\documentclass[reqno, 11pt, twoside]{amsart}

\usepackage[margin=3.5cm]{geometry}

\usepackage{amsmath,amssymb,amsthm,bm,colonequals,graphicx,mathrsfs,mathtools,microtype,stmaryrd, amsfonts, dsfont, comment, multicol}
\usepackage[shortlabels]{enumitem}
\usepackage[colorinlistoftodos]{todonotes}

\numberwithin{equation}{section}

\theoremstyle{plain}

\newtheorem{theorem}{Theorem}[section] 

\newtheorem{lemma}[theorem]{Lemma} 

\newtheorem{proposition}[theorem]{Proposition} 

\newtheorem{proposition-definition}[theorem]{Proposition-Definition} 

\newtheorem{corollary}[theorem]{Corollary}

\theoremstyle{definition}

\newtheorem{definition}[theorem]{Definition}

\newtheorem{example}[theorem]{Example}

\theoremstyle{remark}

\newtheorem{remark}[theorem]{Remark}

\newcommand{\abs}[1]{\left\lvert #1 \right\rvert} 
\newcommand{\card}[1]{\#{ #1 }}
\newcommand{\norm}[1]{\lvert #1 \rvert} 
\newcommand{\floor}[1]{\lfloor #1 \rfloor}
\newcommand{\ceil}[1]{\lceil #1 \rceil}

\newcommand{\belongs}{\subseteq}

\newcommand{\eps}{\epsilon}

\newcommand{\defeq}{\colonequals}

\newcommand{\maps}{\colon}

\newcommand{\NN}{\mathbb{N}}
\newcommand{\ZZ}{\mathbb{Z}}
\newcommand{\QQ}{\mathbb{Q}}
\newcommand{\CC}{\mathbb{C}}
\newcommand{\FF}{\mathbb{F}}

\newcommand{\PP}{\mathbb{P}}

\DeclareMathOperator{\supp}{supp}

\DeclareMathOperator{\Br}{Br}

\DeclareMathOperator{\Pic}{Pic}

\def\RR{\mathbb{R}}

\newcommand{\ol}{\overline}

\renewcommand{\hat}{\widehat}

\renewcommand{\leq}{\leqslant}
\renewcommand{\le}{\leqslant}
\renewcommand{\geq}{\geqslant}
\renewcommand{\ge}{\geqslant}
\newcommand{\ve}{\varepsilon}
\renewcommand{\eps}{\varepsilon}
\newcommand{\bve}{\boldsymbol{\varepsilon}}
\newcommand{\lcm}{\mathrm{lcm}}
\newcommand{\ZZp}{\mathbb{Z}_{\text{prim}}}

\usepackage[pagebackref=true]{hyperref}
\usepackage[alphabetic,initials]{amsrefs}

\newcommand{\x}{\mathbf{x}}
\renewcommand{\a}{\mathbf{a}}
\renewcommand{\b}{\mathbf{b}}
\newcommand{\dd}{\mathbf{d}}
\newcommand{\y}{\mathbf{y}}

\newcommand{\m}{\mathbf{m}}
\newcommand{\n}{\mathbf{n}}

\newcommand{\z}{\mathbf{z}}
\newcommand{\0}{\mathbf{0}}
\newcommand{\sumstar}{\sideset{}{^\star}\sum}
\newcommand{\1}{\mathbf{1}}

\usepackage{color}

\title[Beyond the square-root barrier] 
{Beyond the square-root barrier:\\ 
cubic forms of Perazzo type}
\author{Tim Browning}
\author{Ritabrata Munshi}
\author{Victor Y. Wang}

\address{IST Austria\\
Am Campus 1\\
3400 Klosterneuburg\\
Austria}
\email{tdb@ist.ac.at}

\address{Stat-Math Unit\\ Indian Statistical Institute\\ 203 B.T. Road\\ Kolkata 700108\\ India}
\email{ritabratamunshi@gmail.com}

\address{Institute of Mathematics\\
Academia Sinica\\
Taipei 106319\\
Taiwan}
\email{vywang@as.edu.tw}

\subjclass{11D25 (11D45, 11G35, 11P55, 14D10, 14J35)}

 \date{\today}

\begin{document}

\makeatletter
\def\subjclassname{$2020$ Mathematics Subject Classification}
\makeatother

\begin{abstract}
We show how the circle method can be used to study rational points on a certain  cubic fourfold, going beyond the square-root barrier.
\end{abstract}

\maketitle

\thispagestyle{empty}
 \setcounter{tocdepth}{1}
 \tableofcontents

\section{Introduction}

The circle method has long proved an important ally in the quantitative study 
of cubic hypersurfaces $X\subset \PP^{n-1}$ defined 
over $\QQ$.  Let $N_X(B)$ denote the number of rational points of height $B$ on $X$. 
Assuming that $X$ is non-conical and that 
the singular locus has sufficiently large codimension, 
the Manin conjecture \cite{manin}
predicts that $N_X(B)$  should have order $B^{n-3}$ for the standard exponential height function on $\PP^{n-1}(\QQ)$.

Qualitatively,  as outlined by Colliot-Th\'el\`ene  \cite[Appendix]{13}, 
if $n\geq 5$ and $X$ is not a cone, then one
expects  the Hasse principle and weak approximation to hold for the 
smooth locus $X_{\mathrm{smooth}}$ of  $X$, provided that 
the singular locus is  either  empty or has codimension at least $4$ in $X$.
It follows from work of Heath-Brown \cite{14} that $X(\QQ)\neq \emptyset$ for $n\geq 14$, without any  hypotheses on $X$. Likewise, Swarbrick Jones \cite{SJ} has shown that $X_{\mathrm{smooth}}$   satisfies weak approximation provided that 
$n\geq 19$, when   $X$ is   geometrically integral and  not equal to a cone.
When $X$ is smooth, 
Hooley \cites{hooley1, hooley3} has shown that the Hasse principle and weak approximation hold 
provided that $n\geq 9$. 
All of these results rely on the circle method. 
When the cubic hypersurface $X\subset\PP^{n-1}$ contains a set of 
three conjugate singular points,  however, work of 
Colliot-Th\'el\`ene and Salberger \cite{c-t-s} uses descent machinery to show that the smooth Hasse principle holds if $n\geq 4$.
Moreover, if $X$ is geometrically integral and not equal to a cone,  they further show that  weak approximation holds for $X_\mathrm{smooth}$ if $n\neq 5$. (They also show that weak approximation fails when $n=5$, in which case the singular locus has codimension  $3$.)

The well-known {\em square-root barrier} in the circle method seems to preclude handling cubic hypersurfaces in $\PP^{n-1}$ via the circle method unless $n>6$.  One notable exception to this arises in work of the third author \cite{wang2023ratios}, which provides a highly conditional treatment of a smoothly weighted version of the counting function $N_X(B)$ for the smooth cubic fourfold
\begin{equation}\label{eq:6}
x_1^3+x_2^3+x_3^3=x_4^3+x_5^3+x_6^3,
\end{equation}
which
% by Weyl differencing
is $\QQ$-linearly isomorphic to
$\sum_{i=1}^{3} (3x_iy_i^2 + x_i^3) = 0$,
% a union of ternary quadratic equations.
a family of quadrics.
% after blowing up x_1=x_2=x_3=0.
In this paper we study the  fourfold 
  $X\subset \PP^5$ defined by the cubic form
\begin{align}\label{eq:FF}
F(\x, \y)
= x_1y_1^2+x_2y_2^2+x_3y_3^2.
\end{align}
In 1900, Perazzo \cite{perazzo} initiated a programme to classify non-conical cubic hypersurfaces 
with vanishing Hessian, whose singular locus often  contains a linear space.
Perazzo's work was put on a firmer footing by
Gondim and Russo \cite{gondim}, who proved that in some
cases the existence of a linear space of sufficiently high dimension in the singular locus assures
that the cubic hypersurface has vanishing Hessian.  
To illustrate their result,  in \cite[Remark 3.2]{gondim} it is observed that 
the hypersurface $X$ defined by \eqref{eq:FF}
contains the double plane $y_1=y_2=y_3=0$ but has non-vanishing Hessian $H_F=y_1^2y_2^2y_3^2$. 

We shall use the smooth $\delta$-method variant of the circle method to assess the asymptotic behaviour of the counting function
$$
N(B)=\sum_{\substack{\x,\y\in \ZZ^3\\ F(\x, \y)=0}}W\left(\frac{(\x,\y)}{B}\right),
$$
as $B\to \infty$,
for  any smooth weight  function 
$W:\RR^6\to\RR_{\geq 0}$ 
that is compactly supported on 
$\{(\x,\y)\in \RR^6: y_1y_2y_3\neq 0\}$.
Let 
\begin{equation}\label{eq:sigma-inf}
\sigma_\infty =\lim_{\epsilon\to 0} (2\epsilon)^{-1} \int_{|F(\x,\y)|\leq \epsilon} W(\x,\y)\mathrm d\x\mathrm d \y
\end{equation}
be the (weighted) real density associated to $X$. 
Then 
$H_F\neq 0$ 
for any $(\x,\y)\in \supp(W)$, and furthermore, 
it follows from \cite[Thm.~3]{HB} that $\sigma_\infty>0$
provided we choose $W$ so that $F(\x,\y)=0$ for some $(\x,\y)\in \supp(W)$.
The following is our main result. 

\begin{theorem}\label{t:main}
Let $\eps>0$.
Let 
$W:\RR^6\to\RR_{\geq 0}$ 
be any smooth weight  function 
that is compactly supported on 
$\{(\x,\y)\in \RR^6: y_1y_2y_3\neq 0\}$.
Then 
$$
N(B)=
\frac{\sigma_ \infty}{\zeta(3)}
B^3\log B +O(B^3(\log{B})^\eps).
$$
\end{theorem}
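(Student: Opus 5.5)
Since $F$ is linear in $\x$, my plan is to use the delta method with the $\x$-sums made explicit, instead of treating $F$ as a general cubic. I would write $N(B)=\sum_{\x,\y}W(\cdot)\,\delta(F(\x,\y))$ and expand $\delta$ by the Duke--Friedlander--Iwaniec/Heath-Brown smooth $\delta$-method with parameter $Q=B^{3/2}$. Here $F$ takes values of size $B^3$, so the moduli satisfy $q\le Q$. This produces sums over $q$ and $a \bmod q$ of $e(aF/q)h(q,F/Q^2)$.

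The key step is Poisson summation in $\x$ for fixed $\y$. The phase is linear in $\x$, so for each $i$ the $x_i$-sum picks out the congruence $a y_i^2\equiv -c_i \pmod q$ on the dual variable $c_i$. The dual integral is an oscillatory integral in $x_i$ with phase $x_iy_i^2/q$ against the weight. Using $y_i\asymp B$ (the support avoids $y_i=0$), it decays rapidly unless $c_i\ll q B^{\eps}/B$, and in the main range $q\asymp Q$ this leaves $O(B^{1/2+\eps})$ dual frequencies per coordinate.

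The zero frequency $\mathbf c=\0$ forces $q\mid y_i^2$ for all $i$. I would then apply Poisson again in $\y$, or compute directly. This should yield a singular series of the shape $\sum_q q^{-3}\#\{\y \bmod q: q\mid y_i^2\}\cdot(\ldots)$. Its local factors behave like $\sum_k p^{-3k}\cdot p^{3\lfloor k/2\rfloor}\cdot p^{\ldots}$, and the terms $q=m^2$ contribute roughly $\sum_m m^{-1}$, divergent at rate $\log Q\sim\tfrac32\log B$. I expect this divergence to produce the $B^3\log B$ term. The $1/\zeta(3)$ factor should come from the primitivity or square-part bookkeeping, after matching constants with $\sigma_\infty$ defined in \eqref{eq:sigma-inf}. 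I would verify the constant by first counting heuristically by fibration: for fixed $\y$ the equation is a linear equation in $\x$ whose lattice has covolume governed by $\gcd(y_1^2,y_2^2,y_3^2)$. In fact this suggests an easier route to the main term that bypasses the circle method: sum the point count of the plane lattice $\{\x: \sum x_iy_i^2=0\}$ over $\y$. Each such count equals $B^3$ times the real density divided by $\|(y_1^2,y_2^2,y_3^2)\|/g^2$ with $g=\gcd(y_i)$, plus a lattice-point error. Summing over $\y$ with $\y\asymp B$ gives $B^3\sum_g g^{-1}\cdot g^{-?}$-type sums, from which the $\log B$ arises.

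The main obstacle is the error term: the non-zero frequencies $\mathbf c\neq\0$, or equivalently the lattice-point errors summed over $\y$. The dual lattice of $\{\x:\sum x_iy_i^2=0\}$ is spanned by $(y_1^2,y_2^2,y_3^2)/g^2$, which has length $\asymp B^2/g^2$. The error in counting $\x$ in a box of side $B$ on a rank-2 lattice of covolume $D=B^2/g^2$ is governed by the successive minima. Trivially this error is $O(B\,\lambda_1^{-1}+1)$, which over $B^3$ values of $\y$ gives at most $B^4/\lambda_1$. That is fine unless the lattice has a very short vector $\lambda_1\ll B^{1+\eps}$. So I must show that $\#\{(\x,\y): \x \text{ short}, \sum x_iy_i^2=0\}$ is $o(B^3\log B)$ after weighting. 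Short vectors $\x$ with $|\x|\le T$ and $\x\cdot(y_1^2,y_2^2,y_3^2)=0$ amount to counting points on the diagonal conics $x_1y_1^2+x_2y_2^2+x_3y_3^2=0$ in $\y$ for fixed $\x$. By uniform bounds for ternary quadratic forms (Heath-Brown/Browning's $O(B^{1+\eps})$ or the sharper $\ll B\, d(\ldots)/|\Delta|^{1/3}$), this gives about $\sum_{|\x|\le T} B\cdot\tau(x_1x_2x_3)\,|x_1x_2x_3|^{-1/3}$. Controlling this with only a $(\log B)^\eps$ loss in the error is the delicate step. I would first try dyadic decomposition in $\lambda_1$ combined with the divisor-bounded conic counts, and I expect to need averaging of $\tau$ over $\x$ to lose only the stated power of $\log B$.
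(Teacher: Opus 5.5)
\textbf{There is a genuine gap.} Your fibration over $\y$ gets the main term right, but it leaves open the error term, and the tool you propose for it cannot work.

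On the main term: write $\y=g\y'$ with $\y'$ primitive. The plane lattice $\Lambda_{\y}=\{\x\in\ZZ^3:\x\cdot\y^2=0\}$ has determinant $|(\y')^2|\asymp (B/g)^2$, so a fibre holds about $g^2$ points. Summing $B^3/(\zeta(3)g)$ over $g\ll B$ gives $\frac{\sigma_\infty}{\zeta(3)}B^3\log B$; this is the heuristic behind the lattices $\Lambda(\mathbf t)$ in Section~\ref{s:dual}.

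The error term is the entire difficulty.
\begin{itemize}
\item Your dichotomy is vacuous. Since $\lambda_1\lambda_2\asymp\det\Lambda_{\y}$, every $\y$ has $\lambda_1(\Lambda_{\y})\ll B/g\le B$, so there is no regime where the trivial bound suffices. You must show $\sum_{\y}B/\lambda_1(\Lambda_{\y})\ll B^3(\log B)^{\eps}$. Fibres with $\lambda_1\le B\le\lambda_2$ genuinely carry an error of size $B/\lambda_1$, so nothing is wasted in this reduction.
\item Write $\lambda_1=|\x_0|$ with $\x_0$ primitive. The sum becomes $\sum_g B\sum_{X\ll B/g}N(X,B/g)/X$, where $N(X,Y)$ counts primitive pairs $(\x_0,\y')$ with $|\x_0|\sim X$, $|\y'|\sim Y$ and $\sum_i x_i(y'_i)^2=0$.
\item You therefore need, after separating off $x_1x_2x_3=0$, a bound $N(X,Y)\ll X^2Y$ with at most a $(\log)^{\eps}$ loss, uniformly for $X\le Y$. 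The critical range is $g\asymp 1$, $X\asymp Y\asymp B$. This is a sharp-order upper bound for the points with $\gcd(\y)$ bounded, which is essentially the hard core of the problem.
\item The conic bound $\ll\tau(x_1x_2x_3)\,(1+Y/|x_1x_2x_3|^{1/3})$ only yields $N(X,Y)\ll X^2Y(\log X)^3$. That gives an error $O(B^3(\log B)^3)$, which swamps the main term $B^3\log B$.
\item Averaging $\tau$ over $\x$ cannot repair this, because the mean of $\tau$ is exactly the logarithm you are trying to avoid.
\end{itemize}

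\textbf{The missing idea.} Replace $\tau$ by the number of local square-root classes, roughly $\prod_{p\mid x_i}\left(1+\left(\frac{-x_jx_k}{p}\right)\right)$. Then prove this has bounded mean, using cancellation of Jacobi symbols over $\x$ (for instance Heath-Brown's large sieve for real characters), with separate treatment of non-coprime coefficients. This is the same obstruction the paper faces in $E_1(B)$, where losing a single factor $\log B$ would be fatal. The paper avoids it by combining:
\begin{itemize}
\item the better-than-square-root bound $S_p(\m,\n)\ll p^3$;
\item Hooley $\Delta$-function averages over the values of $D(\m,\n)$;
\item the prime number theorem;
\item the large sieve for real characters.
\end{itemize}

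\textbf{The first route misidentifies the main term.} After Poisson summation in $\x$ only, the zero frequency forces $q\mid g^2$. With $q\ll Q=B^{3/2}$, it therefore only sees fibres with $g\ll B^{3/4}$. It produces $\frac34\cdot\frac{\sigma_\infty}{\zeta(3)}B^3\log B$, matching the paper's $M(B)$. The missing quarter comes from non-zero frequencies proportional to $(\y')^2$ with $|\y'|\ll B^{1/4}$. In the paper's full Poisson set-up these are the $(\m,\n)\neq\0$ on the dual hypersurface $D(\m,\n)=0$, which are evaluated in $E_3(B)$. Your sketch never accounts for them.
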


The cubic fourfold $X\subset \PP^5$ contains
the double plane $y_1=y_2=y_3=0$ and in  Section~\ref{s:peyre} 
we shall confirm that this asymptotic formula establishes the Manin conjecture \cite{manin} for 
a crepant resolution $\rho:\widetilde X \to X$. We shall  also check that the leading constant agrees with Peyre's prediction \cite{peyre} for $\widetilde X$.

Theorem \ref{t:main} appears to be the first unconditional treatment of a reduced and irreducible cubic fourfold via the circle method. 
As we shall discuss shortly, one of the key steps to circumventing the  square-root barrier arises from 
our analysis of  certain finite field  exponential sums  in the method, which satisfy better than square-root cancellation for the cubic form \eqref{eq:FF}.
It would be interesting to determine exactly which cubic forms
in the $20$-dimensional moduli space of cubic fourfolds
admit such cancellation; for now, we list some examples below.
This cancellation property $(\diamond)$ is closely related, or perhaps equivalent,
to the vanishing of the $A$-number defined by Katz \cite{katz}.

\begin{definition}
Let $N$ be a non-zero integer.
We say that \emph{the property $(\diamond)$ holds}
for a cubic form $F\in \ZZ[\frac1N][x_1,\dots,x_6]$
% for a cubic form $F\in \QQ[x_1,\dots,x_6]$ with coefficients in $\ZZ[\frac1N]$
if there exists a non-zero polynomial $G\in \ZZ[b_1,\dots,b_6]$
and a constant $C>0$ such that
uniformly over vectors $\b\in \ZZ^6$ and primes $p\nmid N\,G(\b)$,
we have $|S_p(\b)| \le Cp^3$, where
$$S_p(\b)\defeq \sum_{a\in \FF_p^\times} \sum_{x_1,\dots,x_6\in \FF_p}\psi(aF(x_1,\dots,x_6) + b_1x_1+\dots+b_6x_6),
$$
for any non-trivial additive character  $\psi:\FF_p\to \CC^*$.
\end{definition}

\begin{example}
\label{prelim-examples}
Preliminary calculations
indicate that $(\diamond)$ holds for
sufficiently general members of the following families,
the first and fourth of which contain \eqref{eq:FF}.
\begin{enumerate}
\item Cubic fourfolds singular along a rational plane, say $\y=\0$.
These take the form $\sum_{i=1}^{3} x_iQ_i(\y) = C(\y)$,
where $Q_1,Q_2,Q_3\in \QQ[\y]$ are ternary quadratic forms
and $C\in \QQ[\y]$ is a ternary cubic form.
The expression $\sum_{i=1}^{3} x_iQ_i(\y)$ defines a \emph{net of conics},
whose classification over $\RR$ and $\CC$
can be found in work of Wall \cite{WallNets}.

\item Fourfolds corresponding to pencils of quadric surfaces.
Their equations are of the form $x_1Q_1(y_1,\dots,y_4)=x_2Q_2(y_1,\dots,y_4)$.
In a biprojective setting,
such equations were studied by
% https://arxiv.org/abs/2204.09322
Bonolis--Browning--Huang \cite{BBH}.

\item Fourfolds defined by trilinear equations
$\sum_{f,g,h\in \{x,y\}} c_{f,g,h} f_1 g_2 h_3 = 0$
in three pairs of variables,
with coefficients $c_{f,g,h}\in \QQ$.
These are specializations of (2).
Dividing by $y_1y_2y_3$, they
may be interpreted in terms of fractions $\frac{x_i}{y_i}$.
Equations of this sort are the subject of many works, including
\cites{BBS,BloomKuperberg,nFracs}.

\item Fourfolds $X$ given by
$\sum_{i=1}^{3} x_iy_i^2 = c_1x_1x_2x_3 + c_2y_1y_2y_3$,
where $c_1,c_2\in \QQ$.
If $c_1=0$, this fits in (1).
% If $c_1=c_2=0$, then $X$ is the subject of Theorem~\ref{t:main}.
If $c_1c_2^2=4$, then $X$ was studied by
Schmidt \cite{S} and Derenthal \cite{ulrich}.
% If $c_1(c_1c_2^2-4) = 0$, then $X$ has a two-dimensional singular locus.
Now suppose $c_1(c_1c_2^2-4)\ne 0$.
Then the singular locus of $X$ consists of three conics,
$y_i=y_j=x_k = y_k^2 - c_1x_ix_j = 0$, where $\{i,j,k\} = \{1,2,3\}$.
A generic hyperplane section of $X$ is a cubic threefold
with six isolated singularities.
% which can be checked to be nodal
% and to have the property that any five nodes are linearly independent.
By methods of \cites{6nodal,dichotomous},
% https://arxiv.org/abs/1501.06432
% https://arxiv.org/abs/2202.10427
it can be shown that $(\diamond)$ holds.
Alternatively, an explicit derivation via Gauss and Sali\'{e} sums
is given for $(c_1,c_2) = (-1,0)$ in Appendix~\ref{elementary}.
% which might be possible to extend to the general case.
\end{enumerate}
The case $c_1(c_1c_2^2-4)\ne 0$ of (4) may be especially deep,
because it seems to have relatively little linear structure.
% and it seems unlikely
% for existing torsor-theoretic or group-theoretic methods to apply.
Theorem~\ref{t:main} might extend to
(4) when $c_1(c_1c_2^2-4)$ is a non-zero square,
% for example if $(c_1,c_2) = (-1,0)$,
in which case we also expect the main term to have order 
$B^3\log B$. 
\end{example}

We proceed to compare  our result with the literature.
Using multiplicative harmonic analysis (as in work of Batyrev and Tschinkel \cite{toric})
or universal torsors
(as in work of Salberger \cite{salberger}),
Manin's conjecture has been proven for toric varieties,
including the singular cubic fourfold in $\PP^5$ given by the equation
% https://zbmath.org/32.0651.04
\begin{equation}
\label{primal}
x_1x_2x_3 = y_1y_2y_3.
\end{equation}
The cubic \eqref{primal} is sometimes called the \emph{Perazzo primal},
although its Hessian is non-zero
and it originates from \cite{primal} rather than from \cite{perazzo}.
Recent 
work  of  Blomer, Br\"udern and Salberger \cite{BBS} is  concerned with another singular cubic fourfold in $\PP^5$, with defining  equation
\begin{equation}\label{eq:BBS}
x_1y_2y_3 + x_2y_1y_3 + x_3y_1y_2=0. 
\end{equation}
Their main result shows that there is a degree $4$ polynomial $P$ and a constant $\delta>0$ 
such that the number of 
rational points of height at most $B$   on the open subset where $y_1y_2y_3\neq 0$ is 
 $B^3P(\log B)+O(B^{3-\delta}).$ 
The proof of this result does not involve  the circle method. Instead it relies on a combination of  elementary lattice point considerations, analytic counting by multiple Mellin integrals, and an Euler product identity for certain multiple Dirichlet series.
As expounded by Derenthal \cite{ulrich}, 
a  further cubic fourfold whose quantitative arithmetic has yielded to investigation 
is cut out by the  senary cubic form arising as the determinant of a symmetric $3\times 3$ matrix, corresponding to taking $(c_1,c_2)=(4,1)$ in Example \ref{prelim-examples}(4).
It is shown in \cite{ulrich} how to relate this counting problem
to a problem about counting  quadratic points in $\PP^2$, which was handled by Schmidt \cite{S} using  the geometry of numbers.

\begin{remark}
    A possible measure of difficulty that can be attributed to cubic fourfolds $X\subset \PP^5$ is the degree of the associated dual  variety $X^*$, which is the image of $X$ under the Gauss map. When $X$ is smooth  it is well-known that $X^*$ is a hypersurface with 
    $\deg (X^*)=48$. 
        In our case, for the cubic defined by the polynomial \eqref{eq:FF}, we shall see that the dual is a hypersurface of degree $6$. On the other hand, the examples \eqref{primal} and \eqref{eq:BBS} are self-dual, so that their duals have degree $3$. Finally, the chordal cubic studied in \cite{ulrich} is dual to the degree $4$ Veronese surface.
\end{remark}

Even for cubic hypersurfaces $X\subset \PP^{n-1}$ in $n=7$ variables, there 
are  relatively few successful treatments via the circle method.  The first of these concerns diagonal cubic hypersurfaces, for which   Baker \cite{baker} has shown that $X(\QQ)\neq \emptyset$ as soon as
$n\geq 7$. Next, consider the family of cubic hypersurfaces
$$
N_{K_1/\QQ}(x_1,x_2,x_3)+N_{K_2/\QQ}(x_4,x_5,x_6)+cx_7^3=0,
$$
where $c\in \QQ^*$ and $N_{K_1/\QQ}$ and $N_{K_2/\QQ}$ are norm forms associated to cubic extensions $K_1,K_2$ of $\QQ$. Birch, Davenport and Lewis \cite{BDL} applied the circle method to prove the smooth Hasse principle for  this family. Building on these examples,  Harvey \cite{harvey} has shown how to handle a hybrid family of cubic hypersurfaces in $\PP^6$ involving a norm form and a diagonal form. (Note that the latter  results 
are special cases of the hypersurfaces studied in 
\cite{c-t-s}.)

\begin{remark}
The cubic form \eqref{eq:FF} 
has  bidegree $(1,2)$ and can also be  viewed as a Fano threefold in 
$\PP^2\times \PP^2$. It would be  interesting to try and confirm the Manin conjecture for this threefold, for which one would need to count with respect to the  anticanonical height function $H(x_1:x_2:x_3)^2H(y_1:y_2:y_3)$, where $H$ is the exponential height function on $\PP^2(\QQ)$. 
Upper and lower bounds 
of  matching order
  were established for this counting function by Le Boudec \cite{Pierre}.
\end{remark}

\medskip

Let us close this introduction with some comments about the proof of Theorem \ref{t:main} and the reason we are able to go beyond the square-root barrier. 
Our work draws 
inspiration from some of the arguments found in \cite{wang2023ratios}, but our work is completely unconditional. 
The starting point is the version of the circle method 
developed by Heath-Brown \cite[Thm.~1]{HB}, which is based on the smooth $\delta$-function technology of  Duke, Friedlander and
Iwaniec \cite{DFI}.  After an application of Poisson summation, the method leads to an analysis of the complete exponential sums
\begin{align*}
S_q(\m,\n)
&=\sumstar_{a\bmod{q}}~~\sum_{\x,\y\bmod{q}} e_q\left(aF(\x,\y)+\m.\x+\n.\y\right),
\end{align*}
for $q\in \NN$ and  $\m,\n\in \ZZ^3$, where  $\sum^\star$ denotes a sum over residues coprime to the modulus.
We might expect  $S_q(\m,\n)$ to have order 
$q^{7/2}$ for typical $q$ and $\m,\n$, if the sum exhibits 
square-root cancellation. 
The exponential sums $S_q(\m,\n)$ are multiplicative functions of $q$ and
%by evaluating 
%the $\x$-sum using orthogonality of characters,
we will be able to show that 
$S_p(\m,\n)=O(p^{3})$, for any prime $p$
% provided that   $q$ is  square-free 
 and suitably generic $\m,\n$,
thereby ensuring that property $(\diamond)$ holds for $F(\x,\y)$.
    This is critical to the success of our proof but it is not in itself enough. 
 Instead, we shall need to exploit additional cancellation obtained through sign changes in these exponential sums, via an analysis of the Dirichlet series
 $$
\xi(s;\m,\n)=\sum_{q=1}^\infty q^{-s}S_q(\m,\n),
$$
for $s\in \CC$, 
when $\m,\n\in \ZZ^3$ satisfy $m_1m_2m_3D(\m,\n)\neq 0$, where 
$D$ is an explicit sextic polynomial given in 
\eqref{eq:Dmn},   defining the dual hypersurface $X^*$. 
For such $\m,\n$, the  series $\xi(s;\m,\n)$ is absolutely convergent
in the half-plane  $\Re(s)>4$.  
In order to prove Theorem~\ref{t:main} it is vital to establish an
analytic continuation of $\xi(s;\m,\n)$ to the left of this
line.

A further difficulty arises when analysing the contribution from square-free $q\in \NN$ and $\m,\n\in \ZZ^3$ such that 
$D(\m,\n)\neq 0$ and $q\mid D(\m,\n)$. In this case $S_q(\m,\n)=c_qq^4$ for a non-negative arithmetic function $c_q$ (with constant average order) and we are precluded from obtaining any additional cancellation in the summation over $q$. 
Morally speaking, we need to estimate sums of the form
$$
\frac{1}{M^6}
\sum_{\substack{|\m|,|\n|\leq M \\D(\m, \n)\neq 0}}
\sum_{\substack{q\sim R\\ q\mid D(\m,\n)}}\mu^2(q),
$$
for $M,R\geq 1$, which we might expect to have order $O(\log M)$.
Unfortunately we cannot afford to lose this factor of $\log M$ in our argument and we shall instead make use of the 
Hooley $\Delta$-function, together with  upper bounds for the average of this function along polynomial 
sequences  provided in work of la Bret\`eche--Tenenbaum \cite{regis} and 
Chan--Koymans--Pagano--Sofos \cite{chan}.
 
One final feature of our work that is worth highlighting 
concerns how the circle method pieces fit together to 
yield the statement of Theorem \ref{t:main}. 
When applying Poisson summation in the smooth $\delta$-function version of the circle method, it  is common for the main term to arise from the 
trivial character. However, in our setting, it  transpires that only  $\frac{3}{4}$ of the main term arises from the vectors $\m,\n$ that both vanish. For the remaining contribution, which amounts to  
$\frac{1}{4}$ of the main term in  Theorem \ref{t:main}, 
we must look to the 
vectors $\m,\n\in \ZZ^3$ with $(\m,\n)\neq (\0,\0)$ for which $D(\m,\n)=0$, where $D$ is the dual form from  above. Comparable phenomena can be found in the work of Wang 
\cite{wang2023ratios}, where the dual variety encodes the contribution from rational planes on the fourfold \eqref{eq:6}, or in work of Heath-Brown \cite{hb-3/2}, where the dual variety handles 
the rational lines on the Fermat cubic surface,
or in works of Vaughan--Wooley \cite{VW} and Br\"udern--Wooley \cite{BW}, that  demonstrate 
how the minor arcs can play a similar role for varieties related to  the Segre cubic. 
In Section \ref{s:dual},
 we shall offer some 
 numerical intuition for why the $\frac{3}{4}:\frac{1}{4}$ split of contributions  is plausible in our setting.
It would be interesting to understand what happens in the circle method for
the cubic investigated in \cite{BBS}, which features a main term of order $B^3(\log{B})^4$, and which may reveal additional features.

\subsection*{Acknowledgements}
The second author was supported by J.C. Bose Fellowship JCB/2021/000018 from ANRF DST
and the third author was supported by the European Union's Horizon~2020 research and innovation programme under the Marie Sk\l{}odowska-Curie Grant Agreement No.~101034413,
and by the National Science and Technology Council Project Grant 114-2115-M-001-010-MY2.

\section{The Manin conjecture}\label{s:peyre}

 Let 
 $X\subset \PP^5$ be the 
 cubic fourfold  defined by the equation 
\[ F(\mathbf{x}, \mathbf{y}) = x_1 y_1^2 + x_2 y_2^2 + x_3 y_3^2 = 0. \] 
 This variety is 
 singular along the plane $\Pi = \{y_1 = y_2 = y_3 = 0\} \cong \mathbb{P}^2$. 
We shall verify that Theorem \ref{t:main} yields a resolution of the Manin conjecture 
 \cite{manin}  for the blow-up  $\rho:\widetilde X\to X$ along $\Pi$,
which  resolves the singularities of $X$.
  This section was written with the help of ChatGPT 5.2 and 
Gemini 3 Pro.
 
 Because $X$ is a cubic hypersurface, its canonical class is $K_X = \mathcal{O}_X(-3)$ 
 and we take the anticanonical height $H(z)=\|\z\|^{3}$, 
 where $\|\cdot\|$ is the Euclidean norm on $\RR^6$ and where
 $\z\in \ZZ^6$ is a primitive non-zero vector representing the rational point 
$z\in \PP^5(\QQ)$, modulo the action of $\pm 1$. 
Pick a smooth weight function $\eta:\PP^5(\RR)\to \RR_{\geq 0}$ which is supported on  a compact subset of 
$\{(\x:\y)\in \PP^5(\RR): y_1y_2y_3\ne 0\}$.
Then we will concern ourselves with the asymptotic behaviour of the counting function
$$
N_X(B)\defeq 
\sum_{\substack{z\in X(\QQ)\\
H(z)\leq  B
}}
\eta(z) 
$$
as $B\to \infty$.
This corresponds
to a smoothly weighted 
variant of the usual counting function with respect to the anticanonical height function. 

Let $\delta>0$ and 
pick smooth weight functions 
$w_\delta^\pm:(0,\infty)\to \RR_{\geq 0}$ such that 
$$
\chi_{(1/2+\delta,1-\delta]}(t)\leq  
w_\delta^-(t)\leq 
\chi_{(1/2,1]}(t)\leq w_\delta^+(t)\leq \chi_{(1/2-\delta,1+\delta]}(t),
$$ 
for all $t>0$.
Define the weight functions $
W^\pm(\z)=\eta\left(\z/\|\z\|\right) w_\delta^\pm(\|\z\|),
$
for any $\z=(\x,\y)\in \RR^6$.  Then $W^\pm:\RR^6\to \RR_{\geq 0}$ are smooth weight functions that are compactly supported on
$\{(\x,\y)\in \RR^6: y_1y_2y_3\neq 0\}$.
A straightforward application of M\"obius inversion and Theorem \ref{t:main} now yields
\begin{align*}
\sum_{z\in X(\QQ)}
\eta(z) w_\delta^\pm \left(\frac{H(z)^{1/3}}{B^{1/3}}\right)
&=\frac{1}{2}\cdot \frac{\sigma_\infty^{(\pm,\delta)}}{\zeta(3)^2} B\log (B^{1/3}) +O(B(\log B)^{\ve})\\
&= \frac{\sigma_\infty^{(\pm,\delta)}}{6\zeta(3)^2} B\log B +O(B(\log B)^{\ve}),
\end{align*}
for any $\ve>0$, 
where in the light of 
\eqref{eq:sigma-inf}, we have 
$$
\sigma_\infty^{(\pm,\delta)} =\lim_{\epsilon\to 0} (2\epsilon)^{-1} \int_{|F(\z)|\leq \epsilon} 
\eta\left(\frac{\z}{\|\z\|}\right) w_\delta^\pm(\|\z\|)
\mathrm d\z.
$$
A simple change of variables yields
$$
\lim_{\epsilon\to 0} (2\epsilon)^{-1} \int_{|F(\z)|\leq \epsilon} 
\eta\left(\frac{\z}{\|\z\|}\right) \chi_{(\alpha,\beta]}(\|\z\|)
\mathrm d\z=(\beta^3-\alpha^3)\lim_{\epsilon\to 0} (2\epsilon)^{-1} \int_{\substack{\|\z\|\leq 1\\  |F(\z)|\leq \epsilon}}
\eta\left(\frac{\z}{\|\z\|}\right) 
\mathrm d\z,
$$
for any $0<\alpha<\beta$, so that 
$
\lim_{\delta\to 0} \sigma_\infty^{(\pm,\delta)}=\left(1-\frac{1}{2^{3}}\right)\sigma_\infty,
$
with 
\begin{equation}\label{eq:sigma-inf'} 
\sigma_\infty=
\lim_{\epsilon\to 0} (2\epsilon)^{-1}
\int_{\{\|\z\|\leq 1: |F(\z)|\leq \epsilon\}}
\eta\left(\frac{\z}{\|\z\|}\right) 
\mathrm d\z.
\end{equation}
Taking $\delta\to 0$, we may conclude  that 
\begin{align*}
\sum_{\substack{z\in X(\QQ)\\
2^{-1}B^{1/3} < H(z)^{1/3}\leq  B^{1/3}
}}
\eta(z) 
= (1+o(1)) \frac{(1-\frac{1}{2^3})\sigma_\infty}{6\zeta(3)^2} B\log B,
\end{align*}
as $B\to \infty$.  It now follows that 
$$
N_X(B)
= (1+o(1)) \frac{\sigma_\infty}{6\zeta(3)^2} B\log B,
$$
on summing over dyadic intervals.
We proceed to compare this result with the Manin prediction for $\widetilde{X}$ in \cite{manin}, together with Peyre's prediction \cite{peyre} for the leading constant. 

 The variety  $\widetilde{X}$ is a smooth variety that admits the structure of a $\mathbb{P}^2$-bundle over $\mathbb{P}^2$,  given by $\widetilde{X} \cong \mathbb{P}_{\mathbb{P}^2}(K \oplus \mathcal{O}_{\mathbb{P}^2}(-1))$, where $K$ is a rank-2 vector bundle over $\mathbb{P}^2$. 
The morphism $\rho$ is crepant, meaning that $K_{\widetilde{X}} = \rho^* K_X$. 
It follows that the exponents of $B$ and $\log B$ are correct, since 
 $\Pic( \widetilde{X})\cong \ZZ^2$.
The predicted leading constant is 
\begin{equation}\label{eq:peyre}
\alpha(\widetilde{X}) \beta(\widetilde{X}) \tau_\infty(\widetilde{X}) \tau_{\mathrm{fin}}(\widetilde{X}). 
\end{equation}
The effective cone $C_{\mathrm{eff}}$ of $\widetilde{X}$ is generated by the exceptional divisor $E$ and the pullback of the hyperplane class from the base, $M = \pi^* \mathcal{O}_{\mathbb{P}^2}(1)$.
Let $L$ be the hyperplane class of $X$. Since $\rho$ is a blow-up, we have $\rho^*L = M + E$ and 
it follows from  adjunction that $K_X = -3L$. 
We deduce that 
$-K_{\widetilde{X}} = -\rho^* K_X = 3(M + E) = 3M + 3E$, 
since $\rho$ is crepant.
Then, with the Lebesgue measure on $N_1(\widetilde{X})_{\mathbb{R}}$ induced by the dual lattice to $\Pic( \widetilde{X})$, the constant $\alpha(\widetilde{X})$ equals the volume of the slice
\[ \{ (y_1, y_2) \in C_{\mathrm{eff}}^\vee : 3y_1 + 3y_2 = 1, y_i \ge 0 \}, \]
which is $(3 \cdot 3)^{-1}$.
Furthermore, 
the constant $\beta(\widetilde{X})$ is defined as the order of the 
Brauer group $\Br(\widetilde{X})/\Br(\QQ)$. 
But then it follows that 
  \begin{equation}\label{eq:ab}
 \alpha(\widetilde{X}) = \frac{1}{9}, \quad 
\beta(\widetilde{X}) = 1 ,
\end{equation}
since $\widetilde X$ is clearly rational.

\begin{lemma}\label{lem1}
The finite Tamagawa number is 
$$
\tau_{\mathrm{fin}}(\widetilde X) = \frac{1}{\zeta(3)^2}.
$$
\end{lemma}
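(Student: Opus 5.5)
We follow Peyre's definition: with Picard rank $2$ and trivial Galois action on $\Pic(\widetilde X)$ (since $\widetilde X$ is split), we have
$$
\tau_{\mathrm{fin}}(\widetilde X)=\prod_p \Bigl(1-\frac1p\Bigr)^{2}\frac{\#\widetilde X(\FF_p)}{p^{4}}.
$$
This uses the fact that for a smooth projective model over $\ZZ_p$ the $p$-adic Tamagawa measure of $\widetilde X(\QQ_p)$ equals $\#\widetilde X(\FF_p)/p^{\dim}$, by Weil's theorem. So the plan has three steps: pick a good integral model, count its $\FF_p$-points, and simplify the Euler product.

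\textbf{Step 1 (model).} The blow-up $\rho:\widetilde X\to X$ of $\Pi=\{\y=\0\}$ is defined over $\ZZ$. Its description as $\{((\x:\y),(u_1:u_2:u_3))\in X\times\PP^2 : \y\in\langle \mathbf u\rangle\}$ makes sense over $\ZZ$. We will check that it is smooth over $\ZZ_p$ for every $p$, or at least for all odd $p$, treating $p=2$ separately if needed. The projection to $\mathbf u\in\PP^2$ has fibre over $\mathbf u$ given by the points $(\x:\lambda\mathbf u)$ with $\lambda\sum x_iu_i^2=0$, after removing the factor $\lambda^2$. Concretely, $\widetilde X$ is the $\PP^2$-bundle $\{(\x:\lambda)\in\PP^3 : \sum_i x_iu_i^2=0\}$ over $\PP^2_{\mathbf u}$: a hyperplane in $\PP^3$ whose linear form $\sum_i x_iu_i^2$ is never identically zero, since $\mathbf u\ne 0$ and the $u_i^2$ cannot all vanish. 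This holds in every characteristic, because over a field $u_i^2=0$ forces $u_i=0$. Hence it is a projective bundle over $\PP^2_\ZZ$, smooth over $\ZZ$, and this agrees with the identification $\widetilde X\cong \PP_{\PP^2}(K\oplus\mathcal O(-1))$ stated in the paper. I will double-check the generic identification with the blow-up, but for the count only the bundle description matters.

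\textbf{Step 2 (count).} Since $\widetilde X$ is a Zariski-locally trivial $\PP^2$-bundle over $\PP^2$, for every prime $p$
$$
\#\widetilde X(\FF_p)=(p^2+p+1)^2 .
$$
This is consistent with the Weil conjectures, as $H^2$ has rank $2$, matching $\Pic\cong\ZZ^2$.

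\textbf{Step 3 (Euler product).} We compute
$$
\Bigl(1-\frac1p\Bigr)^2\frac{(p^2+p+1)^2}{p^4}=\Bigl(\frac{(1-p^{-1})(p^2+p+1)}{p^2}\Bigr)^2=\Bigl(\frac{p^3-1}{p^3}\Bigr)^2=(1-p^{-3})^2 .
$$
The product over all $p$ converges absolutely with no convergence factors needed beyond the $L$-function factor $(1-p^{-1})^{2}$ built into Peyre's definition, which here is $\lim_{s\to1}(s-1)^2\zeta(s)^2=1$. It equals $\zeta(3)^{-2}$, as claimed.

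The main obstacle is Step 1: verifying carefully that the model is smooth at every prime, including $p=2$ where the squares $u_i^2$ might raise concerns. The projective bundle description settles this because the hyperplane never degenerates. A secondary point is matching Peyre's normalisation of the local measures (anticanonical metric versus counting measure), which is standard via Weil's formula once good reduction is known.
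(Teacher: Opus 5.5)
Your proposal is correct and follows essentially the same route as the paper: your fibrewise hyperplane $\{\sum_i x_iu_i^2=0\}$ inside the $\PP^3$-bundle over $\PP^2_\ZZ$ is exactly the paper's model $\PP_{\PP^2_\ZZ}(K\oplus\mathcal{O}(-1))$ with $K=\ker(u_1^2,u_2^2,u_3^2)$, smooth for the same reason (the $u_i^2$ never vanish simultaneously in any characteristic). The remaining steps also coincide with the paper's: the count $(p^2+p+1)^2$, Hensel/Weil to pass to $p$-adic densities, and the simplification to $(1-p^{-3})^2$. One small precision: the locus $\{\y\in\langle\mathbf u\rangle\}\subset X\times\PP^2$ is the total transform (it contains all of $\Pi\times\PP^2$), and only after removing the factor $\lambda^2$ do you obtain the strict transform, i.e.\ the blow-up; your final description correctly uses the latter.
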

\begin{proof}
We have $\tau_{\mathrm{fin}}(\widetilde X)=\prod_p \tau_p$, where 
$$
\tau_p = \frac{1}{L_p(1,\Pic (\widetilde{X}))} \lim_{k \to \infty} \frac{\#\widetilde{X}(\mathbb{Z}/p^k\mathbb{Z})}{p^{k \dim X}},
$$
with
 $L_p(1,\Pic (\widetilde{X}))=
  (1 - p^{-1})^{-2}$  the convergence factor associated to the split Picard group  of rank $2$.

The original singular variety $X$ has bad reduction at $p=2$,
since the partial derivatives $2x_iy_i$ vanish modulo 2.
Nonetheless, the variety $\widetilde X$ has
a smooth integral model $$\mathcal{\widetilde{X}}
:= \mathbb{P}_{\mathbb{P}^2_{\mathbb{Z}}}(K \oplus \mathcal{O}_{\mathbb{P}^2_{\mathbb{Z}}}(-1))$$
over $\mathbb{Z}$,
constructed via the bundle $K$, defined over $\mathbb{P}^2_{\mathbb{Z}}$ as the kernel of the map
\[ \mathcal{O}_{\mathbb{P}^2_{\mathbb{Z}}}^{\oplus 3} \xrightarrow{(u_1^2, u_2^2, u_3^2)} \mathcal{O}_{\mathbb{P}^2_{\mathbb{Z}}}(2). \]
Because $u_1, u_2, u_3$ are projective coordinates on the base $\mathbb{P}^2_{\mathbb{Z}}$, they share no common zeros in any characteristic.
% Consequently, their squares $u_1^2, u_2^2, u_3^2$ also cannot simultaneously vanish.
% This guarantees that the map is universally surjective over $\mathbb{Z}$.
Thus, $K$ is a well-defined rank-2 vector bundle over $\mathbb{P}^2_{\mathbb{Z}}$,
and $\mathcal{\widetilde{X}}$ is a projective bundle over $\mathbb{P}^2_{\mathbb{Z}}$.

Over any finite field $\mathbb{F}_p$, the number of points on this $\mathbb{P}^2$-bundle over $\mathbb{P}^2$ is exactly the product of the number of points on the base and the fibre, whence
\[ 
\#\widetilde{X}(\mathbb{F}_p) = (\#\mathbb{P}^2(\mathbb{F}_p))^2 = (p^2 + p + 1)^2. 
\]
Because the reduction $\widetilde{X}_{\mathbb{F}_p}$ is smooth, Hensel's lemma implies that 
$$
\#\widetilde{X}(\mathbb{Z}/p^k\mathbb{Z}) = p^{\dim X (k-1)} \#\widetilde{X}(\mathbb{F}_p),
$$
for any $k\geq 1$. 
 The dimension of $X$ is 4, so that 
\[ \tau_p = \left(1 - \frac{1}{p}\right)^2 \frac{(p^2 + p + 1)^2}{p^4} = \left( \frac{p-1}{p} \right)^2 \left( \frac{p^3-1}{p^2(p-1)} \right)^2 = 
\left( 1-\frac{1}{p^3} \right)^2.
\]
Taking the product over all primes easily yields the statement of the lemma.
\end{proof}

\begin{lemma}\label{lem2}
The smoothly weighted archimedean Tamagawa number is 
 $\tau_\infty(\widetilde{X}) = \frac{3}{2} \sigma_\infty$.
\end{lemma}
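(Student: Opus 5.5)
The plan is to compute $\tau_\infty(\widetilde X)$ directly as an integral over $X(\RR)$ and to compare it with the affine singular integral \eqref{eq:sigma-inf'} using polar coordinates on the cone $C=\{F=0\}\subset\RR^6$. First, since $\eta$ is supported on a compact subset of $\{y_1y_2y_3\neq 0\}$, which lies in $X_{\mathrm{smooth}}$ away from the plane $\Pi$, the blow-up $\rho$ is an isomorphism over $\supp(\eta)$. Because $\rho$ is crepant, the metric on $\omega_{\widetilde X}^{-1}=\rho^*\mathcal O_X(3)$ is pulled back from the one on $X$. Hence
$$\tau_\infty(\widetilde X)=\int_{X(\RR)}\eta\,\mathrm d\omega_\infty,$$
where $\omega_\infty$ is Peyre's measure attached to the metric on $\mathcal O_X(3)$ defining $H(z)=\|\z\|^3$.

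Next I will make $\omega_\infty$ explicit. On the open chart where $\partial F/\partial z_j\neq 0$, the adjunction isomorphism $\omega_X\cong\mathcal O_X(3-6)$ identifies the Peyre density with the Leray form $\mathrm d z_1\cdots\widehat{\mathrm dz_j}\cdots\mathrm dz_6/|\partial F/\partial z_j|$. This form is taken on the affine slice $z_{j_0}=1$ and multiplied by the norm factor $\|\z\|^{-3}$ coming from the section of $\mathcal O(3)$. The claim I will use is the standard identity (as in Peyre's treatment of hypersurfaces) that $\omega_\infty$ is the pushforward of the Leray measure $\omega_L$ on the cone $C$, via $\z\mapsto[\z]$, restricted suitably. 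Concretely, for any $\phi$ on $X(\RR)$,
$$\int_{C\cap\{\|\z\|\le 1\}}\phi([\z])\,\mathrm d\omega_L(\z)=2\int_0^1 r^{6-3-1}\,\mathrm dr\int_{X(\RR)}\phi\,\mathrm d\omega_\infty.$$
This follows by writing $\z=r\mathbf u$ with $\|\mathbf u\|=1$. The Leray form $\omega_L=\mathrm d\z/\mathrm dF$ is homogeneous of degree $6-3=3$, so it splits as $r^{2}\,\mathrm dr$ times a $4$-form on the unit sphere intersected with $C$. That $4$-form is identified with $\omega_\infty$ in the chart by an explicit Jacobian computation, which is where the exponent $3$ in $\|\z\|^3$ is matched. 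The factor $2$ accounts for the two antipodal points $\pm\mathbf u$ over each point of $\PP^5(\RR)$.

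Finally, the left-hand side with $\phi=\eta$ is exactly $\sigma_\infty$ in \eqref{eq:sigma-inf'}. Indeed, $\lim_{\epsilon\to0}(2\epsilon)^{-1}\int_{|F|\le\epsilon}$ equals integration against $\omega_L$, by the coarea formula, which is valid since $\nabla F\neq0$ on the support, where $H_F\neq 0$ and $y_iy_j\neq 0$. Thus $\sigma_\infty=2\cdot\frac13\cdot\tau_\infty(\widetilde X)$, giving $\tau_\infty(\widetilde X)=\frac32\sigma_\infty$.

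The main obstacle is the middle step: verifying carefully that the normalisation of Peyre's measure, with the Euclidean metric to the power $3$, coincides with the radial slice of the Leray measure without stray constants. I would first check this in the chart $z_{j_0}=1$ by direct substitution $\z=t(1,\mathbf v)$, computing the Jacobian of $(t,\mathbf v)\mapsto\z$ on $C$ and integrating out $t$ against $\chi_{[0,1]}(|t|\,\|(1,\mathbf v)\|)$. This produces $\frac{2}{3}\|(1,\mathbf v)\|^{-3}$ times the affine Leray form, which is precisely the Peyre density for this metric.
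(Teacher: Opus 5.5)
Your proposal is correct and follows essentially the same route as the paper: both decompose the Leray measure on the cone as $|t|^{2}\,\mathrm{d}t\wedge\pi^*\omega_X$ and integrate over $t\in[-1,1]$ to obtain $\sigma_\infty=\tfrac{2}{3}\tau_\infty(\widetilde X)$. The paper simply asserts this radial identity; you add the chart verification via $\z=t(1,\mathbf v)$, which checks the normalisation of Peyre's measure, together with the crepancy/support reduction from $\widetilde X$ to $X$.
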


\begin{proof}
The density \eqref{eq:sigma-inf'} can be  written 
\[ \sigma_\infty = \int_{\{\|\z\|\leq 1:F(\mathbf{z}) = 0 \}}
\eta\left(\frac{\z}{\|\z\|}\right) 
 \omega_{L}, \]
where 
$\omega_{L}$ is the Leray form, defined via  $\mathrm{d}F \wedge \omega_{L} = \mathrm{d}\mathbf{z}$.
By contrast, in Peyre's formalism \cite{peyre}, 
the smoothly weighted archimedean factor 
 $\tau_\infty(\widetilde{X})$ 
 is obtained by integrating the smooth weight $\eta$ against 
 the canonical measure $\omega_X$ on  $X(\mathbb{R})$.
Let $\pi: \mathcal{C}_X \setminus \{0\} \to X(\mathbb{R})$ be the standard projection, 
where $\mathcal{C}_X$ is the affine cone over $X$. 
Setting $t$ as the radial fibre coordinate, $n=6$ as the number of variables, and $d=3$ as the degree, the measures satisfy the identity
\[ 
\omega_{L} = |t|^{n - d - 1} \mathrm{d}t \wedge \pi^* \omega_X = |t|^2 \mathrm{d}t \wedge \pi^* \omega_X. 
\]
By Fubini's theorem, we integrate over the base $X(\mathbb{R})$ and the fiber $t \in [-1, 1]$ to obtain the relation
\[ \sigma_\infty = \int_{X(\mathbb{R})} \eta(z)\left( \int_{-1}^1 |t|^2 \mathrm{d}t \right) \omega_X =\frac{2}{3}
\tau_\infty(\widetilde{X}),
 \]
which completes the proof. 
\end{proof}

Combining Lemmas \ref{lem1} and \ref{lem2} with \eqref{eq:ab} in
\eqref{eq:peyre}, we finally deduce that 
\begin{align*}
\alpha(\widetilde{X}) \beta(\widetilde{X}) \tau_\infty(\widetilde{X}) \tau_{\mathrm{fin}}(\widetilde{X})
&=
\frac{1}{9}\cdot 1\cdot \frac{3}{2}\sigma_\infty \cdot \frac{1}{\zeta(3)^2}
=\frac{\sigma_\infty}{6\zeta(3)^2},
\end{align*}
as required.

\section{Enter the circle method}\label{sec:initial}

We shall free the sum $N(B)$ from the arithmetic  restriction by detecting the equation using the smooth
 $\delta$-method version of the circle method in the form that was developed by 
 Heath-Brown \cite[Thm.~1]{HB}.  
Let $e_q(x)=\exp(2\pi ix/q)$. 
Then 
$$
N(B)=\sum_{\x, \y\in \ZZ^3}W\left(\frac{(\x,\y)}{B}\right)\frac{c_Q}{Q^2}\sum_{q=1}^{\infty}\;\sideset{}{^{\star}}\sum_{a \bmod{q}}e_q\left(aF(\x, \y)\right)h\left(\frac{q}{Q},\frac{F(\x, \y)} {Q^2}\right),
$$
for any $Q\geq 1$, 
for a suitable smooth function $h(x,y)$ and a constant $c_Q$ that satisfies $c_Q=1+O_N(Q^{-N}).$ The notation $\sum^\star_{a \bmod{q}}$ means that the sum is restricted to $a\bmod{q}$ for which $\gcd(a,q)=1$.
Some useful   properties of $h(x,y)$ are recorded 
in \cite[Lemma 4]{HB}, which we proceed to recall here. Firstly, 
 $h(x,y)\neq 0$ only if  $x\leq \max\{1,2|y|\}.$ Moreover, 
\begin{eqnarray}
x^{i} \frac{\partial^i}{\partial x^i}h(x,y)\ll_i x^{-1} & \textnormal{and} & \frac{\partial}{\partial y}h(x,y)=0 ,\label{hbound1}
\end{eqnarray}
for $x\leq 1$ and $|y|\leq x/2$. Also for $|y|\geq x/2$, we have
\begin{equation}
x^i |y|^j \frac{\partial^{i+j}}{\partial x^i\partial y^j}h(x,y)\ll_{i,j} x^{-1}. \label{hbound2}
\end{equation}
In our work we shall  choose $Q=B^{3/2}$.

\subsection*{Poisson summation}
It follows from  Poisson summation that 
$$
\sum_{\x, \y\in \ZZ^3}W\left(\frac{(\x,\y)}{B}\right)e_q\left(aF(\x, \y)\right)h\left(\frac{q}{Q},\frac{F(\x, \y)} {Q^2}\right)=\frac{B^6}{q^6}\sum_{\m, \n\in \ZZ^3} U_q(a;\m,\n)
 I_q(\m,\n),
$$
where 
$$
U_q(a;\m,\n)=\sum_{\x, \y\bmod{q}}e_q(aF(\x,\y)+\m.\x+\n.\y)
$$
and 
\begin{align}\label{eq:int-trans}
I_q(\m,\n)=\int_{\mathbb{R}^6}W(\x,\y)h\left(\frac{q}{Q},F(\x, \y)\right)e_q(-B\m.\x-B\n.\y)\mathrm{d}\x\mathrm{d}\y.
\end{align}
Hence we arrive at the expression
\begin{align}\label{eq:NB-1}
N(B)=c_QB^3\sum_{\m, \n\in \ZZ^3}\sum_{q=1}^{\infty}\frac{1}{q^6} S_q(\m,\n)I_q(\m,\n),
\end{align}
where
\begin{align}\label{eq:char-sum-in}
S_q(\m,\n)=\sideset{}{^{\star}}\sum_{a \bmod{q}}\sum_{\x, \y\bmod{q}}e_q(aF(\x,\y)+\m.\x+\n.\y).
\end{align}
As commented upon in the introduction, a surprising feature of our work is that we shall get main term contributions both from the zero frequency $(\m,\n)=\0$, and  from the vectors $(\m,\n)\neq (\0,\0)$
which vanish on the dual hypersurface. 

\subsection*{The dual form}
A key role will be played by 
the dual form, which  in our setting is given by the  sextic form
\begin{align}\label{eq:Dmn}
D(\m,\n)=\sum_{1\leq i\leq 3} m_i^2n_i^4-2\sum_{1\leq i<j\leq 3} m_im_jn_i^2n_j^2.
\end{align}
Using elementary algebra it is clear that it  satisfies the following factorization property.
\begin{equation}
\label{eq:factorize-dual-form}
D(\m,\n) = (x - y - z) (x + y - z) (x - y + z) (x + y + z),
\end{equation}
where $x,y,z$ are square roots of $m_1n_1^2,m_2n_2^2,m_3n_3^2$, respectively.
In particular $D(\m,\n)$ is irreducible over $\QQ$.
% Later in the proof we shall have need of the upper bound
% % \begin{equation}\label{eq:NT}
% % \sum_{
% % \substack{
% % \m,\n\in \ZZ^3\\
% % |\m|,|\n|\leq T}} \tau(m_1m_2m_3D(\m,\n))^\ve\ll T^6 (\log T)^{16^\ve-1},
% % \end{equation}
% % for any $T\geq 2$ and $\ve>0$.
% % To see this we use the inequality $\tau(mn)\leq \tau(m)\tau(n)$ 
% % and H\"older's inequality 
% % to deduce that 
% % $$
% % \left(\sum_{
% % |\m|,|\n|\leq T} \tau(m_1m_2m_3D(\m,\n))^\ve
% % \right)^4
% % \ll \left(T^6 (\log T)^{2^{4\ve}-1}\right)^3
% % \sum_{
% % |\m|,|\n|\leq T} \tau(D(\m,\n))^{4\ve}.
% % $$
% % A variant of the remaining sum is analysed in 
% % work of Chan et al.\ \cite[Cor.~1.14]{chan}, which readily yields
% % $$
% % \sum_{
% % |\m|,|\n|\leq T} \tau(D(\m,\n))^{4\ve}\ll T^6 (\log T)^{2^{4\ve}-1},
% % $$
% % as required to complete the proof of  \eqref{eq:NT}.
% \begin{equation}\label{eq:NT}
% \sum_{
% \substack{
% \m,\n\in \ZZ^3\\
% |\m|,|\n|\leq T}} \tau(D(\m,\n))
% \ll T^6 \log T,
% \end{equation}
% for any $T\geq 2$,
% which is a special case of
% work of Chan et al.\ \cite[Cor.~1.14]{chan}.

\subsection*{Oscillatory integrals}%\label{s:integrals}

Recalling \eqref{eq:FF}, the  Hessian of $F(\x,\y)$ is easily calculated to be $H_F(\x,\y)=y_1^2y_2^2y_3^2$.
Our assumption on the support of $W$  therefore implies that 
\begin{equation}\label{eq:assume-hess}
H_F(\x,\y)\gg 1 \quad \text{ for all $(\x,\y)\in \supp(W)$}.
\end{equation}
The following result summarises what we need to know about the 
integral $I_q(\m,\n)$ and its partial derivatives with respect to $q$.

\begin{lemma}\label{int-estimate}
Let $j,k\geq 0$ be integers. Then 
$$
q^j\frac{\partial^j}{\partial q^j}
I_q(\m,\n)
\ll_{j,k} \frac{(1+B\max\{|\m|,|\n|\}/q)^{-2}}
{(1+\max\{|\m|,|\n|\}/B^{1/2})^k
\, (1+B\,|\hat{D}(\m,\n)|\max\{|\m|,|\n|\}/q)^k},
$$
where
\begin{equation}
\label{normalize-D}
\hat{D}(\m,\n)
\defeq \frac{D(\m,\n)}{(1+\max\{|\m|,|\n|\})^6}
\ll 1.
\end{equation}
\end{lemma}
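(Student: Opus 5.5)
The plan is to reduce $I_q(\m,\n)$ to a family of oscillatory integrals with nondegenerate phase and read off the three decay factors from stationary and non-stationary phase. Write $\mathbf{v}=(\m,\n)$, $\z=(\x,\y)$, $r=q/Q$ and $\mathbf{w}=B\mathbf{v}/q$, so that
$$
I_q(\m,\n)=\int_{\RR^6}W(\z)h(r,F(\z))e(-\mathbf{w}.\z)\,\mathrm d\z .
$$

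\textbf{Step 1: the $q$-derivatives.} I would dispose of these first, so that only the case $j=0$ remains. The operator $q\partial_q$ falls either on $h(q/Q,\cdot)$, producing $x\partial_x h$, or on the phase $e(-B\mathbf{v}.\z/q)$, producing $2\pi i\,\mathbf{w}.\z$ times the integrand. The latter equals $-(\z.\nabla_{\z})$ applied to the phase. Integrating by parts with the Euler operator moves it onto $W(\z)h(r,F(\z))$. Since $\z.\nabla F=3F$, this produces only $3F\,\partial_y h$ and $\z.\nabla W$. By \eqref{hbound1} and \eqref{hbound2} these obey the same bounds as $h$ and $W$. Iterating, $q^j\partial_q^jI_q$ is a finite combination of integrals of the same shape, and it suffices to treat $j=0$ with general weights of this type.

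\textbf{Step 2: Fourier decomposition in $F$.} Following Heath-Brown \cite{HB}, I write $h(r,t)=\int p_r(u)e(ut)\,\mathrm du$. This gives
$$
I_q=\int p_r(u)J(u,\mathbf{w})\,\mathrm du,\qquad J(u,\mathbf{w})=\int W(\z)e(uF(\z)-\mathbf{w}.\z)\,\mathrm d\z .
$$
The bounds \eqref{hbound1}--\eqref{hbound2} give $p_r(u)$ rapid decay once $|u|\gg r^{-1}$, up to harmless factors.

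\textbf{Step 3: the three factors from $J$.}
\begin{enumerate}
\item By \eqref{eq:assume-hess} the Hessian of $uF$ is $\asymp |u|^6$ on $\supp W$. Stationary phase therefore gives $J\ll(1+|u|)^{-3}$.
\item If $|\mathbf{w}|\gg|u|$, the phase has gradient $\gg|\mathbf{w}|$, and repeated integration by parts gives $J\ll_N|\mathbf{w}|^{-N}$.
\item Combining (1) and (2) with the $u$-integral yields the factor $(1+|\mathbf{w}|)^{-2}$.
\item The factor $(1+|\mathbf{v}|/B^{1/2})^{-k}$ comes from the same non-stationary bound: only $|u|\ll r^{-1}$ matters, and $|\mathbf{w}|\gg r^{-1}=Q/q=B^{3/2}/q$ is exactly $|\mathbf{v}|\gg B^{1/2}$.
\end{enumerate}

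\textbf{Step 4: the dual-form factor.} This is the step I expect to be the main obstacle, because the bound must hold uniformly in $u$, $\mathbf{w}$ and $r$. The key algebraic input is that $D$ defines the dual hypersurface, so
$$
D(\nabla F(\z))=0\quad\text{identically}.
$$
A stationary point requires $\mathbf{w}=u\nabla F(\z)$. Since $D$ is homogeneous of degree $6$ and Lipschitz on compact sets, if $|\hat D(\mathbf{v})|$ is large then $\mathbf{w}/u$ lies at distance $\gg|\hat D(\mathbf{v})|$ from $\nabla F(\supp W)$. Hence the gradient of the phase is $\gg|\mathbf{w}|\,|\hat D(\mathbf{v})|$ throughout $\supp W$, for every $u$ of size comparable to $|\mathbf{w}|$. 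Then $k$ integrations by parts give the factor $(1+|\mathbf{w}|\,|\hat D(\mathbf{v})|)^{-k}$.

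The delicate points are:
\begin{itemize}
\item justifying this lower bound near the boundary of $\supp W$ and uniformly in the scale, for which I would use compactness and homogeneity;
\item checking that the non-stationary $u$-ranges are still controlled by $p_r$;
\item handling the range where $|\mathbf{w}|\ll1$, where the claimed bound is trivial.
\end{itemize}
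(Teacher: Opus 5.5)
Your Steps 1--3 follow the standard Heath-Brown route; Step 1 is essentially the $q$-derivative recursion the paper cites (\cite[Lemma~14]{HB}, \cite{wang2026zeta}). Step 4, however, rests on a false identity: $D(\nabla F(\z))$ does \emph{not} vanish identically. Since $H_F\neq 0$ on $\supp(W)$, the map $\z\mapsto\nabla F(\z)$ is a local diffeomorphism there. Its image is therefore open and cannot lie inside $\{D=0\}$. Explicitly, with $X_i=x_iy_i^2$, the factorization \eqref{eq:factorize-dual-form} gives
\[
D(\nabla F)=16\,(X_1+X_2+X_3)(X_1-X_2-X_3)(X_1+X_2-X_3)(X_1-X_2+X_3).
\]
This is divisible by $F=X_1+X_2+X_3$ but is non-zero off $F=0$. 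Only the divisibility \eqref{poly-div} is available, and that is what the paper uses.

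Consequently, take any $\mathbf{w}$ in the open cone $\RR^\times\cdot\nabla F(\supp W)$, whatever the size of $\hat D$. There are then $u$ with $|u|\asymp|\mathbf{w}|$ for which $uF(\z)-\mathbf{w}.\z$ has a nondegenerate critical point $\z_0\in\supp(W)$, necessarily with $F(\z_0)\neq 0$ when $D(\mathbf{w})\neq 0$. At such $u$, $J(u,\mathbf{w})$ is in general of exact order $|u|^{-3}$. So your lower bound $\gg|\mathbf{w}|\,|\hat D|$ for the $\z$-gradient throughout $\supp(W)$ fails, and no decay in $|\mathbf{w}|\,|\hat D|$ can be extracted from $J$ at fixed $u$. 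Even granting such a gradient bound, plain non-stationary phase with second derivatives of size $|u|\asymp|\mathbf{w}|$ would only give $(|\mathbf{w}|\hat D^2)^{-k}$.

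The missing idea is that the $\hat D$-decay must come from the $u$-integration, i.e.\ from the localisation of $F(\z)$ near $0$ encoded by $h$.
\begin{enumerate}
\item By Euler's identity the critical value is $uF(\z_0)-\mathbf{w}.\z_0=-2uF(\z_0)$.
\item By homogeneity $\z_0=|u|^{-1/2}\z_1$, where $\nabla F(\z_1)=\pm\mathbf{w}$ with the sign of $u$. So the stationary-phase main term of $J(u,\mathbf{w})$ carries the factor $e(\mp 2|u|^{-1/2}F(\z_1))$.
\item Write $D(\nabla F)=F\cdot R$ with $R$ a form of degree $9$. Since $|\z_1|\asymp|\mathbf{w}|^{1/2}$ (using $|\nabla F|\asymp 1$ on $\supp(W)$), you get $|F(\z_1)|\gg|D(\mathbf{w})|/|\mathbf{w}|^{9/2}\asymp|\mathbf{w}|^{3/2}|\hat D(\m,\n)|$.
\item Hence on $|u|\asymp|\mathbf{w}|$ the $u$-phase has derivative $\gg|\hat D(\m,\n)|$. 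Each integration by parts in $u$ then gains $(|\mathbf{w}|\,|\hat D(\m,\n)|)^{-1}$, provided $(u\partial_u)^i p_r(u)$ obey the same bounds as $p_r(u)$, which follows from \eqref{hbound1}--\eqref{hbound2}.
\item The ranges $|u|\not\asymp|\mathbf{w}|$ must be removed by non-stationary phase. For $|u|\gg|\mathbf{w}|$ this requires $|\nabla F|\gg 1$ on $\supp(W)$, i.e.\ \eqref{smoothness-on-W}, which your proposal never invokes.
\end{enumerate}
The three inputs \eqref{eq:assume-hess}, \eqref{smoothness-on-W} and \eqref{poly-div} are exactly what the paper feeds into the argument of \cite{wang2026zeta}.
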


\begin{proof}
We have $\nabla{F}(\x,\y) = (y_1^2,y_2^2,y_3^2,2x_1y_1,2x_2y_2,2x_3y_3)$.
So by \eqref{eq:assume-hess},
we have
\begin{equation}
\label{smoothness-on-W}
|\nabla{F}(\x,\y)| \ge \max(y_1^2,y_2^2,y_3^2) \gg 1
\end{equation}
for all $(\x,\y)\in \supp(W)$.
Although the cubic form $F$ is singular,
the inequality \eqref{smoothness-on-W} will be a suitable replacement for smoothness.

Using \eqref{eq:factorize-dual-form},
we easily verify the polynomial divisibility relation
\begin{equation}
\label{poly-div}
F(\x,\y)\mid D(\nabla{F}(\x,\y)).
\end{equation}
Recall the  properties 
\eqref{hbound1} and \eqref{hbound2}
of the $h$ function occurring in definition \eqref{eq:int-trans}
 of the integral $I_q(\m,\n)$.
Using \eqref{eq:assume-hess}, \eqref{smoothness-on-W}, and \eqref{poly-div}, 
as in \cite{wang2026zeta}*{proof of Proposition~8.1},
we obtain the bound in the lemma. 
(A similar, and more transparent, argument is used in  \cite{BGW2024positive}*{\S5}
over a function field.)
The fact that we can take arbitrarily many $q$-derivatives is due to a recursion
recorded in \cite{wang2026zeta}*{Lemma~8.5},
which was observed for $j=1$ by Heath-Brown \cite[Lemma~14]{HB}.
\end{proof}

\subsection*{Summary}

Substituting $c_Q=1+O_N(Q^{-N})$ in 
\eqref{eq:NB-1}, it now follows from Lemma \ref{int-estimate} that 
$$
N(B)=B^3 \left(M(B)+E_1(B)+E_2(B)+E_3(B)\right)+O(1),
$$
where
\begin{equation}\label{eq:main}
M(B)\defeq
\sum_{q=1}^{\infty}\frac{1}{q^6} S_q(\0,\0)I_q(\0,\0)
\end{equation}
and 
\begin{align}\label{eq:E1B}
E_1(B)&\defeq
\sum_{\substack{\m, \n\in \ZZ^3\\
m_1m_2m_3D(\m,\n)\neq 0}}
\sum_{q=1}^{\infty}\frac{1}{q^6} S_q(\m,\n)I_q(\m,\n),\\
E_2(B)&\defeq \label{eq:E2B}
\sum_{\substack{\m, \n\in \ZZ^3\\
m_1m_2m_3=0\\
D(\m,\n)\neq 0}}
\sum_{q=1}^{\infty}\frac{1}{q^6} S_q(\m,\n)I_q(\m,\n),\\
E_3(B)&\defeq \label{eq:E3B}
\sum_{\substack{\m, \n\in \ZZ^3\\
D(\m,\n)=0\\
(\m,\n)\neq (\0,\0)
}}
\sum_{q=1}^{\infty}\frac{1}{q^6} S_q(\m,\n)I_q(\m,\n).
\end{align}
Our primary tasks are to prove that 
$M(B) + E_3(B)$ satisfies an asymptotic formula with main term of order
 $\log B$, as $B\to \infty$, and to prove that $E_i(B)=o(\log B)$, for $1\leq i\leq 2$.
This will be achieved in 
Propositions
\ref{PROP:main},
\ref{p:E1B},
\ref{PROP:E2B},
and \ref{final-E3-estimate}.

\subsection*{Notation and basic facts}

Throughout our work we 
shall let $\ve>0$ be a small parameter, following common convention and allowing it to change value from  appearance to appearance. In any  estimate involving $\ve$ the implied constant will be allowed to depend on $\ve$ in any way. 
Given any non-negative quantities $A,B$,
we shall write $A\asymp B$ when there exist positive constants $c_1<c_2$ such that 
$c_1A\leq B\leq c_2 A$,
and we write $A\sim B$ when $A < B \le 2A$.
We put
$\kappa(q)=\prod_{p\mid q}p,
$ 
for  the square-free kernel of $q\in \NN$. 
We will make frequent use of the  estimate
\begin{equation}\label{eq:kernel-divisor-bound}
\#\{1\le n\leq T: \kappa(n)\mid r\}\ll (rT)^\eps,
\end{equation}
for any $\eps>0$, that follows from a simple application of Rankin's trick. 
Indeed the left hand side is at most 
$$
\sum_{\kappa(n)\mid r} (T/n)^\eps
= T^\eps \prod_{p\mid r} (1 - p^{-\eps})^{-1}
\ll T^\eps r^\eps.
$$
We will also use the standard inequality
\begin{equation}
\label{gcd-1-average}
\sum_{1\le n\le T} \gcd(n,a)
\le \sum_{d\mid a} d \frac{T}{d}
\ll T a^\eps,
\end{equation}
valid for any integer $a\ge 1$.

At various stages of our work we shall need to estimate the number of zeros of polynomials modulo prime powers. 
Let $G\in \ZZ[x_1,\dots,x_n]$ be a polynomial of degree $d$ with content $c(G)$. Then, for any prime power $p^k$, we have  
\begin{equation}\label{eq:regis}
\#\{\x\in (\ZZ/p^k\ZZ)^n: G(\x)\equiv 0\bmod{p^k}\}\leq d^n(k+1)^{n-1}p^{k(n-1/d)+v_p(c(G))/d}.
\end{equation}
There are	upper bounds of   this shape in the literature, but the one we have given is found in work of la Bret\`eche and Tenenbaum 
 \cite{regis}*{Lemma~4.2}.

Finally, we need versions of the prime number theorem
for the Riemann zeta function
and for quadratic Dirichlet $L$-functions.
\begin{lemma}
\label{twisted-PNT}
Let $k,Z,N,P,B\ge 1$ be integers and 
let 
$$
S(N,z) = \sum_{\substack{1\le n\le N \\ \gcd(n,z)=1}} \mu(n).
$$
Then
\begin{equation*}
\sum_{ z\le Z} \tau(z)^k \abs{S(N,Pz)}
\ll_{k,B} \frac{(1+\log{Z})^{2^k-1}}{(1+\log{N})^B} ZN
+ (PZN)^\eps ZN^{1/2}.
\end{equation*}
% old Davenport bound: \ll_{k,B} \frac{(1+\log{Z})^{2^k-1}}{(1+\log{N})^B} \tau(P) ZN.
\end{lemma}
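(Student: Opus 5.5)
We estimate $S(N,Pz)$ pointwise in two regimes: a prime-number-theorem (Siegel--Walfisz type) bound, which decays like any power of $\log N$ but grows with the number of prime factors of $Pz$, and a crude bound of size $(PzN)^\eps N^{1/2}$. The total is then obtained by averaging over $z$ with the weight $\tau(z)^k$.

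\textbf{Step 1: reduce to Dirichlet series.} For fixed $r=Pz$, write the generating function as
$$\sum_{\gcd(n,r)=1}\mu(n)n^{-s}=\zeta(s)^{-1}\prod_{p\mid r}(1-p^{-s})^{-1}.$$
Perron's formula and the classical zero-free region give
$$S(N,r)\ll_B N(\log N)^{-B}\prod_{p\mid r}\left(1+O(p^{-\sigma_0})\right),$$
where $\sigma_0=1-c/\log N$. Equivalently, write $\mu\1_{(n,r)=1}=\mu*g_r$, with $g_r$ supported on integers $d$ satisfying $\kappa(d)\mid r$ and given by $g_r(d)=\prod_{p^j\| d}1$. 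Then
$$S(N,r)=\sum_{\kappa(d)\mid r}M(N/d),$$
where $M$ is the Mertens function. We use $M(x)\ll_B x(\log 2x)^{-B}$ for $d\le N^{1/2}$, and the trivial bound for larger $d$.

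\textbf{Step 2: estimate the convolution.} From the decomposition in Step 1,
$$|S(N,r)|\ll_B \frac{N}{(\log N)^{B}}\sum_{\kappa(d)\mid r}\frac{1}{d}\,\left(\frac{\log N}{\log 2N/d}\right)^{B}+\sum_{\substack{d>N^{1/2}\\ \kappa(d)\mid r}}\frac{N}{d}.$$
We treat the second term by Rankin's trick, as in \eqref{eq:kernel-divisor-bound}:
$$\sum_{\substack{d>N^{1/2}\\ \kappa(d)\mid r}}\frac{N}{d}\le N\cdot N^{-1/4}\sum_{\kappa(d)\mid r}d^{-3/4}\ll (rN)^\eps N^{3/4}.$$
That only gives $N^{3/4}$, so the split has to be sharper. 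Instead we split at $d\le N/ \exp(\sqrt{\log N})$ and use
$$\#\{d\le T:\kappa(d)\mid r\}\ll (rT)^\eps.$$
Large $d$ (with $N/d$ small) then contribute at most $(rN)^\eps\cdot$(trivial bound). To reach $N^{1/2}$ honestly, the clean route is the following.

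\textbf{Step 3: the $N^{1/2}$ regime.} Bound $S(N,r)$ by the product structure
$$\mu(n)\1_{(n,r)=1}=\sum_{d\mid (n,r^\infty)}\mu(d)\cdots,$$
that is, $S(N,r)=\sum_{\kappa(d)\mid r} M(N/d)$ as above. For $d\ge N^{1/2}$ the terms satisfy $|M(N/d)|\le N/d\le N^{1/2}$, and by the divisor-count bound there are $\ll (rN)^\eps$ such $d$ in each dyadic range. So these $d$ contribute $\ll (rN)^\eps N^{1/2}\log N$. For $d<N^{1/2}$ we have $N/d>N^{1/2}$, and $M(N/d)\ll_B (N/d)(\log N)^{-B}$. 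This gives
$$|S(N,r)|\ll_B \frac{N}{(\log N)^{B}}\sum_{\kappa(d)\mid r}\frac1d+(rN)^\eps N^{1/2}.$$
Since $\sum_{\kappa(d)\mid r}1/d=r/\varphi(r)$, the first term is at most $\ll_B N(\log N)^{-B}\,\tau(r)$. Note that $\tau(Pz)\le\tau(P)\tau(z)$.

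\textbf{Step 4: sum over $z$.} We must avoid a factor $\tau(P)$, since the stated bound has none. For this we keep the sharper factor
$$\frac{Pz}{\varphi(Pz)}\ll \log\log(3P)\cdot\frac{z}{\varphi(z)},$$
and absorb it: for $P\le N^{A}$ we have $\log\log P\ll\log\log N$, which is swallowed by enlarging $B$; for $P>N^{A}$ the factor $\log\log P\le P^\eps$ is dominated by the second term. We then use $\sum_{z\le Z}\tau(z)^{k}z/\varphi(z)\ll Z(1+\log Z)^{2^k-1}$ and $\sum_{z\le Z}(PzN)^\eps\ll(PZN)^\eps Z$, which gives the claim. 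The main obstacle is exactly this bookkeeping: ensuring the coprimality to $P$ costs no $\tau(P)$ in the main term, which the $r/\varphi(r)$ form of the sum handles.
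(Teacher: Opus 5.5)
Your argument is correct and is essentially the paper's proof: the identity $S(N,Pz)=\sum_{\kappa(d)\mid Pz}M(N/d)$, the split at $d=N^{1/2}$ with the prime number theorem below that point and \eqref{eq:kernel-divisor-bound} above it, and then summing $\tau(z)^k$ over $z\le Z$. The Perron/zero-free-region claim in Step 1 and the abandoned attempts in Step 2 can simply be deleted. The one difference is that the paper bounds $\sum_{d\le N^{1/2},\,\kappa(d)\mid Pz}1/d$ by $\sum_{d\le N^{1/2}}1/d\ll 1+\log N$ and absorbs this into the arbitrary power of $\log N$, so the factor $Pz/\varphi(Pz)$ never appears; your Step 4 case split on $P$ versus $N^{A}$ does work, provided you take $A\gg 1/\eps$, but it is unnecessary.
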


\begin{proof}
The result is standard.
The following argument was found with the help of
Gemini 3 Pro,
improving on a variant that we had based on a result of 
Davenport \cite{davPNT}.
Let $M(x) = \sum_{n\le x} \mu(n)$ be the Mertens function.
Then
\begin{equation*}
S(N,Pz)
= \sum_{\substack{de\le N \\ \kappa(d)\mid Pz}} \mu(e)
= \sum_{\substack{d\le N \\ \kappa(d)\mid Pz}} M(N/d),
\end{equation*}
via the Dirichlet series factorization
$\prod_{p\nmid Pz} (1-p^{-s})
= \zeta(s)^{-1} \prod_{p\mid Pz} (1-p^{-s})^{-1}$.
Trivially $M(N/d)\ll N/d\ll N^{1/2}$
for $d\ge N^{1/2}$, say.
On the other hand, by the prime number theorem,
$M(N/d)\ll_A (N/d)/(1+\log(N/d))^A \ll_A (N/d)/(1+\log{N})^A$
for $d\le N^{1/2}$, for any fixed $A>0$.
Since $\sum_{d\le N^{1/2}} 1/d \ll 1+\log{N}$,
we get 
$$S(N,Pz)\ll_A \frac{N}{(1+\log{N})^{A-1}}
+ \#\{N^{1/2}\le d\le N: \kappa(d)\mid Pz\}\, N^{1/2}.$$
The bounds \eqref{eq:kernel-divisor-bound}
and $\sum_{z\le Z} \tau(z)^k \ll_k Z(1+\log{Z})^{2^k-1}$
complete the proof.
\end{proof}

% \begin{proof}
% Let $T(N,d) = \sum_{\substack{1\le n\le N \\ n\equiv 0\bmod{d}}} \mu(n)$.
% Trivially, $T(N,d)\ll N/d$.
% On the other hand,
% $$
% T(N,d)
% =
% \frac{1}{d}\sum_{h\bmod{d}}
%  \sum_{1\le n\le N}
% \mu(n) e(hn/d),
% $$
% using additive characters to detect the condition $d\mid n$.
% But we can estimate the inner sum using 
% work of  Davenport \cite{davPNT}, 
% which provides the 
% uniform bound
% % Note: A modern reference to Davenport is Green-Tao, The Mobius function is strongly orthogonal to nilsequences, https://arxiv.org/abs/0807.1736
% $\sum_{n\le x} e(n\alpha)\mu(n) \ll_A x/(\log{x})^A$,
% for $x\ge 2$ and $\alpha\in \RR$.
% Thus $T(N,d)\ll_A N/(1+\log{N})^A$.
% By M\"{o}bius inversion,
% $S(N,Pz)
% % = \sum_{d\mid Pz} \mu(d) T(N,d)
% \ll \sum_{d\mid Pz} |T(N,d)|$.
% Letting $g = \gcd(d,P)$ and $d' = d/g$,
% and writing $z = d'y$,
% we find that the sum over $z$ in the lemma is
% \begin{equation*}
% \ll_A \sum_{d\le Z} \sum_{y\le Z/d'} \frac{\tau(d'y)^k N}{\max(d,(\log{N})^A)}
% \ll_k \sum_{d\le Z} \frac{Z(\log{Z})^{2^k-1}}{d'}
% \frac{\tau(d')^k N}{\max(d,(\log{N})^A)},
% \end{equation*}
% since $\tau(d'y)\leq \tau(d')\tau(y)$.
% Since $\max(d,(\log{N})^A)\ge d^{1/2} (\log{N})^{A/2}
% \ge (d')^{1/2} (\log{N})^{A/2}$,
% and the number of possible values of $g$ is $\le \tau(P)$,
% the lemma now follows from the divisor bound $\tau(d')\ll (d')^\eps$.
% \end{proof}

The Jacobi symbol $(\frac{a}{b})\in \{-1,0,1\}$,
where $a,b\in \ZZ$ with $b>0$ and $b\equiv 1\bmod{2}$,
is by definition completely multiplicative in $b$ when $a$ is fixed.
% https://en.wikipedia.org/wiki/Jacobi_symbol#Properties
It is also completely multiplicative in $a$ when $b$ is fixed.
Moreover, we have $(\frac{a^2}{b}) = (\frac{a}{b^2})
= (\frac{a}{b})^2 = \1_{\gcd(a,b)=1}$.
% For each $m\in \ZZ$,
% let $\zeta_{x^2=m}$ be the Hasse--Weil zeta function of the $\QQ$-variety $x^2=m$,
% and let $\chi_m(r)$ be the $r$th coefficient of
% the Dirichlet series $\zeta_{x^2=m}(s)/\zeta(s)$.
% Concretely, if $m$ is not a square,
% then $\zeta_{x^2=m}$ is the Dedekind zeta function of the quadratic field $\QQ(\sqrt{m})$;
% if $m=0$, then $\zeta_{x^2=m}$ is the Riemann zeta function $\zeta$;
% and if $m$ is a non-zero square, then $\zeta_{x^2=m}(s) = \zeta(s)^2$.
% Thus in particular, $\chi_m = \chi_{d^2m}$ for all $d\ge 1$,
% and $(\frac{m}{p}) = \chi_m(p) \bm{1}_{p\nmid m}$ for all odd primes $p$.
The following result is based on Heath-Brown's large sieve for real characters \cite{realLS},
and we  will eventually apply it
with coefficients $c(q)\in \{\mu(q),1\}$.

\begin{lemma}
\label{cheap-large-sieve}
Let $H,M,Q_1\ge 1$
and $I\belongs \{q\sim Q_1\}$.
Let $t\in 2\ZZ$ with $t\ne 0$.
Let $c\maps I\to \CC$ be a function
such that $|c(q)|\le 1$ for all $q\in I$.
Then
\begin{equation*}
\sum_{0\ne h\ll H}
\sum_{0\ne m\ll M}
\left|\sum_{q\in I} c(q) \left(\frac{hm}{q}\right) \1_{\gcd(q,t)=1}\right|
\ll (HMQ_1)^\eps (HMQ_1^{1/2} + (HM)^{1/2}Q_1).
\end{equation*}
\end{lemma}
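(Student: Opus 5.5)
We will deduce the bound from Heath-Brown's large sieve for real characters \cite{realLS}. In the form we use, it states that for $M_1,N_1\ge 1$ and arbitrary complex coefficients $a_m,b_n$,
\begin{equation*}
\sum_{m\le M_1}\sum_{n\le N_1} a_m b_n \mu^2(2n)\left(\frac{m}{n}\right)\ll (M_1N_1)^\eps (M_1+N_1)\|a\|_2\|b\|_2 .
\end{equation*}
The square-free and odd conditions on $n$ can be removed at the cost of a divisor-type factor. We do this by writing $n=n_1n_2^2$ with $n_1$ square-free and $(\frac{m}{n_1n_2^2})=(\frac{m}{n_1})\1_{\gcd(m,n_2)=1}$. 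Oddness is automatic, since $t$ is even and $\gcd(q,t)=1$.

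\textbf{Step 1: dualise.} Since $hm$ is a non-zero integer with $|hm|\ll HM$, we group the pairs $(h,m)$ by the value $k=hm$. The number of representations of $k$ is at most $2\tau(|k|)\ll (HM)^\eps$. We then remove the absolute values by choosing unimodular weights $\theta_{h,m}$. This bounds the left-hand side by
\begin{equation*}
(HM)^\eps\sum_{0\ne |k|\ll HM}\Big|\sum_{q\in I} c'(q)\Big(\frac{k}{q}\Big)\Big|,
\end{equation*}
where $c'(q)=c(q)\1_{\gcd(q,t)=1}$ satisfies $|c'(q)|\le 1$. Alternatively we keep the weights $\theta_k$ (obtained from the $\theta_{h,m}$ by summing over representations), which satisfy $|\theta_k|\ll (HM)^\eps$. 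In either form we have a bilinear sum $\sum_k\sum_q \theta_k c'(q)(\frac{k}{q})$ with $|k|\le K\asymp HM$ and $q\sim Q_1$. Negative $k$ are handled by writing $(\frac{-k}{q})=(\frac{-1}{q})(\frac{k}{q})$ and absorbing $(\frac{-1}{q})$ into $c'(q)$.

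\textbf{Step 2: apply the large sieve.} Applying the large sieve with $M_1=K$ and $N_1=Q_1$ gives
\begin{equation*}
\sum_k\sum_q \theta_k c'(q)\Big(\frac{k}{q}\Big)\ll (KQ_1)^\eps (K+Q_1)\,K^{1/2}Q_1^{1/2}=(KQ_1)^\eps(K^{3/2}Q_1^{1/2}+K^{1/2}Q_1^{3/2}).
\end{equation*}
This is weaker than the target. The target $KQ_1^{1/2}+K^{1/2}Q_1$ is what one gets from Cauchy--Schwarz applied to the $L^2$ form of the large sieve:
\begin{equation*}
\sum_k\Big|\sum_q c'(q)\Big(\frac{k}{q}\Big)\Big|\le K^{1/2}\Big(\sum_k\Big|\sum_q c'(q)\Big(\frac{k}{q}\Big)\Big|^2\Big)^{1/2}\ll K^{1/2}\big((KQ_1)^\eps (K+Q_1)Q_1\big)^{1/2}.
\end{equation*}
The right-hand side equals $(KQ_1)^\eps(KQ_1^{1/2}+K^{1/2}Q_1)$, exactly the claimed bound. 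We therefore use the $L^2$ version of the large sieve rather than the bilinear one. It states that
\begin{equation*}
\sum_{|k|\le K}\Big|\sum_{q\le Q_1} b_q\mu^2(2q)\Big(\frac{k}{q}\Big)\Big|^2\ll (KQ_1)^\eps (K+Q_1)\sum_q|b_q|^2,
\end{equation*}
and it is equivalent to the bilinear form by duality.

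\textbf{Step 3: handle non-square-free $q$ (main obstacle).} The large sieve requires $q$ square-free, but $I$ may contain non-square-free $q$. We write $q=q_1q_2^2$ with $q_1$ square-free and treat each $q_2\le Q_1^{1/2}$ separately. For fixed $q_2$ we have $(\frac{k}{q})=(\frac{k}{q_1})\1_{\gcd(k,q_2)=1}$, and the inner sum runs over $q_1\sim Q_1/q_2^2$. The large sieve then bounds the $q_2$ contribution by
\begin{equation*}
(KQ_1)^\eps\big(KQ_1^{1/2}q_2^{-1}+K^{1/2}Q_1q_2^{-2}\big).
\end{equation*}
Summing over $q_2\le Q_1^{1/2}$ costs at most a factor $\log Q_1$, which the $\eps$-power absorbs, and gives the lemma. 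Finally, all $\tau$-factors arising from grouping by $k$ are bounded by $(HM)^\eps$, since $|k|\ll HM$. We take the $\theta$-weighted route, so that these factors enter only as $\|\theta\|_\infty$ and not as multiplicities inside the square.
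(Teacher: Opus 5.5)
There is a genuine gap in Step~2. The $L^2$ inequality you apply,
\[
\sum_{|k|\le K}\Big|\sum_{q\le Q_1} b_q\mu^2(2q)\Big(\frac{k}{q}\Big)\Big|^2\ll (KQ_1)^\eps (K+Q_1)\sum_q|b_q|^2,
\]
is false when $k$ runs over all non-zero integers. Heath-Brown's Theorem~1 requires \emph{both} variables to be odd and square-free. You removed that condition only from the modulus $q$, never from $k=hm$, which is an arbitrary integer. Squares already break the inequality. For $k=j^2$ with $j\le K^{1/2}$ prime we have $(\frac{j^2}{q})=\1_{\gcd(j,q)=1}$. Taking $b_q=1$ on odd square-free $q\sim Q_1$, the left side is $\gg K^{1/2}Q_1^2/\log K$, while the right side is $\asymp (K+Q_1)Q_1$. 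When $K\asymp Q_1$ this fails by a factor of about $Q_1^{1/2}$.

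There is also a secondary error in the bilinear form you quote. Its constant should be $(M_1+N_1)^{1/2}$, not $M_1+N_1$, since duality pairs an $L^2$ bound with constant $C^2$ with a bilinear bound with constant $C$. So your two displayed forms are not equivalent, and the weak bound you obtain in Step~2 is an artefact of that misstatement.

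The missing idea is to decompose the character variable too, and to apply Cauchy--Schwarz only after fixing its square part. This is what the paper does. Write $k=k_0k_1k_2^2$ with $k_0\in\{\pm1,\pm2\}$ and $k_1$ odd, positive and square-free, and write $q=q_1q_2^2$. Then
\[
\Big(\frac{k}{q}\Big)\1_{\gcd(q,t)=1}=\Big(\frac{k_0}{q_1}\Big)\Big(\frac{k_1}{q_1}\Big)\1_{\gcd(q_1,tk_2)=1}\1_{\gcd(q_2,tk_1k_2)=1}.
\]
Fix $(k_0,k_2,q_2)$. Absorb $(\frac{k_0}{q_1})\1_{\gcd(q_1,tk_2)=1}c(q_1q_2^2)$ into the $q_1$-coefficients, and drop the last indicator by the triangle inequality, since it does not depend on $q_1$. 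Then apply Heath-Brown's theorem with Cauchy--Schwarz over $k_1\ll K/k_2^2$ \emph{only}. This gives
\[
(KQ_1)^\eps\Big(\frac{KQ_1^{1/2}}{k_2^2q_2}+\frac{K^{1/2}Q_1}{k_2q_2^2}\Big),
\]
and the sums over $k_2$ and $q_2$ then cost only logarithms.

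The order matters. If you apply Cauchy--Schwarz over all $k$ first and decompose inside the square, you get
\[
\sum_{k_2}(K/k_2^2+Q_1)Q_1\asymp(K+K^{1/2}Q_1)Q_1,
\]
which yields only $KQ_1^{1/2}+K^{3/4}Q_1$. That is weaker than the lemma.

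Your Step~1 gluing of $h$ and $m$ into $k$, and your decomposition of $q$ in Step~3, match the paper.
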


\begin{proof}
By the divisor bound we may glue $h$ and $m$ into a new variable $m'=hm\ll HM$,
and so reduce to the case where $H=1$, with $h=1$.
Write $m = m_0 m_1 m_2^2$ where $m_1$ is odd, positive, and square-free,
$m_2$ is positive,
and $m_0\in \{\pm 1, \pm 2\}$.
Write $q = q_1 q_2^2$ where $q_1$ is positive and square-free,
and $q_2$ is positive.
Then
\begin{align*}
\left(\frac{m}{q}\right)\1_{\gcd(q,t)=1}
&= \left(\frac{m_0m_1m_2^2}{q_1q_2^2}\right)\1_{\gcd(q,t)=1}\\
&= \left(\frac{m_0}{q_1}\right) \left(\frac{m_1}{q_1}\right)
\1_{\gcd(q_1,tm_2)=1}
\1_{\gcd(q_2,tm_1m_2)=1}.
\end{align*}
The triangle inequality yields
\begin{align*}
\sum_{0\ne m\ll M}
\left|\sum_{q\in I} c(q) \left(\frac{m}{q}\right) \1_{\gcd(q,t)=1}\right|
\le \sum_{m_0,m_2,q_2}
\sum_{m_1\ll M/m_2^2}
\left|\Sigma_{m_0,m_2,q_2}(m_1)\right|
\end{align*}
where
$$
\Sigma_{m_0,m_2,q_2}(m_1)=
\sum_{q_1\in q_2^{-2}I} \left(\frac{m_1}{q_1}\right) c(q_1q_2^2)
\left(\frac{m_0}{q_1}\right) \1_{\gcd(q_1,tm_2)=1}.
$$
The interval $q_2^{-2}I$
and the coefficient $c(q_1q_2^2) (\frac{m_0}{q_1}) \1_{\gcd(q_1,tm_2)=1}$
are independent of $m_1$. Thus
\begin{equation*}
\sum_{m_1\ll M/m_2^2} \left|\Sigma_{m_0,m_2,q_2}(m_1)\right|
\ll (M/m_2^2)^{1/2}
((MQ_1)^\eps (M/m_2^2 + Q_1/q_2^2) (Q_1/q_2^2))^{1/2},
\end{equation*}
by \cite{realLS}*{Theorem~1}
and the Cauchy--Schwarz inequality over $m_1$.
Thus
\begin{equation*}
\sum_{m_0,m_2,q_2}
\sum_{m_1\ll M/m_2^2}  \left|\Sigma_{m_0,m_2,q_2}(m_1)\right|
\ll (MQ_1)^\eps (MQ_1^{1/2} + M^{1/2}Q_1),
\end{equation*}
since $\sum_{m_2\ll M^{1/2}} 1/m_2\ll M^\eps$
and $\sum_{q_2\ll Q_1^{1/2}} 1/q_2\ll Q_1^\eps$.
\end{proof}

\section{Exponential sums}

\subsection*{Auxiliary estimates}

It will be convenient to define 
\begin{equation}\label{eq:sq-gcd}
\{a,b\}=\prod_{p^j\| \gcd(a,b)} p^{2\floor{j/2}}
\end{equation}
for the largest square dividing the greatest common divisor of two integers $a,b$.
For any $q\in \NN$ and $m\in \ZZ$, 
let   $\eta_q(m)$  be the number of $y\in \ZZ/q\ZZ$ such that 
$y^2\equiv m\bmod{q}$. 

\begin{lemma}\label{lem:eta}
Let $q\in \NN$ and $m\in \ZZ$. Then 
 $\eta_q(m)\leq \gcd(q,2) 2^{\omega(q)}\sqrt{\{q,m\}}$. 
\end{lemma}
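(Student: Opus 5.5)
By the Chinese remainder theorem, $\eta_q(m)=\prod_{p^k\| q}\eta_{p^k}(m)$. The right-hand side is also multiplicative in $q$ for fixed $m$: $\gcd(q,2)$ only sees the prime $2$, $2^{\omega(q)}$ is multiplicative, and $\{q,m\}=\prod_{p}\{p^{k},m\}$. So it will suffice to prove, for each prime power $p^k$ with $k\ge 1$,
$$
\eta_{p^k}(m)\le c_p\, 2\, p^{\lfloor j/2\rfloor},\qquad j=\min(k,v_p(m)),
$$
where $c_p=2$ if $p=2$ and $c_p=1$ otherwise. Here $p^{2\lfloor j/2\rfloor}=\{p^k,m\}$, so its square root is $p^{\lfloor j/2\rfloor}$.

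Let $v=v_p(m)$, with $v=\infty$ if $m=0$. I will split into two cases.

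\emph{Case $v\ge k$.} Then we are counting $y$ with $p^k\mid y^2$, which means $p^{\lceil k/2\rceil}\mid y$. The number of such $y\bmod p^k$ is $p^{k-\lceil k/2\rceil}=p^{\lfloor k/2\rfloor}$, and this is at most the claimed bound since $j=k$.

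\emph{Case $v<k$.} Any solution has $v_p(y^2)=v$, so $v=2u$ must be even; otherwise $\eta_{p^k}(m)=0$. Write $y=p^u y'$ and $m=p^{2u}m'$ with $p\nmid m'$. The congruence becomes $y'^2\equiv m'\bmod p^{k-2u}$, where $y'$ is determined modulo $p^{k-u}$. The number of units $z$ modulo $p^{k-2u}$ with $z^2\equiv m'$ is at most $2$ for odd $p$, since $(\ZZ/p^r\ZZ)^\times$ is cyclic. For $p=2$ it is at most $4$, since the $2$-power unit group is cyclic times $\{\pm1\}$ and so has at most $4$ square roots of unity. Each such residue lifts to $p^{u}$ values of $y'\bmod p^{k-u}$, giving $\eta_{p^k}(m)\le 2c_p\,p^{u}=2c_p\,p^{\lfloor j/2\rfloor}$, because $j=v=2u$.

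Multiplying the local bounds over $p^k\| q$ gives the lemma. No step is a real obstacle. The only points needing care are the bookkeeping in the lifting step (counting $y'$ modulo $p^{k-u}$ rather than modulo $p^{k-2u}$) and the extra factor $2$ at $p=2$, which is exactly what the factor $\gcd(q,2)$ absorbs.
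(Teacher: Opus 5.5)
Your proof is correct and follows essentially the same route as the paper: reduce to prime powers by the Chinese remainder theorem, handle $v_p(m)\ge k$ by counting $y$ with $p^{\lceil k/2\rceil}\mid y$, and for $v_p(m)<k$ force $v_p(m)=2u$ to be even, strip off $p^{u}$, and bound the square roots of a unit modulo $p^{k-2u}$ by $2$ (or $4$ when $p=2$), each lifting to $p^{u}$ residues. You spell out the multiplicativity of the right-hand side and the lifting count more explicitly than the paper does.
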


\begin{proof}
By the  Chinese remainder theorem, it suffices to study $\eta_{q}(m)$ when $q=p^r$ is a prime power.  Let $p^j=\gcd(p^r,m)$. Then the congruence 
$y^2\equiv m\bmod{p^r}$ implies that $p^{\lceil j/2 \rceil}\mid y$.
Writing $m'=m/p^j$, it follows that 
$$
\eta_{p^r}(m)=\#\left\{
y \bmod{p^{r-\lceil j/2 \rceil}} : p^{2\lceil j/2 \rceil-j}y^2\equiv m' \bmod{p^{r-j}}
\right\}.
$$
If $j=r$ then we get 
$$
\eta_{p^r}(m)=p^{r-\lceil r/2 \rceil}
= p^{\floor{r/2}},
$$
which is satisfactory. 
If $j<r$ then $j$ must be even, since  $\gcd(p^{r-j},m')=1$. But then 
$$
\eta_{p^r}(m)=\#\left\{
y \bmod{p^{r- j/2}} : y^2\equiv m' \bmod{p^{r-j}}
\right\}\leq \gcd(p,2) 2p^{j/2},
$$
which is also satisfactory, since $\{p^r,m\}=p^j$ when $j$ is even.
% Note that $1$ has four, not two, square roots modulo $8$.
\end{proof}

Suppose that $q=p^r$ is a prime power and that $m=0$. Then it follows  
 that $j=r$ in the proof of Lemma \ref{lem:eta}. 
But then 
$\eta_{p^r}(0)=p^{r-\lceil r/2 \rceil}$, whence
\begin{equation}\label{eq:eta}
\eta_{p^r}(0)=p^{\lfloor r/2 \rfloor}.
\end{equation}

\subsection*{The dual form redux}

Recall that the dual form for our problem is given by  the  sextic form
$D(\m,\n)$ in \eqref{eq:Dmn}.
It will be convenient to henceforth set
\begin{equation}\label{eq:Gmn}
G(\m,\n) \defeq 6D(\m,\n).
\end{equation}
Define 
\begin{equation}\label{eq:LLi}
L_i(\m,\n) = 2m_in_i^2 - \sum_{1\le j\le 3} m_jn_j^2,
\end{equation}
for $1\leq i\leq 3$.
Then 
it follows that
\begin{equation}\label{eq:derivatives}
\frac{\partial G}{\partial m_i} =12n_i^2 L_i(\m,\n), \quad 
\frac{\partial G}{\partial n_i} =24m_in_i L_i(\m,\n),
\end{equation}
for $1\leq i\leq 3$.

\subsection*{First steps}

For $q\in \NN$ and $(\m,\n)\in \ZZ^6$, we shall now conduct a careful analysis of the exponential sum $S_q(\m,\n)$
that was defined in 
\eqref{eq:char-sum-in}.
The trivial bound is  $|S_q(\m,\n)|\leq q^7$. In what follows,
we shall produce a range of better estimates  for $S_q(\m,\n)$, which get
sharper if $q$ is devoid of certain prime factors related to $\m$ and $\n$. 
Our first observation concerns the basic multiplicativity relation
\begin{equation}\label{eq:multiply}
S_{q_1q_2}(\m,\n)=S_{q_1}(\m,\n)S_{q_2}(\m,\n),
\end{equation}
for any coprime  $q_1,q_2\in \NN$. The proof of this identity is standard and will not be repeated here.
We proceed to record the following expression for  
$S_q(\m,\n)$. 

\begin{lemma}\label{lem:return}
Let $q\in \NN$  and let $\m,\n\in \ZZ^3$. Then 
$$
S_q(\m,\n)=q^3
\sideset{}{^{\star}}\sum_{a \bmod{q}}
\sum_{\substack{\y \bmod{q}\\ ay_i^2+m_i\equiv 0\bmod{q}\\1\leq i\leq 3 }} e_q(\n.\y).
$$
\end{lemma}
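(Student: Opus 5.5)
The plan is to evaluate the $\x$-sum in \eqref{eq:char-sum-in} exactly, using orthogonality of additive characters modulo $q$; this works because $F$ is linear in $\x$.

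Fix $a$ coprime to $q$ and $\y\bmod q$. Then
$$aF(\x,\y)+\m.\x+\n.\y=\sum_{i=1}^3 (ay_i^2+m_i)x_i+\n.\y .$$
The sum over $\x\bmod q$ therefore factorizes as
$$e_q(\n.\y)\prod_{i=1}^3\sum_{x_i\bmod q}e_q\bigl((ay_i^2+m_i)x_i\bigr).$$
Each inner sum equals $q$ if $ay_i^2+m_i\equiv 0\bmod q$, and equals $0$ otherwise. So the full $\x$-sum is $q^3$ times the indicator that all three congruences hold, multiplied by $e_q(\n.\y)$.

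Next, I substitute this back and interchange the finite sums over $a$ and $\y$. This gives exactly the claimed expression: the factor $q^3$, the sum over $a$ restricted to residues coprime to $q$, and the sum over $\y$ restricted by $ay_i^2+m_i\equiv0\bmod q$ for $1\le i\le 3$.

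There is no real obstacle here. The only point to check is that the summand is well defined modulo $q$: the congruence conditions and $e_q(\n.\y)$ depend only on the residues of $a$ and $\y$ modulo $q$, so they are.
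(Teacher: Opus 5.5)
Your proposal is correct and follows essentially the paper's proof: since $F$ is linear in $\x$, you sum over $\x\bmod q$ by orthogonality of additive characters. The paper first factorises $S_q(\m,\n)$ as $\sum^\star_a\prod_i T_q(a;m_i,n_i)$ and then performs this same $x$-sum.
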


\begin{proof}
On recalling the shape \eqref{eq:FF} of $F(\x,\y)$, 
we may  observe that 
$$
S_q(\m,\n)=\sideset{}{^{\star}}\sum_{a \bmod{q}}
\prod_{1\leq i\leq 3} T_q(a;m_i,n_i),
$$
where
$$
T_q(a;m,n)=
\sum_{x, y\bmod{q}}e_q(axy^2+mx+ny),
$$
for any $m,n\in \ZZ$. 
The statement of the lemma is  immediate on  executing the sum over $x$ using orthogonality of characters.
\end{proof}

\begin{lemma}
\label{n-vanishing}
Suppose $S_q(\m,\n)\ne 0$.
Then $\{q,m_i\}^{1/2}\mid n_i$ for all $1\le i\le 3$.
\end{lemma}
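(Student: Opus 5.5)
By the multiplicativity relation \eqref{eq:multiply} and the definition \eqref{eq:sq-gcd} of $\{q,m_i\}$, which is multiplicative in $q$, it suffices to treat a prime power $q=p^r$. For general $q$, $S_q(\m,\n)\ne 0$ forces $S_{p^r}(\m,\n)\ne 0$ for every $p^r\| q$. The local divisibilities then combine to give $\{q,m_i\}^{1/2}\mid n_i$.

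For $q=p^r$ I would start from Lemma~\ref{lem:return}, which writes
$$
S_q(\m,\n)=q^3\sideset{}{^{\star}}\sum_{a\bmod q}\prod_{1\le i\le 3}\Sigma_i(a),\qquad
\Sigma_i(a)=\sum_{\substack{y\bmod q\\ ay^2+m_i\equiv 0\bmod q}} e_q(n_iy).
$$
If $S_q\ne0$ then some $a$ coprime to $q$ has $\Sigma_i(a)\ne 0$ for all $i$. It therefore suffices to show, for a single $i$ and a unit $a$, that $\Sigma_i(a)\ne 0$ implies $p^{j/2}\mid n_i$, where $p^{j}=\{p^r,m_i\}$.

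The key step is a translation invariance of the solution set. Write $p^{e}=\gcd(p^r,m_i)$ and let $y$ solve $ay^2\equiv -m_i \bmod p^r$. As in the proof of Lemma~\ref{lem:eta}, every solution has $p^{\lceil e/2\rceil}\mid y$. I claim that the solution set is stable under $y\mapsto y+tp^{r-\lceil e/2\rceil}$ for any $t$. Indeed,
$$
a(y+tp^{r-\lceil e/2\rceil})^2-ay^2=2atyp^{r-\lceil e/2\rceil}+at^2p^{2r-2\lceil e/2\rceil},
$$
and the first term vanishes modulo $p^r$ since $p^{\lceil e/2\rceil}\mid y$. For the second term there are two cases. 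When $e=r$ we have $2r-2\lceil r/2\rceil\ge r-1$, which is not enough, so in that case I would instead use the shift $p^{\lceil r/2\rceil}$: here the solution set is exactly the multiples of $p^{\lceil r/2\rceil}$, and this shift clearly preserves it. When $e<r$, $e$ is even, and $2r-e\ge r$, so the second term vanishes too.

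Summing over a coset of this subgroup of translations, $\Sigma_i(a)$ picks up the factor $\sum_t e_q(n_i t p^{s})$ with $s=r-\lceil e/2\rceil$ (respectively $s=\lceil r/2\rceil$). This factor is zero unless $p^{r-s}\mid n_i$. In the case $e<r$ this gives $p^{e/2}\mid n_i$, and $p^{e/2}=\{p^r,m_i\}^{1/2}$. In the case $e=r$ it gives $p^{\lfloor r/2\rfloor}\mid n_i$, and $\{p^r,m_i\}=p^{2\lfloor r/2\rfloor}$.

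The main obstacle is the bookkeeping at $p=2$ and the case distinction between $e=r$ and $e<r$. I expect the cross term $2atyp^{s}$ to be harmless because $p^{\lceil e/2\rceil}\mid y$ already suffices without the factor $2$, so no loss at $p=2$ should arise. If it does, I would fall back on splitting the solution set into its cosets explicitly, as in Lemma~\ref{lem:eta}.
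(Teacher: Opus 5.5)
Your proposal is correct and is essentially the paper's argument: reduce to $q=p^r$ by multiplicativity, use Lemma~\ref{lem:return}, and average over shifts $y_i\mapsto y_i+p^{k}t$ that preserve the solution set, which forces $p^{r-k}\mid n_i$. Your two cases use exactly the shift that the paper packages into one formula, $k_i=\max(r-\lceil j_i/2\rceil,\lceil r/2\rceil)$, and your observation that $v_p(y)=e/2$ when $e<r$ makes the factor $2$ unnecessary, so $p=2$ causes no trouble.
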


\begin{proof}
By \eqref{eq:multiply}, we may assume that $q = p^r$.
Let $p^{j_i} = \gcd(p^r,m_i)$.
The congruence $ay_i^2+m_i\equiv 0\bmod{q}$ implies
$$\gcd(q,y_i^2) = \gcd(q,m_i) = p^{j_i}.$$
In particular, $p^{\lceil{j_i/2}\rceil}\mid y_i$.
Moreover, $(y_i+p^{k_i}t_i)^2\equiv y_i^2\bmod{q}$ for all $t_i\in \mathbb{Z}$,
provided that $q\mid \gcd(2p^{\lceil{j_i/2}\rceil}p^{k_i},p^{2k_i})$.
Replacing $y_i$ with $y_i+p^{k_i}t_i$
in the statement of Lemma~\ref{lem:return},
and averaging over $t_i\bmod{q}$,
we find that $S_q(\m,\n)=0$
unless $n_ip^{k_i}\equiv 0\bmod{q}$ for all $i$.
Taking $$k_i = \max(r-\lceil{j_i/2}\rceil,\lceil{r/2}\rceil),$$
we conclude from the assumption $S_q(\m,\n) \ne 0$ that
\begin{equation*}
v_p(n_i)\ge r-k_i
= \min(\lceil{j_i/2}\rceil,r-\lceil{r/2}\rceil)
\ge \floor{j_i/2},
\end{equation*}
since $r-\lceil{r/2}\rceil=\lfloor{r/2}\rfloor$.
By definition, $\floor{j_i/2} = v_p(\{q,m_i\}^{1/2})$.
\end{proof}

\begin{corollary}\label{cor:basic}
Let $q\in \NN$.
Then 
$$
S_q(\m,\n)\ll 8^{\omega(q)}q^{4}
\prod_{1\le i\le 3} \{q,m_i\}^{1/2} \1_{\{q,m_i\}^{1/2}\mid n_i}.
$$
\end{corollary}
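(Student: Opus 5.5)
The plan is to combine Lemma~\ref{lem:return}, the trivial bound on the resulting sum, and Lemma~\ref{lem:eta}, with the indicator factors supplied by Lemma~\ref{n-vanishing}.

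If $\{q,m_i\}^{1/2}\nmid n_i$ for some $i$, then Lemma~\ref{n-vanishing} gives $S_q(\m,\n)=0$. In that case the bound holds trivially, so we may assume all three divisibility conditions hold and the indicator factors equal $1$.

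For the remaining case we start from the expression in Lemma~\ref{lem:return} and apply the triangle inequality, discarding the phase $e_q(\n.\y)$. This gives
$$
|S_q(\m,\n)|\le q^3\sideset{}{^{\star}}\sum_{a\bmod q}\prod_{1\le i\le 3}\#\{y_i\bmod q: ay_i^2\equiv -m_i\bmod q\}.
$$
Since $a$ is a unit modulo $q$, the inner count equals $\eta_q(-\bar a m_i)$, where $\bar a$ denotes the inverse of $a$ modulo $q$. Moreover $\{q,-\bar a m_i\}=\{q,m_i\}$, because multiplying by a unit does not change $\gcd(q,\cdot)$.

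Lemma~\ref{lem:eta} therefore gives, for each $i$,
$$
\eta_q(-\bar a m_i)\le \gcd(q,2)\,2^{\omega(q)}\{q,m_i\}^{1/2}.
$$
Taking the product over the three indices and summing over the at most $q$ values of $a$, we obtain
$$
|S_q(\m,\n)|\le q^4\,\gcd(q,2)^3\,8^{\omega(q)}\prod_{1\le i\le 3}\{q,m_i\}^{1/2}.
$$
The factor $\gcd(q,2)^3\le 8$ is an absolute constant and is absorbed into the implied constant. This is exactly the bound stated in the corollary.

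No step here is a real obstacle; the only point needing care is that the unit $a$ leaves the quantity $\{q,m_i\}$ unchanged when Lemma~\ref{lem:eta} is applied.
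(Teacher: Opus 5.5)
Your proposal is correct and follows the paper's proof essentially verbatim. You use Lemma~\ref{n-vanishing} to reduce to the case where the indicators equal $1$, then Lemma~\ref{lem:return} with the triangle inequality to bound by $q^3\sum^\star_a\prod_i\eta_q(-\bar a m_i)$, and finally Lemma~\ref{lem:eta} with trivial summation over $a$. Your remark that $\{q,-\bar a m_i\}=\{q,m_i\}$, and that $\gcd(q,2)^3\le 8$ is absorbed into the implied constant, fills in the only details the paper leaves implicit.
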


\begin{proof}
By Lemma~\ref{n-vanishing}, we may assume that $\{q,m_i\}^{1/2}\mid n_i$ for  $1\leq i\leq 3$.
On the other hand,
it follows from 
Lemma \ref{lem:return} and 
the triangle inequality that 
$$
|S_q(\m,\n)|\leq q^3\sideset{}{^{\star}}\sum_{a \bmod{q}} \eta_q(-\bar{a}m_1)
\eta_q(-\bar{a}m_2)\eta_q(-\bar{a}m_3),
$$
where $\eta_q(m)$ denotes the number of $y\in \ZZ/q\ZZ$ such that 
$y^2\equiv m\bmod{q}$, for any $m\in \ZZ$. The claimed bound follows on applying Lemma \ref{lem:eta} and summing trivially over $a$.
\end{proof}

\begin{corollary}\label{cor:basic-00}
Let $p^r$ be a prime power. 
Then 
$$
S_{p^r}(\0,\0)= 
p^{4r+3\lfloor r/2 \rfloor}\left(1-\frac{1}{p}\right).
$$
\end{corollary}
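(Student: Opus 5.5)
Specialise Lemma~\ref{lem:return} to $q=p^r$ and $\m=\n=\0$. The congruence conditions $ay_i^2+m_i\equiv 0\bmod{p^r}$ become $ay_i^2\equiv 0\bmod{p^r}$. Since $a$ is a unit modulo $p^r$, this is the same as $y_i^2\equiv 0\bmod{p^r}$. The additive character $e_q(\n.\y)$ is identically $1$ because $\n=\0$. Hence
$$
S_{p^r}(\0,\0)=p^{3r}\sideset{}{^{\star}}\sum_{a \bmod{p^r}} \eta_{p^r}(0)^3,
$$
where $\eta_{p^r}(0)$ is the number of $y\bmod p^r$ with $y^2\equiv 0\bmod{p^r}$, exactly as introduced before Lemma~\ref{lem:eta}. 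The summand no longer depends on $a$.

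Next insert the exact evaluation \eqref{eq:eta}, namely $\eta_{p^r}(0)=p^{\lfloor r/2\rfloor}$. This is elementary, since $y^2\equiv 0\bmod p^r$ holds if and only if $p^{\lceil r/2\rceil}\mid y$. The sum over $a$ then simply counts units, giving $\varphi(p^r)=p^r(1-1/p)$. Multiplying the three factors gives
$$
S_{p^r}(\0,\0)=p^{3r}\cdot p^{3\lfloor r/2\rfloor}\cdot p^r\left(1-\frac1p\right)=p^{4r+3\lfloor r/2\rfloor}\left(1-\frac1p\right),
$$
which is the claim.

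There is no real obstacle here. The only points needing care are that the reduction to $y_i^2\equiv0$ uses invertibility of $a$, and that \eqref{eq:eta} also holds for $p=2$. The latter is true because the argument in Lemma~\ref{lem:eta} for the case $j=r$ never uses that $p$ is odd.
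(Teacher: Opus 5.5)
Your proof is correct and is essentially the paper's argument: specialising Lemma~\ref{lem:return} to $\m=\n=\0$ gives $S_{p^r}(\0,\0)=p^{3r}\phi(p^r)\eta_{p^r}(0)^3$, and \eqref{eq:eta} finishes the computation. As in the paper, $r\ge 1$ is implicitly assumed, since $\phi(p^r)=p^r(1-1/p)$ fails at $r=0$.
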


\begin{proof}
Taking $\m=\n=\0$ in Lemma \ref{lem:return}, we see that 
$$
S_{p^r}(\0,\0)=p^{3r}\phi(p^r) \eta_{p^r}(0)^3.
$$
The desired result now follows from \eqref{eq:eta}.
\end{proof}

In view of the 
multiplicativity relation 
\eqref{eq:multiply}, it suffices to analyse $S_q(\m,\n)$ when $q=p^r$ is a prime power.
We proceed to record the following result. 

\begin{lemma}\label{lem:primepower}
Let $p^r$ be a prime power. Then 
$$
S_{p^r}(\m,\n)=
\frac{p^{4r}}{\phi(p^r)}\left(N_0(p^r)-p^{-1}N_1(p^r)\right),
$$
where
\begin{equation}\label{eq:Njp}
N_j(p^r)=\#\left\{
(a,\y)\in (\ZZ/p^r\ZZ)^4: 
\begin{array}{l}
p\nmid a, ~\n.\y\equiv 0 \bmod{p^{r-j}}\\
ay_i^2+m_i\equiv 0\bmod{p^r} \text{ for $1\leq i\leq 3$}
\end{array}
\right\},
\end{equation}
for any $j\in \{0,1\}.$
\end{lemma}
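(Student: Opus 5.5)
Starting from Lemma~\ref{lem:return} with $q=p^r$, we have
$$
S_{p^r}(\m,\n)=p^{3r}\sideset{}{^{\star}}\sum_{a \bmod{p^r}}\ \sum_{\substack{\y \bmod{p^r}\\ ay_i^2+m_i\equiv 0\bmod{p^r}}} e_{p^r}(\n.\y).
$$
The plan is to exploit a hidden scaling symmetry. For any unit $\lambda\in(\ZZ/p^r\ZZ)^\times$, the substitution $(a,\y)\mapsto(\lambda^{-2}a,\lambda\y)$ preserves the set of pairs $(a,\y)$ with $p\nmid a$ and $ay_i^2+m_i\equiv0\bmod{p^r}$ for $1\le i\le 3$. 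It also sends the phase $e_{p^r}(\n.\y)$ to $e_{p^r}(\lambda\,\n.\y)$. Hence, writing
$$
\Sigma=\sum_{(a,\y)} e_{p^r}(\n.\y)
$$
over this set, we have $\Sigma=\sum_{(a,\y)}e_{p^r}(\lambda\,\n.\y)$ for every unit $\lambda$. Averaging over $\lambda$ gives
$$
\Sigma=\frac{1}{\phi(p^r)}\sum_{(a,\y)} c_{p^r}(\n.\y),
$$
where $c_{p^r}(t)=\sideset{}{^{\star}}\sum_{\lambda\bmod p^r}e_{p^r}(\lambda t)$ is the Ramanujan sum.

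Next we evaluate the Ramanujan sum at a prime power. We have $c_{p^r}(t)=p^r\1_{p^r\mid t}-p^{r-1}\1_{p^{r-1}\mid t}$. This follows by writing the sum over units as the full sum over $\lambda\bmod p^r$ minus the sum over $\lambda=p\lambda'$, and applying orthogonality to each piece. Substituting,
$$
\Sigma=\frac{1}{\phi(p^r)}\Bigl(p^rN_0(p^r)-p^{r-1}N_1(p^r)\Bigr)=\frac{p^r}{\phi(p^r)}\bigl(N_0(p^r)-p^{-1}N_1(p^r)\bigr),
$$
with $N_j$ as in \eqref{eq:Njp}. Multiplying by $p^{3r}$ yields the stated formula.

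The only point needing care is checking that the scaling really is a bijection of the constraint set. Indeed $(\lambda^{-2}a)(\lambda y_i)^2=ay_i^2$, and $\lambda^{-2}a$ remains a unit, with inverse map given by $\lambda^{-1}$. The case $r\ge1$ with $j=1$, where the condition is modulo $p^{r-1}$ (vacuous when $r=1$), matches the Ramanujan sum formula directly. No real obstacle is expected.
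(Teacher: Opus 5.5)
Your proposal is correct and is essentially the paper's proof. The paper also inserts a dummy average over units $b$ via the substitution $(a,\y)\mapsto(a\bar{b}^2,b\y)$, then evaluates the resulting Ramanujan sum as the full sum over $b \bmod p^r$ minus the sum over $b \bmod p^{r-1}$, exactly as you do.
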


\begin{proof}
It follows from 
Lemma \ref{lem:return} that 
$$
S_{p^r}(\m,\n)=
p^{3r}
\sideset{}{^{\star}}\sum_{a \bmod{p^r}}
\sum_{\substack{\y \bmod{p^r}\\ ay_i^2+m_i\equiv 0\bmod{p^r}\\
1\leq i\leq 3 }} e_{p^r}(\n.\y).
$$
We introduce a dummy sum over  $b\in (\ZZ/p^r\ZZ)^*$ and make the  changes of variable 
$\y\mapsto b\y$ and $a\mapsto a\bar b^2$. This leads to the expression
\begin{align*}
S_{p^r}(\m,\n)
&=
\frac{p^{3r}}{\phi(p^r)}
\sideset{}{^{\star}}\sum_{a,b \bmod{p^r}}
\sum_{\substack{\y \bmod{p^r}\\ ay_i^2+m_i\equiv 0\bmod{p^r}\\
1\leq i\leq 3 }} e_{p^r}(b\n.\y)\\
&=
\frac{p^{3r}}{\phi(p^r)}
\sideset{}{^{\star}}\sum_{a \bmod{p^r}}
\sum_{\substack{\y \bmod{p^r}\\ ay_i^2+m_i\equiv 0\bmod{p^r}\\
1\leq i\leq 3 }} 
\left(\sum_{b\bmod{p^r}}
e_{p^r}(b\n.\y)-
\sum_{b\bmod{p^{r-1}}}
e_{p^{r-1}}(b\n.\y)\right),
\end{align*}
from which the statement of the lemma easily follows. 
\end{proof}

\begin{lemma}\label{lem:Njp}
Let $p^r$ be a prime power and let $j\in \{0,1\}$. Then 
\begin{enumerate}
\item[(i)]
 $N_j(p^r) =0$ if $r> j+v_p(D(\m,\n))$;
\item[(ii)]
$N_j(p^r)
 \leq \phi(p^r)$ if $j=1$ and  $r=j+v_p(D(\m,\n))$.
\end{enumerate}
\end{lemma}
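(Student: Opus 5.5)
The plan is to turn any counted pair $(a,\y)$ into a divisibility statement for $D(\m,\n)$, using the factorisation \eqref{eq:factorize-dual-form}. Suppose $(a,\y)$ is counted by $N_j(p^r)$, and put $z_i=y_in_i$. The congruences $ay_i^2+m_i\equiv 0\bmod{p^r}$ give $m_i\equiv -ay_i^2\bmod{p^r}$, and hence
$$m_in_i^2\equiv -a z_i^2 \bmod{p^r}.$$
Since $D$ is a polynomial in the quantities $m_in_i^2$ with integer coefficients, substituting gives
$$
D(\m,\n)\equiv a^2\Big(\sum_i z_i^4-2\sum_{i<j}z_i^2z_j^2\Big)\bmod{p^r}.
$$
The bracket factors as an integer polynomial identity:
$$
\sum_i z_i^4-2\sum_{i<j}z_i^2z_j^2=-(z_1+z_2+z_3)(-z_1+z_2+z_3)(z_1-z_2+z_3)(z_1+z_2-z_3).
$$
This is \eqref{eq:factorize-dual-form} with $x,y,z$ replaced by $z_1,z_2,z_3$. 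The first factor is $\n.\y$, which is $\equiv 0\bmod{p^{r-j}}$ by the definition \eqref{eq:Njp}. Hence $p^{r-j}\mid D(\m,\n)$, i.e. $v_p(D(\m,\n))\ge r-j$, with the convention $v_p(0)=\infty$. So if $r>j+v_p(D(\m,\n))$ there are no admissible $(a,\y)$, which is part (i).

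For part (ii), take $j=1$ and $v_p(D(\m,\n))=r-1$. The computation above shows that for any counted $(a,\y)$ we have $p^{r-1}\mid \n.\y$, while $D\equiv -a^2(\n.\y)P(\z)\bmod p^r$ with $P$ the product of the other three linear factors. Since $v_p(D)=r-1$ exactly, this forces $v_p(\n.\y)=r-1$ and $P(\z)\not\equiv 0\bmod p$, so each of the three other factors is a $p$-adic unit.

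Assume now $r\ge 2$ and $p$ odd. If $p\mid z_1$, then $z_1+z_2+z_3\equiv -z_1+z_2+z_3\bmod p$. The left side is divisible by $p$ and the right side is a unit, which is a contradiction; by symmetry, $p\nmid z_i$ for every $i$. So $p\nmid y_in_i$, and hence $p\nmid m_i$. Each congruence $ay_i^2\equiv -m_i$ then has at most two solutions $\pm y_i$ for fixed $a$, and solutions exist only when $-m_i\bar a$ is a square for every $i$; this confines $a$ to a single square class, i.e. to at most $\phi(p^r)/2$ residues.

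Among the up to $8$ sign patterns $(\pm y_1,\pm y_2,\pm y_3)$, if two patterns not related by a global sign both satisfied $p^{r-1}\mid \n.\y$, then some factor $\pm z_1\pm z_2\pm z_3$ other than $\n.\y$ would be $\equiv 0\bmod{p^{r-1}}$, hence divisible by $p$ since $r\ge2$. This contradicts the fact that those factors are units. Thus at most the pair $\{\y,-\y\}$ survives for each $a$. This gives $N_1(p^r)\le 2\cdot\phi(p^r)/2=\phi(p^r)$.

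The main obstacle is the remaining boundary cases, and I expect this is where the argument must be adjusted. When $r=1$, the condition $\n.\y\equiv 0\bmod{p^{0}}$ is vacuous, so the sign-pattern argument gives nothing. A naive count then gives up to $4\phi(p)$, since there are $8$ sign patterns and half the $a$ qualify. I would first check whether the lemma is intended only for $r\ge 2$, or for the quantity $N_0-p^{-1}N_1$ in Lemma~\ref{lem:primepower}, before trying to force the bound for $r=1$. For $p=2$, where $1$ has four square roots modulo $8$, I would either invoke the factor $6$ in \eqref{eq:Gmn} to exclude small primes or treat $p=2$ separately with a crude bound.
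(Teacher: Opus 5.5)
Part (i) is correct and takes a genuinely different, more elementary route than the paper. The paper lifts to $\ZZ_p$ and takes exact roots $z_i\in\mathcal{O}\subset\overline{\QQ}_p$ of $az_i^2+m_i=0$, with signs chosen so that $v_p(y_i-z_i)\ge v_p(y_i+z_i)$. It then orders the valuations $v_p(y_i-z_i)$ and bounds $v_p(D(\m,\n))$ from below using \eqref{eq:factorize-dual-form} over $\overline{\QQ}_p$ together with $\min(a,b)+\min(a,b')\ge\min(a,b+b')$. Your substitution $z_i=y_in_i$, with $m_in_i^2\equiv -az_i^2 \bmod{p^r}$, gives an integral identity modulo $p^r$ in which $\n.\y$ is literally one of the four factors. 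This needs no extension of valuations and no ordering. It also yields directly that the other three factors are units when $v_p(D(\m,\n))=r-1$, which is exactly what (ii) requires.

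For (ii), your suspicion about $r=1$ is right: the statement is false there. Take $p$ odd with $p\nmid m_1m_2m_3D(\m,\n)$ and \eqref{eq:m123} holding, for example $\m=\n=(1,1,1)$, $D=-3$, $p\ge 5$. Then $N_1(p)=4(p-1)>\phi(p)$, which the paper itself computes in the proof of Lemma~\ref{lem:Sp2}.

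The paper's proof of (ii) shows that $\y$ is congruent modulo $p^r$ to the vector of chosen roots, and then counts one $\y$ per $a$. But each root is fixed by $a$ only up to sign, and the sign was chosen using $\y$, so that step only gives $N_1(p^r)\le 8\phi(p^r)$. Your two extra ingredients are exactly what is missing:
\begin{enumerate}
\item[(a)] when $r\ge 2$, a second sign pattern would make another factor divisible by $p$, contradicting that it is a unit;
\item[(b)] the admissible $a$ lie in a single square class.
\end{enumerate}
Together these prove (ii) for odd $p$ and $r\ge 2$.

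That is the only case the paper uses. In Lemma~\ref{lem:L5} one has $r=1+v_p(G(\m,\n))\ge 2$. For $p\in\{2,3\}$ the factor $6$ in \eqref{eq:Gmn} gives $r=2+v_p(D(\m,\n))$, so part (i) applies instead of (ii).

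Finally, $p=2$ needs no separate treatment. For $r\ge 2$, your own observation gives $-z_1+z_2+z_3\equiv z_1+z_2+z_3=\n.\y\equiv 0\bmod 2$, contradicting that this factor is a unit, so $N_1(2^r)=0$. For $r=1$, necessarily $a=1$ and $y_i\equiv m_i\bmod 2$, so $N_1(2)\le 1=\phi(2)$.
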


\begin{proof}
Recalling the definition \eqref{eq:Njp} of $N_j(p^r)$, we  abuse  notation and 
fix a lift $y_i\in \ZZ_p$ of each residue $y_i\bmod{p^r}$.
We may (uniquely) extend the valuation $v_p$ on $\QQ_p$ to $\ol{\QQ}_p$, keeping $v_p(p)=1$.
Let $\mathcal{O}$ be the integral closure of $\ZZ_p$ in $\ol{\QQ}_p$ and let $z_i\in \mathcal{O}$ be a solution to $az_i^2+m_i = 0$ such that $v_p(y_i-z_i)\ge v_p(y_i+z_i)$.
Then $v_p(y_i^2-z_i^2) \ge r$, since $v_p(a)=0$.
If $\eps_1,\eps_2,\eps_3\in \{\pm 1\}$, then
$$
\n.(\eps_1z_1,\eps_2z_2,\eps_3z_3)
\equiv 0\bmod{p^{\min(r-j,v_p(y_1-\eps_1z_1),v_p(y_2-\eps_2z_2),v_p(y_3-\eps_3z_3))}},
$$
since $\n.\y\equiv 0 \bmod{p^{r-j}}$.
Permuting the indices $i$ if necessary, we may assume without loss of generality that  $v_p(y_1-z_1)\ge v_p(y_2-z_2)\ge v_p(y_3-z_3)$.
Then, taking $\eps_1=1$ and multiplying over all $2^2 = 4$ choices of signs $(\eps_2,\eps_3)$, using the factorization in \eqref{eq:factorize-dual-form}, we find that
\begin{equation*}
v_p(D(\m,\n)) \ge \sum_{(\eps_2,\eps_3)} \min(r-j,v_p(y_1-z_1),v_p(y_2-\eps_2z_2),v_p(y_3-\eps_3z_3)).
\end{equation*}
However, $v_p(y_1-z_1)\ge v_p(y_2-z_2)\ge v_p(y_3-z_3)\ge v_p(y_3+z_3)$,
so $$\min(r-j,v_p(y_1-z_1),v_p(y_2-z_2),v_p(y_3-\eps_3z_3))
= \min(r-j,v_p(y_3-\eps_3z_3)).
$$
Thus, keeping only the terms with $\eps_2=1$
(and noting that each term is non-negative), we get the lower bound
\begin{equation*}
v_p(D(\m,\n)) \ge \sum_{\eps_3} \min(r-j,v_p(y_3-\eps_3z_3)).
\end{equation*}
However, for any reals $a,b,b'\ge 0$,
we have $\min(a,b)+\min(a,b') \ge \min(a,b+b')$,
since $a+a,a+b',b+a\ge a$ and $b+b'\ge b+b'$.
Therefore, we obtain
\begin{equation*}
v_p(D(\m,\n)) \ge \min\left(r-j,\sum_{\eps_3} v_p(y_3-\eps_3z_3)\right)
\ge \min(r-j,r) = r-j.
\end{equation*}
It follows that $N_j(p^r)=0$ unless $r\le j+v_p(D(\m,\n))$, as required for (i).

For (ii) we suppose that $j=1$ and $r=j+v_p(D(\m,\n))$.
Then $v_p(y_3-z_3)>r-j$,
or else we would have $r-j\ge v_p(y_3-z_3)\ge v_p(y_3+z_3)$ and
$$v_p(D(\m,\n)) \ge \sum_{\eps_3} \min(r-j,v_p(y_3-\eps_3z_3))
= \sum_{\eps_3} v_p(y_3-\eps_3z_3) \ge r > r-j.$$
Since $j=1$, this means $v_p(y_3-z_3)\ge r$.
But $v_p(y_1-z_1)\ge v_p(y_2-z_2)\ge v_p(y_3-z_3)$ by assumption.
It follows that $\y\equiv (z_1,z_2,z_3)\bmod{p^r}$,
so that 
$$
N_1(p^r)\le \sum_{a\in (\ZZ/p^r\ZZ)^\times} 1 = \phi(p^r),
$$
as claimed.
\end{proof}

\subsection*{Square-free moduli}

We have multiplicativity of $S_q(\m,\n)$, by 
\eqref{eq:multiply}. Thus for  square-free $q$ we can focus on the evaluation of $S_p(\m,\n)$ with $p$ a prime. 
We proceed to provide a necessary condition for the non-vanishing of $S_p(\m,\n)$.

\begin{lemma}\label{lem:Sp1}
Let $p\nmid 2m_1m_2m_3$ be a prime. Then 
$S_p(\m,\n)=0$ unless 
\begin{equation}\label{eq:m123}
\left(\frac{m_1m_2}{p}\right)=\left(\frac{m_2m_3}{p}\right)=\left(\frac{m_3m_1}{p}\right)=1.
\end{equation}
\end{lemma}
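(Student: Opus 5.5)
We work from Lemma~\ref{lem:return} with $q=p$ prime:
$$
S_p(\m,\n)=p^3\sideset{}{^{\star}}\sum_{a \bmod{p}}\ \sum_{\substack{\y \bmod{p}\\ ay_i^2+m_i\equiv 0\bmod{p}\\ 1\leq i\leq 3}} e_p(\n.\y).
$$
The strategy is to show that the inner sum over $\y$ is empty for every $a\in(\ZZ/p\ZZ)^\times$ unless \eqref{eq:m123} holds. In that case every term vanishes and $S_p(\m,\n)=0$, so no cancellation in the character sum is needed; only solubility of the congruences matters.

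Fix $a$ with $p\nmid a$, and suppose some $\y$ satisfies $ay_i^2\equiv -m_i\bmod p$ for $i=1,2,3$. Since $p\nmid m_i$, each $y_i$ is non-zero modulo $p$. Then $-\bar a m_i\equiv y_i^2$ is a non-zero square, so
$$
\left(\frac{-a m_i}{p}\right)=\left(\frac{-\bar a m_i}{p}\right)=1 \quad \text{for each } i.
$$
Here we use $\left(\frac{\bar a}{p}\right)=\left(\frac{a}{p}\right)$.

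Multiplying the conditions for indices $i\neq j$ gives
$$
1=\left(\frac{a^2m_im_j}{p}\right)=\left(\frac{m_im_j}{p}\right),
$$
because $p\nmid a$ makes $a^2$ a non-zero square. This is exactly \eqref{eq:m123}. If \eqref{eq:m123} fails, then no $a$ admits a solution $\y$, every inner sum is empty, and $S_p(\m,\n)=0$.

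There is no real obstacle. The only points to take care over are that $p\nmid 2m_1m_2m_3$ guarantees the $y_i$ are units, so the Legendre symbols equal $1$ rather than $0$, and the elementary multiplicativity $\left(\frac{a^2b}{p}\right)=\left(\frac{b}{p}\right)$ for $p\nmid a$. The hypothesis $p\neq 2$ is needed only so that the Legendre symbol is defined and behaves multiplicatively as stated.
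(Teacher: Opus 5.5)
Your proposal is correct and is essentially the paper's argument. The paper first eliminates $a$, rewriting $S_p(\m,\n)$ as $p^3$ times a sum over $\y\in(\FF_p^*)^3$ with $\overline{m_1}y_1^2\equiv\overline{m_2}y_2^2\equiv\overline{m_3}y_3^2 \bmod p$, and then notes that this system is insoluble unless \eqref{eq:m123} holds; you keep $a$ and check the same solubility directly, which amounts to the same thing.
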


\begin{proof}
Taking $q=p$ in Lemma \ref{lem:return}, we deduce that 
\begin{align*}
S_p(\m,\n)
&=p^3\sum_{a\in \FF_p^*}
\prod_{1\leq i\leq 3}
\sum_{\substack{y\in \FF_p\\ ay^2+m_i\equiv 0\bmod{p}}} e_p(n_iy)\\
&=p^3
\sum_{\substack{\y\in (\FF_p^*)^3\\ 
\overline{m_1}y_1^2\equiv \overline{m_2}y_2^2
\equiv \overline{m_3}y_3^2 \bmod{p}}} 
e_p(\n.\y).
\end{align*}
It is now clear that 
$S_p(\m,\n)=0$ unless \eqref{eq:m123} holds.
\end{proof}

The following result 
gives a complete evaluation of $S_p(\m,\n)$ and depends on the 
sextic form $D(\m,\n)$ that was defined in \eqref{eq:Dmn}. 

\begin{lemma}\label{lem:Sp2}
Let $p>2$ be a prime. 
Suppose  that $p\nmid m_1m_2m_3$ and that \eqref{eq:m123} holds.
Then 
$$
S_p(\m,\n)=
\begin{cases}
-4p^3 & \text{ if $p\nmid D(\m,\n)$,}\\
p^4-4p^3 & 
\text{ if $p\nmid n_1n_2n_3$ and $p\mid D(\m,\n)$,}\\
2p^4-4p^3 & 
\text{ if $p\mid n_1n_2n_3$, $p\nmid \n$, and  $p\mid D(\m,\n)$,}\\
4p^4-4p^3 & 
\text{ if $p\mid  \n$.}
\end{cases}
$$
Suppose  that $p\mid m_1m_2m_3$.
Let $\{i,j,k\}=\{1,2,3\}$ be a permutation such that $p\mid m_j\Leftrightarrow p\mid m_k$.
(Such a permutation exists by the pigeonhole principle.)
Then 
$$
S_p(\m,\n)=
\begin{cases}
p^4-p^3 & \text{ if $p\mid \m$,}\\
p^4\1_{p\mid n_i} -p^3
& \text{ if $p\nmid m_i$, $p\mid (m_j,m_k)$,}\\
0
& \text{ if $p\mid m_i$,  $(\frac{m_jm_k}{p})=-1$,}\\
2p^4 -2p^3
& \text{ if $p\mid (m_i,n_j,n_k)$, $(\frac{m_jm_k}{p})=1$,}\\
p^4 -2p^3
& \text{ if $p\mid (m_i,D(\m,\n))$, $p\nmid (n_j,n_k)$, $(\frac{m_jm_k}{p})=1$,}\\
-2p^3
& \text{ otherwise.}
\end{cases}
$$
\end{lemma}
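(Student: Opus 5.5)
The plan is to start from the formula in the proof of Lemma~\ref{lem:Sp1}, namely
$$
S_p(\m,\n)=p^3\sum_{a\in\FF_p^*}\prod_{i=1}^3 \Big(\sum_{\substack{y\in\FF_p\\ ay^2\equiv -m_i}} e_p(n_iy)\Big),
$$
and evaluate each inner sum explicitly. If $p\nmid m_i$, the inner sum is $0$ unless $-am_i$ is a square. In that case it equals $e_p(n_iz_i)+e_p(-n_iz_i)$, where $z_i^2=-\bar a m_i$. If $p\mid m_i$, the inner sum equals $1$.

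\emph{Case $p\nmid m_1m_2m_3$.} Under \eqref{eq:m123} all $-m_i$ lie in the same square class. Hence exactly $(p-1)/2$ values of $a$ contribute. Parametrise these as $a=-m_1 t^{-2}$, so $z_1=t$, and choose fixed square roots $\lambda_i$ with $\lambda_i^2\equiv m_i/m_1$ and $\lambda_1=1$, so that $z_i=\lambda_i t$. Expanding the product over signs gives
$$
S_p=p^3\cdot\tfrac12\sum_{t\in\FF_p^*}\sum_{\eps\in\{\pm1\}^3} e_p\big(t\,(\eps_1n_1\lambda_1+\eps_2n_2\lambda_2+\eps_3n_3\lambda_3)\big).
$$
The inner sum over $t$ is $p-1$ or $-1$ according as the linear form $\ell_\eps=\sum\eps_in_i\lambda_i$ vanishes mod $p$ or not. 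This yields
$$
S_p=\tfrac{p^3}{2}\big(-8+p\,\#\{\eps:\ell_\eps\equiv0\}\big).
$$
By \eqref{eq:factorize-dual-form} with $x,y,z=n_i\lambda_i\sqrt{m_1}$, we have $D(\m,\n)\equiv m_1^2\prod_{\eps_1=1}\ell_\eps$. The zeros of $\ell_\eps$ come in $\pm$ pairs, so the count is $2\times\#\{\eps_1=1\text{ factors vanishing}\}$. It remains to count vanishing factors:
\begin{itemize}
\item none if $p\nmid D$;
\item exactly one if $p\mid D$ and $p\nmid n_1n_2n_3$, since two vanishing factors force some $2n_i\lambda_i\equiv0$;
\item exactly two if exactly one $n_i$ vanishes mod $p$ and $p\mid D$; note that if two $n_i$ vanish and $p\nmid\n$ then $p\nmid D$;
\item all four if $p\mid\n$.
\end{itemize}
This gives $-4p^3$, $p^4-4p^3$, $2p^4-4p^3$ and $4p^4-4p^3$, as claimed.

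\emph{Case $p\mid m_1m_2m_3$.} The indices with $p\mid m_i$ contribute a factor $1$, and we split by how many there are.
\begin{itemize}
\item If $p\mid\m$, then $S_p=p^3(p-1)$.
\item If only $m_i$ is a unit, the sum over $a$ of $\sum_{ay^2=-m_i}e_p(n_iy)$ equals $\sum_{y\ne0}e_p(n_iy)$, giving $p\1_{p\mid n_i}-1$.
\item If $p\mid m_i$ and $m_j,m_k$ are units, we need $-am_j,-am_k$ simultaneously square. This is impossible when $(\frac{m_jm_k}{p})=-1$. Otherwise the same parametrisation as above applies with two signs, giving
$$
\tfrac{p^3}{2}\big(-4+p\,\#\{(\eps_j,\eps_k):\eps_jn_j+\eps_kn_k\lambda\equiv0\}\big).
$$
Here $D\equiv(m_jn_j^2-m_kn_k^2)^2$ when $p\mid m_i$, so the count is $4$ if $p\mid(n_j,n_k)$, is $2$ if $p\mid D$ but $p\nmid(n_j,n_k)$, and is $0$ otherwise. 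This produces the last three lines of the second case list.
\end{itemize}

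The only delicate point is the bookkeeping in the first case: matching the number of vanishing linear factors with the stated conditions via \eqref{eq:factorize-dual-form}, and using $p>2$ to rule out coincidences such as two factors vanishing when all $n_i$ are units.
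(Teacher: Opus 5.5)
Your proposal is correct, and it reaches the formulas by a somewhat different route from the paper's proof.

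The paper first applies Lemma~\ref{lem:primepower}, which averages over the scaling $\y\mapsto b\y$, $a\mapsto a\bar b^2$, to write $S_p=\frac{p^4}{p-1}\bigl(N_0(p)-p^{-1}N_1(p)\bigr)$. It then eliminates $a$ and $y_3$, so that $N_0(p)=(p-1)L(p)$, where $L(p)$ counts the common roots of $Ax^2+Bx+C$ and $m_2x^2-m_1$. It evaluates $L(p)$ via the resultant identity $\mathrm{Res}=m_1^2D(\m,\n)$ when $p\nmid n_1n_2n_3$, and via a separate proportionality check when $p\mid n_1n_2$.

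You instead parametrise the contributing $a$ by $t$ and expand into the eight sign vectors; your sum over $t$ plays the role of the paper's $b$-average. Because \eqref{eq:m123} places the square roots $\lambda_i$ in $\FF_p$, the factorization \eqref{eq:factorize-dual-form} splits $D$ into four $\FF_p$-linear factors. This gives a single uniform formula $S_p=kp^4-4p^3$, where $k$ is the number of vanishing factors with $\eps_1=1$.

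Your version makes it transparent why only the coefficients $0,1,2,4$ can occur. It also ties the square-free evaluation directly to the same linear factors that drive Lemma~\ref{lem:Njp}, and it needs no $N_j$ machinery. The paper's route, by contrast, stays inside the $N_0,N_1$ framework, which it reuses for higher prime powers in Lemmas~\ref{lem:square-full}, \ref{lem:beach} and~\ref{lem:L5}.

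You state the case ``exactly one $n_i\equiv 0$ and $p\mid D$ gives $k=2$'' rather briefly, but it is easily completed. When $n_i\equiv 0$ the vanishing factors come in pairs: via independence from $\eps_i$ if $i\ne 1$, or via $\ell_{-\eps}=-\ell_\eps$ if $i=1$. Your observation that two vanishing factors force some $2n_j\lambda_j\equiv 0$ then caps $k$ at $2$.
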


\begin{proof}
Taking $r=1$ in Lemma \ref{lem:primepower}, we obtain
\begin{equation}\label{eq:bottle}
S_{p}(\m,\n)=
\frac{p^{4}}{p-1}\left(N_0(p)-p^{-1}N_1(p)\right),
\end{equation}
where
$$
N_j(p)=\#\left\{
(a,\y)\in \FF_p^*\times \FF_p^3:
\begin{array}{l}
\n.\y\equiv 0 \bmod{p^{1-j}}\\
ay_i^2+m_i\equiv 0\bmod{p} \text{ for $1\leq i\leq 3$}
\end{array}
\right\},
$$
for any $j\in \{0,1\}.$ It follows from Lemma \ref{lem:Njp} that 
$N_j(p)=0$ unless $j+v_p(D(\m,\n))\geq 1$.  

\medskip
\noindent
{\em The case $p\nmid m_1m_2m_3$.} It  follows from Lemma \ref{lem:Sp1}
that \eqref{eq:m123} holds.  Executing the sum over $a$, we deduce 
that 
\begin{align*}
N_0(p)&=
\#\left\{\y\in (\FF_p^*)^3:  \overline{m_1}y_1^2\equiv \overline{m_2}y_2^2
\equiv \overline{m_3}y_3^2 \bmod{p}, ~\n.\y\equiv 0\bmod{p}\right\},\\
N_1(p)&=
\#\left\{\y\in (\FF_p^*)^3:  \overline{m_1}y_1^2\equiv \overline{m_2}y_2^2
\equiv \overline{m_3}y_3^2 \bmod{p}\right\},
\end{align*}
in  \eqref{eq:bottle}.
It is clear that
$N_1(p)=4(p-1)$,
whence
$$
S_p(\m,\n)
=\frac{p^4N_0(p)}{p-1} -4p^3.
$$

Our calculation of $N_0(p)$ depends on the value of $\n \bmod{p}.$
If $p\nmid D(\m,\n)$ then $N_0(p)=0$ and we are done. Hence we can assume that $p\mid D(\m,\n)$.
Suppose first that $p\mid \n$. Then $N_0(p)=N_1(p)=4(p-1)$ and we are done. 
Henceforth we assume that  $p\nmid \n$, with $p\nmid n_3$, say. 
Then we  can eliminate $y_3$ to deduce that 
\begin{align*}
N_0(p)
&=
\#\left\{(y_1,y_2)\in (\FF_p^*)^2:  \overline{m_1}y_1^2\equiv \overline{m_2}y_2^2
\equiv \overline{m_3n_3^2}(n_1y_1+n_2y_2)^2 \bmod{p}\right\}\\
&=
(p-1)L(p),
\end{align*}
where 
$$
L(p)=
\#\left\{\eta\in \FF_p: 
\eta^2\equiv 	m_1 \overline{m_2} \bmod{p}, ~
n_3^2\eta^2\equiv 
m_1 \overline{m_3}(n_1\eta+n_2)^2 \bmod{p}\right\}.
$$
Thus we have 
$$
S_p(\m,\n)
=p^4L(p) -4p^3,
$$
and our task falls to analysing $L(p)$.

The quantity $L(p)$ is equal to the number of roots modulo $p$ of the pair of equations
$Ax^2+Bx+C=0$
and 
$Dx^2+E=0$, with 
$$
A=m_1n_1^2-m_3n_3^2, \quad B=2n_1n_2m_1,\quad C=m_1 n_2^2, \quad D= m_2, \quad
E=-m_1.
$$
If $p\nmid n_1n_2n_3$ then these quadratic polynomials are not proportional and so it follows that
$$
L(p)
=
\begin{cases}
1 & \text{ if $p\mid \mathrm{Res}(Ax^2+Bx+C,Dx^2+E)$,}\\
0 & \text{ if $p\nmid \mathrm{Res}(Ax^2+Bx+C,Dx^2+E)$.}
\end{cases}
$$
But 
\begin{align*}
\mathrm{Res}(Ax^2+Bx+C,Dx^2+E)
&=
C^2D^2 + B^2DE - 2ACDE + A^2E^2\\
&=m_1^2D(\m,\n),
\end{align*}
in the notation of \eqref{eq:Dmn}. This therefore completes the proof of the lemma when 
$p\nmid n_1n_2n_3$. 

Suppose finally that $p\nmid n_3$ and $p\mid n_1n_2$. Then $B\equiv 0\bmod{p}$ and the polynomials  $Ax^2+C$
and 
$Dx^2+E$ are proportional modulo $p$ if and only if 
$m_1n_1^2-m_3n_3^2\equiv m_2n_2^2\bmod{p}$, which is if and only if
$p\mid D(\m,\n)$ when $p\mid n_1n_2$.
But then  $L(p)=2$, since 
$(\frac{m_1m_2}{p})=1$. 

\medskip
\noindent
{\em The case $p\mid m_1m_2m_3$.}
We return to \eqref{eq:bottle}. If $p\mid \m$ then $N_0(p)=N_1(p)=p-1$ and 
so $S_p(\m,\n)=p^4-p^3$.  
Without loss of generality we work with the permutation $(i,j,k)=(3,1,2)$ in the statement of the lemma.
Suppose  that $p\mid (m_1,m_2)$ and $p\nmid m_3$.
Then 
$$
N_j(p)=\#\left\{
(a,y_3)\in \FF_p^*\times \FF_p:
\begin{array}{l}
n_3y_3\equiv 0 \bmod{p^{1-j}}\\
ay_3^2+m_3\equiv 0\bmod{p}
\end{array}
\right\},
$$
for  $j\in \{0,1\}.$ But then  $N_0(p)=(p-1)\1_{p\mid n_3}$ and $N_1(p)=p-1$, whence
$
S_{p}(\m,\n)=p^4\1_{p\mid n_3}-p^3.
$
Finally, we suppose that $p\nmid m_1m_2$ and $p\mid m_3$.
Then 
$$
N_j(p)=\#\left\{
(a,y_1,y_2)\in \FF_p^*\times \FF_p^2:
\begin{array}{l}
n_1y_1+n_2y_2\equiv 0 \bmod{p^{1-j}}\\
ay_i^2+m_i\equiv 0\bmod{p} \text{ for $i=1,2$}
\end{array}
\right\},
$$
for $j\in \{0,1\}.$ In particular $N_j(p)=0$ unless $(\frac{m_1m_2}{p})=1$, in which case
$$
N_j(p)=\#\left\{
(y_1,y_2)\in \FF_p^*\times \FF_p^*:
\begin{array}{l}
n_1y_1+n_2y_2\equiv 0 \bmod{p^{1-j}}\\
\overline{m_1}y_1^2\equiv 
\overline{m_2}y_2^2
\bmod{p}
\end{array}
\right\}.
$$
Clearly $N_1(p)=2(p-1)$. If $p\mid (n_1,n_2)$ then also $N_0(p)=2(p-1)$. If 
$p\nmid (n_1,n_2)$ then 
$$
N_0(p)=\begin{cases}
p-1 & \text{ if $p\mid D(\m,\n)$,}\\
0 & \text{ otherwise,}
\end{cases}
$$
since 
$m_1n_1^2\equiv m_2n_2^2\bmod{p}$ if and only if 
$p\mid D(\m,\n)$,  when $p\mid m_3$. The statement of the lemma follows.
\end{proof}

When $p=2$ we trivially have $|S_2(\m,\n)|\leq 2^7$.
Noting that $p\mid D(\m,\n)$ if $p\mid \n$, the following result is an immediate consequence of combining  Lemma \ref{lem:Sp2} with 
\eqref{eq:multiply}.

\begin{corollary}\label{cor.3.8}
Assume that $q\in \NN$ is square-free. Then 
$$
S_q(\m,\n)\ll 4^{\omega(q)}q^3\gcd(q,D(\m,\n)).
$$
\end{corollary}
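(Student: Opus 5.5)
By the multiplicativity relation \eqref{eq:multiply}, $S_q(\m,\n)=\prod_{p\mid q}S_p(\m,\n)$ when $q$ is square-free. Since $4^{\omega(q)}q^3\gcd(q,D(\m,\n))$ is also multiplicative in square-free $q$, it suffices to prove a prime-by-prime bound
$$|S_p(\m,\n)|\le C_p\, p^3\gcd(p,D(\m,\n)),$$
where $C_p\le 4$ for odd $p$ and $C_2$ is an absolute constant. Multiplying these bounds then gives the corollary, with the absolute constant from $p=2$ absorbed into the implied constant.

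For $p=2$ we use the trivial bound $|S_2(\m,\n)|\le 2^7$. This is $O(2^3)$, which is acceptable whatever the value of $\gcd(2,D(\m,\n))$.

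For odd $p$ we go through the case list in Lemma~\ref{lem:Sp2}, together with Lemma~\ref{lem:Sp1}, which gives vanishing when $p\nmid m_1m_2m_3$ and \eqref{eq:m123} fails.

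Suppose first that $p\nmid m_1m_2m_3$. If $p\nmid D(\m,\n)$, then $|S_p|=4p^3$, which is acceptable. In the remaining three cases $p\mid D(\m,\n)$. For the case $p\mid\n$ this uses the noted fact that $p\mid\n$ forces $p\mid D(\m,\n)$, since $D$ is homogeneous of positive degree in $\n$. In each of these cases $|S_p|\le 4p^4=4p^3\gcd(p,D)$.

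Now suppose $p\mid m_1m_2m_3$. Every value in the list is at most $2p^4$ in absolute value, and the values not involving $p^4$ are at most $2p^3$. So it remains to check that whenever a $p^4$ term appears, we have $p\mid D(\m,\n)$.
\begin{itemize}
\item If $p\mid\m$, then $D\equiv 0\bmod p$, since every monomial of $D$ contains some $m_i$.
\item If $p\mid(m_j,m_k)$ and $p\nmid m_i$, the $p^4$ term appears only when $p\mid n_i$. Then every monomial of $D$ contains $m_j$, $m_k$ or $n_i$, so $p\mid D$. If $p\nmid n_i$ the value is $-p^3$.
\item If $p\mid(m_i,n_j,n_k)$, then $D\equiv m_i^2n_i^4\equiv 0$ modulo $p$, reducing via \eqref{eq:Dmn}.
\item The case with $p^4-2p^3$ assumes $p\mid D$ explicitly.
\end{itemize}

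There is no real obstacle here. The only care needed is in confirming, case by case, that each $p^4$ term occurs only when $p\mid D(\m,\n)$, and this follows from the monomial structure of \eqref{eq:Dmn}.
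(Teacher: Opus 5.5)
Your proof is correct and follows the same route as the paper. You reduce to primes via \eqref{eq:multiply}, handle $p=2$ with the trivial bound $|S_2(\m,\n)|\le 2^7$, and for odd $p$ read off $|S_p(\m,\n)|\le 4p^3\gcd(p,D(\m,\n))$ from Lemmas~\ref{lem:Sp1} and~\ref{lem:Sp2}, using that $p\mid \n$ forces $p\mid D(\m,\n)$. Your case-by-case check that each $p^4$ term occurs only when $p\mid D(\m,\n)$ simply spells out what the paper calls an immediate consequence.
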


\subsection*{Square-full moduli}

For our  next result we recall  the notation
$\kappa(q)=\prod_{p\mid q}p,
$ 
for the square-free kernel of $q$. 

\begin{lemma}\label{lem:square-full}
Let $q$ be square-full. Then $S_q(\m,\n)=0$ unless
$\kappa(q)\mid D(\m,\n)$ and
$q\mid \kappa(q) D(\m,\n)$.
\end{lemma}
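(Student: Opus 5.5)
By the multiplicativity relation \eqref{eq:multiply}, it suffices to treat a prime power $q=p^r$. Since $q$ is square-full, we have $r\ge 2$. The two conditions to verify then become
\[
p\mid D(\m,\n) \qquad\text{and}\qquad r\le 1+v_p(D(\m,\n)).
\]
The second condition implies the first, because $r\ge 2$ forces $v_p(D(\m,\n))\ge 1$. It therefore suffices to show that $S_{p^r}(\m,\n)=0$ whenever $r>1+v_p(D(\m,\n))$. This covers the case $D(\m,\n)=0$ trivially, since then $v_p(D(\m,\n))=\infty$ and there is nothing to prove.

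Assume $r>1+v_p(D(\m,\n))$. By Lemma \ref{lem:primepower},
\[
S_{p^r}(\m,\n)=\frac{p^{4r}}{\phi(p^r)}\bigl(N_0(p^r)-p^{-1}N_1(p^r)\bigr).
\]
Now apply Lemma \ref{lem:Njp}(i) with $j=0$ and with $j=1$. In both cases the hypothesis $r>j+v_p(D(\m,\n))$ holds, since
\[
r>1+v_p(D(\m,\n))\ge j+v_p(D(\m,\n)).
\]
Hence $N_0(p^r)=N_1(p^r)=0$, and so $S_{p^r}(\m,\n)=0$.

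Returning to a general square-full $q$, suppose $S_q(\m,\n)\neq 0$. Then every prime power $p^r\| q$ has $S_{p^r}(\m,\n)\ne 0$, and so satisfies $v_p(q)=r\le 1+v_p(D(\m,\n))$ together with $v_p(D(\m,\n))\ge 1$. This is exactly $\kappa(q)\mid D(\m,\n)$ and $q\mid \kappa(q)D(\m,\n)$, prime by prime.

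I expect no real obstacle, since Lemma \ref{lem:Njp} has already done the $p$-adic work. The one point to check is that its proof is uniform in $p$, including $p=2$ and primes dividing $m_i$. It only uses the factorization \eqref{eq:factorize-dual-form} over $\mathcal{O}$ and the fact that $v_p(a)=0$, so it does not depend on $p$ being odd or on $p\nmid m_1m_2m_3$. It is also worth confirming that when $D(\m,\n)=0$ the conclusion is read as vacuous, because every integer divides $0$.
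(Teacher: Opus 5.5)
Your proposal is correct and follows essentially the same route as the paper: reduce by multiplicativity to $q=p^r$ with $r\ge 2$, combine Lemma~\ref{lem:primepower} with Lemma~\ref{lem:Njp}(i) for $j\in\{0,1\}$ to get $r\le 1+v_p(D(\m,\n))$ whenever $S_{p^r}(\m,\n)\neq 0$, and note that $r\ge 2$ then forces $p\mid D(\m,\n)$. Your remark that the proof of Lemma~\ref{lem:Njp}(i) is uniform in $p$ is also right, since it uses only the factorization \eqref{eq:factorize-dual-form} over $\mathcal{O}$ and the fact that $v_p(a)=0$.
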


\begin{proof}
It suffices to prove the result when $q=p^r$, with $r\geq 2$.
Lemma \ref{lem:primepower} implies that 
$$
S_{p^r}(\m,\n)=
\frac{p^{4r}}{\phi(p^r)}\left(N_0(p^r)-p^{-1}N_1(p^r)\right).
$$
Suppose that $S_{p^r}(\m,\n)\neq 0$. Then 
$N_0(p^r)\neq 0$
or $N_1(p^r)\neq 0$. This is only possible if $r\leq 1+v_p(D(\m,\n))$, by Lemma \ref{lem:Njp}, which implies that
$p\mid D(\m,\n)$, since 
$r\geq 2$.
\end{proof}

\begin{lemma}
\label{grad1}
Let $p$ be a prime.
Assume $p\mid D(\m,\n)$
and $p\nmid \nabla G(\m,\n)$.
Then $p\nmid n_1n_2n_3$.
Moreover, if $S_{p^r}(\m,\n)\ne 0$ for some $r\ge 2$,
then $p\nmid m_1m_2m_3$.
\end{lemma}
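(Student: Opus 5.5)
The plan is to argue directly from the explicit partial derivatives \eqref{eq:derivatives}, and then combine Lemma~\ref{lem:return} with Lemma~\ref{n-vanishing} for the second assertion.

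First I will dispose of small primes. Every entry of $\nabla G$ is divisible by $12$, because $\partial G/\partial m_i=12n_i^2L_i(\m,\n)$ and $\partial G/\partial n_i=24m_in_iL_i(\m,\n)$. So the hypothesis $p\nmid\nabla G(\m,\n)$ forces $p>3$, and in particular $p$ is odd.

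\emph{First claim.} Suppose for contradiction that $p\mid n_k$ for some $k$; by symmetry take $k=3$. Then both $\partial G/\partial m_3$ and $\partial G/\partial n_3$ vanish mod $p$. Reducing \eqref{eq:Dmn} modulo $p$ with $n_3\equiv 0$ gives
\[
D(\m,\n)\equiv m_1^2n_1^4+m_2^2n_2^4-2m_1m_2n_1^2n_2^2=(m_1n_1^2-m_2n_2^2)^2 \pmod p .
\]
Hence $p\mid D(\m,\n)$ forces $m_1n_1^2\equiv m_2n_2^2\bmod p$. By \eqref{eq:LLi}, with $n_3\equiv0$,
\[
L_1\equiv m_1n_1^2-m_2n_2^2\equiv 0,\qquad L_2\equiv m_2n_2^2-m_1n_1^2\equiv 0 \pmod p .
\]
So the partials with index $1$ and $2$ vanish as well, giving $p\mid\nabla G(\m,\n)$, which is a contradiction. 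Therefore $p\nmid n_1n_2n_3$.

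\emph{Second claim.} Assume $S_{p^r}(\m,\n)\ne0$ with $r\ge2$, and suppose for contradiction that $p\mid m_i$ for some $i$. I split according to $v_p(m_i)$.

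If $v_p(m_i)\ge2$, then $\{p^r,m_i\}\ge p^2$, since both $r\ge 2$ and $v_p(m_i)\ge 2$. Lemma~\ref{n-vanishing} then gives $p\mid n_i$, contradicting the first claim.

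If $v_p(m_i)=1$, then in the formula of Lemma~\ref{lem:return} the inner sum requires $ay_i^2\equiv -m_i\bmod p^r$ with $p\nmid a$. Since $r\ge2$, this forces $v_p(y_i^2)=1$, which is impossible because $v_p(y_i^2)$ is even. So the sum is empty and $S_{p^r}(\m,\n)=0$, again a contradiction.

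Hence $p\nmid m_1m_2m_3$. The only step requiring any care is the identification of $D$ modulo $p$ as a perfect square when some $n_k\equiv0$; everything else is bookkeeping.
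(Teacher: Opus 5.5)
Your proof is correct and follows the paper's argument essentially step for step. In the first part, reducing $D$ modulo $p$ to $(m_jn_j^2-m_kn_k^2)^2$ forces all the $L_i$, and hence all of $\nabla G$, to vanish mod $p$. In the second part, you use Lemma~\ref{lem:return} to rule out $v_p(m_i)=1$, which you spell out via the parity of $v_p(y_i^2)$, and then Lemma~\ref{n-vanishing} to get $p\mid n_i$; the preliminary remark that $p>3$ is harmless but not needed.
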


\begin{proof}
Suppose $p\mid n_i$ for some $i$.
Then $p\mid \frac{\partial G}{\partial m_i},\frac{\partial G}{\partial n_i}$
by \eqref{eq:derivatives}.
Let $\{i,j,k\}=\{1,2,3\}$.
Then $p\mid D(\m,\n)$ implies $p\mid (m_jn_j^2-m_kn_k^2)^2$, by \eqref{eq:Dmn}.
So $p\mid m_jn_j^2-m_kn_k^2-m_in_i^2 = L_j(\m,\n)$, in the notation of \eqref{eq:LLi}.
Thus $p\mid \frac{\partial G}{\partial m_j},\frac{\partial G}{\partial n_j}$
by \eqref{eq:derivatives}.
Similarly, $p\mid L_k,\frac{\partial G}{\partial m_k},\frac{\partial G}{\partial n_k}$.
It follows that  $p\mid \nabla{G}(\m,\n)$, which is a contradiction.

Thus we have shown that  $p\nmid n_1n_2n_3$.
Assume now that $S_{p^r}(\m,\n)\ne 0$ for some $r\ge 2$, and 
suppose $p\mid m_i$ for some $i$.
Since $r\ge 2$ and $S_{p^r}(\m,\n)\ne 0$,
Lemma~\ref{lem:return} forces $v_p(m_i)\ge 2$.
But then $p^2\mid \{p^r,m_i\}$,
so Lemma~\ref{n-vanishing} forces $p\mid n_i$.
This is impossible.
Thus in fact $p\nmid m_1m_2m_3$.
\end{proof}

\subsection*{A further bound}

Recall the definition \eqref{eq:Gmn} of $G(\m,\n)$. 
Our final bound for $S_q(\m,\n)$ is valid for any $q\in \NN$ 
devoid of certain prime factors.

\begin{lemma}\label{lem:beach}
Let $p^r$ be a prime power such that $p\nmid \nabla G(\m,\n)$. Then 
$$
|S_{p^r}(\m,\n)|\leq p^{4r}. 
$$
\end{lemma}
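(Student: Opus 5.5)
Split according to whether $p$ divides $D(\m,\n)$ and whether $r=1$ or $r\ge 2$. Throughout, use Lemma~\ref{lem:primepower}, which gives
$$
S_{p^r}(\m,\n)=\frac{p^{4r}}{\phi(p^r)}\left(N_0(p^r)-p^{-1}N_1(p^r)\right),
$$
together with the vanishing and size information on $N_j(p^r)$ from Lemma~\ref{lem:Njp}. The hypothesis $p\nmid\nabla G(\m,\n)$ forces $p\nmid 6$, since $G=6D$ and every partial derivative in \eqref{eq:derivatives} carries a factor $12$ or $24$. So $p$ is odd.

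\textbf{Case $p\nmid D(\m,\n)$.} Lemma~\ref{lem:Njp}(i) gives $N_0(p^r)=0$ for every $r\ge1$, and $N_1(p^r)=0$ for $r\ge 2$. Hence $S_{p^r}(\m,\n)=0$ for $r\ge2$. For $r=1$ we get $S_p=-p^3N_1(p)/(p-1)$. Counting $N_1(p)$ directly, each $a\in\FF_p^*$ admits at most $\prod_i\eta_p(-\bar a m_i)\le 8$ choices of $\y$, so $|S_p|\le 8p^3$. This is at most $p^4$ once $p\geq 8$, and $p\in\{5,7\}$ can be checked from the explicit table in Lemma~\ref{lem:Sp2}, where every value not involving $p\mid D$ has absolute value at most $4p^3$.

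\textbf{Case $r=1$, $p\mid D(\m,\n)$.} Lemma~\ref{grad1} gives $p\nmid n_1n_2n_3$. Read off Lemma~\ref{lem:Sp2}:
\begin{itemize}
\item If $p\nmid m_1m_2m_3$, the value is $0$ or $p^4-4p^3$.
\item If $p\mid m_1m_2m_3$, the case $p\mid\m$ gives $p^4-p^3$. The other cases give $-p^3$, $0$, $p^4-2p^3$ or $-2p^3$, because $p\nmid n_j n_k$ excludes the $2p^4$ cases and $p\nmid n_i$ excludes $p^4\1_{p\mid n_i}$.
\end{itemize}
All of these values have absolute value at most $p^4$.

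\textbf{Case $r\ge 2$, $p\mid D(\m,\n)$ (main obstacle).} Lemma~\ref{grad1} gives $p\nmid m_1m_2m_3n_1n_2n_3$. The key point is that $p\nmid\nabla G$ should force $v_p(D)=1$, or at least should make the $p$-adic roots behave like simple roots. I would use the factorisation \eqref{eq:factorize-dual-form} over $\mathcal O$: write $z_i=\sqrt{-m_i/a}$, which are units, and consider the four linear forms $\n.(\pm z_1,\pm z_2,\pm z_3)$ (up to overall sign). Since the $L_i$ are nonzero modulo $p$, the idea is that at most one of these forms is divisible by $p$, because the difference of two of them is $2n_iz_i$, a unit.

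Given that, rerun the argument of Lemma~\ref{lem:Njp} with the unique factor. Any solution $(a,\y)$ counted by $N_j(p^r)$ has each $y_i\equiv\pm z_i\bmod p^r$ (Hensel, since $p$ is odd and $z_i$ is a unit). The condition $\n.\y\equiv0\bmod p^{r-j}$ then selects only the sign pattern of the distinguished factor, and requires $v_p$ of that factor to be at least $r-j$. The total valuation of $D$ is $v_p$ of this single factor, so $N_j(p^r)\ne0$ forces $r-j\le v_p(D)$.

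Next, parametrise by $a$ and track how the distinguished factor $\n.\z(a)$ depends on $a$. Since $\z(a)=a^{-1/2}\z(1)$, its valuation is independent of $a$. Hence $N_0(p^r)$ and $N_1(p^r)$ are each either $0$ or $\phi(p^r)\cdot c$ with the same $c\le 2$, accounting for the overall sign $\pm\y$. The difference $N_0-p^{-1}N_1$ is then one of $0$, $-p^{-1}c\,\phi(p^r)$ or $(1-p^{-1})c\,\phi(p^r)$.

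This gives $|S_{p^r}|\le c\,p^{4r}$, which overshoots the target by the factor $c$. The remaining work is to check that the $\pm\y$ pairing and the normalisation $a\mapsto a\bar b^2$ cancel this factor $2$. Carefully counting the solutions modulo $p^r$, rather than over $\mathcal O$, is the step I expect to need the most care.
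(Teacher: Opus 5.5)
Your case split and your treatment of $r=1$ via Lemma~\ref{lem:Sp2} and Lemma~\ref{grad1} are correct and essentially the paper's. For $r\ge 2$ you also found the right key fact: since $p$ is odd and $p\nmid n_1n_2n_3$, at most one sign pattern $\pm(\eps_1z_1,\eps_2z_2,\eps_3z_3)$ can satisfy $\n.\y\equiv 0\bmod p$.

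\textbf{The gap.} The proof is not finished. You only reach $|S_{p^r}(\m,\n)|\le 2p^{4r}$, and the extra factor $2$ does not need a further cancellation. It is an overcount in your claim that $N_j(p^r)$ is either $0$ or $c\,\phi(p^r)$. That claim tacitly assumes every $a\in(\ZZ/p^r\ZZ)^\times$ contributes $c$ vectors $\y$, which is false.

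\textbf{The fix.} If $(a,\y)$ is counted, then $p\nmid y_3$ and $a\equiv -m_3\overline{y_3}^2\bmod{p^r}$. So $a$ lies in the single coset $-m_3\cdot((\ZZ/p^r\ZZ)^\times)^2$, which has size $\phi(p^r)/2$ because $p$ is odd. For the other half of the units, $-m_3/a$ is not a square in $\ZZ_p$. Thus $z_3\notin\ZZ_p$ and there is no $y_3$ at all. Your remark that $v_p(\n.\z(a))$ does not depend on $a$ overlooks exactly this.

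With at most two $\y$ for each admissible $a$, you get $N_j(p^r)\le\phi(p^r)$. Since $0\le N_0(p^r)\le N_1(p^r)$, it follows that
\[
|S_{p^r}(\m,\n)|\le \frac{p^{4r}}{\phi(p^r)}\max\{N_0(p^r),p^{-1}N_1(p^r)\}\le p^{4r}.
\]
The paper performs the same count more directly, so no factor $2$ ever appears. It eliminates $a$ and writes $\y=y_3(\eta_1,\eta_2,1)$ with $y_3$ a unit. It then shows that at most one of the four pairs $(\pm\eta_1,\pm\eta_2)$ satisfies $n_1\eta_1+n_2\eta_2+n_3\equiv 0\bmod p$. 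This gives $N_j(p^r)=M_j\le\phi(p^r)$.

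\textbf{A false heuristic.} Your suggestion that $p\nmid\nabla G(\m,\n)$ should force $v_p(D(\m,\n))=1$ is wrong. Take $\m=(1,1,1)$ and $\n=(1,1,-2)$. Then $D(\m,\n)=0$, while $\nabla G(\m,\n)=(-48,-48,96,-96,-96,-96)$, so every prime $p\ge 5$ is coprime to $\nabla G(\m,\n)$. Your argument never actually uses this heuristic. Only the fact that a single sign pattern survives is needed, and $v_p(D(\m,\n))$ plays no role in the bound.
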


\begin{proof}
Since $p\nmid \nabla G(\m,\n)$, we can henceforth assume that $p\geq 5$.
Moreover, by the first part of Lemma~\ref{grad1}, we may assume that 
$p\nmid D(\m,\n)$ or $p\nmid n_1n_2n_3$.

Suppose first that $r=1$ and $p\nmid m_1m_2m_3$. 
 The first part of Lemma 
\ref{lem:Sp2} yields
\begin{align*}
|S_{p}(\m,\n)|&\leq 
\begin{cases}
4p^{3} &\text{ if $p\nmid D(\m,\n)$,}\\ 
p^4-4p^{3} &\text{ if $p\mid D(\m,\n)$ and $p\nmid n_1n_2n_3$,}\\
\end{cases}\\
&\leq p^4,
\end{align*}
since  $p\geq 5$.
We  now deal with the case  $r=1$ and $p\mid m_1m_2m_3$. 
By the second part of  Lemma \ref{lem:Sp2} we see that $|S_p(\m,\n)|\leq p^4$ unless 
$p\mid (m_i,n_j,n_k)$, for some permutation $\{i,j,k\}=\{1,2,3\}$.
% But clearly 
% $p\mid (L_1(\m,\n),L_2(\m,\n), L_3(\m,\n))$ in this case, which is impossible.
But clearly $p\mid D(\m,\n)$ and $p\mid n_1n_2n_3$ in this case, which is impossible.

	Suppose next that $r\geq 2$. 
It follows from Lemmas~\ref{lem:square-full}
and~\ref{grad1}
that we may proceed under the assumption that 
$p\mid  D(\m,\n)$ and  $p\nmid m_1m_2m_3n_1n_2n_3$.
We  return to 
Lemma \ref{lem:primepower}.
Since  $p\nmid m_1m_2m_3$, we can assume $p\nmid y_1y_2y_3$ for the $\y$ counted in $N_j(p^r)$.  Eliminating $a$, we obtain
$$
S_{p^r}(\m,\n)=
\frac{p^{4r}}{\phi(p^r)}
(M_0-p^{-1}M_1),
$$
where
\begin{align*}
M_j
&=\#\left\{
\y\in (\ZZ/p^r\ZZ)^3: 
\begin{array}{l}
\n.\y\equiv 0 \bmod{p^{r-j}},~p\nmid y_1y_2y_3\\
y_1^2\overline{m_1}\equiv
y_2^2\overline{m_2}\equiv
y_3^2\overline{m_3} \bmod{p^{r}}
\end{array}
\right\}\\
&=\phi(p^r)\#\left\{
(\eta_1,\eta_2)\in (\ZZ/p^r\ZZ)^2: 
\begin{array}{l}
n_1\eta_1+n_2\eta_2+n_3\equiv 0 \bmod{p^{r-j}},~p\nmid \eta_1\eta_2\\
\eta_1^2\equiv m_1\overline{m_3} \bmod{p^r}, ~
\eta_2^2\equiv m_2\overline{m_3} \bmod{p^r}
\end{array}
\right\},
\end{align*}
for  $j\in \{0,1\}.$
In particular it follows that $m_im_3$ must be a quadratic residue modulo $p^r$, for $1\leq i\leq 3$. 
We can write 
\begin{align*}
M_j
&=\phi(p^r)
\sum_{(\eta_1,\eta_2)\in U(\m)}
\delta(\eta_1,\eta_2),
\end{align*}
for  $j\in \{0,1\}$, 
where $U(\m)$ is the set of $(\eta_1,\eta_2)\in (\ZZ/p^r\ZZ)^2$ such that 
$\eta_i^2\equiv m_i\overline{m_3}\bmod{p^r}$ for $i=1,2$, and 
$$
\delta(\eta_1,\eta_2)=
\begin{cases}
1 & \text{ if 
$n_1\eta_1+n_2\eta_2+n_3\equiv 0 \bmod{p^{r-j}}$,}\\
0 & \text{ otherwise.}
\end{cases}
$$
We claim that $\#U(\m)\leq 1$ if $p\nmid \nabla G(\m,\n)$. 
From this it follows that 
$M_j \leq \phi(p^r)$, for $j\in \{0,1\}$, whence
$$
|S_{p^r}(\m,\n)|=
\frac{p^{4r}}{\phi(p^r)}
\max\{M_0-p^{-1}M_1, p^{-1}M_1-M_0\}\leq \frac{p^{4r}}{\phi(p^r)} \max\{M_0,M_1\}\leq p^{4r},
$$
as desired.

To prove the claim, we suppose that 
$\#U(\m)\geq 2$. Then, without loss of generality we may assume that 
$\delta(\eta_1,\eta_2)=\delta(-\eta_1,\eta_2)=1$.
Since $r-j\geq 1$,  we must then have 
$$
\pm n_1\eta_1+n_2\eta_2+n_3\equiv 0 \bmod{p},
$$ 
whence
$n_2\eta_2\equiv -n_3\bmod{p}$ and $p\mid n_1$. But this implies that
$m_2n_2^2\equiv m_3n_3^2\bmod{p}$ and $p\mid n_1$,
% whence in fact $p\mid (L_1(\m,\n),
% L_2(\m,\n),L_3(\m,\n))$, which is not allowed.
contradicting the fact that $p\nmid n_1n_2n_3$.
\end{proof}

\subsection*{A convolution of exponential sums}

Guided by the ``generic'' case $p\nmid G(\m,\n)$ of Lemma \ref{lem:Sp2}, we set
\begin{equation}
\label{eq:S1q}
S^{(1)}_q(\m,\n)
\defeq \prod_{\substack{p\parallel q \\ p\nmid G(\m,\n)}} (-p^3\mathfrak{Q}(\m,p)),
\end{equation}
where $\mathfrak{Q}(\m,p)\defeq 0$ for $p\in \{2,3\}$ (for later convenience) and
\begin{equation}
\label{eq:Qmp}
\mathfrak{Q}(\m,p) \defeq 1 + \left(\frac{m_1m_2}{p}\right)+\left(\frac{m_2m_3}{p}\right)+\left(\frac{m_3m_1}{p}\right)\in \{0,1,2,4\}
\end{equation}
for $p\ge 5$.
In particular, $S^{(1)}_1(\m,\n) = 1$
and 
\begin{equation}\label{eq:SUP}
q^{-3}|S^{(1)}_q(\m,\n)|
\leq 4^{\omega(q)}\leq 
\tau(q)^2.
\end{equation}
Define $S^{(2)}_q(\m,\n)$ so that
\begin{equation}\label{eq:q1q2}
S_q(\m,\n) = \sum_{q_1q_2=q} S^{(1)}_{q_1}(\m,\n) S^{(2)}_{q_2}(\m,\n).
\end{equation}
Explicitly at prime powers, we have 
$$
S_{p^r}(\m,\n) = S^{(2)}_{p^r}(\m,\n)
- \mathfrak{Q}(\m,p)p^3 S^{(2)}_{p^{r-1}}(\m,\n) \bm{1}_{p\nmid G(\m,\n)},
$$
so that
\begin{equation}\label{eq:S2-as-S}
S^{(2)}_{p^r}(\m,\n) = S_{p^r}(\m,\n)
+ \sum_{j=1}^r (\mathfrak{Q}(\m,p)p^3)^j S_{p^{r-j}}(\m,\n) \bm{1}_{p\nmid G(\m,\n)}.
\end{equation}
Our remaining results in this section comprise of various estimates and observations about the sums 
$S^{(2)}_{q}(\m,\n)$, for $\m,\n\in \ZZ^3$. 
We begin with a crude upper bound, in which we recall the notation $\{q,m\}$ from \eqref{eq:sq-gcd}.

\begin{lemma}\label{lem:L1}
Let $q\in \NN$. Then 
$$
S_q^{(2)}(\m,\n)\ll q^{4+\ve} \sqrt{\{q,m_1\}\{q,m_2\}\{q,m_3\}},
$$
for any $\ve>0$.
\end{lemma}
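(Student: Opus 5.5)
Since $S^{(1)}_q$ is multiplicative in $q$ by \eqref{eq:S1q}, and $S_q$ is multiplicative by \eqref{eq:multiply}, the Dirichlet convolution relation \eqref{eq:q1q2} shows that $S^{(2)}_q(\m,\n)$ is multiplicative in $q$. The right-hand side of the lemma is multiplicative up to the $q^\ve$ factor, so we reduce to prime powers. It suffices to prove
$$
|S^{(2)}_{p^r}(\m,\n)|\le C^{r} p^{4r}\prod_{1\le i\le 3}\{p^r,m_i\}^{1/2}
$$
for an absolute constant $C$. Then $C^{\Omega(q)}\ll q^\ve$ after multiplying over $p^r\| q$. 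Indeed, $C^{\Omega(q)}\le \tau(q)^{\log_2 C}\ll q^\ve$.

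For the prime power bound we use \eqref{eq:S2-as-S}. If $p\mid G(\m,\n)$, or if $p\in\{2,3\}$ (where $\mathfrak{Q}=0$), then $S^{(2)}_{p^r}=S_{p^r}$. Corollary~\ref{cor:basic} then gives $|S_{p^r}|\le 8\cdot 2\,p^{4r}\prod_i\{p^r,m_i\}^{1/2}$, which is acceptable.

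Suppose instead $p\nmid G(\m,\n)$ with $p\ge 5$. Then $p\nmid \nabla G$ is not guaranteed, but $p\nmid D(\m,\n)$. Lemma~\ref{lem:square-full} then gives $S_{p^k}(\m,\n)=0$ for all $k\ge 2$. Hence the sum in \eqref{eq:S2-as-S} collapses to the terms with $r-j\in\{0,1\}$:
$$
S^{(2)}_{p^r}=S_{p^r}\1_{r\le 1}+(\mathfrak{Q}p^3)^{r}+(\mathfrak{Q}p^3)^{r-1}S_p(\m,\n).
$$
By Lemma~\ref{lem:Sp2} with $p\nmid D$, every case gives $|S_p(\m,\n)|\le p^4$; the generic case $p\nmid m_1m_2m_3$ in fact gives exactly $-\mathfrak{Q}p^3$ when \eqref{eq:m123} holds. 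Since $\mathfrak{Q}\le 4$, we obtain $|S^{(2)}_{p^r}|\le 4^r p^{3r}+4^{r-1}p^{3r-3}p^4+p^4\le 3\cdot 4^r p^{4r}$ for $r\ge1$. This is the required bound even without the $\{p^r,m_i\}$ factors.

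The only point requiring care is this collapse of the geometric sum. Without the vanishing from Lemma~\ref{lem:square-full}, the terms $(\mathfrak{Q}p^3)^jS_{p^{r-j}}$ would need the bound from Corollary~\ref{cor:basic} for each $j$, costing a factor $r\,4^r$. That would still be acceptable, since $|S_{p^{k}}|\le 16 p^{4k}\prod_i\{p^k,m_i\}^{1/2}$ and $\{p^k,m_i\}\le\{p^r,m_i\}$ for $k\le r$. So either route suffices, and the resulting constant $C^r$ (with a factor of $r$ absorbed) is harmless. Multiplying over primes completes the proof.
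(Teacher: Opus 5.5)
Your overall strategy matches the paper's: reduce to prime powers by multiplicativity, then combine \eqref{eq:S2-as-S} with Corollary~\ref{cor:basic}. Your collapse of the sum for $p\nmid G(\m,\n)$ via Lemma~\ref{lem:square-full} is also correct. The gap is in the final combination step. The inequality $C^{\Omega(q)}\le \tau(q)^{\log_2 C}$ is equivalent to $2^{\Omega(q)}\le \tau(q)$. This fails as soon as $p^2\mid q$ for some $p$; in fact $\tau(q)\le 2^{\Omega(q)}$ always holds. Nor is $C^{\Omega(q)}$ of size $O(q^{\varepsilon})$: for $q=5^r$ it equals $q^{\log C/\log 5}$, a fixed power of $q$. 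So a prime-power bound of the shape $C^r p^{4r}\prod_i\{p^r,m_i\}^{1/2}$ does not give the lemma. You need a constant independent of $r$, and then the product over $p\mid q$ is $C^{\omega(q)}\ll q^{\varepsilon}$. For the same reason, the claim in your last paragraph that losing a factor $r\,4^r$ is harmless is false.

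The repair is easy and is exactly what the paper does: keep the powers of $p^{-1}$ and use $p\ge 5$. In your second case, $4^rp^{3r}=(4/p)^r p^{4r}\le p^{4r}$ and $4^{r-1}p^{3r+1}=(4/p)^{r-1}p^{4r}\le p^{4r}$. Hence $|S^{(2)}_{p^r}(\m,\n)|\le 3p^{4r}$ uniformly in $r$. In fact Lemma~\ref{lem:L2} shows $S^{(2)}_{p^r}(\m,\n)=0$ there, and your own generic evaluation $S_p=-\mathfrak{Q}(\m,p)p^3$ already exhibits this cancellation. Similarly, in the route without the collapse, each term satisfies $(\mathfrak{Q}(\m,p)p^3)^j|S_{p^{r-j}}(\m,\n)|\ll (4/p)^j p^{4r}\prod_i\{p^r,m_i\}^{1/2}$. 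Since $\sum_{j}(4/p)^j\1_{p\ge 5}=O(1)$, this gives an $r$-uniform constant, and that is precisely the paper's argument. With this fix, your case split on $p\mid G(\m,\n)$ is valid but unnecessary. A minor point: for $r=1$ your collapsed formula counts $S_p$ twice, which only weakens the upper bound.
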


\begin{proof}
It suffices to establish this result when $q=p^r$ is a prime power. 
When $r=1$ it follows from \eqref{eq:S2-as-S} and 
Corollary \ref{cor.3.8} that 
$$
S^{(2)}_{p}(\m,\n) = S_{p}(\m,\n)
+ \mathfrak{Q}(\m,p)p^3 \bm{1}_{p\nmid G(\m,\n)} \ll p^4.
$$
Suppose next that $r\geq 2$. Then we deduce from 
\eqref{eq:S2-as-S} and 
Corollary \ref{cor:basic} that 
\begin{align*}
|S^{(2)}_{p^r}(\m,\n)|
&\leq
\sum_{j=0}^{r-1} (\mathfrak{Q}(\m,p)p^3)^j |S_{p^{r-j}}(\m,\n)|+ (\mathfrak{Q}(\m,p)p^3)^r\\
&\ll 
\sum_{j=0}^{r-1} (4p^3\bm{1}_{p\ge 5})^j 
p^{4(r-j)}\sqrt{\{p^{r-j},m_1\}\{p^{r-j},m_2\}\{p^{r-j},m_3\}}
+ (4p^3\bm{1}_{p\ge 5})^r\\
&\ll p^{4r}\sqrt{\{p^{r},m_1\}\{p^{r},m_2\}\{p^{r},m_3\}}
\sum_{j=0}^{r-1} \left(\frac{4}{p}\bm{1}_{p\ge 5}\right)^j
+ (4p^3\bm{1}_{p\ge 5})^r,
\end{align*}
since $\{p^{r-j},m\}\leq \{p^r,m\}$ for any $m\in \ZZ$.
The remaining geometric series is $O(1)$, since $\frac{4}{p}\bm{1}_{p\ge 5} < 1$,  and so the statement of the lemma follows.
\end{proof}

Our next result gives conditions under which  $S^{(2)}_{q}(\m,\n)$ vanishes. 

\begin{lemma}\label{lem:L2}
Suppose that $p\nmid G(\m,\n)$. Then $S^{(2)}_{p^r}(\m,\n)=0$ for any integer $r\geq 1$.
\end{lemma}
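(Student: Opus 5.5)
Since $p\nmid G(\m,\n)=6D(\m,\n)$, we have $p\geq 5$ and $p\nmid D(\m,\n)$. The plan is to evaluate every term in \eqref{eq:S2-as-S} explicitly and check that they cancel. The three inputs are the following.

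First, Lemma \ref{lem:square-full} gives $S_{p^k}(\m,\n)=0$ for all $k\ge 2$, because $\kappa(p^k)=p\nmid D(\m,\n)$.

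Second, $S_{p^0}(\m,\n)=S_1(\m,\n)=1$.

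Third, $S_p(\m,\n)=-p^3\mathfrak{Q}(\m,p)$. To see this, split into cases as in Lemma \ref{lem:Sp2}.
\begin{itemize}
\item If $p\nmid m_1m_2m_3$ and \eqref{eq:m123} holds, then $S_p=-4p^3$, and $\mathfrak{Q}=4$.
\item If $p\nmid m_1m_2m_3$ and \eqref{eq:m123} fails, then $S_p=0$ by Lemma \ref{lem:Sp1}. Since the three Legendre symbols multiply to $1$, exactly two of them equal $-1$, so $\mathfrak{Q}=0$.
\item If $p\mid m_1m_2m_3$, observe that $p\nmid D$ rules out $p\mid\m$. It also rules out the case $p\mid(m_j,m_k)$, $p\nmid m_i$, because there $D\equiv m_i^2n_i^4$, so $p\nmid n_i$; Lemma \ref{lem:Sp2} then gives $S_p=-p^3$, while two symbols vanish and $\mathfrak{Q}=1$. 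So that case is consistent after all.
\item If $p\mid m_i$ only, then $D\equiv(m_jn_j^2-m_kn_k^2)^2$. Hence $p\nmid (n_j,n_k)$, the case $p\mid(m_i,n_j,n_k)$ is excluded, and so is $p\mid(m_i,D)$. This leaves $S_p=0$ if $(\frac{m_jm_k}{p})=-1$ and $S_p=-2p^3$ otherwise, matching $\mathfrak{Q}=1+(\frac{m_jm_k}{p})\in\{0,2\}$.
\end{itemize}
This case matching is the main thing to check carefully; everything else is bookkeeping.

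Substituting these into \eqref{eq:S2-as-S}, with $Q=\mathfrak{Q}(\m,p)$ and $p\nmid G$, only $j=r-1$ and $j=r$ survive:
\begin{itemize}
\item for $r\ge 2$, the sum collapses to $(Qp^3)^{r-1}S_p+(Qp^3)^r=(Qp^3)^{r-1}(-Qp^3)+(Qp^3)^r=0$;
\item for $r=1$, it is $S_p+Qp^3=0$.
\end{itemize}
Either way $S^{(2)}_{p^r}(\m,\n)=0$.

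Equivalently, the local factor $\sum_r S_{p^r}X^r=1-Qp^3X$ coincides with the local factor of $S^{(1)}$, so the convolution quotient is trivial.
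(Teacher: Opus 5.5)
Your proof is correct and follows essentially the same route as the paper. Lemma~\ref{lem:square-full} collapses \eqref{eq:S2-as-S} to $(\mathfrak{Q}(\m,p)p^3)^{r-1}\bigl(S_p(\m,\n)+\mathfrak{Q}(\m,p)p^3\bigr)$, and $S_p(\m,\n)=-p^3\mathfrak{Q}(\m,p)$ is checked by the same casework on Lemmas~\ref{lem:Sp1} and~\ref{lem:Sp2}. One wording slip: in the case $p\mid(m_j,m_k)$, $p\nmid m_i$, the condition $p\nmid D(\m,\n)$ does not rule the case out but only forces $p\nmid n_i$, and all three Legendre symbols vanish (not two), which is why $\mathfrak{Q}(\m,p)=1$.
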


\begin{proof}
Returning to 
\eqref{eq:S2-as-S}, we obtain
$$
S^{(2)}_{p^r}(\m,\n) = 
(\mathfrak{Q}(\m,p)p^3)^r+(\mathfrak{Q}(\m,p)p^3)^{r-1} S_{p}(\m,\n),
$$
since $S_{p^j}(\m,\n)=0$ for $j\geq 2$ (by Lemma~\ref{lem:square-full}).
We now proceed by casework on which coordinates of $\m$ are divisible by $p$.
We note that $p\ge 5$, since $p\nmid G(\m,\n)$.

Suppose first that $p\nmid m_1m_2m_3$.
If \eqref{eq:m123} holds,
then $S_{p}(\m,\n)=-4p^3$ by the first part of Lemma~\ref{lem:Sp2},
and $\mathfrak{Q}(\m,p)=4$ by \eqref{eq:Qmp}. Thus 
 $S^{(2)}_{p^r}(\m,\n)=0$ in this case.
If \eqref{eq:m123} does not hold,
then $S_{p}(\m,\n)=0$ by Lemma~\ref{lem:Sp1},
and $\mathfrak{Q}(\m,p)=0$ by \eqref{eq:Qmp}. Thus $S^{(2)}_{p^r}(\m,\n)=0$ in this case.

Suppose next that $p\mid m_1m_2m_3$ and 
$p\nmid G(\m,\n)$. Thus there exists a permutation $\{i,j,k\}=\{1,2,3\}$ such that 
$p\nmid m_jm_k,\; p\mid m_i,\; p\nmid (n_j,n_k)$,
or $p\mid (m_j,m_k),\; p\nmid m_in_i$.
In the former case, the second part of Lemma~\ref{lem:Sp2} implies that
$S_p(\m,\n) = -(1+(\frac{m_jm_k}{p}))p^3$,
whereas $\mathfrak{Q}(\m,p)=1+(\frac{m_jm_k}{p})$ by \eqref{eq:Qmp},
so $S^{(2)}_{p^r}(\m,\n)=0$.
In the latter case, Lemma~\ref{lem:Sp2} implies that
$S_p(\m,\n) = -p^3$,
whereas $\mathfrak{Q}(\m,p)=1$ by \eqref{eq:Qmp},
so $S^{(2)}_{p^r}(\m,\n)=0$.
\end{proof}

Let $p^r$ be a prime power. On combining 
\eqref{eq:S2-as-S} with 
Lemma \ref{lem:primepower}, we obtain
\begin{equation}\label{eq:fount}
S^{(2)}_{p^r}(\m,\n)
= S_{p^r}(\m,\n)
= \frac{p^{4r}}{\phi(p^r)}\left(N_0(p^r)-p^{-1}N_1(p^r)\right)
\end{equation}
for any $p\mid G(\m,\n)$.

\begin{lemma}\label{lem:L3}
Let $q\in \NN$.
Suppose that for every $p\mid q$, we have $p\nmid \nabla G(\m,\n)$. Then 
% $$
% S^{(2)}_{q}(\m,\n)\ll \left(\frac{q}{\phi(q)}\right)^4 q^4.
% $$
$$
\abs{S^{(2)}_{q}(\m,\n)}\le q^4.
$$
\end{lemma}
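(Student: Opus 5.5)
Since both $S^{(1)}_q$ and $S_q$ are multiplicative in $q$, the convolution identity \eqref{eq:q1q2} makes $S^{(2)}_q(\m,\n)$ multiplicative in $q$ as well. It therefore suffices to prove $|S^{(2)}_{p^r}(\m,\n)|\le p^{4r}$ for every prime power $p^r$ with $r\ge 1$ and $p\nmid \nabla G(\m,\n)$; the general statement follows by multiplying over the prime powers exactly dividing $q$. I will split the prime-power case according to whether $p$ divides $G(\m,\n)$.

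\emph{Case $p\nmid G(\m,\n)$.} Here Lemma~\ref{lem:L2} gives $S^{(2)}_{p^r}(\m,\n)=0$, so the bound is immediate.

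\emph{Case $p\mid G(\m,\n)$.} Now \eqref{eq:fount} gives $S^{(2)}_{p^r}(\m,\n)=S_{p^r}(\m,\n)$, and Lemma~\ref{lem:beach} gives $|S_{p^r}(\m,\n)|\le p^{4r}$ whenever $p\nmid\nabla G(\m,\n)$. The only point to verify is that \eqref{eq:fount} is valid here. It follows from \eqref{eq:S2-as-S}, because the correction terms there carry the factor $\mathbf 1_{p\nmid G(\m,\n)}$, which vanishes. (If $p\in\{2,3\}$, then $p$ divides every component of $\nabla G=6\nabla D$, so such primes are excluded by the hypothesis anyway, consistent with the proof of Lemma~\ref{lem:beach}.)

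No step here is hard; the substantive work is in Lemmas~\ref{lem:beach} and~\ref{lem:L2}. The main point needing care is the multiplicativity of $S^{(2)}$, which holds because the Dirichlet inverse of a multiplicative function is multiplicative, so $S^{(2)}=S*(S^{(1)})^{-1}$ is multiplicative. Multiplying the prime-power bounds then gives $|S^{(2)}_q(\m,\n)|\le\prod_{p^r\| q}p^{4r}=q^4$.
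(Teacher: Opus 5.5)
Your proof is correct and follows essentially the same route as the paper: Lemma~\ref{lem:L2} disposes of primes $p\nmid G(\m,\n)$, \eqref{eq:fount} identifies $S^{(2)}_{p^r}(\m,\n)$ with $S_{p^r}(\m,\n)$ when $p\mid G(\m,\n)$, and Lemma~\ref{lem:beach} bounds the latter by $p^{4r}$. The paper phrases the argument globally rather than prime by prime. It assumes $S^{(2)}_q(\m,\n)\ne 0$, deduces $p\mid G(\m,\n)$ for every $p\mid q$, and concludes $S^{(2)}_q=S_q$. This step relies on the same multiplicativity of $S^{(2)}$ that you make explicit.
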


\begin{proof}
% Let $p^r$ be a prime power such that $p\nmid \nabla G(\m,\n)$. In particular, we may assume that $p\geq 5$. 
% It now follows from 
% \eqref{eq:S2-as-S} and Lemma \ref{lem:beach} that 
% \begin{align*}
% |S^{(2)}_{p^r}(\m,\n)|\leq (4p^3)^r
% +
% \sum_{j=0}^{r-1} (4p^3)^j\cdot p^{4(r-j)}
% &=
% p^{4r}\sum_{j=0}^{r} \left(\frac{4}{p}\right)^j\\
% &=\left(1-\frac{1}{p}\right)^{-4}\left(1+O(p^{-2})\right).
% \end{align*}
% Taking a product over the prime divisors of $q$ we are easily led to the statement of the lemma. 

We may assume that $S^{(2)}_{q}(\m,\n)\ne 0$, else the result is trivial.
Then Lemma~\ref{lem:L2} implies $p\mid G(\m,\n)$ for all $p\mid q$.
Thus $S^{(2)}_{q}(\m,\n)=S_{q}(\m,\n)$,
by multiplying \eqref{eq:fount} over the prime divisors of $q$.
Yet $\abs{S_{q}(\m,\n)}\le q^4$, by Lemma \ref{lem:beach}.
\end{proof}

\begin{lemma}\label{lem:L4}
Let $p$ be a prime and suppose that $r\geq 2+v_p(G(\m,\n))$. Then 
$$S^{(2)}_{p^r}(\m,\n)=0.$$
\end{lemma}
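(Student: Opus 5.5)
We split according to whether $p$ divides $G(\m,\n)$. If $p\nmid G(\m,\n)$, then $v_p(G(\m,\n))=0$, so the hypothesis reads $r\ge 2$. Lemma~\ref{lem:L2} already gives $S^{(2)}_{p^r}(\m,\n)=0$ for every $r\ge 1$, so this case is immediate.

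Now suppose $p\mid G(\m,\n)$. Then \eqref{eq:fount} gives $S^{(2)}_{p^r}(\m,\n)=S_{p^r}(\m,\n)$, and it remains to show that $S_{p^r}(\m,\n)=0$ when $r\ge 2+v_p(G(\m,\n))$. We first dispose of the degenerate possibility $D(\m,\n)=0$: then $G(\m,\n)=0$, so $v_p(G(\m,\n))=\infty$ and the hypothesis is vacuous. We may therefore assume $D(\m,\n)\neq 0$, so that all valuations are finite.

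Since $G=6D$, we have $v_p(G(\m,\n))=v_p(6)+v_p(D(\m,\n))\ge v_p(D(\m,\n))$. The hypothesis therefore gives
\[
r\ge 2+v_p(D(\m,\n)) > 1+v_p(D(\m,\n)).
\]
Lemma~\ref{lem:Njp}(i), applied with $j=0$ and with $j=1$, shows that $N_0(p^r)=N_1(p^r)=0$ whenever $r>j+v_p(D(\m,\n))$, and our strict inequality covers both values of $j$. Lemma~\ref{lem:primepower} then yields
\[
S_{p^r}(\m,\n)=\frac{p^{4r}}{\phi(p^r)}\bigl(N_0(p^r)-p^{-1}N_1(p^r)\bigr)=0 .
\]
Equivalently, we could invoke Lemma~\ref{lem:square-full} with $q=p^r$, where $r\ge 2$: it forces $p^r\mid p\,D(\m,\n)$, that is $r\le 1+v_p(D(\m,\n))$, which contradicts the hypothesis.

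There is no real obstacle here. The only points needing care are the convention $v_p(0)=\infty$ when $D(\m,\n)=0$, and checking that the factor $6$ in $G$ only increases the valuation at $p=2,3$. The latter works in our favour, because the hypothesis is stated in terms of $v_p(G)$, which is at least $v_p(D)$.
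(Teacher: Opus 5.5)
Your proof is correct and follows the paper's argument: you dispose of $p\nmid G(\m,\n)$ via Lemma~\ref{lem:L2}, then use \eqref{eq:fount} to reduce to showing $S_{p^r}(\m,\n)=0$. The paper gets this vanishing from Lemma~\ref{lem:square-full}, which is your stated alternative, and whose proof is exactly your inlined use of Lemmas~\ref{lem:Njp}(i) and~\ref{lem:primepower}; your remarks on $D(\m,\n)=0$ and on the factor $6$ in $G$ are harmless bookkeeping that the paper leaves implicit.
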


\begin{proof}
We may  assume that $p\mid G(\m,\n)$, since Lemma~\ref{lem:L2} covers the complementary case.
In particular, $S^{(2)}_{p^r}(\m,\n) = S_{p^r}(\m,\n)$, by \eqref{eq:fount}.
The statement of the lemma therefore follows from
% combining  Lemmas \ref{lem:primepower} and  \ref{lem:Njp}.
the second part of Lemma~\ref{lem:square-full}.
\end{proof}

Our final  result produces a useful upper bound for 
$S^{(2)}_{q}(\m,\n)$.

\begin{lemma}\label{lem:L5}
% Let $p$ be a prime such that 
% $v_p(G(\m,\n))\leq 1$ and let $r\geq 2$. Then 
% $$
% S^{(2)}_{p^r}(\m,\n)\ll p^{\frac{7r}{2}}.
% $$
Let $p$ be a prime and let 
$r=1+v_p(G(\m,\n))$. Then 
$$
\abs{S^{(2)}_{p^r}(\m,\n)}\le p^{4r-1}.
$$
\end{lemma}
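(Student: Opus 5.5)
The plan is to split according to whether $p$ divides $G(\m,\n)$, and in the main case to evaluate $S^{(2)}_{p^r}$ through Lemma~\ref{lem:primepower}, feeding in the vanishing and counting statements of Lemma~\ref{lem:Njp}.

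\emph{The case $p\nmid G(\m,\n)$.} Here $r=1$, and Lemma~\ref{lem:L2} gives $S^{(2)}_{p}(\m,\n)=0$. The bound is then trivial.

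\emph{The case $p\mid G(\m,\n)$.} By \eqref{eq:fount} we have
$$
S^{(2)}_{p^r}(\m,\n)=S_{p^r}(\m,\n)=\frac{p^{4r}}{\phi(p^r)}\left(N_0(p^r)-p^{-1}N_1(p^r)\right).
$$
Since $G=6D$, we have $v_p(G(\m,\n))=v_p(6)+v_p(D(\m,\n))$. The argument now splits according to whether $p\in\{2,3\}$.

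Suppose first that $p\ge 5$. Then $r=1+v_p(D(\m,\n))$. Lemma~\ref{lem:Njp}(i) with $j=0$ applies, since $r>0+v_p(D(\m,\n))$, and gives $N_0(p^r)=0$. Lemma~\ref{lem:Njp}(ii) with $j=1$ applies, since $r=1+v_p(D(\m,\n))$ exactly, and gives $N_1(p^r)\le \phi(p^r)$. Hence
$$
\abs{S^{(2)}_{p^r}(\m,\n)}=\frac{p^{4r}}{\phi(p^r)}\,p^{-1}N_1(p^r)\le p^{4r-1}.
$$

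Suppose instead that $p\in\{2,3\}$. Then $v_p(6)=1$, so $r=2+v_p(D(\m,\n))>1+v_p(D(\m,\n))$. Lemma~\ref{lem:Njp}(i) therefore kills both $N_0(p^r)$ and $N_1(p^r)$, so the sum vanishes. Equivalently, this case is already covered by Lemma~\ref{lem:L4}.

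The only delicate point is to track the factor $6$ in $G=6D$. This factor is what converts the hypothesis $r=1+v_p(G)$ into exactly the threshold $r=j+v_p(D)$ with $j=1$ required in Lemma~\ref{lem:Njp}(ii) when $p\ge5$. The rest is a direct substitution.
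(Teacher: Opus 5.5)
Your proof is correct and follows the paper's own argument: Lemma~\ref{lem:L2} when $p\nmid G(\m,\n)$, then \eqref{eq:fount} combined with Lemma~\ref{lem:Njp}. Your separate treatment of $p\in\{2,3\}$ is more careful than the paper's one-line citation: there $r=2+v_p(D(\m,\n))$, so part (ii) of Lemma~\ref{lem:Njp} does not literally apply, and it is part (i) that gives $N_0(p^r)=N_1(p^r)=0$. One small correction: your aside that this case is ``already covered by Lemma~\ref{lem:L4}'' is inaccurate, since that lemma requires $r\ge 2+v_p(G(\m,\n))=3+v_p(D(\m,\n))$; the vanishing instead comes from Lemma~\ref{lem:Njp}(i) (equivalently Lemma~\ref{lem:square-full}), which is exactly what your main argument uses.
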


\begin{proof}
% We may also assume $v_p(G(\m,\n))=1$, since Lemma~\ref{lem:L2} covers the 
% case that $p\nmid G(\m,\n)$.
% In particular, 
% Lemma \ref{lem:L4}  yields
% $S^{(2)}_{p^r}(\m,\n)=0$ if $r\geq 2+v_p(G(\m,\n)) =3$. Hence we may proceed under the assumption that $r=2$.
% If $p\leq 3$ then the statement of the lemma is  obvious and so we may assume  that $p>3$. 

% We must have $p\nmid \m$ 
% and $p\nmid (m_i,m_j,n_k)$ for any permutation 
% $\{i,j,k\}=\{1,2,3\}$, 
% since  $v_p(G(\m,\n))=1$. It follows from 
% \eqref{eq:fount} that 
% $$
%  S^{(2)}_{p^2}(\m,\n)=
%  S_{p^2}(\m,\n)
% = \frac{p^{6}}{1-\frac{1}{p}}\left(N_0(p^2)-p^{-1}N_1(p^2)\right),
% $$
% since $p\mid G(\m,\n).$ 
% Moreover, an application of Lemma \ref{lem:Njp} reveals that $N_0(p^2)=0$ and $N_1(p^2)\leq p^2-p$.  Hence  $|S^{(2)}_{p^2}(\m,\n)|\leq  p^{7}$. This completes the proof. 

We may also assume $v_p(G(\m,\n))\ge 1$, since Lemma~\ref{lem:L2} covers the 
case that $p\nmid G(\m,\n)$.
It follows from 
\eqref{eq:fount} that 
$$
S^{(2)}_{p^r}(\m,\n)
= S_{p^r}(\m,\n)
= \frac{p^{4r}}{\phi(p^r)}\left(N_0(p^r)-p^{-1}N_1(p^r)\right),
$$
since $p\mid G(\m,\n).$ 
Moreover, an application of Lemma \ref{lem:Njp} reveals that $N_0(p^r)=0$ and $N_1(p^r)\leq \phi(p^r)$.  Hence  $|S^{(2)}_{p^r}(\m,\n)|\leq p^{4r-1}$. This completes the proof. 
%Supposing without loss of generality that $p\nmid m_3$ we can eliminate $a$ and $y_3$ from $N_j(p^2)$ to obtain $N_j(p^2)=\phi(p^2)L_j$, where
%$$
%L_j=\#\left\{(y_1,y_2)\in (\ZZ/p^2\ZZ)^2: 
%\begin{array}{l}
%n_1y_1+n_2y_2+n_3\equiv 0 \bmod{p^{2-j}}\\
%m_3y_i^2\equiv m_i\bmod{p^2} \text{ for $i=1,2$}
%\end{array}
%\right\}.
%$$
%If $j=0$ then we easily conclude that $L_0=0$. Indeed, if $L_0\geq 1$ then  $p^2\mid G(\m,\n)$, which is impossible. Hence 
%$S^{(2)}_{p^2}(\m,\n)=-p^7L_1(p^2)$ and it remains to prove that $L_1=O(1)$.
%
%If $p\mid (m_1,m_2)$ then $L_1=0$, since we cannot have 
%$p\mid n_3$. Thus we may assume that $p\nmid (m_1,m_2)$. Suppose 
%without loss of generality that 
%$p\nmid m_2$. 
%Then there are at most $2$ choices for $y_2$ such that 
%$m_3y_2^2\equiv m_2\bmod{p^2}$. If also $p\nmid m_1$ then there are at most $2$ choices for $y_1$ such that 
%$m_3y_1^2\equiv m_1\bmod{p^2}$
%and we conclude that $L_1\leq 4$.  If $p\| m_1$ then clearly $L_1=0$. It remains to deal with the possibility that $p^2\mid m_1$, in which case 
%$$
%L_1=p~
%\#\left\{y_2\in \ZZ/p^2\ZZ: 
%\begin{array}{l}
%n_2y_2+n_3\equiv 0 \bmod{p}\\
%m_3y_2^2\equiv m_2\bmod{p^2}
%\end{array}
%\right\}.
%$$
%If $L_1\geq 1$ then one deduces that $m_2n_2^2\equiv m_3n_3^2\bmod{p}$, whence
%$p^2\mid (m_2n_2^2- m_3n_3^2)^2$. But this implies that 
%\begin{align*}
%G(\m,\n)
%&\equiv m_2^2n_2^4+m_3^2n_3^4-2m_2m_3n_2^2n_3^2 \bmod{p^2}\\
%&\equiv  (m_2n_2^2- m_3n_3^2)^2 \bmod{p^2}\\
%&\equiv 0 \bmod{p^2},
%\end{align*}
%since also $p^2\mid m_1$. This contradics our assumption 
%$v_p(G(\m,\n))= 1$ and so we conclude that $L_1=0$.
\end{proof}

\section{The main term}

We return to 
the main term 
$$
M(B)=
\sum_{q=1}^{\infty}\frac{1}{q^6} S_q(\0,\0)I_q(\0,\0)
$$
that was defined in \eqref{eq:main}.
The properties of the $h$-function 
that were recorded at the start of Section
\ref{sec:initial} ensure that only $q\ll Q$ contribute to this sum.
Let 
$$
\Sigma(x)=
\sum_{q\leq x} q^{-6} S_q(\0,\0).
$$
The following result is concerned with an asymptotic formula for this quantity.

\begin{lemma}\label{lem:Sigma-x}
There exists $b\in \RR$ and $\delta>0$ such that 
$$
\Sigma(x)=\frac{1}{2\zeta(3)}\log x  +b +O(x^{-\delta}).
$$
\end{lemma}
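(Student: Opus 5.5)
By Corollary~\ref{cor:basic-00} and the multiplicativity relation \eqref{eq:multiply}, the function $f(q)\defeq q^{-6}S_q(\0,\0)$ is multiplicative, with
$$
f(p^r)=p^{-2r+3\lfloor r/2\rfloor}\left(1-\frac1p\right).
$$
I will study its Dirichlet series $F(s)=\sum_{q\ge1} f(q)q^{-s}$ through its Euler factors. With $X=p^{-s}$, the exponents split by parity of $r$: $f(p^{2k})=p^{-k}(1-p^{-1})$ and $f(p^{2k+1})=p^{-k-2}(1-p^{-1})$. Hence
$$
\sum_{r\ge0} f(p^r)X^r = 1+\left(1-\frac1p\right)\left(\frac{1}{1-p^{-1}X^2}-1+\frac{p^{-2}X}{1-p^{-1}X^2}\right).
$$
At $s=0$ the terms $f(p^{2k})$ are of size $p^{-k}$, so the squares contribute like $\sum_k k^{-1}$. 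The plan is therefore to factor $F(s)=\zeta(2s+1)G(s)$, where $G(s)$ is an Euler product that converges absolutely in some half-plane $\Re(s)>-\delta_0$.

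Concretely, multiplying the local factor by $(1-p^{-1-2s})$ gives
$$
(1-p^{-1}X^2)+(1-p^{-1})(p^{-1}X^2+p^{-2}X) = 1-p^{-2}X^2+(1-p^{-1})p^{-2}X,
$$
using $p^{-1}X^2=p^{-1-2s}$. This factor equals $1+O(p^{-2+|\Re s|\cdot 2})$, so $G(s)$ converges absolutely for $\Re(s)>-1/2$. At $s=0$ we get
$$
G(0)=\prod_p\left(1-p^{-2}+p^{-2}-p^{-3}\right)=\prod_p(1-p^{-3})=\zeta(3)^{-1}.
$$
So $F(s)$ has a simple pole at $s=0$. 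Since $\zeta(2s+1)\sim 1/(2s)$, its residue is $G(0)/2=1/(2\zeta(3))$, which matches the claimed leading coefficient.

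To obtain the asymptotic with a power saving, I would avoid contour integration and instead write $f=g*h$ elementarily. Here $h(n)=\1_{n=m^2}\,m^{-1}$ is the coefficient sequence of $\zeta(2s+1)$, and $g$ has Dirichlet series $G$. Absolute convergence of $G$ at $s=-\delta_0$ means $\sum_d |g(d)|d^{\delta_0}<\infty$ for any $\delta_0<1/2$. Then
$$
\Sigma(x)=\sum_{d\le x} g(d)\sum_{m\le \sqrt{x/d}}\frac1m = \sum_{d\le x} g(d)\left(\tfrac12\log(x/d)+\gamma+O\big(\sqrt{d/x}\big)\right).
$$
Extending the $d$-sum to infinity costs $\sum_{d>x}|g(d)|(1+\log d)\ll x^{-\delta}$, and the error term contributes $x^{-1/2}\sum_{d\le x}|g(d)|d^{1/2}\ll x^{-\delta}$, using $|g(d)|d^{1/2}\le |g(d)|d^{\delta_0}\,x^{1/2-\delta_0}$. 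This gives
$$
\Sigma(x)=\frac{G(0)}{2}\log x+b+O(x^{-\delta}),\qquad b=\gamma G(0)-\tfrac12\sum_d g(d)\log d.
$$

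The only step needing care is the verification that $g$ has absolutely convergent Dirichlet series slightly to the left of $0$, including the higher prime powers. I would check this by bounding $|g(p^r)|\ll p^{-r}$ (up to constants) from the explicit local factor $(1-p^{-2}X^2+(1-p^{-1})p^{-2}X)$. Once that is in hand, the rest is routine.
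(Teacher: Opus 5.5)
Your proof is correct, and it follows a different route from the paper's. Both rest on the same factorization. Your $F(s)=\zeta(2s+1)G(s)$ is the paper's $F(s)=\zeta(2s-11)D(s)$ shifted by $6$, with $G(0)=D(6)=\prod_p(1-p^{-3})=1/\zeta(3)$.

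The paper then applies a truncated Perron formula to $\zeta(2s+1)D(s+6)x^s/s$ and shifts the contour to $\Re(s)=-1/4$, picking up the double pole at $s=0$. This implicitly needs growth bounds for $\zeta$ on the critical line and control of the truncation error, which the paper calls ``straightforward''.

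You instead write $f=g*h$, with $h$ the coefficient sequence of $\zeta(2s+1)$. You then use only the elementary estimate $\sum_{m\le y}1/m=\log y+\gamma+O(1/y)$ and the convergence of $\sum_d|g(d)|d^{\delta_0}$. This avoids complex analysis and truncation issues entirely, gives any exponent $\delta<1/2$, and gives $b$ explicitly as $\gamma/\zeta(3)+\tfrac12 G'(0)$. The paper's contour argument is the more standard template and adapts more readily if one later wants secondary terms from further poles.

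One correction to your last paragraph: the bound $|g(p^r)|\ll p^{-r}$ would not suffice for convergence to the left of $0$, since $\sum_p p^{-1+\delta_0}$ diverges. What you need comes straight from the quadratic local factor $1-p^{-2}X^2+(1-p^{-1})p^{-2}X$:
\[
g(p)=(1-p^{-1})p^{-2},\qquad g(p^2)=-p^{-2},\qquad g(p^r)=0 \text{ for } r\ge 3.
\]
This gives $\sum_d|g(d)|d^{\delta_0}<\infty$ for every $\delta_0<1/2$, as your earlier $1+O(p^{-2+2|\Re s|})$ estimate already showed.
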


\begin{proof}
We begin by analysing the Dirichlet series
$$
F(s)=\sum_{q=1}^\infty q^{-s}S_q(\0,\0)
$$
for $s\in \CC$.  
Since $|S_q(\0,\0)|\leq q^7$, the series $F(s)$ is absolutely 
convergent for $\Re(s) >8$.
In view of 
Corollary \ref{cor:basic-00}
and 
the multiplicativity property
\eqref{eq:multiply}, we have 
\begin{align*}
F(s)
&=\prod_p
\left(
 1+
\left(1-\frac{1}{p}\right)\sum_{r\geq 1}
p^{4r+3\lfloor r/2 \rfloor-rs}\right),
\end{align*}
for $\Re(s)>8$.  Clearly 
$$
\sum_{r\geq 1}
p^{4r+3\lfloor r/2 \rfloor-rs}=
\sum_{r'\geq 1}
p^{11r'-2r's}+
p^{4-s}\sum_{r'\geq 0}
p^{11r'-2r's}.
$$
Since $\Re(s)>8>6$, we may execute the geometric series to conclude that 
$$
\sum_{r\geq 1}
p^{4r+3\lfloor r/2 \rfloor-rs}=
\left(\frac{1}{p^{s-4}}+\frac{1}{p^{2s-11}}\right)\left(1-\frac{1}{p^{2s-11}}\right)^{-1}.
%\frac{1}{p^{s-4}}\left(1+\frac{1}{p^{s-7}}\right)\left(1-\frac{1}{p^{2s-11}}\right)^{-1}.
$$
But then 
$
F(s)
=
\zeta(2s-11)D(s),
$
for $\Re(s)>8$,
where
$$
D(s)=
\prod_p
\left(
1-\frac{1}{p^{2s-10}}
+
\frac{1}{p^{s-4}}-\frac{1}{p^{s-3}}\right).
$$
Clearly this expression gives a meromorphic continuation 
of $F(s)$ 
to the half-plane $\Re(s)>5.5$, with a simple pole at $s=6$.

By Perron's formula, for $x$ not an integer, we get
$$
\Sigma(x)=\frac{1}{2\pi i} \int_{2+\ve-iT}^{2+\ve+iT} \frac{\zeta(2s+1)D(s+6) x^s }{s}\mathrm d s +O\left(\frac{x^{2+\ve}}{T}\right),
$$
for any  $\ve>0$.  We can shift the contour to $\Re(s)=-1/4$, encountering a double pole at $s=0$. Taking $T$ sufficiently large, it is now straightforward to deduce that 
$$
\Sigma(x)=\frac{D(6)}{2}\log x+b +O(x^{-\delta}),
$$
for some $\delta>0$ and a suitable constant $b.$
This completes the proof of the lemma.
\end{proof}

The (weighted) real density for our problem is defined in \eqref{eq:sigma-inf}.
We may now record the following result, which completes our treatment of the main term.

\begin{proposition}
\label{PROP:main}
There exists a constant $b^*\in \RR$ and 
$\delta>0$ such that
$$
M(B)=\frac{3\sigma_ \infty}{4\zeta(3)}
\log B +b^* +O(B^{-\delta}).
$$
\end{proposition}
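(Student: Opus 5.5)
The plan is to combine Lemma~\ref{lem:Sigma-x} with partial summation in $q$ and the standard analysis of $I_q(\0,\0)$ from Heath-Brown \cite{HB}. First I will isolate the dependence on $q$ in the integral. Write $I_q(\0,\0)=I^*(q/Q)$, where
$$
I^*(r)=\int_{\RR^6} W(\x,\y)\, h(r,F(\x,\y))\,\mathrm d\x\,\mathrm d\y .
$$
By \cite[Lemma~13]{HB}, which relies only on the smoothness of $W$ and on $\nabla F\neq \0$ on $\supp(W)$ (guaranteed by \eqref{smoothness-on-W}), we have $I^*(r)=\sigma_\infty+O_N(r^N)$ for small $r$. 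Moreover $I^*(r)=0$ for $r\gg 1$, and $r^j (I^*)^{(j)}(r)\ll 1$ by Lemma~\ref{int-estimate} with $\m=\n=\0$.

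Next, by partial summation,
$$
M(B)=\sum_{q} q^{-6}S_q(\0,\0)\, I^*(q/Q) = -\int_{0}^{\infty} \Sigma(x)\,\frac{\mathrm d}{\mathrm dx} I^*(x/Q)\,\mathrm dx ,
$$
and the boundary terms vanish because $I^*$ has compact support in $r$ and $\Sigma(x)=0$ for $x<1$. Substituting the formula $\Sigma(x)=\frac{1}{2\zeta(3)}\log x+b+O(x^{-\delta})$ from Lemma~\ref{lem:Sigma-x}, valid for $x\ge 1$, contributes three pieces.
\begin{itemize}
\item The constant $b$ contributes $b\,I^*(1/Q)=b\sigma_\infty+O(Q^{-N})$.
\item The error term contributes $\int_1^\infty x^{-\delta}\,|\partial_x I^*(x/Q)|\,\mathrm dx \ll \int_1^{\infty} x^{-1-\delta}\,\mathrm dx$, using $x\,\partial_x I^*(x/Q)\ll 1$. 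This is not yet $O(B^{-\delta})$, so I will split the integral. For $x\le Q^{1/2}$, the bound $I^*(r)=\sigma_\infty+O(r^N)$ (after differentiating, or by the $r$-derivative estimates for small $r$) makes the derivative $O(Q^{-N'})$. For $x>Q^{1/2}$, the integral is $\ll Q^{-\delta/2}$.
\item The logarithm contributes, after substituting $x=Qr$,
$$
-\frac{1}{2\zeta(3)}\int_{1/Q}^\infty (\log Q+\log r)\,(I^*)'(r)\,\mathrm dr=\frac{1}{2\zeta(3)}\Bigl(\sigma_\infty\log Q - \int_0^\infty \log r\,(I^*)'(r)\,\mathrm dr\Bigr)+O(Q^{-1+\ve}).
$$
\end{itemize}
The last integral converges, since $(I^*)'(r)=O(r^N)$ near $0$.

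Since $Q=B^{3/2}$, we have $\log Q=\tfrac32\log B$. The main term is therefore $\frac{3\sigma_\infty}{4\zeta(3)}\log B$, with all constants gathered into $b^*$ and an error of $O(B^{-\delta})$. This matches the leading constant $D(6)/2=1/(2\zeta(3))$, since $D(6)=\prod_p(1-p^{-2})(1+p^{-2}-p^{-3})=\prod_p(1-p^{-3})\cdot(\text{correction})$, and I should double check that this product really equals $\zeta(3)^{-1}$ as Lemma~\ref{lem:Sigma-x} asserts.

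The main obstacle is justifying $I^*(r)=\sigma_\infty+O(r^N)$ and the decay of $(I^*)'$ near $0$. These are needed both for the negligibility of the small-$x$ range and for identifying the limit as $\sigma_\infty$. I would follow \cite[Lemmas~9--13]{HB} essentially verbatim. Those arguments need only that $W$ is smooth and compactly supported and that $|\nabla F|\gg1$ on the support, which \eqref{smoothness-on-W} supplies despite the singularity of $F$. No other step looks delicate.
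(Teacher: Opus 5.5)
Your argument reorganises the paper's partial summation, but one step is not justified as written.

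\textbf{How the paper does it.} Following Heath-Brown's \S13, the paper splits at $q=\rho Q$ with $\rho=B^{-\eps}$. For $q\le \rho Q$ it substitutes $I_q(\0,\0)=\sigma_\infty+O_N(\rho^N)$ directly into $\Sigma(\rho Q)$. It partially sums only over $q>\rho Q$, where the crude bound $q\,\partial_q I_q(\0,\0)\ll 1$ from Lemma~\ref{int-estimate} already makes the error $O((\rho Q)^{-\delta})$. The constant is $\lim_{\rho\to0}K(\rho)$, where $K(\rho)=\sigma_\infty\log\rho+\int_\rho^\infty r^{-1}I^*(r)\,\mathrm{d}r$. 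Since $K'(\rho)=(\sigma_\infty-I^*(\rho))/\rho$, this limit exists with rate $O(\rho^N)$ using only $I^*(\rho)-\sigma_\infty\ll\rho^N$. After an integration by parts it equals your $-\int_0^\infty \log r\,(I^*)'(r)\,\mathrm{d}r$.

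\textbf{The gap.} Because you partially sum over all $x\ge 1$, you need $(I^*)'(r)\ll_N r^N$ as $r\to 0$. You use this to make $\int_1^{Q^{1/2}}E(x)\,\partial_x I^*(x/Q)\,\mathrm{d}x$ small, and as your reason for the convergence of $\int_0 \log r\,(I^*)'(r)\,\mathrm{d}r$. That bound does not follow by differentiating $I^*(r)=\sigma_\infty+O(r^N)$, and Heath-Brown's Lemma~13 concerns $I^*$ itself, not its derivative. Lemma~\ref{int-estimate} gives only $r(I^*)'(r)\ll 1$. That leaves a contribution $\ll\int_1^{Q^{1/2}}x^{-1-\delta}\,\mathrm{d}x\ll 1$ from $1\le x\le Q^{1/2}$, which is bounded but neither small nor shown to tend to a constant. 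So as written you do not reach $b^*+O(B^{-\delta})$.

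\textbf{Two ways to repair it.}
\begin{enumerate}
\item Do not partially sum on $1\le x\le Q^{1/2}$; sum those $q$ directly (equivalently, integrate that piece back by parts), which is exactly the paper's split. For the logarithmic piece, rewrite it as $\frac{1}{2\zeta(3)}\int_{1/Q}^\infty r^{-1}I^*(r)\,\mathrm{d}r$; its convergence then needs only $I^*-\sigma_\infty\ll r^N$.
\item Prove the derivative bound, which is true, by homogeneity. Heath-Brown's formula for $h$ gives $x\partial_x h+y\partial_y h=-h+O_N(x^N)$. Combining this with Euler's relation $\z\cdot\nabla F=3F$ and an integration by parts gives $r\,\partial_r I^*(r)=I^*(r;\,W+\tfrac13\z\cdot\nabla W)+O_N(r^N)$, where $I^*(r;V)$ denotes $I^*$ with weight $V$. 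Lemma~13 applies to this weight, and its main term $\sigma_\infty(W+\tfrac13\z\cdot\nabla W)$ vanishes because $\sigma_\infty(W(\cdot/\lambda))=\lambda^3\sigma_\infty(W)$.
\end{enumerate}

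\textbf{On your side check.} $D(6)=\prod_p(1-p^{-3})=\zeta(3)^{-1}$ exactly, since the Euler factor of $D(s)$ at $s=6$ is $1-p^{-2}+p^{-2}-p^{-3}$. The factor $(1-p^{-2})(1+p^{-2}-p^{-3})$ you wrote is not the right one, though Lemma~\ref{lem:Sigma-x} is correct as stated.
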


\begin{proof}
We shall mimic an argument of Heath-Brown \cite[\S 13]{HB}.
Let $\rho\leq 1$ be a parameter at our disposal. Then,  for each $q\leq \rho Q$, 
it follows from \cite[Lemma 13]{HB} that 
$
I_q(\0)=\sigma_\infty +O_N(\rho^N),
$
for any $N>0$. 
Hence Lemma~\ref{lem:Sigma-x} implies that 
\begin{align*}
\sum_{q\leq \rho Q} q^{-6} S_q(\0,\0)I_q(\0,\0)
&=
\sigma_\infty  \Sigma(\rho Q)
+O_N(\rho^{N}Q^2)\\
&=
\sigma_\infty  
\left\{\frac{1}{2\zeta(3)}\log (\rho Q)+b\right\} +O((\rho Q)^{-\delta})
+O_N(\rho^{N}Q^2).
\end{align*}
Next, it follows from partial summation  and Lemma \ref{lem:Sigma-x} that 
$$
\sum_{q>\rho Q} q^{-6} S_q(\0,\0)I_q(\0,\0)=\frac{1}{2\zeta(3)}\int_{\rho Q}^\infty q^{-1} I_q(\0,\0)\mathrm d q +O((\rho Q)^{-\delta}).
$$
Hence 
$$
M(B)=\frac{\sigma_ \infty}{2\zeta(3)}  \log Q +b\sigma_\infty +\frac{1}{2\zeta(3)}K(\rho)+ 
O((\rho Q)^{-\delta})+O_N(\rho^{N}Q^2),
$$
where
$$
K(\rho)=\sigma_\infty  \log \rho+\int_{\rho Q}^\infty q^{-1} I_q(\0,\0)\mathrm d q.
$$
Arguing as in Heath-Brown \cite[\S 13]{HB}, one finds that 
there exists a constant $K$ such that 
$K(\rho)=K+O(\rho^N)$, for any $N>0$. We finally conclude the proof of the lemma on taking $\rho=B^{-\ve}$ and recalling that $Q=B^{3/2}$.
\end{proof}

\section{The treatment of \texorpdfstring{$E_1(B)$}{E1(B)}}

When bounding the quantity $E_1(B)$ from \eqref{eq:E1B},
the simplest approach in our setting loses a key factor of $\log{B}$.
For $D(\m,\n)\neq 0$, 
Lemma \ref{lem:Sp2} implies that 
we have $S_p(\m,\n)=p^4+O(p^3)$ for $p\mid D(\m,\n)$, if $p\nmid n_1n_2n_3$.
Morally speaking, we shall need to tackle the sum
$$
\sum_{\substack{|\m|,|\n|\ll B^{1/2}\\D(\m, \n)\neq 0}}
\sum_{q|D(\m,\n)}\frac{1}{q^6} S_q(\m,\n)I_q(\m,\n).
$$
For $q$ of generic size $q\sim B^{3/2}$, we can ignore  the oscillation in $I_q(\m,\n)$, and then the sum is basically given by
$$
\sum_{\substack{|\m|,|\n|\ll B^{1/2}\\D(\m, \n)\neq 0}}
\sum_{\substack{q\sim B^{3/2}\\q\mid D(\m,\n)}}\frac{1}{q^2}
\ll \frac{1}{B^3}\sum_{\substack{|\m|,|\n|\ll B^{1/2}\\D(\m, \n)\neq 0}} \tau(D(\m,\n))
\ll \log B.
$$
Another difficulty we have to confront is that
a large power of $\log{Q}$
could in principle arise from sums of the shape
$$
\sum_{\substack{\m,\n\\ D(\m,\n)\ne 0}}
\sum_{\substack{q\sim Q \\ q\mid D(\m,\n)^k}} 1,
$$
where $k\ge 2$.
As a simpler toy problem, we might heuristically have
$$
\sum_{1\le n\le Q}
\sum_{\substack{q\sim Q \\ q\mid n^k}} 1
\gg_k Q (\log{Q})^{k-1}
$$
because $n$ should have roughly $1$ divisor on average in each interval $[e^j,e^{j+1})$,
with $0\le j\le \log{n}$.

In our analysis of $E_1(B)$, as 
defined in \eqref{eq:E1B}, 
we shall address the first issue using Hooley's $\Delta$-function in Section~\ref{s:hooley}.
To address all remaining issues, including the second mentioned above,
we use Lemma~\ref{int-estimate} and the following five facts,
which follow from
Lemmas~\ref{lem:L2},
\ref{lem:L4},
\ref{lem:L5},
\ref{lem:L1},
and~\ref{lem:L3},
respectively:
\begin{enumerate}
\item We have $S^{(2)}_q(\m,\n) = 0$ unless
$\kappa(q)\mid G(\m,\n)$. 

\item We have
$$
S^{(2)}_q(\m,\n) \ne 0 \Rightarrow q\mid G(\m,\n)\kappa(G(\m,\n)).
$$

\item
If the condition
$$
p\mid q\Rightarrow v_p(q) > v_p(G(\m,\n))
$$
holds,
then
$$
S^{(2)}_q(\m,\n) \ll q^4/\kappa(q).
$$

\item We always have
$$
S^{(2)}_q(\m,\n) \ll q^{4+\eps} \sqrt{\{q,m_1\}\{q,m_2\}\{q,m_3\}},
$$
where $\{q,m\}$ is defined in \eqref{eq:sq-gcd}.

\item Suppose that
$
\gcd(q,G(\m,\n),\nabla{G}(\m,\n)) = 1.
$
Then
$$
S^{(2)}_q(\m,\n) \ll q^4.
$$
\end{enumerate}

Let $Q=B^{3/2}$.  Using  (1)--(5),
we proceed to study the quantity $E_1(B)$ defined in 
\eqref{eq:E1B}. It follows 
from the decomposition in \eqref{eq:q1q2} that
\begin{equation}
\label{define-E1B}
E_1(B) 
= \sum_{m_1m_2m_3 D(\m,\n)\ne 0}
\sum_{q_1,q_0\ge 1}
\frac{S^{(1)}_{q_1}(\m,\n)}{q_1^3} \frac{S^{(2)}_{q_0}(\m,\n)}{q_0^6}
 \frac{I_{q_1q_0}(\m,\n)}{q_1^3}.
\end{equation}
Let $A_1>0$ be a large real constant to be specified later.
By partial summation over $q_1$,
using Lemma~\ref{int-estimate},
we find that the contribution 
$E_{1,2}=E_{1,2}(B,Q_1,Q_0)$
to $E_1(B)$ from $q_1\sim Q_1$ and $q_0\sim Q_0$ is
\begin{equation}
\label{define-and-partial-sum-E2}
E_{1,2}
\ll \sum_{\substack{D(\m,\n)\ne 0 \\ m_1m_2m_3\ne 0}}
\frac{J_1}{Q_1^3J_2}
\abs{\sum_{q_1\in \mathcal{I}_1}
\frac{S^{(1)}_{q_1}(\m,\n)}{q_1^3}}
\sum_{q_0\sim Q_0}
\frac{\abs{S^{(2)}_{q_0}(\m,\n)}}{q_0^6},
\end{equation}
for some interval $\mathcal{I}_1=\mathcal{I}_1(B,Q_1,Q_0)\belongs \{q_1\sim Q_1\}$ independent of $(\m,\n)$, where
\begin{equation}\label{eq:def-J1J2}
\begin{split}
J_1&\defeq \frac{(1+B\max\{|\m|,|\n|\}/Q')^{-2}}
{(1+\max\{|\m|,|\n|\}/B^{1/2})^{A_1}}
\le \frac{(B\max\{|\m|,|\n|\}/Q')^{-2}}
{(1+\max\{|\m|,|\n|\}/B^{1/2})^{A_1}}, \\
\quad J_2&\defeq (1+B\,|\hat{D}(\m,\n)|\max\{|\m|,|\n|\}/Q')^{A_1}
\end{split}
\end{equation}
are defined in terms of the convenient quantity $Q'\defeq Q_1Q_0$.
Since $I_q(\m,\n)$ is supported on a range of the form $q\ll Q$,
we may assume that $Q'\ll Q=B^{3/2}$.

Next, write $q_0=q_2q_3q_4$,
where $q_2\mid G(\m,\n)$ with
$
\gcd(q_2,\nabla{G}(\m,\n)) = 1,
$
where $q_3\mid G(\m,\n)$ with
$
\kappa(q_3)\mid \nabla{G}(\m,\n),
$
and where
$$
p\mid q_4\Rightarrow v_p(q_4)>v_p(G(\m,\n)).
$$
On observing that $\{q,m_i\}$ is a square-full integer $d_i\mid \gcd(q,m_i)$,
we find that
\begin{equation}
\begin{split}
\label{bound-E2}
E_{1,2}\ll
\sum_{\substack{d_1,d_2,d_3\ge 1 \\ \textnormal{square-full}}}
&\sum_{\substack{D(\m,\n)\ne 0 \\ d_i\mid m_i\ne 0}}
\frac{J_1}{Q_1^3J_2Q_0^6}
\abs{\sum_{q_1\in \mathcal{I}_1}
\frac{S^{(1)}_{q_1}(\m,\n)}{q_1^3}} \\
&\sum_{\substack{q_2q_3q_4\sim Q_0 \\
q_2,q_3\mid G(\m,\n) \\
d_i\mid q_3 \\
\kappa(q_3)\mid \nabla{G}(\m,\n) \\
p\mid q_4\Rightarrow v_p(q_4)=1+v_p(G(\m,\n))\ge 2}}
q_2^4
q_3^{4+\eps} (d_1d_2d_3)^{1/2}
\frac{q_4^4}{\kappa(q_4)}.
\end{split}
\end{equation}
Let $Q_2,Q_3,Q_4\gg 1$ such that  $Q_2Q_3Q_4\asymp Q_0$. 
Let 
$E_{1,3}=E_{1,3}(B,Q_4,\dots,Q_1)$
be 
the contribution
 to the right-hand side from
$q_2\sim Q_2$, $q_3\sim Q_3$ and $q_4\sim Q_4$. Then 
\begin{equation}
\begin{split}
\label{define-and-bound-E3}
E_{1,3}\ll 
\frac{Q_2^4
Q_3^{4+\eps}
Q_4^4}{Q_1^2Q_0^6}
&\sum_{\substack{d_1,d_2,d_3\ge 1 \\ \textnormal{square-full}}}
(d_1d_2d_3)^{1/2} \\
&\sum_{\substack{D(\m,\n)\ne 0 \\ d_i\mid m_i\ne 0}}
\frac{J_1}{J_2}
\abs{\sum_{q_1\in \mathcal{I}_1}
\frac{S^{(1)}_{q_1}(\m,\n)}{Q_1q_1^3}}
\hspace{-0.3cm}
\sum_{\substack{q_j\sim Q_j,\;(2\le j\le 4) \\
q_2,q_3\mid G(\m,\n) \\
d_i\mid q_3 \\
\kappa(q_3)\mid \nabla{G}(\m,\n) \\
p\mid q_4\Rightarrow v_p(q_4)=1+v_p(G(\m,\n))\ge 2}}
\hspace{-0.3cm}
\frac{1}{\kappa(q_4)}.
\end{split}
\end{equation}
In view of the convergence of the series
$\sum_{\textnormal{square-full }d\ge 1} d^{-\frac12-\eps}$,
it will be convenient to use the inequality $d_i\le q_3$ to write
\begin{equation}
\begin{split}
\label{before-main-holder}
E_{1,3}\ll 
\frac{Q_2^4
Q_3^{4+7\eps}
Q_4^4}{Q_1^2Q_0^6}
&\sum_{\substack{d_1,d_2,d_3\ge 1 \\ \textnormal{square-full}}}
(d_1d_2d_3)^{\frac12-2\eps} \\
&\sum_{\substack{D(\m,\n)\ne 0 \\ d_i\mid m_i\ne 0}}
\frac{J_1}{J_2}
\abs{\sum_{q_1\in \mathcal{I}_1}
\frac{S^{(1)}_{q_1}(\m,\n)}{Q_1q_1^3}}
\hspace{-0.3cm}
\sum_{\substack{q_j\sim Q_j,\;(2\le j\le 4) \\
q_2,q_3\mid G(\m,\n) \\
d_i\mid q_3 \\
\kappa(q_3)\mid \nabla{G}(\m,\n) \\
p\mid q_4\Rightarrow v_p(q_4)=1+v_p(G(\m,\n))\ge 2}}
\hspace{-0.3cm}
\frac{1}{\kappa(q_4)}.
\end{split}
\end{equation}
At this point, the extreme case $Q'\asymp Q_1$ is a real analysis and large sieve problem,
the case $Q'\asymp Q_2$ is a Hooley $\Delta$-function problem,
the case $Q'\asymp Q_3$ is an Ekedahl sieve problem,
and the case $Q'\asymp Q_4$ is a divisor function problem.
In general, we combine these ingredients in a suitable way,
based on the relative sizes of $Q_1,Q_2,Q_3,Q_4$.

We can handle the aspects $\mathcal{I}_1$, $Q_3$, and $Q_4$
by taking large moments
in H\"{o}lder's inequality over $(\m,\n)$,
which will have the advantage of  separating the variables $q_1,q_2,q_3,q_4$.
Recall the definition \eqref{eq:def-J1J2} of $J_1$ and $J_2$ and fix $\mathbf{d}=(d_1,d_2,d_3)\in \NN^3$.
For any $\delta\geq 0$, define 
$$
\Sigma_1^\delta (\mathbf{d})=
\sum_{\substack{D(\m,\n)\ne 0 \\ d_i\mid m_i\ne 0}}
\left(\sum_{\substack{q_2\sim Q_2 \\ q_2\mid G(\m,\n)}} 1\right)^{1+\delta}
J_1.
$$
Next, for any $A\geq 0$, we let
\begin{align}
\nonumber
\Sigma_2^A (\mathbf{d})&=
\sum_{\substack{D(\m,\n)\ne 0 \\ d_i\mid m_i\ne 0}}
\frac{J_1}{J_2^A},\\
\label{eq:def_SIG3}
\Sigma_3^A (\mathbf{d})&=
\sum_{\substack{D(\m,\n)\ne 0 \\ d_i\mid m_i\ne 0}}
\abs{\frac{1}{Q_1}\sum_{q_1\in \mathcal{I}_1} q_1^{-3}S^{(1)}_{q_1}(\m,\n)}^A
J_1,\\
\label{eq:def_SIG4}
\Sigma_4^A (\mathbf{d})&=
\sum_{\substack{D(\m,\n)\ne 0 \\ d_i\mid m_i\ne 0}}
\left(\sum_{\substack{q_3\sim Q_3 \\ q_3\mid G(\m,\n) \\ \kappa(q_3)\mid \nabla{G}(\m,\n)}} 1\right)^A
J_1,
\end{align}
and 
$$
\Sigma_5^A (\mathbf{d})=
\sum_{\substack{D(\m,\n)\ne 0 \\ d_i\mid m_i\ne 0}}
\left(\sum_{\substack{q_4\sim Q_4 \\ p\mid q_4\Rightarrow v_p(q_4)=1+v_p(G(\m,\n))\ge 2}} \frac{1}{\kappa(q_4)}\right)^A
J_1
$$
With this notation it now follows that the sum over $\m,\n$ in \eqref{before-main-holder} is
\begin{equation}
\label{main-holder-bound}
\le \Sigma_1^\delta(\mathbf{d})^{1/(1+\delta)} \prod_{2\leq i\leq 5} 
\Sigma_i^{4A}(\mathbf{d})^{1/(4A)},
\end{equation}
provided that $\delta,A>0$ are reals with $1 = \frac{1}{1+\delta} + \frac{1}{A}$.
This leads us
prove the following five lemmas,
where $\log_+{x}\defeq \max\{1,\log{x}\}$.

\begin{lemma}
\label{Q2-aspect}
Let  $\delta> 0$ and assume that $A_1>4$. Then
$$
\Sigma_1^\delta(\mathbf{d})
\ll_{\delta,A_1} \frac{B^3 (\log_+{B})^{3\delta}}
{(d_1d_2d_3)^{1-\eps} (B^{3/2}/Q')^2}.
$$
\end{lemma}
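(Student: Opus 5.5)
We bound $\Sigma_1^\delta(\mathbf{d})$ by splitting dyadically according to $T=\max\{|\m|,|\n|\}$ and invoking a Hooley-type moment estimate for divisors of $G(\m,\n)$ in a dyadic range. First we handle the weight $J_1$. From \eqref{eq:def-J1J2}, for $T\sim T_0$ we have $J_1\ll (BT_0/Q')^{-2}(1+T_0/B^{1/2})^{-A_1}$. We split into $T_0\le B^{1/2}$ and $T_0>B^{1/2}$. The large-$T_0$ range is handled by the decay factor $(T_0/B^{1/2})^{-A_1}$, which is why we need $A_1>4$: the count of $(\m,\n)$ grows like $T_0^6$, and together with $T_0^{-2}$ the series $\sum_{T_0} T_0^{4-A_1}$ converges.

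The main arithmetic input is a bound, for each dyadic $T_0$, of the form
$$
\sum_{\substack{\max\{|\m|,|\n|\}\sim T_0,\ D(\m,\n)\ne 0\\ d_i\mid m_i\ne 0}}\Bigl(\sum_{\substack{q_2\sim Q_2\\ q_2\mid G(\m,\n)}}1\Bigr)^{1+\delta}\ll \frac{T_0^6(\log_+ T_0)^{3\delta}}{(d_1d_2d_3)^{1-\eps}}.
$$
The inner sum is bounded by Hooley's function $\Delta(G(\m,\n))$, since it counts divisors in a dyadic interval. For a single polynomial, la Bret\`eche--Tenenbaum \cite{regis} and Chan--Koymans--Pagano--Sofos \cite{chan} give that moments $\sum \Delta(P(\x))^{k}$ over boxes are $\ll T^{n}(\log T)^{c_k}$ with small exponent. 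We take $k=1+\delta$ and obtain an exponent of the shape $O(\delta)$; we would aim to reach $3\delta$, or accept that the statement's $3\delta$ reflects their result.

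To handle the congruence conditions $d_i\mid m_i$, we write $m_i=d_im_i'$ and view $G(d_1m_1',d_2m_2',d_3m_3',\n)$ as a new polynomial. We then need uniformity in the coefficients. The loss $(d_1d_2d_3)^{\eps}$ comes from the box shrinking by $d_1d_2d_3$ while $\Delta$-moment bounds lose only $\tau$-type factors. If uniformity of the cited results in the coefficients is insufficient, we instead use the cruder pointwise bound $\Delta(n)\le \tau(n)\ll n^\eps$ when $d_1d_2d_3$ is large, say $d_1d_2d_3>(\log B)^{C}$. This gives $T_0^6 B^{\eps}/(d_1d_2d_3)$, which must still beat the claimed bound up to $(d_1d_2d_3)^{\eps}$. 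That seems to need $d_1d_2d_3\gg B^{\eps'}$, so an intermediate range $(\log B)^C<d_1d_2d_3<B^{\eps'}$ may require a genuinely uniform moment estimate.

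Summing over dyadic $T_0$, the total is $\ll \sum_{T_0}(Q'/BT_0)^2 T_0^6(1+T_0/B^{1/2})^{-A_1}(\log_+ T_0)^{3\delta}/(d_1d_2d_3)^{1-\eps}$. This sum is dominated by $T_0\asymp B^{1/2}$ and yields $(Q')^2B^{-2}B^{2}(\log B)^{3\delta}/(d_1d_2d_3)^{1-\eps}$. This equals $B^3(\log B)^{3\delta}/((d_1d_2d_3)^{1-\eps}(B^{3/2}/Q')^2)$, as claimed. The main obstacle is the uniform Hooley moment estimate over the sublattice $d_i\mid m_i$ with the sharp log exponent; the real-analysis bookkeeping is routine.
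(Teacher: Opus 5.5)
Your skeleton matches the paper's: bound the inner count by $\Delta(G(\m,\n))$, decompose dyadically in $T$, feed in a uniform bound for $S_T$ from \eqref{eq:definition_of_ST}, and sum over $T$ using $A_1>4$, with the peak at $T\asymp B^{1/2}$. However, the proposal stops exactly at the step that carries the content. The ``intermediate range'' you worry about comes from choosing the wrong threshold. When $\max\{|\m|,|\n|\}\le T$ and $D(\m,\n)\neq 0$ we have $0<|G(\m,\n)|\ll T^6$, so the divisor bound costs $T^{\eps}$, not $B^{\eps}$. You should therefore split at $d_1d_2d_3=T^{1/3}$ rather than at a power of $\log B$.

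\begin{itemize}
\item If $d_1d_2d_3>T^{1/3}$, then trivially $S_T\ll T^{6+\eps}/(d_1d_2d_3)\le T^6/(d_1d_2d_3)^{1-3\eps}$.
\item If $d_1d_2d_3\le T^{1/3}$, then every $d_i\le \sqrt{T}$, and only \emph{polynomial} uniformity in the coefficients is needed. This is exactly the range covered by la Bret\`eche--Tenenbaum's Theorem~3.1. Its hypothesis $(\max_j X_j+\|G\|)^{1/2}\ll \min_j X_j$ holds because the box sides are $T/d_i\ge \sqrt{T}$ and the coefficients are $\ll \max_i d_i^2\le T$. In this range the extra $T^{5+\eps}$ from the discarded terms with $n_1n_2n_3=0$ is also $\le T^6/(d_1d_2d_3)$.
\end{itemize}

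What remains is to show that this theorem gives a log-exponent $O(\delta)$, uniformly in $\dd$. That is Lemma~\ref{Tim-S_T-Hooley-delta-bound}, which the paper's proof simply invokes; your proposal names it as the obstacle but does not supply it. The paper's argument runs as follows.

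\begin{itemize}
\item Write $m_i=d_im_i'$ and strip the content $\gcd(d_1,d_2,d_3)^2$ using $\Delta(mn)\le \Delta(n)\tau(m)$.
\item Apply the theorem to the primitive irreducible form $G=G^{\bve}_{\dd'}$. This bounds the sum by $\frac{T^6}{d_1d_2d_3}\sum_{s\le 6T}\Delta(s)^{1+\delta}\rho_G(s)s^{-6}\prod_{6<p\le\sqrt T}(1-\rho_G(p)p^{-6})$.
\item The sieve product is $\ll 1/\log T$, because $\rho_G(p)\ge p^5+O(p^{9/2})$ for every $p$. For $p\nmid d_1d_2d_3$ this is Lang--Weil; when $p$ divides exactly one $d_i$, $G$ is a square mod $p$. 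This is where irreducibility enters, and it is what cancels the full power of $\log T$ that a naive divisor-moment bound would leave.
\item The square-full part of $s$ contributes $O(1)$, using $\rho_G(p^j)p^{-6j}\ll\min\{p^{-2},(j+1)^5p^{-j/6}\}$.
\item The square-free part is handled by the $\Delta$-moment theorem for multiplicative weights. It gives $(\log T)^{2^{1+\delta}-1+\eps}$, up to a factor $2^{O(\omega(D))}\ll D^{\eps}$ with $D=\gcd(d_1,d_2)\gcd(d_1,d_3)\gcd(d_2,d_3)$. This factor comes from primes dividing two of the $d_i$, where $\rho_G(p)=2p^5+O(p^4)$.
\end{itemize}

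The net exponent is $2^{1+\delta}-2+\eps=2\delta\log 2+O(\delta^2)\le 3\delta$ for small $\delta$, which is the only case used. Note also that the $(d_1d_2d_3)^{\eps}$ loss comes from the content, the doubled local densities and the trivial regime, not from ``the box shrinking'' as you suggest.
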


\begin{proof}
Putting $u=\log Q_2$, we see that 
$$
\sum_{\substack{q_2\sim Q_2 \\ q_2\mid G(\m,\n)}} 1\leq 
\sum_{\substack{e^u<q_2\leq e^{1+u} \\ q_2\mid G(\m,\n)}} 1
\leq \Delta(G(\m,\n)),
$$
where $\Delta(n)$ is Hooley's $\Delta$-function, as defined in 
\eqref{eq:Hooley}.
Breaking the sum over $\m,\n$ into dyadic intervals, we find that 
\begin{equation}
\label{dyadic-T-for-Q2}
\Sigma_1^\delta(\mathbf{d})
\ll \sum_{T} 
 \frac{(BT/Q')^{-2}}
{(1+T/B^{1/2})^{A_1}} S_T,
\end{equation}
where
 $S_T$ is defined in 
\eqref{eq:definition_of_ST}.
It now follows from Lemma~\ref{Tim-S_T-Hooley-delta-bound} that 
$$S_T
= \sum_{\substack{D(\m,\n)\ne 0 \\ d_i\mid m_i\ne 0\\ 
\max\{|\m|,|\n|\}\leq  T
}} 
\Delta(G(\m,\n))^{1+\delta}
\ll_\delta \frac{T^6(\log{T})^{3\delta}}{(d_1d_2d_3)^{1-\eps}}$$
if $d_1d_2d_3\le T^{1/3}$, say.
On the other hand, the same bound on $S_T$
holds trivially by the divisor bound if $d_1d_2d_3>T^{1/3}$.
Plugging this bound into \eqref{dyadic-T-for-Q2}, we get
$$
\Sigma_1^\delta(\mathbf{d})
\ll_\delta (d_1d_2d_3)^{\eps-1} \sum_T
\frac{T^6(\log{T})^{3\delta}}{(BT/Q')^2 (1+T/B^{1/2})^{A_1}}
\ll_{\delta,A_1} (d_1d_2d_3)^{\eps-1}
\frac{B^3(\log_+{B})^{3\delta}}{(B^{3/2}/Q')^2},$$
upon summing separately over the ranges $T\le B^{1/2}$
and $T\ge B^{1/2}$, provided $A_1>4$.
\end{proof}

\begin{lemma}
\label{J2-Q'-aspect}
Let  $A>0$ and assume that $A_1>6$. Then
$$
\Sigma_2^A(\mathbf{d})
\ll_{A,A_1} \frac{B^3}
{d_1d_2d_3 (B^{3/2}/Q')^{2+\min(A_1A,1/5)}}.
$$
\end{lemma}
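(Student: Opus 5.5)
We need to bound
\[
\Sigma_2^A(\mathbf d)=\sum_{\substack{D(\m,\n)\ne0\\ d_i\mid m_i\ne 0}} J_1 J_2^{-A},
\qquad J_2^{-A}=\bigl(1+B|\hat D(\m,\n)|T/Q'\bigr)^{-A_1A}, \quad T=\max\{|\m|,|\n|\}.
\]
The plan is to decompose dyadically in $T$, exactly as in the proof of Lemma~\ref{Q2-aspect}. In the range $T\sim T_0$ we have
\[
J_1\ll (BT_0/Q')^{-2}(1+T_0/B^{1/2})^{-A_1}.
\]
So it suffices to bound
\[
S_T' \defeq \sum_{\substack{|\m|,|\n|\le T,\ D\ne 0\\ d_i\mid m_i\ne 0}} \bigl(1+B|\hat D(\m,\n)|T/Q'\bigr)^{-A_1A}.
\]
Put $\lambda=BT/Q'$. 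Since $Q'\ll B^{3/2}$, the relevant $T\ll B^{1/2}$ satisfy $\lambda\gg T/B^{1/2}\cdot B^{3/2}/Q'$. The weight is $\le\min(1,(\lambda|\hat D|)^{-A_1A})$, so we split according to whether $|\hat D(\m,\n)|\le \lambda^{-1}$ or $|\hat D|\sim 2^k\lambda^{-1}$ with $k\ge 0$.

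The key input is a measure estimate for small values of $D$. For $\theta\in(0,1]$ we want
\[
\#\{(\m,\n): |\m|,|\n|\le T,\ d_i\mid m_i,\ 0<|D(\m,\n)|\le \theta T^6\}\ll \frac{T^6}{d_1d_2d_3}\,\theta^{c}+(\text{lower-order lattice error}),
\]
with some $c\ge 1/5$. We intend to obtain this by fixing $\n$ and regarding $D$ as a quadratic form in $\m$. Its coefficients are $n_i^4$ and $-2n_i^2n_j^2$, and it factors via \eqref{eq:factorize-dual-form}. Alternatively, for fixed $\n$ with all $n_i\ne0$, set $u_i=m_in_i^2$; then $D$ is the ternary quadratic form $u_1^2+u_2^2+u_3^2-2u_1u_2-2u_1u_3-2u_2u_3$, which is nondegenerate. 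The real volume of $\{|\mathbf u|\le U,\ |Q(\mathbf u)|\le \theta U^2\}$ is $\ll \theta^{1/2}U^3$ up to logarithms, coming from a neighbourhood of the cone. The difficulty is converting this volume to a lattice count in $\m$ with $d_i\mid m_i$. That count is $\ll \theta^{1/2}T^3/(d_1d_2d_3)+$ boundary terms of size $T^2$. Summing over $\n$ gives $T^6\theta^{1/2}/(d_1d_2d_3)+T^5$. When $\theta$ is tiny the boundary terms dominate. Small $|n_i|$, where $\hat D$ is small simply because $\n$ is small, also contribute; these cost a further saving of $\theta^{1/5}$-type after optimising the cutoff for $|n_i|$ against $\theta$. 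This is why we expect only the exponent $\min(A_1A,1/5)$, and we would aim to prove the estimate with $c=1/5$. The condition $D\ne0$ together with integrality gives $|D|\ge1$, so $\theta\ge T^{-6}$, which tames the degenerate range.

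Summing over $k$ with the decay $2^{-kA_1A}$ then gives
\[
S_T'\ll \frac{T^6}{d_1d_2d_3}\,\lambda^{-\min(A_1A,1/5)}
\]
(logarithmic losses in the case $A_1A=1/5$ are absorbed by shrinking slightly). Since $\lambda\ge (T/B^{1/2})\cdot(B^{3/2}/Q')$, we have $\lambda^{-c}\le (B^{3/2}/Q')^{-c}(T/B^{1/2})^{-c}$. Inserting this into the dyadic sum, the total is
\[
\sum_T \frac{T^6 (T/B^{1/2})^{-c}}{d_1d_2d_3(BT/Q')^2(1+T/B^{1/2})^{A_1}}.
\]
For $T\le B^{1/2}$ the power $T^{4-c}$ is increasing, so this part is dominated by $T=B^{1/2}$. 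For $T\ge B^{1/2}$ the sum converges once $A_1>6$. Both ranges give $B^3(B^{3/2}/Q')^{-2-c}/(d_1d_2d_3)$, as claimed. The main obstacle is the uniform lattice-point count for $|D|$ small in the presence of the divisibility conditions $d_i\mid m_i$. If the lattice error terms prove troublesome, the fallback is to treat large $d_1d_2d_3$ trivially (using $J_2\ge1$ and the count $T^6/(d_1d_2d_3)$) and to apply the volume argument only when $d_1d_2d_3\le T^{1/3}$, as in Lemma~\ref{Q2-aspect}.
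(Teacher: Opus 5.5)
There is a genuine gap, and it sits where you yourself locate the ``main obstacle''. Your overall architecture is the right one: a dyadic decomposition in $T$ and in the size of $\hat D$, followed by summation against $J_1$. But everything rests on a small-values count
\[
\#\{\max\{|\m|,|\n|\}\asymp T:\ d_i\mid m_i\ne 0,\ 0<|\hat D(\m,\n)|\ll L\}\ll \frac{T^5+T^6L^{c}}{d_1d_2d_3}
\]
with some $c>1/5$, and crucially with the factor $1/(d_1d_2d_3)$ on the $T^5$ term as well.

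Your route fixes $\n$ and counts $\m$ near the cone of the ternary form in $u_i=m_in_i^2$. But $\m$ is exactly the variable carrying the congruences $d_i\mid m_i$, so the lattice and boundary errors lose some or all of that factor. You yourself get an error $T^5$. Even counting the last variable $m_3\in d_3\ZZ$ in $O(1)$ intervals leaves an error $T^5/(d_1d_2)$.

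Such a term is not dominated by the claimed bound. For $d_3\asymp T\asymp B^{1/2}$ it exceeds the right-hand side by a factor $(B^{3/2}/Q')^{c}$. You also silently drop this term when summing over $k$ to reach $S_T'\ll T^6\lambda^{-c}/(d_1d_2d_3)$.

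The fallback does not rescue this. Bounding $J_2^{-A}\le 1$ when $d_1d_2d_3>T^{1/3}$ forfeits the entire factor $(B^{3/2}/Q')^{-\min(A_1A,1/5)}$. In Lemma~\ref{Q2-aspect}, by contrast, only a factor $T^{\eps}$ had to be absorbed. Finally, the exponent $1/5$ ``from small $n_i$'' is asserted rather than derived, and shrinking the exponent to absorb a logarithm would change the statement.

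The missing idea is to do the fine count in the unconstrained variables $\n$. By \eqref{eq:factorize-dual-form}, $|\hat D(\m,\n)|\ll L$ with $\max\{|\m|,|\n|\}\asymp T$ forces one of the four factors $\sum_i\eps_i(m_i/T)^{1/2}(n_i/T)$ to be $\ll L^{1/4}$. The square roots may be imaginary, which is harmless. Fixing $\m,n_1,n_2$ then puts $n_3$ in $O(1)$ intervals of length $\ll TL^{1/4}/|m_3/T|^{1/2}$. Summing over $m_3=d_3m_3'$, and trivially over the remaining variables, gives $\ll (T^5+T^6L^{1/4})/(d_1d_2d_3)$. This is uniform in $\mathbf d$ because the $O(1)$ error now falls on $n_3$.

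Then sum over dyadic $T^{-6}\ll L\le 1$ using $(1+BLT/Q')^{-A_1A}\le (BLT/Q')^{-\min(A_1A,1/5)}$. Since $\frac14>\frac15$, the function $L^{1/4}(BLT/Q')^{-\min(A_1A,1/5)}$ is increasing in $L$, so there is no logarithmic loss. The $T^5\log_+T$ term contributes $B^{(5+\eps)/2}/(d_1d_2d_3(B^{3/2}/Q')^2)$, which is absorbed since $(B^{3/2}/Q')^{1/5}\ll B^{(1-\eps)/2}$. So $1/5$ is merely a convenient exponent below the $1/4$ coming from the four linear factors, not the output of an optimisation.
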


\begin{proof}
Let $L\leq 1\leq T$ and suppose that 
 $\max\{\norm{\m},\norm{\n}\}\asymp T$ and  $0\ne |\hat D(\m,\n)|\asymp L$.
 Then 
 $J_2\asymp (1+BLT/Q')^{A_1}$ by \eqref{eq:def-J1J2},
and $L\asymp \abs{D(\m,\n)}/T^6\gg 1/T^6$ by the definition \eqref{normalize-D} of $\hat D(\m,\n)$.
Moreover, we have 
\begin{align*}
&\#\{\max\{\norm{\m},\norm{\n}\}\asymp T:
D(\m,\n)\ne 0,
\; d_i\mid m_i\ne 0,
\; \hat D(\m,\n)\ll L\} \\
&\quad \le \#\left\{|\m|,|\n|\ll T:
d_i\mid m_i\ne 0,
\; \min_{\eps_i=\pm 1} \abs{\sum_i \eps_i (m_i/T)^{1/2} (n_i/T)}\ll L^{1/4}\right\},
\end{align*}
by the factorization in \eqref{eq:factorize-dual-form}.
Once $\m,n_1,n_2$ are specified, the number of available choices for $n_3$ in the latter count is $\ll 1 + TL^{1/4}/\abs{m_3/T}^{1/2}$,
so the last display is
$$\ll \frac{T^4}{d_1d_2} \sum_{0<m'_3\ll T/d_3} \left(1 + \frac{TL^{1/4}}{\abs{d_3m'_3/T}^{1/2}}\right)
\ll \frac{T^5}{d_1d_2d_3} + \frac{T^6L^{1/4}}{d_1d_2d_3},$$
where we have written $\abs{m_3} = d_3m'_3$.
Summing dyadically over $T$ and $L$, we get
$$\Sigma_2^A(\mathbf{d})
\ll \sum_{T\ge 1\ge L\gg T^{-6}} \frac{T^6}{d_1d_2d_3}
\frac{T^{-1}+L^{1/4}}{(BT/Q')^2 (1+T/B^{1/2})^{A_1}(1+BLT/Q')^{A_1A}}.$$
Since $1+BLT/Q'\ge 1$ and $L^{1/4}/(BLT/Q')^{\min(A_1A,1/5)}$ is a strictly increasing function of $L$, it follows that 
\begin{align*}
\Sigma_2^A(\mathbf{d})\ll~&
 \sum_{T\ge 1} \frac{T^6}{d_1d_2d_3}
\frac{T^{-1}\log_+{T}}{(BT/Q')^2 (1+T/B^{1/2})^{A_1}}\\
&+ \sum_{T\ge 1} \frac{T^6}{d_1d_2d_3}
\frac{1}{(BT/Q')^{2+\min(A_1A,1/5)} (1+T/B^{1/2})^{A_1}},
\end{align*}
where we have separately bounded the contributions from the two terms in the expression $T^{-1}+L^{1/4}$.
Since $T^a/(BT/Q')^b$ is strictly increasing in $T\le B^{1/2}$
and $(T/B^{1/2})^{-A_1}T^a/(BT/Q')^b$ is strictly decreasing in $T\ge B^{1/2}$
for any fixed exponents $5\le a\le 6$ and $2\le b\le 2+\min(A_1A,1/5)$,
assuming $A_1>6$, we obtain
\begin{align*}
\Sigma_2^A(\mathbf{d})
&\ll \frac{(B^{1/2})^{5+\eps}}{d_1d_2d_3}
\frac{1}{(B^{3/2}/Q')^2}
+ \frac{(B^{1/2})^6}{d_1d_2d_3}
\frac{1}{(B^{3/2}/Q')^{2+\min(A_1A,1/5)}}\\
&\ll \frac{B^3}{d_1d_2d_3 (B^{3/2}/Q')^{2+\min(A_1A,1/5)}},
\end{align*}
where the final inequality holds because
$(B^{3/2}/Q')^{1/5}
\ll (B^{3/2})^{1/5}
\ll (B^{1/2})^{1-\eps}$.
\end{proof}

\begin{lemma}
\label{Q1-aspect}
Let  $A>0$ and assume that $A_1>6$. Then, 
for all $\eps>0$, we have
$$
\Sigma_3^A(\mathbf{d})
\ll_{A,A_1} \frac{B^3 (\log_+{Q_1})^{-1/\eps}}
{(d_1d_2d_3)^{1-\eps} (B^{3/2}/Q')^2}.
$$
\end{lemma}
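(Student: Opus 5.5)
We bound $\Sigma_3^A(\mathbf{d})$ by reducing it to a large-sieve estimate for real characters. The plan has four steps: expand the Euler-product weight $S^{(1)}_{q_1}$ into Jacobi symbols, use prime-number-theorem savings for short character sums, use Heath-Brown's large sieve for long ones, and finally interpolate to reach arbitrary moments $A$.

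\textbf{Step 1: dyadic decomposition and reduction to $A=2$.} First decompose dyadically in $T\asymp\max\{|\m|,|\n|\}$, exactly as in Lemma~\ref{Q2-aspect}. This gives
\begin{equation*}
J_1\ll (BT/Q')^{-2}(1+T/B^{1/2})^{-A_1}.
\end{equation*}
It then suffices to bound
\begin{equation*}
\sum_{\max\{|\m|,|\n|\}\asymp T,\ d_i\mid m_i\ne 0}\abs{\Phi(\m,\n)}^A,
\qquad
\Phi(\m,\n)\defeq\frac{1}{Q_1}\sum_{q_1\in\mathcal{I}_1}q_1^{-3}S^{(1)}_{q_1}(\m,\n),
\end{equation*}
by $T^6(d_1d_2d_3)^{\eps-1}(\log_+Q_1)^{-1/\eps}$. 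By \eqref{eq:SUP} we have $|\Phi|\ll Q_1^{-1}\sum_{q\sim Q_1}\tau(q)^2\ll(\log_+Q_1)^3$. Hence $|\Phi|^A\le |\Phi|^{2}(\log_+Q_1)^{3A}$ for $A\ge 2$, and for $A<2$ we use Hölder. So it is enough to treat $A=2$, or even $A=1$ combined with the trivial bound, with a saving of an arbitrary power of $\log$.

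\textbf{Step 2: opening $S^{(1)}$ into characters.} By \eqref{eq:S1q} and \eqref{eq:Qmp}, $q_1^{-3}S^{(1)}_{q_1}(\m,\n)=\mu(q_1)\1_{\gcd(q_1,6G(\m,\n))=1}\,\mathfrak{Q}(\m,q_1)$ up to the square-free condition $\mu^2$. Expanding $\prod_{p\mid q_1}\mathfrak{Q}(\m,p)$ multiplicatively, write $q_1=efgh$. This gives
\begin{equation*}
\mu(q_1)\left(\frac{m_1m_2}{f}\right)\left(\frac{m_2m_3}{g}\right)\left(\frac{m_3m_1}{h}\right),
\end{equation*}
a sum over at most $4^{\omega(q_1)}$ terms. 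Fix the three largest-size factorisation pattern dyadically. One of the four variables, say $x\in\{e,f,g,h\}$, has size at least $Q_1^{1/4}$. The coprimality to $G(\m,\n)$ is awkward, so we remove it by writing $\1_{\gcd(q_1,G)=1}=\sum_{c\mid\gcd(q_1,G)}\mu(c)$. The $c$ with $c>(\log Q_1)^{C}$ are handled crudely, since their count is controlled by divisor sums of $G$ on average (as in Lemma~\ref{Tim-S_T-Hooley-delta-bound}). This loses only powers of $\log$ times $c^{-1+\eps}$.

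\textbf{Step 3: two regimes for the long variable $x$.}
\begin{itemize}
\item If $x=e$, the character is trivial and the $x$-sum is a Möbius sum coprime to a modulus. Lemma~\ref{twisted-PNT} gives a saving of $(\log Q_1)^{-B}$.
\item If the long variable carries a non-principal character $(\frac{m_im_j}{\cdot})$ and $x\le\exp((\log T)^{1/2})$, Siegel--Walfisz style bounds are unavailable uniformly. Instead, apply Lemma~\ref{cheap-large-sieve} with $h=m_i$, $m=m_j$ (free $\n$, $m_k$ summed trivially, the $d_i\mid m_i$ conditions giving the factor $(d_1d_2d_3)^{-1}$). The result is
\begin{equation*}
(T^2Q_1)^{\eps}\left(T^2x^{1/2}+Tx\right),
\end{equation*}
to be compared with the trivial bound $T^2x$. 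This saves a power of $x$ or of $T$ whenever $Q_1$ is large.
\item When $Q_1\le (\log T)^{C}$ the large sieve is useless. Here one needs the non-square case $m_im_j\ne\square$ and uses PNT for the quadratic character $\chi_{m_im_j}$ on $q\le\exp((\log)^{1/2})$. Exceptional $m_im_j$ (squares, or those with a Siegel zero) are sparse: squares of $m_im_j$ number $O(T^{1+\eps})$, which is negligible against $T^2$.
\end{itemize}

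\textbf{Step 4: assembling.} Summing over $T$ against the $J_1$ weight, as in Lemma~\ref{Q2-aspect}, requires $A_1>6$. The $(\log_+Q_1)^{3A}$ losses are absorbed since $B$ in Lemma~\ref{twisted-PNT} is arbitrary.

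\textbf{Main obstacle.} The hardest point is the intermediate range $Q_1$ between powers of $\log T$ and small powers of $T$. There the large sieve gives only $Q_1^{-1/2}$ savings and the PNT input must be uniform in the modulus $m_im_j$. I would first try the cheap large sieve, which suffices once $Q_1\ge(\log T)^{C/\eps}$. For smaller $Q_1$ I would use Lemma~\ref{twisted-PNT}-type bounds for $L(s,\chi_{m_im_j})$ for all but $O(T^{2-\delta})$ exceptional moduli, via a zero-density estimate.
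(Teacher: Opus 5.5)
There is a genuine gap, and it sits exactly in the range you flag as the main obstacle. Your expansion of $S^{(1)}_{q_1}$ into M\"obius-twisted quadratic characters is essentially the paper's first step, as is the use of Lemma~\ref{twisted-PNT} for the untwisted long variable and Lemma~\ref{cheap-large-sieve} for the twisted ones.

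\emph{Why your remedy fails.} Lemma~\ref{cheap-large-sieve} loses $(T^2Q_1)^{\eps}$, which is a power of $T$. So it saves something only when $Q_1\ge T^{c\eps}$, not once $Q_1\ge(\log T)^{C/\eps}$. Divisor-bound losses such as $|G(\m,\n)|^{\eps}$ behave the same way. For $Q_1\le T^{c\eps}$ you appeal to the prime number theorem for $\chi_{m_im_j}$. But these characters typically have conductor $\asymp T^2$, while the sums have length at most $Q_1$. Unconditional prime number theorems for a character of conductor $k$ give cancellation in sums of length $x$ only when $\log k\ll\sqrt{\log x}$, and Siegel--Walfisz only when $k\le(\log x)^{O(1)}$. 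So essentially every modulus is out of range; this is not a sparse exceptional set. A zero-density estimate would at best control sums of length a fixed power of $\log T$ outside an exceptional set, which leaves $Q_1\le(\log T)^{C}$ untreated.

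\emph{The missing idea is periodicity in $(\m,\n)$.} Work with an even moment; $A=2$ is fine after your reduction, but $A=1$ is not. Expand $|\Phi(\m,\n)|^A$. Each term is a product of $A$ factors $S^{(1)}_{q^{(\nu)}}(\m,\n)$, and each factor depends only on $(\m,\n)\bmod q^{(\nu)}$, through $\1_{p\nmid G(\m,\n)}$ and $(\frac{m_im_j}{p})$. Hence each term is periodic modulo an integer $\ll Q_1^{A}$. For $T\ge Q_1^{A+1}$, the smoothed average of $|\Phi|^A$ over $|\m|,|\n|\ll T$ therefore equals its average over $|\m|,|\n|\ll Q_1^{A+1}$, up to $O_K(Q_1^{-K})$. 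The paper gets this by splitting into residue classes and applying Poisson summation. In that box every $T^{\eps}$ loss becomes $Q_1^{O(\eps)}$. This is absorbed by the power saving in $Q_1$ from the longest convolution variable and by the $(\log Q_1)^{-B}$ saving from Lemma~\ref{twisted-PNT}, once $\eps$ is small in terms of $A$. No information about zeros of $L(s,\chi_{m_im_j})$ is needed.

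\emph{Two smaller points.}
First, your Step~1 asks for the saving $(\log_+Q_1)^{-1/\eps}$ in every dyadic $T$-box. Your argument does not deliver this when $T$ is small compared with $Q_1^{\eps}$, since the $Tx$ term of the large sieve saves only $1/T$ against a $Q_1^{\eps}$ loss. The paper proves only
$\ll T^6(d_1d_2d_3)^{\eps-1}\bigl((\log Q_1)^{-1/\eps}+Q_1^{\eps^2}T^{-\eps}\bigr)$
per box. The second term becomes harmless only after summing against $J_1$, whose mass sits at $T\asymp B^{1/2}$, using $Q_1\ll B^{3/2}$.

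Second, the factor $(d_1d_2d_3)^{-1}$ does not simply fall out of the large sieve. After writing $m_i=d_im_i'$, the $(HM)^{1/2}Q_1$ term gains only $(d_id_j)^{-1/2}$. The paper instead proves the unconstrained bound for the $1/\eps$-th moment. It then inserts $d_i\mid m_i$ by H\"older's inequality, obtaining $(T^6/d_1d_2d_3)^{1-\eps}(T^6)^{\eps}$ times the $\eps$-th power of the normalised unconstrained moment.
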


\begin{proof}
% Recall the character notation introduced before Lemma~\ref{cheap-large-sieve}.
We will use Lemmas~\ref{twisted-PNT} and~\ref{cheap-large-sieve}.
Letting $r_5$ be the maximal square-full divisor of $q$
and letting $r_0 = \gcd(q/r_5, G(\m,\n))$,
 the definition \eqref{eq:S1q} of $S^{(1)}_q(\m,\n)$ implies that
\begin{equation*}
 \frac{S^{(1)}_q(\m,\n)}{q^3}
= \sum_{\substack{r_5r_0r_1r_2r_3r_4=q \\
\gcd(r_i,r_j)=1,\; (0\le i<j\le 5) \\
p\mid r_5\Rightarrow p^2\mid r_5 \\
r_0=\kappa(r_0)\mid G(\m,\n) \\
\gcd(r_1r_2r_3r_4,G(\m,\n)) = 1}} \mu(r_4)
\prod_{1\le i<j\le 3} \mu(r_k) (\frac{m_im_j}{r_k}),
\end{equation*}
where $k=6-i-j$ is such that $\{i,j,k\}=\{1,2,3\}$.
Let $H(\m,\n)\defeq m_1m_2m_3G(\m,\n)$ and $$
G_t(\m,\n)\defeq 
\begin{cases}
G(\m,\n) & \text{ if $1\leq t \leq 4$,}\\
1 & \text{ if $t\in \{0,5\}$.}
\end{cases}
$$
Suppose $r_t\sim R_t$ for $0\le t\le 5$,
such that 
\begin{equation}\label{eq:R0..5}
R_5R_0R_1R_2R_3R_4\asymp Q_1.
\end{equation}
Freezing all but one variable $r_t$,
it follows from
the triangle inequality
that
\begin{equation}
\label{Q1-expand-triangle-ineq}
\sum_{q_1\in \mathcal{I}_1} \frac{S^{(1)}_{q_1}(\m,\n)}{q_1^{3}}
\ll \min_{0\le t\le 5}
\sum_{q\asymp Q_1/R_t}
\tau(q)^5
\abs{S_t},
\end{equation}
where $S_t$ denotes the quantity
\begin{equation*}
\sum_{\substack{r_t\sim R_t\\
r_tq\in \mathcal{I}_1\\
\gcd(r_t,qG_t(\m,\n))=1}}
\hspace{-0.8cm}
\left(\1_{\kappa(r_t)^2\mid r_t} \1_{t=5}
+ \1_{r_t\mid G(\m,\n)} \1_{t=0}
+ \mu(r_t) \1_{t=4}
+ \mu(r_t) \left(\frac{m_im_j}{r_t}\right) \1_{t=k\in \{1,2,3\}}\right).
\end{equation*}
% Here, we set $\chi_{[4]} = \chi_1 = 1$
% and $\chi_{[k]} = \chi_{m_im_j}$, for $1\le i<j\le 3$.

It follows from \eqref{eq:SUP} that
\begin{equation}\label{eq:SUP'}
\sum_{q_1\in \mathcal{I}_1} q_1^{-3}S^{(1)}_{q_1}(\m,\n) \ll \sum_{q_1\sim Q_1} \tau(q_1)^2 \ll Q_1(1+\log{Q_1})^3.
\end{equation}
By \eqref{eq:SUP'} and \eqref{Q1-expand-triangle-ineq},
we have
\begin{equation}
\label{sunny}
\sum_{\substack{|\m|,|\n|\ll T \\ H(\m,\n)\ne 0}}
\abs{\sum_{q_1\in \mathcal{I}_1} q_1^{-3}S^{(1)}_{q_1}(\m,\n)}^A
\ll (Q_1(1+\log{Q_1})^3)^{A-1}
\sum_{R_i} U(\mathbf{R}),
\end{equation}
where
\begin{equation*}
U(\mathbf{R})
= \sum_{q\asymp Q_1/R_t}
\sum_{\substack{|\m|,|\n|\ll T \\ H(\m,\n)\ne 0}}
\tau(q)^5
\abs{S_t},
\end{equation*}
where $0\le t\le 5$ is chosen
in terms of $\mathbf{R}$
in such a way that $R_t = \max(R_0,\dots,R_5)$.
Then $R_t\gg Q_1^{1/6}$ by \eqref{eq:R0..5}.
The number of available choices for $\mathbf{R}$
is $\ll (1+\log{Q_1})^5$.

% Under RH over $r_4$,
% and GRH over $r_1,r_2,r_3$,
% it should be routine to bound $S_t$ for $1\le t\le 4$,
% at least when $Q_1$ exceeds an arbitrarily small power of $B$.
% Unconditionally, we can use a large sieve over $r_1,r_2,r_3$
% and the prime number theorem over $r_4$.

Clearly $S_5\ll R_5^{1/2}$,
and if $G(\m,\n)\ne 0$ then $S_0\ll |G(\m,\n)|^\eps$.
Thus,
\begin{equation*}
U(\mathbf{R}) \1_{t\in \{0,5\}}
\ll \frac{Q_1^{1+\eps}T^6}{R_5^{1/2}} \1_{t=5}
+ \frac{Q_1^{1+\eps}T^{6+\eps}}{R_0} \1_{t=0}
\ll \frac{Q_1^{1+\eps}T^{6+\eps}}{Q_1^{1/12}}.
\end{equation*}
By Lemma~\ref{twisted-PNT} with $P = |G(\m,\n)|$ and $z=q$, we have
\begin{equation*}
\frac{U(\mathbf{R}) \1_{t=4}}{T^6}
\ll \frac{(1+\log{Q_1})^{2^5-1}}{(1+\log{R_4})^B} Q_1
+ (TQ_1)^\eps \frac{Q_1}{R_4^{1/2}}
\ll \frac{Q_1}{(1+\log{Q_1})^{B-31}}
+ \frac{T^\eps Q_1^{1+\eps}}{Q_1^{1/12}},
\end{equation*}
for any $B\geq 1$.
Finally, if $t=k\in \{1,2,3\}$,
then by Lemma~\ref{cheap-large-sieve}
with $(h,m) = (m_i,m_j)$, we have 
\begin{equation*}
U(\mathbf{R})
\ll T^4 (Q_1/R_t)^{1+\eps}
(TR_t)^\eps (T^2 R_t^{1/2} + T R_t)
\ll T^{6+\eps} Q_1^{1+\eps} (Q_1^{-1/12} + T^{-1}).
\end{equation*}

Choosing $\eps$ to be small in terms of $A$,
and letting $B = 3/\eps$, it follows from  \eqref{sunny} that 
\begin{equation}\label{eq:**}
\frac{\sum_{|\m|,|\n|\ll T}
\abs{\frac{1}{Q_1}\sum_{q_1\in \mathcal{I}_1} q_1^{-3}S^{(1)}_{q_1}(\m,\n)}^A}
{T^6 }
\ll \frac{1}{(\log{Q_1})^{2/\eps}}
+ \frac{T^\eps}{Q_1^{1/12-\eps}}
+ \frac{Q_1^\eps}{T^{1-\eps}},
\end{equation}
where we have included the terms $H(\m,\n)=0$
in the summation over $|\m|,|\n|\ll T$,
using the bound \eqref{eq:SUP'}. Let $C(A,\ve)$ be the estimate
$$
\frac{\sum_{|\m|,|\n|\ll T}
\abs{\frac{1}{Q_1}\sum_{q_1\in \mathcal{I}_1} q_1^{-3}S^{(1)}_{q_1}(\m,\n)}^A}
{T^6 }
\ll \frac{1}{(\log{Q_1})^{1/\eps}}+ \frac{Q_1^{2\eps}}{T}.
$$
Clearly $C(A,\ve)$ follows from \eqref{eq:**} when  $Q_1^{A+1} \geq T$, and so we proceed under the assumption that 
$Q_1^{A+1} \le T$. 
We may assume without loss of generality that 
 $A$ be an even integer. 
Let 
 $W\ge 0$ is a smooth function supported on a ball in $\RR^6$,
and let $K\ge 0$ be an arbitrarily large constant.
Writing 
$$
f(\m,\n)=\abs{\frac{1}{Q_1}\sum_{q_1\in \mathcal{I}_1} q_1^{-3}S^{(1)}_{q_1}(\m,\n)}^A,
$$
for notational convenience, it  follows from Poisson summation that
\begin{align*}
\frac{\sum_{|\m|,|\n|\ll T} f(\m,\n)
}
{T^6 }
&\ll \frac{\sum_{\m,\n} W(\frac{\m,\n}{T}) f(\m,\n)}
{T^6} \\
&= \frac{\sum_{\m,\n} W(\frac{\m,\n}{Q_1^{A+1}}) f(\m,\n)
+ O_K(Q_1^{-K})}
{(Q_1^{A+1})^6 },
\end{align*}
on opening up the sum over $q_1$ in both of the expressions on the right hand side, 
switching the order of summation, 
breaking into residue classes modulo 
an integer of order $Q_1^A$,  and then applying Poisson summation. 
But then 
\begin{align*}
\frac{\sum_{|\m|,|\n|\ll T} f(\m,\n)
}
{T^6}
&\ll \frac{\sum_{|\m|,|\n|\ll Q_1^{A+1}}
\abs{\frac{1}{Q_1}\sum_{q_1\in \mathcal{I}_1} q_1^{-3}S^{(1)}_{q_1}(\m,\n)}^A
+ O_K(Q_1^{-K})}
{(Q_1^{A+1})^6} \\
&\ll \frac{1}{(\log{Q_1})^{1/\eps}},
\end{align*}
which is satisfactory for the claimed estimate $C(A,\ve)$.

%\footnote{We use Poisson summation to evaluate the average
%$\frac{1}{X^6} \sum_{\m,\n} W(\frac{\m,\n}{X})
%|\sum_{q_1\in \mathcal{I}_1} \cdots|^A$
%for $X\in \{T,Q_1^{A+1}\}$,
%getting a main term independent of $X$.
%The error term is $O(Q_1^{-K})$.}

Applying $C(A/\ve,\ve^2)$, 
it now follows from an application of  H\"{o}lder's inequality that 
\begin{equation*}
\begin{split}
\sum_{\substack{|\m|,|\n|\ll T \\ d_i\mid m_i\ne 0}} f(\m,\n)
&\ll \left(\sum_{\substack{|\m|,|\n|\ll T \\ d_i\mid m_i\ne 0}} 1\right)^{1-\eps}
\left(\sum_{|\m|,|\n|\ll T} f(\m,\n)^{1/\ve}
\right)^\eps \\
&\ll \left(\frac{T^6}{d_1d_2d_3}\right)^{1-\eps}
(T^6)^\eps \left(\frac{1}{(\log{Q_1})^{1/\eps^2}}
+ \frac{Q_1^{2\eps^2}}{T}\right)^\eps \\
&\ll \frac{T^6}{(d_1d_2d_3)^{1-\eps}}
\left(\frac{1}{(\log{Q_1})^{1/\eps}}
+ \frac{Q_1^{\eps^2}}{T^\eps}\right).
\end{split}
\end{equation*}

We now recall the definition \eqref{eq:def_SIG3} of $\Sigma_3^A(\mathbf{d})$ and the upper bound 
\eqref{eq:def-J1J2} for $J_1$.
On invoking  dyadic summation over $T$, we finally deduce that 
\begin{align*}
\Sigma_3^A(\mathbf{d})
&\ll \sum_T \frac{T^6 (BT/Q')^{-2}}
{(d_1d_2d_3)^{1-\eps} (1+T/B^{1/2})^{A_1}}
\left(\frac{1}{(\log{Q_1})^{1/\eps}}
+ \frac{Q_1^{\eps^2}}{T^\eps}\right) \\
&\ll \frac{(B^{1/2})^6 (B^{3/2}/Q')^{-2}}
{(d_1d_2d_3)^{1-\eps}}
\left(\frac{1}{(\log{Q_1})^{1/\eps}}
+ \frac{Q_1^{\eps^2}}{(B^{1/2})^\eps}\right),
\end{align*}
since  $A_1>6$. The statement of the lemma now follows, since   $Q_1\ll Q'\ll B^{3/2}$.
\end{proof}

\begin{lemma}
\label{Q3-aspect}
Let  $A\geq 0$ and assume that $A_1>4$. Then, 
for all $\delta\in (0,1/3)$, we have
$$
\Sigma_4^A(\mathbf{d})
\ll \frac{\gcd(d_1,d_2,d_3)^{2\delta} B^3 Q_3^{-\delta/2000}}
{(d_1d_2d_3)^{1-\eps} (B^{3/2}/Q')^2}.
$$
\end{lemma}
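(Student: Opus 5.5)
Since $\Sigma_4^A(\mathbf{d})$ carries the counting function of $q_3$ only through the weight $J_1$, I will first dispose of the $A$-th power and then reduce to a lattice-point count for pairs $(\m,\n)$ such that $\nabla G(\m,\n)$ is highly divisible. The inner sum over $q_3$ is trivially at most $\tau(G(\m,\n))\ll T^{\eps}$ when $\max\{|\m|,|\n|\}\asymp T$. Hence
$$\Sigma_4^A(\mathbf{d})\ll \sum_T \frac{T^{\eps}(BT/Q')^{-2}}{(1+T/B^{1/2})^{A_1}}\, \#\mathcal{N}_T,$$
where $\mathcal{N}_T$ is the set of $(\m,\n)$ with $\max\{|\m|,|\n|\}\asymp T$, $d_i\mid m_i\ne 0$, $D(\m,\n)\ne 0$, for which there is some $q_3\sim Q_3$ with $q_3\mid G(\m,\n)$ and $\kappa(q_3)\mid\nabla G(\m,\n)$. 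The aim is to show
$$\#\mathcal{N}_T\ll \frac{\gcd(d_1,d_2,d_3)^{2\delta}\,T^{6}\,Q_3^{-\delta/1000}}{(d_1d_2d_3)^{1-\eps}}.$$
The $T^\eps$ loss is then absorbed, since the range $T\le B^{1/2}$ dominates with $A_1>4$. Let $k=\kappa(q_3)$. Because $q_3\mid G$ and each prime $p\mid k$ divides both $G$ and $\nabla G$, $p$ is a singular prime for the reduction of $(\m,\n)$.

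To count, I split according to $k$. If $k\ge Q_3^{\delta/100}$, I factor $k$ and use an Ekedahl-type estimate at each prime: the conditions $p\mid G$ and $p\mid\nabla G$ cut out a locus of codimension at least $2$ in $\mathbb{A}^6$. This gives a density $\ll p^{-2}$ per prime, which I would justify via \eqref{eq:derivatives} and Lemma~\ref{grad1}. Indeed $p\mid \nabla G$ forces either $p\mid n_i$ together with conditions on $L_j,L_k$, or the simultaneous vanishing of all $L_i$. Either way this amounts to two independent congruence conditions, and using \eqref{eq:regis} where needed yields $\ll p^{4+\eps}$ residues mod $p$ out of $p^6$. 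By CRT, the density for a given squarefree $k$ is $\ll k^{-2+\eps}$. I then combine this with the constraint $d_i\mid m_i$, noting that $\gcd(k,d_1d_2d_3)$ interacts with the conditions; this is the source of the loss $\gcd(d_1,d_2,d_3)^{2\delta}$. Summing over $k\ge Q_3^{\delta/100}$ gives the saving $Q_3^{-\delta/100}$, at least when $k\le T^{1/3}$ so that the lattice count in a box of side $T$ is accurate. When $k$ is larger, I instead take a divisor $k'\mid k$ with $k'$ in a suitable range, which exists since every prime factor of $k$ is at most $T^{O(1)}$ or, being a divisor of $\nabla G$, contributes such a factor.

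If instead $k<Q_3^{\delta/100}$, then $q_3\sim Q_3$ has a small kernel. So $q_3$ is highly powerful and $G(\m,\n)\equiv 0\bmod q_3$ with $q_3$ built from few primes. I would then count $(\m,\n)$ with $q_3\mid G(\m,\n)$ via \eqref{eq:regis} with $d=6$, which gives density $\ll q_3^{-1/6+\eps}$. The number of such $q_3$ with $\kappa(q_3)\le Q_3^{\delta/100}$ is $\ll Q_3^{\eps}$ by \eqref{eq:kernel-divisor-bound}. This yields a saving of $Q_3^{-1/6+\eps}$ whenever $q_3\le T^{1/3}$. For $q_3$ larger than $T^{1/3}$, I would use a divisor $q_3'\mid q_3$ of size about $T^{1/3}$ composed of the same primes instead.

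I expect the main obstacle to be the codimension-$2$ density bound uniformly in $p$ together with the divisibility $d_i\mid m_i$. When $p\mid d_i$ for all $i$, we have $p\mid\m$, hence $p\mid G$ and $p\mid \nabla G$ automatically, so there is no saving at such primes. This is precisely why the factor $\gcd(d_1,d_2,d_3)^{2\delta}$ appears, and the casework via Lemma~\ref{grad1} and \eqref{eq:derivatives} has to handle it carefully.
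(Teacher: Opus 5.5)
There is a genuine gap at the very first step. Write $f(\m,\n)$ for the inner count over $q_3$. Bounding it by $\tau(G(\m,\n))\ll T^{\eps}$ costs a factor $T^{A\eps}$. Your claim that this is absorbed because the range $T\le B^{1/2}$ dominates is backwards. The dyadic sum $\sum_T T^{6+A\eps}(BT/Q')^{-2}(1+T/B^{1/2})^{-A_1}$ is dominated by $T\asymp B^{1/2}$, so you end up with $B^{A\eps/2}\cdot B^3(B^{3/2}/Q')^{-2}$.

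The lemma contains no such factor, and it cannot be hidden in $Q_3^{-\delta/2000}$. The same goes for the slack between your $Q_3^{-\delta/1000}$ and the stated $Q_3^{-\delta/2000}$, because $Q_3$ may stay bounded while $B\to\infty$. Using the trivial bound $f\le 2Q_3$ instead costs $Q_3^{A}$, which is worse. The loss is fatal downstream: $\Sigma_4^{4A}$ enters $E_{1,3}$ with exponent $1/(4A)$, so you lose $B^{\eps/2}$. Any fixed power of $B$ wrecks $E_1(B)\ll(\log B)^{\eps}$, which has to be $o(\log B)$.

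The missing idea is the periodicity reduction \eqref{combo-null}.
\begin{itemize}
\item For integer $A$, expand $f(\m,\n)^A$ over $A$-tuples $(q_3^{(1)},\dots,q_3^{(A)})$. Each term, together with the conditions $d_i\mid m_i$, depends only on $(\m,\n)$ modulo $\Pi=d_1d_2d_3q_3^{(1)}\cdots q_3^{(A)}\asymp d_1d_2d_3Q_3^A$.
\item So when $T\ge d_1d_2d_3Q_3^A$, the average $T^{-6}\sum_{|\m|,|\n|\ll T}f^A$ is bounded by the corresponding average over a box of side $\asymp d_1d_2d_3Q_3^A$. The combinatorial Nullstellensatz guarantees that every residue class mod $\Pi$ has a representative in such a box with $m_1m_2m_3D(\m,\n)\ne0$.
\item Every $T^{\eps}$ from the divisor bound therefore becomes $(d_1d_2d_3Q_3^A)^{\eps}$. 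This is absorbed by the $(d_1d_2d_3)^{-\eps}$ slack and by the $Q_3$-saving. That is why the paper degrades $L^{-\delta}$ to $L^{-\delta/2}$, with $L=Q_3^{1/1000}$.
\end{itemize}
Your counting scheme needs this step, or some other way of controlling the moments of $f$ without a $T^{\eps}$ loss.

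Two further points need repair.
\begin{itemize}
\item Suppose $k=\kappa(q_3)$ has a prime factor $p>T^{1/3}$. Then $k$ need not have a divisor $k'$ in your range, and equidistribution mod $p$ in a box of side $T$ is unavailable. The paper avoids this with the explicit structure. First, $p\mid\nabla G$ forces $p\mid 6n_1n_2n_3\gcd(m_1,m_2,m_3)$. Second, $p\mid\gcd(G,n_i)$ forces $p\mid 6(m_jn_j^2-m_kn_k^2)$. After discarding the $O(T^{5+\eps}/(d_1d_2d_3))$ points where these quantities vanish, one counts with the divisor bound, uniformly in the size of the primes involved.
\item In your small-kernel case, the content of $G$ on the sublattice $d_i\mid m_i$ has size about $\gcd(d_1,d_2,d_3)^2$, and this enters \eqref{eq:regis}. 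The paper handles it by splitting off $g=\gcd(\m)\ge L$ and applying Rankin's trick.
\end{itemize}
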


\begin{proof}
The proof is modelled after the general Ekedahl sieve,
but also makes use of the particular structure of the variety $G=\nabla{G}=0$.
Using the derivative formulas \eqref{eq:derivatives},
it is easy to check that if $p\mid \nabla{G}(\m,\n)$,
then $p\mid 6n_1n_2n_3\gcd(m_1,m_2,m_3)$.
Therefore,
$$\sum_{\substack{D(\m,\n)\ne 0 \\ |\m|,|\n|\ll T \\ d_i\mid m_i\ne 0}}
\sum_{\substack{q_3\sim Q_3 \\ q_3\mid G(\m,\n) \\ \kappa(q_3)\mid \nabla{G}(\m,\n)}} 1
\le \frac{T^{5+\eps}}{d_1d_2d_3}
+ \sum_{\substack{r,g\ge 1 \\ r=\kappa(r)}}
\sum_{\substack{D(\m,\n)\ne 0 \\ |\m|,|\n|\ll T \\ d_i\mid m_i\ne 0
\\ r\mid 6n_1n_2n_3\ne 0 \\ g = \gcd(\m) \\
\prod_{1\le i<j\le 3} (m_in_i^2-m_jn_j^2) \ne 0}}
\sum_{\substack{q_3\sim Q_3 \\ r\mid q_3\mid G(\m,\n) \\ \kappa(q_3)\mid rg}} 1,
$$
where the first term on the right hand side accounts for the possibility that
$n_1n_2n_3=0$
or $m_in_i^2=m_jn_j^2$ for some $1\le i<j\le 3$.
Now write
$$\sum_{\substack{r,g\ge 1 \\ r=\kappa(r)}}
\sum_{\substack{D(\m,\n)\ne 0 \\ |\m|,|\n|\ll T \\ d_i\mid m_i\ne 0
\\ r\mid 6n_1n_2n_3\ne 0 \\ g = \gcd(\m) \\
\prod_{1\le i<j\le 3} (m_in_i^2-m_jn_j^2) \ne 0}}
\sum_{\substack{q_3\sim Q_3 \\ r\mid q_3\mid G(\m,\n) \\ \kappa(q_3)\mid rg}} 1
\le \Sigma_1 + \Sigma_2 + \Sigma_3,$$
where $\Sigma_1$ denotes the contribution from $r\ge L$,
where $\Sigma_2$ denotes the contribution from $g\ge L$,
and where $\Sigma_3$ denotes the contribution from $r,g\le L$.

%We break up this problem (Problem 8.4) into two parts, one where $q_3$ is close to 100th-power free (say), and one where $q_3$ is divisible by a large 100-power full integer, the latter event being rare by an easy (unconditional) case of the square-free sieve conjecture.
%
%Use divisor bound and Ekedahl sieve

In $\Sigma_1$, observe that
$r\mid \gcd(r,6n_1)\gcd(r,6n_2)\gcd(r,6n_3)$
by reduction modulo $r$, so
$$\max\{\gcd(r,6n_1),\gcd(r,6n_2),\gcd(r,6n_3)\}
\ge r^{1/3}\ge L^{1/3}.$$
Since any prime $p\mid \gcd(G(\m,\n),6n_1)$
divides $6(m_2n_2^2-m_3n_3^2)$, for instance,
we may replace $r$ with $\max\{\gcd(r,6n_1),\gcd(r,6n_2),\gcd(r,6n_3)\}$
and permute indices $\{1,2,3\}$ to assume that
$$\Sigma_1
\ll \sum_{\substack{r\ge L^{1/3} \\ r=\kappa(r)}}
\sum_{\substack{D(\m,\n)\ne 0 \\ |\m|,|\n|\ll T \\ d_i\mid m_i\ne 0
\\ r\mid 6n_1\ne 0 \\
r\mid 6(m_2n_2^2-m_3n_3^2) \ne 0}} T^\eps
\ll \frac{T^{6+2\eps}}{d_1d_2d_3 L^{1/3}},
$$
where we have first summed over $g$ and $q_3$ using the divisor bound,
before summing over $n_1$,
then over $r$ using the divisor bound,
and finally over $\m,n_2,n_3$.

For $\Sigma_2$, we first eliminate $r$ and $q_3$ using the divisor bound, getting
$$\Sigma_2
\ll \sum_{g\ge L}
\sum_{\substack{D(\m,\n)\ne 0 \\ |\m|,|\n|\ll T \\ \lcm(g,d_i)\mid m_i\ne 0}} T^\eps
\ll \sum_{\substack{g\ge L}}
\frac{T^{6+\eps}}{\lcm(g,d_1)\lcm(g,d_2)\lcm(g,d_3)}
= \sum_{\substack{g\ge L}}
\frac{g_1g_2g_3 T^{6+\eps}}{g^3 d_1d_2d_3},
$$
where $g_i = \gcd(g,d_i)$.
But, by Rankin's trick, we have 
\begin{align*}
\sum_{g\ge L}
\frac{g_1g_2g_3}{g^3}
\le \frac{1}{L^\delta} \sum_{g\ge 1}
\frac{g_1g_2g_3}{g^{3-\delta}}
&\ll \frac{1}{L^\delta}
\prod_p \sum_{e\ge 0} \frac{\prod_i \gcd(p^e,d_i)}
{(p^e)^{3-\delta}}.
\end{align*}
Put $a_i=v_p(d_i)$ and relabel so that $a_1\leq a_2\leq a_3$. 
If $a_3=0$ then the local factor is $1+O(p^{-(3-\delta)})$. If $a_3\geq 1$ then 
the local factor is
$$
\sum_{0\leq e\leq a_1} p^{\delta e} +
\sum_{a_1< e\leq a_2} p^{a_1-e(1-\delta)} +
\sum_{a_2< e\leq a_3} p^{a_1+a_2-e(2-\delta)} 
+
\sum_{e>a_3} p^{a_1+a_2+a_3-e(3-\delta)} 
$$
Each of these sums is $O(p^{\delta a_1})$, whence
\begin{align*}
\sum_{g\ge L}
\frac{g_1g_2g_3}{g^3}
&\ll \frac{\gcd(d_1,d_2,d_3)^{2\delta}}{L^\delta}.
\end{align*}
The final product over $p$ is
$\ll \gcd(d_1,d_2,d_3)^{(\delta+\eps)/(1-2\delta)}$, and so it follows that
\begin{equation}
\label{g1g2g3-bound}
\Sigma_2
\ll \frac{T^{6+\eps}\gcd(d_1,d_2,d_3)^{2\delta}}
{(d_1d_2d_3)^{1-\eps}L^\delta}.
\end{equation}

Finally,
$$\Sigma_3
\le \sum_{\substack{r,g\le L \\ r=\kappa(r)}}
\sum_{\substack{|\m|,|\n|\ll T \\ d_i\mid m_i\ne 0
\\ g = \gcd(\m)}}
\sum_{\substack{q_3\sim Q_3 \\ q_3\mid G(\m,\n) \\ \kappa(q_3)\mid rg}} 1
= \sum_{\substack{r,g\le L \\ r=\kappa(r)}}
\sum_{\substack{q_3\sim Q_3 \\ \kappa(q_3)\mid rg}} 
\sum_{\substack{|\m|,|\n|\ll T \\ d_i\mid m_i\ne 0
\\ g = \gcd(\m)
\\ q_3\mid G(\m,\n)}} 1.$$
Assume $L = Q_3^{1/1000}$, say.
Since every prime factor of $q_3$ is $\le L$,
it follows that $q_3$ has an integer factor $f$ with
$Q_3^{1/4}\ll f\ll Q_3^{1/3}$, say.
Given $\m$, the number of available choices for $\n$
is $\ll (T+f)^3 (f/g^2)^{\eps-1/4}$, say,
by the univariate degree $4$ case of
\eqref{eq:regis}.
Thus
\begin{align*}
\Sigma_3
\ll \sum_{\substack{r,g\le L \\ r=\kappa(r)}}
\sum_{\substack{q_3\sim Q_3 \\ \kappa(q_3)\mid rg}} 
\sum_{\substack{\m\ll T \\ d_i\mid m_i\ne 0}}
(Q_3L)^\eps \frac{T^3+Q_3}{Q_3^{1/16}} L^{1/2}
&\ll L^2 (Q_3L)^{2\eps} \frac{T^3}{d_1d_2d_3}
\frac{T^3+Q_3}{Q_3^{1/16}} L^{1/2}\\
&\ll \frac{T^3(T^3+Q_3)}{d_1d_2d_3Q_3^{1/17}},
\end{align*}
where the sum over $q_3$ is bounded using \eqref{eq:kernel-divisor-bound}.

It will be convenient to 
put 
$$
f(\m,\n)=\sum_{\substack{q_3\sim Q_3 \\ q_3\mid G(\m,\n) \\ \kappa(q_3)\mid \nabla{G}(\m,\n)}} 1.
$$
Since $\tau(G(\m,\n)) \ll T^\eps$ whenever $|\m|,|\n|\ll T$ with $D(\m,\n)\ne 0$, it follows that
\begin{align*}
\sum_{\substack{D(\m,\n)\ne 0 \\ |\m|,|\n|\ll T \\ d_i\mid m_i\ne 0}}
f(\m,\n)^A
&\ll T^\eps \left(
\frac{T^{5+\eps}}{d_1d_2d_3}+
\Sigma_1+\Sigma_2+\Sigma_3\right)\\
&\ll 
\frac{T^{6+3\eps}\gcd(d_1,d_2,d_3)^{2\delta}}
{(d_1d_2d_3)^{1-\eps}L^\delta}
+ \frac{T^{3+\eps}Q_3^{16/17}}{d_1d_2d_3},
\end{align*}
since 
$$
\frac{T^{5+\eps}}{d_1d_2d_3}\leq 
\left(
\frac{T^{6+3\eps}\gcd(d_1,d_2,d_3)^{2\delta}}
{(d_1d_2d_3)^{1-\eps}L^\delta}\right)^{2/3}
\left( \frac{T^{3+\eps}Q_3^{16/17}}{d_1d_2d_3}\right)^{1/3}.
$$
However, if $K>0$ is a sufficiently large constant
and $T\ge d_1d_2d_3Q_3^A$, then
\begin{equation}
\label{combo-null}
\sum_{\substack{D(\m,\n)\neq 0\\
|\m|,|\n|\ll T \\ d_i\mid m_i}}f(\m,\n)^A
\ll \left(\frac{T}{d_1d_2d_3Q_3^A}\right)^6
\sum_{\substack{D(\m,\n)\ne 0 \\ |\m|,|\n|\le Kd_1d_2d_3Q_3^A \\ d_i\mid m_i\ne 0}} f(\m,\n)^A.
\end{equation}
Indeed, for any integer $\Pi\asymp d_1d_2d_3Q_3^A$
% Note: The case of interest is $\Pi = d_1d_2d_3 q_3^{(1)}\cdots q_3^{(A)}$.
and any residue class $(\a,\b)\bmod{\Pi}$, 
there exists a pair $(\m,\n)\equiv (\a,\b)\bmod{\Pi}$
with $|\m|,|\n|\le 10\Pi$ such that $m_1m_2m_3D(\m,\n)\ne 0$,
by the combinatorial nullstellensatz, as used  in \cite{BGW2024positive}*{proof of Lemma~7.7}.
The key point is that such a residue class $(\a,\b)\bmod{\Pi}$
is counted $\ll (\frac{T}{\Pi})^6 \asymp (\frac{T}{d_1d_2d_3Q_3^A})^6$ times on the left-hand side of \eqref{combo-null},
and is counted at least once on the right-hand side.

We may now deduce that 
\begin{equation*}
\frac{1}{T^6} \sum_{\substack{D(\m,\n)\ne 0 \\ |\m|,|\n|\ll T \\ d_i\mid m_i\ne 0}}
f(\m,\n)^A
\ll \frac{\gcd(d_1,d_2,d_3)^{2\delta}}
{(d_1d_2d_3)^{1-\eps}L^{\delta/2}}
+ \frac{(T^{-3}+(d_1d_2d_3Q_3^A)^{-3}) Q_3^{17/18}}{(d_1d_2d_3)^{1-\eps}}.
\end{equation*}
This is a trivial consequence of our earlier estimate if $T\le d_1d_2d_3Q_3^A$, and it follows from   \eqref{combo-null} if $T\ge d_1d_2d_3Q_3^A$. But now, since $(Q_3^A)^{-3} Q_3^{17/18} \le 1/L^{\delta/2}$ for $A\ge 1$,
we get
\begin{equation*}
\frac{1}{T^6} \sum_{\substack{D(\m,\n)\ne 0 \\ |\m|,|\n|\ll T \\ d_i\mid m_i\ne 0}}f(\m,\n)^A
\ll \frac{\gcd(d_1,d_2,d_3)^{2\delta}}
{(d_1d_2d_3)^{1-\eps}L^{\delta/2}}
+ \frac{T^{-3} Q_3^{17/18}}{(d_1d_2d_3)^{1-\eps}},
\end{equation*}
if $A\ge 1$. The same bound clearly holds for any  $A>0$.

We now recall the definition \eqref{eq:def_SIG4} of $\Sigma_4^A(\mathbf{d})$ and the upper bound 
\eqref{eq:def-J1J2} for $J_1$. 
Appealing to  dyadic summation over $T$, we obtain
\begin{align*}
\Sigma_4^A(\mathbf{d})
&\ll \sum_T \frac{\gcd(d_1,d_2,d_3)^{2\delta} T^6 (BT/Q')^{-2}}
{(d_1d_2d_3)^{1-\eps} (1+T/B^{1/2})^{A_1}}
\left(\frac{1}{L^{\delta/2}}
+ \frac{Q_3^{17/18}}{T^3}\right) \\
&\ll \frac{\gcd(d_1,d_2,d_3)^{2\delta} B^3 (B^{3/2}/Q')^{-2}}
{(d_1d_2d_3)^{1-\eps}}
\left(\frac{1}{L^{\delta/2}}
+ \frac{Q_3^{17/18}}{B^{3/2}}\right)
\end{align*}
since  $A_1>4$. The statement of the lemma follows, since  $Q_3\ll Q'\ll B^{3/2}$.
\end{proof}

\begin{lemma}
\label{Q4-aspect}
Let  $A\geq 0$ and assume that $A_1>4$. Then, 
for all $\delta\in (0,1/3)$, we have
\begin{equation*}
\Sigma_5^A(\mathbf{d})
\ll \frac{\gcd(d_1,d_2,d_3)^{2\delta} B^3 Q_4^{-\delta/2000}}
{(d_1d_2d_3)^{1-\eps} (B^{3/2}/Q')^2}.
\end{equation*}
\end{lemma}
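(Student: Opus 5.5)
We follow the template of Lemma~\ref{Q3-aspect} closely, since $\Sigma_5^A$ has the same shape with the inner count replaced by
$$
f(\m,\n)=\sum_{\substack{q_4\sim Q_4 \\ p\mid q_4\Rightarrow v_p(q_4)=1+v_p(G(\m,\n))\ge 2}} \frac{1}{\kappa(q_4)}.
$$

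The first observation is structural: $q_4$ is square-full, and it is determined by its kernel. Indeed $q_4=\prod_{p\mid \kappa(q_4)}p^{1+v_p(G)}$, so $\kappa(q_4)\mid G(\m,\n)$ and $q_4=\kappa(q_4)\gcd(q_4,G)$. Hence $q_4\mid \kappa(q_4)G(\m,\n)$, and the condition $v_p(G)\ge1$ for every $p\mid q_4$ means that $q_4/\kappa(q_4)$ is a divisor of $G(\m,\n)$ of size $\gg Q_4^{1/2}$; in fact $\kappa(q_4)^2\le q_4$. Thus $f(\m,\n)\ll \tau(G(\m,\n))\,Q_4^{-1/2}$, and more usefully
$$
f(\m,\n)\ll \sum_{\substack{e\mid G(\m,\n),\; e\gg Q_4^{1/2}\\ \kappa(e)\mid e,\ e\text{ with }\kappa(e)^2\mid e\,\kappa(e)}} \frac{1}{\kappa(e)}.
$$
Since $\tau(G(\m,\n))\ll T^\eps$ for $|\m|,|\n|\ll T$ and $D(\m,\n)\neq0$, it suffices, as in Lemma~\ref{Q3-aspect}, to bound the first moment $\sum f(\m,\n)$ over the box with $d_i\mid m_i$, losing only $T^\eps$ when passing to the $A$th moment.

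We then split according to the size of $k=\kappa(q_4)$, with $L=Q_4^{1/1000}$.

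\textbf{Case $k\ge L$.} Here the weight $1/\kappa(q_4)\le L^{-1}$ directly gives the saving. Summing over $q_4$ with the divisor bound yields a contribution $\ll T^{6+\eps}L^{-1}/(d_1d_2d_3)$.

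\textbf{Case $k\le L$.} Here $q_4$ is a $Q_4$-sized integer all of whose prime factors are $\le L$, and $q_4\mid kG(\m,\n)$, so some divisor $f_0\mid q_4/k$ with $Q_4^{1/4}\ll f_0\ll Q_4^{1/3}$ divides $G(\m,\n)$. We count $\n$ with $f_0\mid G(\m,\n)$ via \eqref{eq:regis}, exactly as for $\Sigma_3$ in Lemma~\ref{Q3-aspect}. The content issue is handled by extracting $g=\gcd(\m)$: large $g$ is treated with the Rankin-trick bound \eqref{g1g2g3-bound}, and this is where the factor $\gcd(d_1,d_2,d_3)^{2\delta}L^{-\delta}$ enters. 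Small $g$ gives a saving $Q_4^{-1/17}$ times $(T^3+Q_4)T^3/(d_1d_2d_3)$. The number of $q_4$ with $\kappa(q_4)\mid k$ is controlled by \eqref{eq:kernel-divisor-bound}.

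Collecting the cases, we get
$$
\frac{1}{T^6}\sum f^A\ll \frac{\gcd(d_1,d_2,d_3)^{2\delta}}{(d_1d_2d_3)^{1-\eps}L^{\delta/2}}+\frac{T^{-3}Q_4^{17/18}}{(d_1d_2d_3)^{1-\eps}}.
$$
For large $T$ the residue-class periodicity argument \eqref{combo-null} (combinatorial nullstellensatz) removes the $T^{-3}Q_4$ term. Finally, dyadic summation over $T$ against $J_1$ with $A_1>4$ gives the stated bound.

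The main obstacle is the small-kernel case. The condition defining $q_4$ depends on the exact valuation $v_p(G)$, not just on divisibility. We will replace it by the weaker divisibility $q_4/\kappa(q_4)\mid G$, and we must check that this loses nothing essential. We must also check that the degree-$6$ polynomial $G$ in $\n$ still has non-zero content modulo $f_0/g^2$, so that \eqref{eq:regis} gives a power saving. If the univariate count in $n_3$ is degenerate (when $m_3n_3$-terms vanish modulo $p$), we will instead count in whichever $n_i$ has $p\nmid m_i$.
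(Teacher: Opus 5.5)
Your proposal is correct and follows essentially the same route as the paper: split at $\kappa(q_4)\gtrless L=Q_4^{1/1000}$, use the weight $1/\kappa(q_4)$ for large kernels, use a smooth divisor $f_0\mid q_4/\kappa(q_4)\mid G(\m,\n)$ with \eqref{eq:regis} for small kernels and small $\gcd(\m)$, use the Rankin bound \eqref{g1g2g3-bound} for large $\gcd(\m)$, and finish with the periodicity argument of \eqref{combo-null} and dyadic summation against $J_1$. Two corrections: your aside $f(\m,\n)\ll\tau(G(\m,\n))Q_4^{-1/2}$ is false, since $\kappa(q_4)^2\le q_4$ gives $1/\kappa(q_4)\ge q_4^{-1/2}$ (you never use it, so just drop it); and the periodicity step is needed mainly to remove the $T^{\eps}$ loss from the divisor bound before summing over $T$, not only to absorb the $T^{-3}Q_4^{17/18}$ term.
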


\begin{proof}
The condition
$p\mid q_4\Rightarrow v_p(q_4)=1+v_p(G(\m,\n))\ge 2$
implies, in particular, that
$\kappa(q_4)\mid G(\m,\n)$.
Since $\#\{q_4\sim Q_4: \kappa(q_4)=r\}\ll (Q_4r)^\eps$ by \eqref{eq:kernel-divisor-bound},
it follows that
\begin{align*}
\Sigma_L\defeq \sum_{\substack{q_4\sim Q_4
\\ \kappa(q_4)\ge L
\\ p\mid q_4\Rightarrow v_p(q_4)=1+v_p(G(\m,\n))\ge 2}} \frac{1}{\kappa(q_4)}
&= \sum_{\substack{r\ge L \\ r=\kappa(r) \\ r\mid G(\m,\n)}}
\sum_{\substack{q_4\sim Q_4
\\ \kappa(q_4)=r
\\ p\mid q_4\Rightarrow v_p(q_4)=1+v_p(G(\m,\n))\ge 2}}
\frac{1}{r}\\
&\ll \sum_{\substack{r\ge L \\ r=\kappa(r) \\ r\mid G(\m,\n)}}
\frac{(Q_4r)^\eps}{r}.
\end{align*}
Assuming that  $|\m|,|\n|\ll T$ and $D(\m,\n)\ne 0$, 
 the divisor bound yields
\begin{equation}
\label{Q4-truncated-divisor-bound}
\Sigma_L \ll \frac{(Q_4T)^\eps}{L}.
\end{equation}
In particular, $\Sigma_1 \ll (Q_4T)^\eps$.

Arguing as in the proof of \eqref{g1g2g3-bound},
we have
\begin{equation}
\label{copy-g1g2g3-argument}
\sum_{\substack{D(\m,\n)\ne 0 \\ |\m|,|\n|\ll T \\ d_i\mid m_i\ne 0
\\ \gcd(\m)\ge L}}
\Sigma_1
\ll \sum_{g\ge L}
\sum_{\substack{D(\m,\n)\ne 0 \\ |\m|,|\n|\ll T \\ \lcm(g,d_i)\mid m_i\ne 0}}
(Q_4T)^\eps
\ll Q_4^\eps \frac{T^{6+\eps}\gcd(d_1,d_2,d_3)^{2\delta}}
{(d_1d_2d_3)^{1-\eps}L^\delta}.
\end{equation}
On the other hand,
$$\sum_{\substack{|\m|,|\n|\ll T \\ d_i\mid m_i\ne 0
\\ \gcd(\m)\le L}}
(\Sigma_1-\Sigma_L)
\le \sum_{\substack{r\le L \\ r=\kappa(r)}}
\sum_{\substack{q_4\sim Q_4
\\ \kappa(q_4)=r}}
\sum_{\substack{|\m|,|\n|\ll T \\ d_i\mid m_i\ne 0
%\\ r\mid G(\m,\n)
\\ p\mid q_4\Rightarrow v_p(q_4)=1+v_p(G(\m,\n))\ge 2}}
\frac{1}{r}.
$$
Assume that $L = Q_4^{1/1000}$.
Since every prime factor of $q_4$ is $\le L$,
it follows that the integer $q_4/r\gg Q_4/L$
has an integer factor $f$ with
$Q_4^{1/4}\ll f\ll Q_4^{1/3}$, say.
Since $q_4/r$ divides $G(\m,\n)$, it follows that
once $\m$ is specified, the number of available choices for $\n$
is $\ll (T+f)^3 (f/\gcd(\m)^2)^{\eps-1/4}$,
by the case $n=1$ and $d=4$ of \eqref{eq:regis}.
Hence
\begin{align*}
\sum_{\substack{|\m|,|\n|\ll T \\ d_i\mid m_i\ne 0
\\ \gcd(\m)\le L}}
(\Sigma_1-\Sigma_L)
&\ll \sum_{\substack{r\le L \\ r=\kappa(r)}}
\sum_{\substack{q_4\sim Q_4
\\ \kappa(q_4)=r}}
\frac{T^3}{d_1d_2d_3}
Q_4^\eps \frac{T^3+Q_4}{r Q_4^{1/16}} L^{1/2}\\
&\ll Q_4^{3\eps} \frac{T^3}{d_1d_2d_3}
\frac{T^3+Q_4}{Q_4^{1/16}} L^{1/2}\\
&\ll \frac{T^3}{d_1d_2d_3} \frac{T^3+Q_4}{Q_4^{1/17}}.
\end{align*}
It follows from this bound,
together with \eqref{Q4-truncated-divisor-bound} and \eqref{copy-g1g2g3-argument},
that
\begin{equation*}
\begin{split}
\sum_{\substack{D(\m,\n)\ne 0 \\ |\m|,|\n|\ll T \\ d_i\mid m_i\ne 0}}
\frac{\Sigma_1^{A-1}}{(Q_4T)^\eps} \Sigma_1
&\ll \frac{T^6}{d_1d_2d_3} \frac{(Q_4T)^\eps}{L}
+ \frac{T^3}{d_1d_2d_3} \frac{T^3+Q_4}{Q_4^{1/17}}
+ Q_4^\eps \frac{T^{6+\eps}\gcd(d_1,d_2,d_3)^{2\delta}}
{(d_1d_2d_3)^{1-\eps}L^\delta} \\
&\ll Q_4^\eps \frac{T^{6+\eps}\gcd(d_1,d_2,d_3)^{2\delta}}
{(d_1d_2d_3)^{1-\eps}L^\delta}
+ \frac{T^3 Q_4^{16/17}}{d_1d_2d_3}.
\end{split}
\end{equation*}

On the other hand,
the summation condition $p\mid q_4\Rightarrow v_p(q_4)=1+v_p(G(\m,\n))\ge 2$ in $\Sigma_1$ depends only on $(\m,\n)\bmod{q_4}$,
just as the conditions $q_3\mid G(\m,\n)$ and $\kappa(q_3)\mid \nabla{G}(\m,\n)$ in \eqref{combo-null} depend only on $(\m,\n)\bmod{q_3}$.
Therefore, on mimicking the proof of \eqref{combo-null},
we find that if $K>0$ is a sufficiently large constant
and $T\ge d_1d_2d_3Q_4^A$, then
\begin{equation*}
\sum_{\substack{|\m|,|\n|\ll T \\ d_i\mid m_i}}
\Sigma_1^A
\ll \left(\frac{T}{d_1d_2d_3Q_4^A}\right)^6
\sum_{\substack{D(\m,\n)\ne 0 \\ |\m|,|\n|\le Kd_1d_2d_3Q_4^A \\ d_i\mid m_i\ne 0}}
\Sigma_1^A.
\end{equation*}
Whether $T\le d_1d_2d_3Q_4^A$ or $T\ge d_1d_2d_3Q_4^A$,
and whether $A\ge 1$ or $A\le 1$,
it follows that
$$\frac{1}{T^6} \sum_{\substack{D(\m,\n)\ne 0 \\ |\m|,|\n|\ll T \\ d_i\mid m_i\ne 0}}
\Sigma_1^A
\ll \frac{\gcd(d_1,d_2,d_3)^{2\delta}}
{(d_1d_2d_3)^{1-\eps}L^{\delta/2}}
+ \frac{T^{-3}Q_4^{17/18}}{(d_1d_2d_3)^{1-\eps}}.$$
The rest of the proof is identical to that of Lemma~\ref{Q3-aspect}.
%By dyadic summation over $T$, we conclude that
%\begin{equation*}
%\begin{split}
%\sum_{\substack{D(\m,\n)\ne 0 \\ d_i\mid m_i\ne 0}}
%\Sigma_1^A
%J_1(Q',\m,\n)
%&\ll \sum_T \frac{\gcd(d_1,d_2,d_3)^{2\delta} T^6 (BT/Q')^{-2}}
%{(d_1d_2d_3)^{1-\eps} (1+T/B^{1/2})^{A_1}}
%(\frac{1}{L^{\delta/2}}
%+ \frac{Q_4^{17/18}}{T^3}) \\
%&\ll \frac{\gcd(d_1,d_2,d_3)^{2\delta} B^3 (B^{3/2}/Q')^{-2}}
%{(d_1d_2d_3)^{1-\eps}}
%(\frac{1}{L^{\delta/2}}
%+ \frac{Q_4^{17/18}}{B^{3/2}})
%\end{split}
%\end{equation*}
%provided that $A_1>4$.
%Since $Q_4\ll Q'\ll B^{3/2}$, the desired result follows.
\end{proof}

Ultimately, on plugging Lemmas~\ref{Q2-aspect},
\ref{J2-Q'-aspect},
\ref{Q1-aspect},
\ref{Q3-aspect},
and~\ref{Q4-aspect}
(with $4A$ in place of $A$ in the latter four)
into the upper bound \eqref{main-holder-bound} for \eqref{before-main-holder},
we find that
\begin{equation*}
E_{1,3}\ll
\frac{Q_2^4
Q_3^{4+7\eps}
Q_4^4}{Q_1^2Q_0^6}
\sum_{\substack{d_1,d_2,d_3\ge 1 \\ \textnormal{square-full}}}
(d_1d_2d_3)^{\frac12-2\eps}
\frac{B^3 H}{(d_1d_2d_3)^{1-\eps} (B^{3/2}/Q')^2},
\end{equation*}
where
\begin{equation*}
\begin{split}
H &= ((\log_+{B})^{3\delta})^{1/(1+\delta)}
((B^{3/2}/Q')^{-\min(4A_1A,1/5)})^{1/(4A)}
((\log_+{Q_1})^{-1/\eps})^{1/(4A)} \\
&\qquad
\times\left(\frac{\gcd(d_1,d_2,d_3)^{2\delta}}{Q_3^{\delta/2000}}\right)^{1/(4A)}
\left(\frac{\gcd(d_1,d_2,d_3)^{2\delta} }{Q_4^{\delta/2000}}\right)^{1/(4A)}.
\end{split}
\end{equation*}
Note that 
\begin{equation*}
\sum_{\substack{d_1,d_2,d_3\ge 1 \\ \textnormal{square-full}}}
\frac{\gcd(d_1,d_2,d_3)^{\delta/A}}{(d_1d_2d_3)^{\frac12+\eps}}
% = \prod_p \sum_{\substack{e_1,e_2,e_3,e\ge 0 \\ e_1,e_2,e_3\ne 1 \\ \min\{e_1,e_2,e_3\}=e}}
% \frac{p^{e\delta/A}}{p^{(e_1+e_2+e_3)(\frac12+\eps)}}
= \prod_p (1 + \frac{O(1)}{p^{1+2\eps}}
% + \frac{O(p^{\delta/A})}{p^{\frac32+3\eps}} but e\ge 1 implies e\ge 2.
+ \frac{O(p^{2\delta/A})}{p^{3+6\eps}})
\ll 1,
\end{equation*}
on assuming $\delta/A < 1$. Hence
\begin{equation*}
E_{1,3}\ll
\frac{Q_2^4
Q_3^{4+7\eps}
Q_4^4}{Q_1^2Q_0^6}
\frac{B^3 (\log_+{B})^{3\delta} (\log_+{Q_1})^{-2}}
{(B^{3/2}/Q')^{2+\eta} Q_3^\eta Q_4^\eta}
\le \frac{(Q_2Q_3Q_4)^4 (Q')^2 (\log_+{B})^{3\delta}}
{Q_1^2Q_0^6 (\log_+{Q_1})^2 (B^{3/2}/Q')^\eta Q_3^{\eta/2} Q_4^\eta},
\end{equation*}
on recalling that $Q'=Q_0Q_1\asymp Q_1Q_2Q_3Q_4$, 
where $\eta=\min(4A_1A,1/5,\delta/2000)/(4A)$
and $\eps\le \min(1/(8A),\eta/14)$.

However, by \eqref{bound-E2} and \eqref{define-and-bound-E3},
we have $E_{1,2}\ll \sum_{Q_2Q_3Q_4\asymp Q_0} E_{1,3}$.
Similarly, recalling the expressions for $E_{1}(B)$ and $E_{1,2}$
from \eqref{define-E1B} and  \eqref{define-and-partial-sum-E2}, respectively,
we have $E_1(B)=\sum_{Q_1,Q_0} E_{1,2}$.
Therefore,
$$
E_1(B)
\ll \sum_{Q_1Q_2Q_3Q_4\ll B^{3/2}} E_{1,3}
\ll \sum_{Q_1Q_2Q_3Q_4\ll B^{3/2}}
\frac{(\log_+{B})^{3\delta}}{(\log_+{Q_1})^2 Q_5^\eta Q_3^{\eta/2} Q_4^\eta},
$$
where $Q_5\defeq B^{3/2}/(Q_1Q_2Q_3Q_4)\asymp B^{3/2}/Q'\gg 1$.
Since $Q_2$ is determined by the quantities $Q_1,Q_3,Q_4,Q_5\gg 1$,
we conclude that
$$
E_1(B)
\ll \sum_{Q_1,Q_3,Q_4,Q_5\gg 1}
\frac{(\log_+{B})^{3\delta}}{(\log_+{Q_1})^2 Q_5^\eta Q_3^{\eta/2} Q_4^\eta}
\ll (\log_+{B})^{3\delta},
$$
since $\sum_{k\ge 1} \frac{1}{(2^k)^\eta}$ and $
\sum_{k\ge 1} \frac{1}{k^2} \ll 1$.
Taking $\delta\to 0$, we conclude the proof of the 
following result, which upper bounds the quantity $E_1(B)$ defined in \eqref{eq:E1B}.

\begin{proposition}
\label{p:E1B}
For all $B\ge 1$ and $\eps>0$, we have
$$
E_1(B)=
\sum_{\substack{\m,\n\in \ZZ^3\\
m_1m_2m_3 D(\m,\n)\ne 0}}
\sum_{q=1}^\infty
\frac{1}{q^6}S_q(\m,\n) I_q(\m,\n)
\ll (1+\log{B})^\eps.
$$
\end{proposition}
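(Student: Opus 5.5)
The statement is essentially the conclusion of the argument just carried out, so the plan is to assemble the pieces already established. First, use the convolution identity \eqref{eq:q1q2} to write $E_1(B)$ as in \eqref{define-E1B}, a double sum over $q_1,q_0$ of $q_1^{-3}S^{(1)}_{q_1}\cdot q_0^{-6}S^{(2)}_{q_0}\cdot q_1^{-3}I_{q_1q_0}$. The support property of $h$ restricts to $q_1q_0\ll Q=B^{3/2}$. Next, split into dyadic ranges $q_1\sim Q_1$, $q_0\sim Q_0$. For each range, perform partial summation in $q_1$, using the $q$-derivative bounds of Lemma~\ref{int-estimate}. This removes $I_q$ at the cost of the weights $J_1/J_2$ from \eqref{eq:def-J1J2} and an interval $\mathcal I_1$ that is uniform in $(\m,\n)$, giving \eqref{define-and-partial-sum-E2}.

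Then factor $q_0=q_2q_3q_4$ according to how each prime interacts with $G$ and $\nabla G$. Facts (1)--(5), coming from Lemmas~\ref{lem:L2}, \ref{lem:L4}, \ref{lem:L5}, \ref{lem:L1} and~\ref{lem:L3}, bound $S^{(2)}_{q_0}$ piecewise, producing \eqref{bound-E2} and, after dyadic localisation of $q_2,q_3,q_4$, the bound \eqref{before-main-holder}. Here the square-full parameters $d_i=\{q,m_i\}$ are pulled out.

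The key step is Hölder's inequality \eqref{main-holder-bound} with exponents $1+\delta$ and $4A$, where $1=\frac1{1+\delta}+\frac1A$. This separates the five aspects:
\begin{enumerate}
\item the Hooley $\Delta$ moment, Lemma~\ref{Q2-aspect}, which costs only $(\log B)^{3\delta}$;
\item the $J_2$ decay, Lemma~\ref{J2-Q'-aspect}, which gives a power saving in $B^{3/2}/Q'$;
\item the cancellation in the $S^{(1)}$ sums, Lemma~\ref{Q1-aspect}, which saves an arbitrary power of $\log Q_1$;
\item the Ekedahl-type saving $Q_3^{-\delta/2000}$ of Lemma~\ref{Q3-aspect};
\item the divisor-type saving $Q_4^{-\delta/2000}$ of Lemma~\ref{Q4-aspect}.
\end{enumerate}

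Substituting these bounds, one checks that the sum over square-full $d_i$ converges provided $\delta/A<1$. The powers of $Q_j$ should then cancel exactly against the prefactor $Q_2^4Q_3^{4+7\eps}Q_4^4/(Q_1^2Q_0^6)$ using $(Q')^2$ with $Q'\asymp Q_1Q_2Q_3Q_4$. The outcome is
$E_{1,3}\ll (\log_+B)^{3\delta}(\log_+Q_1)^{-2}Q_5^{-\eta}Q_3^{-\eta/2}Q_4^{-\eta}$, where $Q_5=B^{3/2}/Q'$. Choose $\eps$ small in terms of $\eta$ so that the stray $Q_3^{7\eps}$ is absorbed.

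Finally, sum over the dyadic parameters. Since $Q_2$ is determined by $Q_1,Q_3,Q_4,Q_5$, the geometric decay in $Q_3,Q_4,Q_5$ together with $\sum_k k^{-2}<\infty$ over $\log Q_1$ gives $E_1(B)\ll(\log_+ B)^{3\delta}$. Taking $3\delta=\eps$ yields the proposition.

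The main obstacle is the bookkeeping in the last step. The exponent $\eta$ depends on $A$, and $A$ depends on $\delta$, so one has to verify that the saving factors genuinely beat the number of dyadic blocks in every variable except $Q_2$. The $Q_2$ direction carries no saving at all, which is exactly why the Hooley $\Delta$ input (rather than a divisor bound) is needed to avoid a full factor of $\log B$.
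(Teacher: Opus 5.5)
Your proposal is correct and follows the paper's proof essentially step for step. The shared route is: the decomposition \eqref{define-E1B}, partial summation in $q_1$, the factorisation $q_0=q_2q_3q_4$ via facts (1)--(5), H\"older's inequality \eqref{main-holder-bound} with the five moment lemmas, and a final dyadic summation in which only the $Q_2$ direction is unsaved and is handled by the Hooley $\Delta$ bound, giving $(\log_+ B)^{3\delta}$ and hence the proposition on letting $\delta\to 0$.
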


\section{Application of the Hooley \texorpdfstring{$\Delta$}{Delta}-function} \label{s:hooley}

The
{\em Hooley $\Delta$-function} is defined to be 
\begin{equation}\label{eq:Hooley}
\Delta(n)\defeq \max_{u\in \RR} \#\left\{d\in \NN: d\mid n \text{ and } e^u<d\leq e^{1+u}\right\},
\end{equation}
for any $n\in \mathbb{N}$. We extend this to all integers by writing $\Delta(-n)=\Delta(n)$ and $\Delta(0)=0$. We have 
$
\Delta(mn)\leq \Delta(m) \tau(n),
$
for any integers $m,n\in \NN$,  
by \cite[Lemma 61.1]{hall}.
The purpose of this section is to estimate the sum
\begin{equation}\label{eq:definition_of_ST}
S_T=
\sum_{\substack{D(\m,\n)\ne 0 \\ d_i\mid m_i\ne 0\\ 
|\m|,|\n|\leq  T
}} 
\Delta(G(\m,\n))^{1+\delta},
\end{equation}
for any real $\delta> 0$.
The sum $S_T$ vanishes unless $d_i \leq T$ for $1\leq i\leq 3$, but we shall proceed under the assumption that $d_1,d_2,d_3\leq \sqrt{T}$.

There is an extensive literature on 
upper bounds for 
averages of non-negative arithmetic functions over the values of polynomials, as 
found recently in the recent work of 
Chan--Koymans--Pagano--Sofos  \cite{chan}, and la Bret\`eche--Tenenbaum \cite{regis}.
We will require upper bounds that have sufficient uniformity in the polynomial involved and so it is the latter work that is more appropriate to our needs. 
To begin with, we 
note that 
$F(n)=
\Delta(n)^{1+\delta}$
 belongs to the class of non-negative arithmetic functions considered 
in \cite[\S~2]{regis}. 
We would like to apply \cite[Thm.~3.1]{regis} with  $k=1$ and $t=6$. 
The vectors $(\m,\n)$ with 
$n_1n_2n_3=0$ contribute $O(T^{5+\ve})$ to $S_T$, for any $\ve>0$.
Making the change of variables $m_i=d_im_i'$, and taking into account the possible signs of $m_i',n_i$, we deduce that 
\begin{equation}\label{eq:green_pot}
S_T=
2^3
\sum_{\ve_1,\ve_2,\ve_3\in \{\pm 1\}}
\sum_{\substack{
(\m',\n)\in \ZZ^6\\
0<m_i'\leq T/d_i\\
0<n_i\leq T
}} 
\Delta(G_{\dd}^{\bve}(\m',\n))^{1+\delta} +O(T^{5+\ve}),
\end{equation}
where
$$
G_{\dd}^{\bve}(\x,\y)=\sum_{i=1}^3 d_i^2x_i^2y_i^4-2\sum_{1\leq i<j\leq 3}\ve_i\ve_jd_id_jx_ix_jy_i^2y_j^2.
$$
For a fixed choice of $\bve=(\ve_1,\ve_2,\ve_3)$, we now  focus  on estimating the sum 
$$
S=\sum_{\substack{\mathbf{u}
\in \ZZ^6\\
0<u_j\leq X_j, ~(1\leq j\leq 6)
}} 
\Delta(G_{\dd}^{\bve}(\mathbf{u}))^{1+\delta},
$$
with
$$
X_j=\begin{cases}
T/d_j & \text{ if $1\leq j\leq 3$,}\\
T & \text{ if $4\leq j\leq 6$}.
\end{cases}
$$
Since $\Delta(mn)\ll m^\ve \Delta(n)$ for any coprime integers $m,n$, it follows that
$$
S\ll \gcd(d_1,d_2,d_3)^\ve \sum_{\substack{\mathbf{u}
\in \ZZ^6\\
0<u_j\leq X_j, ~(1\leq j\leq 6)
}} 
\Delta(G(\mathbf{u}))^{1+\delta},
$$
where $G=G^{\bve}_{\dd'}$ is the primitive irreducible form in $\ZZ[\x,\y]$
that corresponds to taking $\dd'=\dd/\gcd(d_1,d_2,d_3)$.
This is exactly the sum considered in \cite[Thm.~3.1]{regis} with $Y_j=X_j$.
Suppose that $T\gg 1$ and note 
 that the conditions in \cite[Eq.~(3.2)]{regis} all hold with $\alpha=1$ and $\beta=1/2$, since 
\begin{align*}
\left(\max_{1\leq j\leq 6} X_j+\|G\|\right)^{1/2} \leq \left(T+\max\{d_1,d_2,d_3\}^2\right)^{1/2}\ll 
\sqrt{T} 
&\ll 
\frac{T}{\max\{d_1,d_2,d_3\}}\\
&= \min_{1\leq j\leq 6} X_j, 
\end{align*}
under our  assumption that $d_1,d_2,d_3\leq \sqrt{T}$.

Let
$$
\rho_G(s)=\#\left\{\mathbf u\in (\ZZ/s\ZZ)^6: G(\mathbf u)\equiv 0 \bmod{s}\right\} ,
$$
for any $s\in \NN$.
Taking $g=6$ 
in \cite[Thm.~3.1]{regis} and applying \cite[Eq.~(3.5)]{regis}, 
it therefore follows that 
\begin{equation}\label{eq:nair}
S\ll \frac{\gcd(d_1,d_2,d_3)^\ve T^6}{d_1d_2d_3}
\sum_{\substack{s\in \NN\\
s\leq 6T}} \frac{\Delta(s)^{1+\delta}\rho_G(s)}{s^6}
\prod_{6<p\leq \sqrt{T}}\left(1-\frac{\rho_G(p)}{p^6}\right),
\end{equation}
if $T\gg 1$,
where we have taken $x=T/\max\{d_1,d_2,d_3\}\geq \sqrt{T}$ in 
the product over primes in \cite[Thm.~3.1]{regis}.
In fact this result holds for any $T\geq 1$, since it is trivially true when $T\ll 1$.
The following result collects together what we shall need about the function $\rho_G(s)$, which it suffices to understand at prime powers $s=p^j$.

\begin{lemma}\label{lem:prime_power}
Let $\bve\in \{\pm 1\}^3$, let $\mathbf{d}\in \NN^3$ such that $\gcd(d_1,d_2,d_3)=1$, and put
$G=G^{\bve}_{\dd}$. 
Let  $p^j$ be a 
 prime power. 
If  $j\geq 2$ then 
$$
p^{-6j}\rho_G(p^j)\ll \min\left\{ p^{-2},  (j+1)^5p^{-j/6}\right\},
$$
for an absolute implied constant.
Furthermore, we have 
$$
\rho_G(p)=
\begin{cases}
p^5+O(p^{9/2}) & \text{ if $p\nmid d_1d_2d_3$,}\\
p^5+O(p^{4}) & \text{ if $p\mid d_i$ and $p\nmid d_jd_k$,}\\
2p^5+O(p^{4}) & \text{ if $p\mid \gcd(d_i,d_j)$ and $p\nmid d_k$,}
\end{cases}
$$
for some permutation $\{i,j,k\}=\{1,2,3\}$. 
\end{lemma}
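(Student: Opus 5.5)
We estimate $\rho_G(p^j)$ by first counting solutions in $(m'_1,m'_2,m'_3)$, using that $G=G^{\bve}_{\dd}$ is a quadratic form in $\x$ once $\y$ is fixed. Write $z_i=\ve_i d_i y_i^2$. Then
$$G(\x,\y)=\sum_i z_i^2x_i^2-2\sum_{i<j}z_iz_jx_ix_j=Q_{\z}(\x),$$
whose Gram matrix has rank at most $3$ and determinant $-4z_1^2z_2^2z_3^2$. Equivalently, this is the identity $D=(w_1-w_2-w_3)\cdots$ from \eqref{eq:factorize-dual-form} with $w_i^2=z_ix_i$.

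\textbf{Primes.} For $p>2$ with $p\nmid d_1d_2d_3y_1y_2y_3$, the form $Q_{\z}$ is a non-degenerate ternary quadratic form. Its zero count in $\FF_p^3$ is $p^2$. Summing over the $(p-1)^3$ such $\y$ gives $p^5+O(p^4)$. The $\y$ with some $y_i\equiv 0$ then need treating.

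If $y_3\equiv 0$, then $Q=(z_1x_1-z_2x_2)^2$, which vanishes on a set of size $p^2$ when $z_1z_2\ne0$. Those $\y$ contribute $O(p^2)\cdot p^2=O(p^4)$. If two of the $y_i$ vanish, the form is a single square $z_i^2x_i^2$, contributing $O(p)\cdot p^2$. If $\y\equiv\0$, all $p^3$ choices of $\x$ are solutions, contributing $p^3$. So the first case gives $p^5+O(p^4)$, which is stronger than the stated $O(p^{9/2})$.

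If $p\mid d_i$ only, then $z_i\equiv 0$ identically. Then $Q=(z_jx_j-z_kx_k)^2$ is a perfect square, free in $x_i$, and has $p\cdot p=p^2$ zeros for generic $\y$. Summing over $p^3$ values of $\y$ gives $p^5+O(p^4)$.

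If $p\mid d_i,d_j$ but $p\nmid d_k$, then $Q=z_k^2x_k^2$. Its zeros are all $\x$ with $x_k=0$, or all $\x$ when $y_k=0$. This gives $(p-1)p^2\cdot p^2+p^3\cdot p^2=2p^5-p^4$. These are direct finite computations, with $p=2$ absorbed into the implied constants.

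\textbf{Higher powers, bound $p^{-2}$.} For $j\ge2$, I would get $p^{-6j}\rho_G(p^j)\ll p^{-2}$ by fibering over $\y\bmod p^j$. The solutions of $Q_{\z}(\x)\equiv0\bmod{p^j}$ are counted via the diagonalisation of $Q_{\z}$ over $\ZZ_p$: the form has rank $3$ with invariants controlled by $v_p(z_1z_2z_3)$. For unit $\z$ the count is $\asymp p^{2j}$. Each extra power of $p$ in the $z_i$ costs at most a factor that is compensated by the rarity of such $\y$.

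More concretely, I would split by $e_i=\min(v_p(y_i),j)$. The $\y$ with given $\mathbf e$ number at most $p^{3j-\sum e_i}$. The $\x$-count for a diagonal form $\sum a_i X_i^2$ modulo $p^j$ is $\ll p^{2j}\,p^{\min(\mathrm{smallest}\ v(a_i),\,j)}\cdot(\mathrm{small})$. Combined with $\gcd(d_1,d_2,d_3)=1$, this yields total density $\ll p^{-2}$. The main contributions to that bound come from $p\mid y_i$ for two indices, or from $p\mid d_i,d_j$ together with $p\mid y_k$.

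\textbf{Higher powers, bound $(j+1)^5p^{-j/6}$.} This follows directly from \eqref{eq:regis} with $n=6$, $d=6$. The content of $G$ is $1$: some $d_i$ is prime to $p$, so the coefficient $d_i^2$ of $x_i^2y_i^4$ is a unit, or, if $p=2$, the coefficient $\pm2$ appears. This gives $\rho_G(p^j)\le 6^6(j+1)^5p^{6j-j/6}$. The only care needed is a content factor of $2^{1/6}$, which is harmless.

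\textbf{Main obstacle.} I expect the uniform $p^{-2}$ bound for $j\ge2$ to be the hardest step. The difficulty is the degenerate fibres where $Q_{\z}$ has low rank modulo $p$, i.e.\ when several $z_i$ are divisible by $p$. I would handle these by the Hensel/valuation bookkeeping above, treating separately $v_p$ of the pairwise combinations $z_ix_i-z_jx_j$. If the bookkeeping becomes messy, the fallback is to use the factorization into four conjugate linear factors in $w_i$ and count the possibilities for which factor absorbs the $p$-power.
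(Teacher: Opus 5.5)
Your treatment of $j=1$ is correct and sharper than the paper's. Fibring over $\y$ and counting zeros of the ternary form $Q_{\z}$ gives $\rho_G(p)=p^5+p^3-p^2$ exactly when $p\nmid 2d_1d_2d_3$, where the paper invokes Lang--Weil. The $(j+1)^5p^{-j/6}$ bound is the same appeal to \eqref{eq:regis} as in the paper.

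The gap is the bound $p^{-6j}\rho_G(p^j)\ll p^{-2}$ for $j\ge 2$. Your sketch does not establish it. The diagonal-form count you quote is wrong: for $a_1$ a unit and $a_2\equiv a_3\equiv 0\bmod{p^j}$ there are $p^{2j+\lfloor j/2\rfloor}$ solutions, so the count is governed by the larger valuations, not the smallest. More seriously, no valuation bookkeeping can give the bound, because it fails when $p^2\mid d_1d_2d_3$. The configuration you list as a main contribution is exactly where it breaks. If $p\mid d_i,d_j$ and $p\mid y_k$, then every coefficient $z_az_b$ of $Q_{\z}$ is divisible by $p^2$, so every $\x$ solves $Q_{\z}(\x)\equiv 0\bmod{p^2}$. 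For $\dd=(p,p,1)$ this gives $\rho_G(p^2)\ge p^{11}$. Likewise, for $\dd=(p^2,1,1)$ one has $G\equiv(\ve_2x_2y_2^2-\ve_3x_3y_3^2)^2\bmod{p^2}$, whence $p^{-12}\rho_G(p^2)\asymp p^{-1}$. Since the $d_i$ in Section~\ref{s:hooley} are square-full, this situation is typical at primes $p\mid d_1d_2d_3$.

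The paper's one-line justification has the same defect. The reduction $p^{-6j}\rho_G(p^j)\le p^{-12}\rho_G(p^2)$ is right, but $\rho_G(p^2)\ll p^{10}$ does not follow from irreducibility and content one alone. What does hold comes from $\rho_G(p^2)\le p^5\rho_G(p)+p^6\,\#\{\textnormal{singular zeros of } G\bmod p\}$, not from diagonalising.

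\begin{itemize}
\item If $p\nmid 2d_1d_2d_3$, then $G$ is $D$ after the invertible substitution $m_i=\ve_id_ix_i$. The formulas $\partial D/\partial m_i=2n_i^2L_i$ and $\partial D/\partial n_i=4m_in_iL_i$ put its singular locus inside $\{\m=\0\}\cup\bigcup_i\{n_i=0,\ m_jn_j^2=m_kn_k^2\}$. That set has $O(p^4)$ points, so $\rho_G(p^2)\ll p^{10}$.
\item If $p\,\|\,d_i$ and $p\nmid d_jd_k$, write $G\equiv A^2-2\ve_id_ix_iy_i^2B\bmod{p^2}$ with $A=\ve_jd_jx_jy_j^2-\ve_kd_kx_ky_k^2$ and $B=\ve_jd_jx_jy_j^2+\ve_kd_kx_ky_k^2$. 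A zero forces $p\mid A$ and $p\mid x_iy_ix_jy_j$. This is a condition modulo $p$ with $O(p^4)$ solutions, so again $\rho_G(p^2)\ll p^{10}$.
\item If $p^2\mid d_1d_2d_3$, only the trivial bound $p^{-6j}\rho_G(p^j)\le p^{-6}\rho_G(p)\ll p^{-1}$ is available.
\end{itemize}

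The weaker bound suffices for the application. It changes the local factors at $p\mid d_1d_2d_3$ in the Euler product over square-full $b$ in Section~\ref{s:hooley} to $1+O(p^{-1})$. The total cost is $\prod_{p\mid d_1d_2d_3}(1+O(p^{-1}))\ll (d_1d_2d_3)^{\eps}$, which Lemma~\ref{Q2-aspect} absorbs. So the statement should be weakened at primes with $p^2\mid d_1d_2d_3$, and your $j\ge 2$ argument replaced by this case split.
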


\begin{proof}
Since $G$ is irreducible and has content $c(G)=1$, the  bound 
$$
p^{-6j}\rho_G(p^j)\leq p^{-12}\rho_G(p^2)\ll p^{-2}
$$ 
is straightforward.  Moreover, 
the bound 
$p^{-6j}\rho_G(p^j)\leq 6^6 (j+1)^5p^{-j/6}$ is a direct  consequence of \eqref{eq:regis} with $d=6$.
Suppose now that $j=1$. 
The case $p=2$ is trivial and so we can assume that $p$ is odd. 
If  $p\nmid d_1d_2d_3$, then $G$ is irreducible over  $\overline \FF_p$ and it follows from the Lang--Weil estimate that 
$\rho_G(p)=p^5+O(p^{9/2})$.  If $p\mid d_1$ but $p\nmid d_2d_3$, say, then 
$$
G(\mathbf{u})\equiv (d_2u_2u_5^2 \pm d_3u_3u_6^2)^2 \bmod{p},
$$ 
for an appropriate sign $\pm$ depending on $\bve$. In this case clearly $\rho_G(p)=p^5+O(p^4).$
Finally, we get $\rho_G(p)=2p^5+O(p^4)$ if $p\mid (d_1,d_2)$ and $p\nmid d_3$, say.
\end{proof}

It follows from the second part of Lemma \ref{lem:prime_power} that 
$\rho_G(p)\geq p^5+O(p^{9/2})$, for any prime $p$. Hence
\begin{equation}\label{eq:yellow_pot}
\prod_{6<p\leq \sqrt{T}}\left(1-\frac{\rho_G(p)}{p^6}\right)\leq 
\prod_{6<p\leq \sqrt{T}}\left(1-\frac{1}{p}+O\left(\frac{1}{p^{3/2}}\right)\right)\ll \frac{1}{\log T}
\end{equation}
in \eqref{eq:nair}.
Any integer $s$ can be written $s=ab$, for coprime integers $a,b$ such that  $a$ is square-free and $b$ is  square-full. Since $\Delta(s)\leq \Delta(a) \tau(b)$, we deduce that 
$$
\sum_{\substack{s\in \NN\\
s\leq 6T}} \frac{\Delta(s)^{1+\delta}\rho_G(s)}{s^6}\leq
\sum_{\substack{a\in \NN\\
a\leq 6T}} \frac{\mu^2(a)\Delta(a)^{1+\delta}\rho_G(a)}{a^6}
\sum_{\substack{b\in \NN\\ \text{$b$ square-full}}} \frac{\rho_G(b)\tau(b)^{1+\delta}}{b^{6}}.
$$
Now 
\begin{align*}
\sum_{\substack{b\in \NN\\ \text{$b$ square-full}}} \frac{\rho_G(b)\tau(b)^{1+\delta}}{b^{6}}
&\leq
\prod_p  \left(1+\sum_{2\leq j\leq 12}\frac{O(1)}{p^2}+
\sum_{j>12}\frac{O((j+1)^{6+\delta})}{p^{j/6}} \right)\\
&\leq
\prod_p  \left(1+\frac{1}{p^2}\right)^{O(1)}\\
&\ll 1,
\end{align*}
by Lemma \ref{lem:prime_power}.
Let $D=\gcd(d_1,d_2)\gcd(d_1,d_3)\gcd(d_2,d_3)$.
Hence, on applying the second part of 
Lemma \ref{lem:prime_power}, we easily deduce that 
\begin{align*}
\sum_{\substack{s\in \NN\\
s\leq 6T}} \frac{\Delta(s)^{1+\delta}\rho_G(s)}{s^6}
&\ll 
\sum_{\substack{a\in \NN\\
a\leq 6T}} \frac{\mu^2(a)\Delta(a)^{1+\delta}}{a}
\prod_{\substack{p\mid a\\ 
p\nmid D
}} \left(1+\frac{1}{\sqrt{p}}\right)^{O(1)}
\prod_{\substack{p\mid \gcd(a,D)}} \left(2+\frac{1}{p}\right)^{O(1)}\\
&\ll 2^{O(\omega(D))}
\sum_{\substack{a\in \NN\\
a\leq 6T}} \frac{\mu^2(a)\Delta(a)^{1+\delta}}{a}
\prod_{\substack{p\mid a}} \left(1+\frac{1}{\sqrt{p}}\right)^{O(1)}.
\end{align*}
We have  $2^{O(\omega(D))}\ll D^\ve$. Furthermore, the function 
$$
g(a)=\mu^2(a)\tau(a)^\delta\prod_{\substack{p\mid a}} \left(1+\frac{1}{\sqrt{p}}\right)^{O(1)}
$$
clearly belongs to the class of functions considered in \cite{regis'}, with $A=O_\delta(1)$
and $y=2^\delta$. Hence it follows from \cite[Thm.~1.1]{regis'} that 
$$
\sum_{\substack{a\in \NN\\
a\leq 6T}} \mu^2(a)\Delta(a)^{1+\delta}
\prod_{\substack{p\mid a}} \left(1+\frac{1}{\sqrt{p}}\right)^{O(1)}\ll_\delta 
T(\log T)^{2^{1+\delta}-2}\exp\left(c\sqrt{\log\log T}\right),
$$
for any $c>\sqrt{2}\log 2$. Applying partial summation, it follows that 
\begin{equation}\label{eq:red_pot}
\sum_{\substack{s\in \NN\\
s\leq 6T}} \frac{\Delta(s)^{1+\delta}\rho_G(s)}{s^6}\ll_{\delta} 
D^\ve 
(\log T)^{2^{1+\delta}-1+\ve},
\end{equation}
for any $\ve>0$.

Note that $2^{1+\delta}-2+\eps=2(2^\delta-1)+\eps=2\delta\log 2+O(\delta^2)$,
if $\eps$ is taken to be sufficiently small in terms of $\delta$.
 Finally, we combine \eqref{eq:yellow_pot} and \eqref{eq:red_pot} in 
\eqref{eq:nair} and then \eqref{eq:green_pot}, in order to deduce the following result.

\begin{lemma}
\label{Tim-S_T-Hooley-delta-bound}
Let $\delta> 0$. 
Assume that $d_1,d_2,d_3\leq \sqrt{T}$. Then 
$$
S_T\ll_{\delta} \frac{D^\ve T^6 (\log T)^{2\delta\log 2+O(\delta^2)}}{d_1d_2d_3}
+T^{5+\ve},
$$
for any $\ve>0$, 
where $D=\gcd(d_1,d_2)\gcd(d_1,d_3)\gcd(d_2,d_3)$.
\end{lemma}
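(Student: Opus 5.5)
The lemma is really a matter of assembling the estimates just derived. The plan is to bound the main part of $S_T$ via \eqref{eq:green_pot}. First I split off the vectors with $n_1n_2n_3=0$; these contribute $O(T^{5+\ve})$, and that produces the second term in the statement. The remaining part is a sum over the eight sign patterns $\bve\in\{\pm1\}^3$ of the sums $S$ in the positive orthant. For each fixed $\bve$, I remove the common factor $e=\gcd(d_1,d_2,d_3)$. The relation $G^{\bve}_{\dd}=e^2G^{\bve}_{\dd'}$ together with $\Delta(mn)\le\Delta(n)\tau(m)$ costs a factor $e^{\ve}\le D^{\ve}$, and leaves a primitive irreducible form $G=G^{\bve}_{\dd'}$ with $\gcd(d_1',d_2',d_3')=1$. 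The hypothesis $d_i\le\sqrt T$ is exactly what is needed to verify the uniformity conditions \cite[Eq.~(3.2)]{regis} with $\alpha=1$, $\beta=1/2$. That gives \eqref{eq:nair}, and I will check that replacing the box lengths $T/d_i'$ by $T/d_i$ only shifts the bound by the harmless factor $e^{3}/e^{3}$ once the $e^\ve$ is absorbed.

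Next I use Lemma~\ref{lem:prime_power} for two things. The lower bound $\rho_G(p)\ge p^5+O(p^{9/2})$ gives the Mertens-type saving \eqref{eq:yellow_pot}, which is a factor $(\log T)^{-1}$. On the divisor-sum side, I factor $s=ab$ with $a$ square-free and $b$ square-full. The square-full part converges by the bounds $p^{-6j}\rho_G(p^j)\ll\min\{p^{-2},(j+1)^5p^{-j/6}\}$. In the square-free part, the local densities $\rho_G(p)/p^6$ equal $p^{-1}(1+O(p^{-1/2}))$ except at primes dividing $D$, where they can be as large as $2p^{-1}$. Those exceptional primes cost $2^{O(\omega(D))}\ll D^\ve$. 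This reduces matters to the weighted sum $\sum_{a\le 6T}\mu^2(a)\Delta(a)^{1+\delta}a^{-1}\prod_{p\mid a}(1+p^{-1/2})^{O(1)}$.

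The main obstacle is this last step. It needs the sharp average of $\Delta^{1+\delta}$, not merely the divisor-function bound, which would lose a full $\log T$. I would invoke \cite[Thm.~1.1]{regis'} with the multiplicative weight $g(a)=\mu^2(a)\tau(a)^\delta\prod_{p\mid a}(1+p^{-1/2})^{O(1)}$, which has $y=2^\delta$. This gives $\ll T(\log T)^{2^{1+\delta}-2}\exp(c\sqrt{\log\log T})$, and partial summation then yields \eqref{eq:red_pot}, namely $(\log T)^{2^{1+\delta}-1+\ve}$. Multiplying by the $(\log T)^{-1}$ from \eqref{eq:yellow_pot}, the net exponent is $2^{1+\delta}-2+\ve=2\delta\log2+O(\delta^2)$ once $\ve$ is small in terms of $\delta$. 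Summing over the eight sign patterns then gives the stated bound $D^\ve T^6(\log T)^{2\delta\log2+O(\delta^2)}/(d_1d_2d_3)+T^{5+\ve}$.
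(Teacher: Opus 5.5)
Your proposal is correct and follows the paper's proof step for step: discard the vectors with $n_1n_2n_3=0$, pass to the primitive form $G^{\bve}_{\dd'}$ at a cost of $\gcd(d_1,d_2,d_3)^{\ve}\le D^{\ve}$, apply \cite[Thm.~3.1]{regis} to get \eqref{eq:nair}, use Lemma~\ref{lem:prime_power} for both \eqref{eq:yellow_pot} and the square-full/square-free split, and finish with \cite[Thm.~1.1]{regis'} (with $y=2^\delta$) to get \eqref{eq:red_pot}. One small correction: no replacement of box lengths is needed, because after $m_i=d_im_i'$ the box is $0<m_i'\le T/d_i$ from the outset, and factoring out $\gcd(d_1,d_2,d_3)^2$ changes only the polynomial, so the volume factor $T^6/(d_1d_2d_3)$ comes out directly.
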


\section{The treatment of \texorpdfstring{$E_2(B)$}{E2(B)}}

Recall the definition of $E_2(B)$ from \eqref{eq:E2B}, which we shall estimate as follows.

\begin{proposition}
\label{PROP:E2B}
    We have
$
        E_2(B)\ll B^{-1/4+\varepsilon}.
$
\end{proposition}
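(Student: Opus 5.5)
The plan is to bound $E_2(B)$ trivially in absolute value, using Lemma~\ref{int-estimate} and the exponential-sum bounds of Section~4. The point is that the restriction $m_1m_2m_3=0$ removes one degree of freedom from the $(\m,\n)$-sum, which should beat the main-term size by a power of $B$.

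By symmetry we may assume $m_3=0$. Since $D(\m,\n)\neq 0$, we have $(\m,\n)\neq(\0,\0)$. Write $M=\max\{|\m|,|\n|\}\geq 1$. By Lemma~\ref{int-estimate} with $j=0$ and large $k$, the contribution from $M\gg B^{1/2+\eps}$ is negligible. Moreover $I_q(\m,\n)\ll (1+BM/q)^{-2}\ll \min\{1,(q/BM)^2\}$, and only $q\ll Q=B^{3/2}$ contribute.

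For the sums $S_q(\m,\n)$, factor $q=uv$ with $u$ square-free, $v$ square-full and $\gcd(u,v)=1$, and use multiplicativity \eqref{eq:multiply}.
\begin{itemize}
\item For the square-free part, Corollary~\ref{cor.3.8} gives $S_u\ll u^{3+\eps}\gcd(u,D(\m,\n))$.
\item For the square-full part, Corollary~\ref{cor:basic} gives $S_v\ll v^{4+\eps}\prod_i\{v,m_i\}^{1/2}$. Since $m_3=0$, we have $\{v,m_3\}\le v$, so this is $\ll v^{9/2+\eps}\{v,m_1\}^{1/2}\{v,m_2\}^{1/2}$.
\item Lemma~\ref{lem:square-full} forces $\kappa(v)\mid D(\m,\n)$ and $v\mid \kappa(v)D(\m,\n)$. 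Since $D\neq 0$ and $|D|\ll M^6$, this leaves only $M^{\eps}$ choices of $v$ by \eqref{eq:kernel-divisor-bound}, each with $v\ll M^{12}$.
\end{itemize}

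With these in hand, the $q$-sum for fixed $(\m,\n)$ is bounded by
\[
\sum_{v}\frac{v^{9/2+\eps}\{v,m_1\}^{1/2}\{v,m_2\}^{1/2}}{v^6}\sum_{u}\frac{\gcd(u,D)}{u^{3-\eps}}\min\Bigl\{1,\Bigl(\frac{uv}{BM}\Bigr)^{2}\Bigr\}.
\]
By \eqref{gcd-1-average}, the $u$-sum is $\ll M^\eps\log B$. It is dominated by $u\asymp BM/v$ and is at most $M^\eps$ times a logarithm.

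The $v$-sum is the delicate part. When $m_1m_2\neq 0$, the factor $\{v,m_1\}^{1/2}\{v,m_2\}^{1/2}$ is at most $v$, which leaves $v^{-1/2}$ summed over $M^\eps$ values. This gives $O(M^\eps)$, so the total over $\m,\n$ looks like $\sum_{M\ll B^{1/2+\eps}}M^{5}\cdot M^\eps$. That is too large: the trivial count has no saving.

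The correct gain has to come from cancellation, or from the factor $\min\{1,(q/BM)^2\}$ combined with the size of $S_u$. So I would redo the estimate more carefully. The square-free generic bound $S_p\ll p^3$ (for $p\nmid D$) gives $\sum_{q\le Q}q^{3}q^{-6}\cdot(q/BM)^2\ll Q^{0}/(BM)^2\cdot$ nothing large. Precisely, $\sum_{q\ll BM}q^{-1}(q/BM)^2\ll 1$, and for $BM\le q\le Q$ the sum is $\sum q^{-3}\cdot q\ll (BM)^{-1}$. The divisors of $D$ inflate this only by $\sum_{d\mid D}d\cdot(\cdots)$, at the cost of $M^\eps$ times at most $d/(BM)$ for $d\ll BM$.

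Hence each $(\m,\n)$ contributes about $M^\eps\bigl(\tau(D)/(BM)+\text{terms with }d\mid D,\ d\gtrsim BM\bigr)$. Summing over the $\ll M^5$ vectors with $m_3=0$ and $M\ll B^{1/2+\eps}$ gives
\[
\sum_{M\ll B^{1/2+\eps}}M^{5+\eps}\frac{1}{BM}\ll \frac{B^{2+\eps}}{B}.
\]
This shows my normalization is off. The correct bookkeeping must keep the factor $1/q^6$ against $S_q\ll q^{3}\gcd(q,D)$ together with the square-full analysis, and also use that the relevant $u$ lie in $[BM,Q]$ with $BM\ge B$.

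The main obstacle is exactly this accounting. I would redo it carefully, using Lemma~\ref{lem:Sp2} to see that when $m_3=0$ the large values $S_p\approx p^4$ require $p\mid(m_3,D)$ with an extra congruence, or $p\mid n_3$ in the case $p\mid(m_1,m_2)$. I would then aim to show that the total is $\ll B^{-1/4+\eps}$ by optimizing over dyadic ranges of $M$ and $q$.
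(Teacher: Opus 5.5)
Your proposal does not yet amount to a proof. As you say yourself, the accounting breaks down and the final bound is only aimed at. The remedy you suggest (more information from Lemma~\ref{lem:Sp2} at prime moduli, or cancellation) also targets the wrong place: the paper's argument is entirely by absolute values.

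First fix the normalization. Since $(1+BM/q)^{-2}\le (q/BM)^2$, Lemma~\ref{int-estimate} gives $I_q(\m,\n)\ll q^2(BM)^{-2}(1+M/B^{1/2})^{-A}$ for every $q$. Hence, with $q=uv$, each vector contributes
\[
\ll \frac{1}{(BM)^2}\sum_{u\ll B^{3/2}}\frac{\gcd(u,D(\m,\n))}{u^{1-\eps}}\sum_{v}\frac{|S_v(\m,\n)|}{v^4}
\ll \frac{(BM)^{\eps}}{(BM)^2}\sum_{v}\frac{|S_v(\m,\n)|}{v^4},
\]
by \eqref{gcd-1-average}. You lost the factor $(BM)^{-2}$, which is why your totals came out as $M^5$ or $B^{1+\eps}$. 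With it, square-free moduli are harmless: the $O(M^5)$ vectors with $m_3=0$ give $\sum_M M^{3+\eps}B^{\eps-2}(1+M/B^{1/2})^{-A}\ll B^{-1/2+\eps}$.

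The genuine obstacle is the square-full part, and there you discarded exactly the information that is needed. Corollary~\ref{cor:basic} gives $v^{-4}S_v\ll v^{\eps}\prod_i\{v,m_i\}^{1/2}\1_{\{v,m_i\}^{1/2}\mid n_i}$, but you dropped the indicators. For $m_3=0$ the factor $\{v,0\}^{1/2}$ can be as large as $v^{1/2}\asymp B^{3/4}$. Bounding it crudely (and also bounding $\sqrt{\{v,m_1\}\{v,m_2\}}$ by $v$) gives an upper bound no better than $M^3B^{3/4}B^{-2}\asymp B^{1/4}$ at $M\asymp B^{1/2}$, which fails.

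The missing idea is the divisibility $\{v,0\}^{1/2}\mid n_3$ from Lemma~\ref{n-vanishing}, combined with the fact that $D(\m,\n)=(m_1n_1^2-m_2n_2^2)^2$ does not involve $n_3$. By Lemma~\ref{lem:square-full}, the $O(M^{\eps})$ admissible square-full $v\mid D^2$ are fixed once $(m_1,m_2,n_1,n_2)$ is. Summing over $n_3$ first therefore costs only $\{v,0\}^{1/2}+M\ll B^{3/4}+M$, rather than $M\{v,0\}^{1/2}$.

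You must also average the factor $\sqrt{\{v,m_1\}\{v,m_2\}}\le \{v,m_1\}+\{v,m_2\}$ rather than bound it by $v$. Use $\{v,m_1\}\mid \{D^2,m_1\}=\{m_2^4n_2^8,m_1\}$ and apply \eqref{gcd-1-average} over $m_1$, and symmetrically for $m_2$. This gives
\[
\sum_M M^{2}(B^{3/4}+M)(BM)^{\eps}B^{-2}(1+M/B^{1/2})^{-A}\ll B^{-1/4+\eps},
\]
which is where the exponent $-1/4$ comes from.

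If a second coordinate of $\m$ vanishes, then $D=m_i^2n_i^4$ for the remaining index $i$. The paper then uses Lemma~\ref{tricky22} and sums over all $O(B^{3/4})$ square-full $v\ll B^{3/2}$, with the same outcome.
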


There are two cases we need to consider:
(1) $m_1=0$, $m_2m_3\neq 0$,
and (2) $m_1=m_2=0$, $m_3\neq 0$.
It follows from Lemma \ref{int-estimate} that
\begin{align*}
    I_q(\mathbf{m}, \mathbf{n})\ll_A \frac{q^2}{B^2\max\{|\mathbf{m}|, |\mathbf{n}|\}^2}\left(1+\frac{\max\{|\mathbf{m}|, |\mathbf{n}|\}}{B^{1/2}}\right)^{-A},
\end{align*}
for any $A>0$. For  square-free $q$ we have,
by Corollary~\ref{cor.3.8},
\begin{align*}
S_q(\mathbf{m}, \mathbf{n})\ll q^{3+\varepsilon}
\gcd(q, D(\mathbf{m}, \mathbf{n})).
\end{align*}
For $q$ square-full, we will use
Lemma~\ref{lem:square-full} and Corollary~\ref{cor:basic} in case~(1),
and additionally Lemma~\ref{tricky22} in case~(2).

Let us consider the contribution $E_{2,1}$ to $E(B)$  from $m_1=0$, $m_2m_3\neq 0$.
In this case
$D(\mathbf{m}, \mathbf{n})=(m_2n_2^2-m_3n_3^2)^2$ is free of $n_1$, by 
 \eqref{eq:factorize-dual-form}.
Placing  $\max\{|\mathbf{m}|, |\mathbf{n}|\}$ into dyadic intervals of order $T$, we obtain
\begin{align*}
E_{2,1}
\ll~& \sum_{\substack{T\gg 1 \\ m_2,m_3,n_2,n_3\ll T
\\ m_2m_3\ne 0 \\ m_2n_2^2-m_3n_3^2\ne 0}}
\sum_{\substack{q_2\ll B^{3/2}\\q_2\;\text{square-full}\\ q_2\mid (m_2n_2^2-m_3n_3^2)^4}}\\
&\quad \times
\sum_{\substack{n_1\ll T \\ \{q_2,0\}^{1/2}\mid n_1
\\ q_1\ll B^{3/2}/q_2\\q_1\;\text{square-free}}}
\frac{B^\eps \gcd(q_1, D(\mathbf{m}, \mathbf{n}))
\sqrt{\{q_2,0\}\{q_2,m_2\}\{q_2,m_3\}}}{q_1 B^2 T^2 (1+T/B^{1/2})^A}.
\end{align*}
Summing over $q_1$ using \eqref{gcd-1-average},
then over $n_1$ using the condition $\{q_2,0\}^{1/2}\mid n_1$, gives
\begin{equation*}
E_{2,1}
\ll \sum_{\substack{T\gg 1 \\ m_2,m_3,n_2,n_3\ll T
\\ m_2m_3\ne 0 \\ m_2n_2^2-m_3n_3^2\ne 0}}
\sum_{\substack{q_2\ll B^{3/2}\\q_2\;\text{square-full}\\ q_2\mid (m_2n_2^2-m_3n_3^2)^4}}
(\{q_2,0\}^{1/2} + T)
\frac{(BT)^\eps
\sqrt{\{q_2,m_2\}\{q_2,m_3\}}}{B^2 T^2 (1+T/B^{1/2})^A}.
\end{equation*}
Next we observe that
$\{q_2,m_j\}\mid \{(m_2n_2^2-m_3n_3^2)^4,m_j\}$
for $j\in \{2,3\}$.
Hence
\begin{equation*}
\sqrt{\{q_2,m_2\}\{q_2,m_3\}}
\ll \{q_2,m_2\}+\{q_2,m_3\}
\ll \{m_3^4n_3^8,m_2\}+\{m_2^4n_2^8,m_3\}.
\end{equation*}
The number of choices for $q_2$ is $O(T^\eps)$, by the divisor bound. 
Noting that $\{q_2,0\}\leq q_1\ll B^{3/2}$, we see that 
\begin{equation*}
E_{2,1}
\ll \sum_{\substack{T\gg 1 \\ m_3,n_2\ll T
\\ m_3\ne 0}}
(B^{3/4} + T)
\frac{(BT)^\eps}{B^2 T^2 (1+T/B^{1/2})^A}
\sum_{n_3\ll T}
\sum_{0\ne m_2\ll T}
\{m_3^4n_3^8,m_2\}.
\end{equation*}
The sum over $m_2$
is $\ll T^{1+\eps} + T^2 \1_{n_3=0}$, by \eqref{gcd-1-average},
so the sum over $n_3$ is $\ll T^{2+\eps}$.
Thus
\begin{equation}
\label{E21-finale}
E_{2,1}
\ll \sum_{T\gg 1}
T^2 (B^{3/4} + T)
\frac{(BT)^\eps}{B^2 (1+T/B^{1/2})^A}
\ll B^{\eps-1/4}.
\end{equation}

Finally we consider the contribution $E_{2,2}$
to $E_2(B)$ from $m_1=m_2=0$, $m_3\neq 0$.
In this case we have  $D(\mathbf{m}, \mathbf{n})=m_3^2n_3^4\neq 0$.

\begin{lemma}
\label{tricky22}
Suppose $m_1=m_2=0$, and $q$ is square-full.
Then 
\begin{align*}
S_{q}(\mathbf{m},\mathbf{n})\ll q^{4+\eps}
\gcd(q, m_3)\gcd(q, n_3).
\end{align*}
\end{lemma}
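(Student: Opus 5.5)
By multiplicativity \eqref{eq:multiply} it suffices to treat $q=p^r$ with $r\ge 2$, and to prove
\[
|S_{p^r}(\m,\n)|\le C\,p^{4r}\gcd(p^r,m_3)\gcd(p^r,n_3)
\]
with an absolute constant $C$; the product of these constants over $p\mid q$ is then $C^{\omega(q)}\ll q^\eps$. The starting point is Lemma~\ref{lem:return} with $m_1=m_2=0$. The conditions for $i=1,2$ become $ay_i^2\equiv 0\bmod p^r$, that is $y_i^2\equiv 0\bmod p^r$, since $p\nmid a$. These conditions no longer involve $a$, so the $(y_1,y_2)$ sum factorises completely:
\[
S_{p^r}(\m,\n)=p^{3r}\Bigl(\prod_{i=1}^2 \sum_{\substack{y\bmod p^r\\ y^2\equiv 0}} e_{p^r}(n_iy)\Bigr)\sideset{}{^{\star}}\sum_{a\bmod p^r}\sum_{\substack{y_3\bmod p^r\\ ay_3^2+m_3\equiv 0}} e_{p^r}(n_3y_3).
\]

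For the first factor, the $y$ with $y^2\equiv0\bmod p^r$ are exactly the multiples of $p^{\ceil{r/2}}$. Hence each inner sum is a complete sum over $\ZZ/p^{\floor{r/2}}\ZZ$, equal to $p^{\floor{r/2}}\1_{p^{\floor{r/2}}\mid n_i}$. In absolute value the product of the two factors is therefore at most $p^{2\floor{r/2}}\le p^r$, which accounts for the $q^{3r}\cdot q^{r}=q^{4r}$ part of the bound. It remains to show
\[
\Bigl|\sideset{}{^{\star}}\sum_{a}\sum_{ay_3^2\equiv -m_3}e_{p^r}(n_3y_3)\Bigr|\ll p^{\eps r}\gcd(p^r,m_3)\gcd(p^r,n_3).
\]

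To do this, swap the order of summation. The sum becomes $\sum_{y_3}e_{p^r}(n_3y_3)\,\#\{a\in(\ZZ/p^r\ZZ)^\times: ay_3^2\equiv -m_3\}$. Write $p^j=\gcd(p^r,m_3)$.

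If $j=r$, the condition forces $p^{\ceil{r/2}}\mid y_3$ and then every unit $a$ works. The sum is $\phi(p^r)\,p^{\floor{r/2}}\1_{p^{\floor{r/2}}\mid n_3}\le p^r\cdot p^{r}=\gcd(p^r,m_3)\,p^{r}$. This is too big by a factor $p^{r}/\gcd(p^r,n_3)$ only if $n_3$ has small valuation, but then the indicator vanishes unless $p^{\floor{r/2}}\mid n_3$. So we get the bound $\phi(p^r)p^{\floor{r/2}}\le p^r\gcd(p^r,n_3)\le \gcd(p^r,m_3)\gcd(p^r,n_3)$.

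If $j<r$, then $j$ is even, $y_3=p^{j/2}u$ with $p\nmid u$, and $a$ is determined modulo $p^{r-j}$ by $u$. The number of lifts of $a$ is $p^{j}$. The $u$ range modulo $p^{r-j/2}$, so the sum is $p^{j}\sum_{u}^{\star}e_{p^{r-j/2}}(n_3u)$, a Ramanujan sum over an appropriate modulus. Its size is at most $\gcd(p^{r-j/2},n_3)\le\gcd(p^r,n_3)$, giving the bound $p^{j}\gcd(p^r,n_3)$.

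The main obstacle is the bookkeeping in the case $j<r$. There I must check that, after eliminating $a$, the number of compatible $a$ per $y_3$ is uniformly $p^{j}$ up to a factor of at most $2$. I also have to handle the constraint that $u^2\equiv -m_3p^{-j}\bar a$ only fixes $a$ and does not restrict $u$, so that the sum over $u$ really is a complete Ramanujan sum. I will verify this directly from the parametrisation, and I will treat $p=2$ crudely, absorbing it into the implied constant. Multiplying the local bounds over $p\mid q$ then yields the lemma.
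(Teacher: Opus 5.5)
Your proof is correct, and it takes a somewhat different route from the paper's.

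\textbf{On the points you flagged.} None of them causes trouble. When $j<r$, each $y_3=p^{j/2}u$ with $p\nmid u$ has exactly $p^{j}$ admissible units $a$, with no factor-$2$ ambiguity: the congruence fixes $a$ modulo $p^{r-j}$ as a unit, and every lift is a unit. So the sum is exactly $p^{j}c_{p^{r-j/2}}(n_3)$, and $|c_{p^k}(n)|\le\gcd(p^k,n)$ finishes. Nothing special happens at $p=2$. In fact your argument gives $|S_{p^r}(\m,\n)|\le p^{3r}p^{2\lfloor r/2\rfloor}\gcd(p^r,m_3)\gcd(p^r,n_3)$ with constant $1$ at every prime, so the $q^{\eps}$ is not even needed.

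\textbf{Comparison with the paper.} The paper starts from Lemma~\ref{lem:primepower}, namely $S_{p^r}=\frac{p^{4r}}{\phi(p^r)}\bigl(N_0(p^r)-p^{-1}N_1(p^r)\bigr)$, and simply counts the points in $N_j(p^r)$:
\begin{itemize}
\item at most $p^{2\lfloor r/2\rfloor}$ choices of $(y_1,y_2)$;
\item at most $p^{j}\gcd(p^r,n_3)$ choices of $y_3$, from the linear congruence $\n\cdot\y\equiv 0\bmod{p^{r-j}}$;
\item at most $\gcd(p^r,m_3)$ choices of $a$ once $y_3$ is fixed.
\end{itemize}
In that version, the saving of $\phi(p^r)$ comes from the dummy average over $b$ built into Lemma~\ref{lem:primepower}, which converts the phase $e_{p^r}(\n\cdot\y)$ into a linear congruence. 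In your version, the same saving appears as cancellation in the Ramanujan sum over $u$, after you use $m_1=m_2=0$ to pull the $(y_1,y_2)$-sums outside the $a$-sum.

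The paper's count is quicker given the existing machinery and uses only absolute values, at the cost of a harmless factor of about $2p/(p-1)$ per prime. Your exact evaluation is sharper, and it also records that the sum vanishes unless $p^{\lfloor r/2\rfloor}$ divides $n_1$ and $n_2$.

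One small typo: $q^{3r}\cdot q^{r}$ should read $p^{3r}\cdot p^{r}$.
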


\begin{proof}
It is enough to establish the bound for $q=p^r$ with $r\geq 2$.
We use Lemma~\ref{lem:primepower}.
To estimate $N_j(p^r)$, we observe that $p^h\mid y_1, y_2$ where $h=\ceil{r/2}$.
Hence the number of $(y_1, y_2)$ pairs is bounded by $p^{2\floor{r/2}}\le p^{r}$.
The congruence $\mathbf{n}.\mathbf{y}\equiv 0\bmod{p^{r-j}}$ implies that
once $(y_1,y_2)$ is chosen,
the number of $y_3$ is bounded by $p^j\gcd(p^{r-j}, n_3)
\le p^j\gcd(p^r, n_3)$.
Finally, the congruence $ay_3^2+m_3\equiv 0\bmod{p^r}$ implies that
once $\y$ is chosen,
the number of $a$ is bounded by $\gcd(p^r, y_3^2) = \gcd(p^r, m_3)$. 
The lemma follows.
\end{proof}    

Employing the bound in Lemma~\ref{tricky22} for square-full moduli, it follows that
\begin{equation*}
E_{2,2}
\ll \sum_{\substack{T\gg 1 \\ m_3,n_3\ll T
\\ m_3n_3\ne 0}}
\sum_{\substack{q_2\ll B^{3/2}\\q_2\;\text{square-full}\\ q_2\mid (m_3^2n_3^4)^2}}
\sum_{\substack{n_1,n_2\ll T \\ 
%\{q_2,0\}^{1/2}\mid n_1,n_2\\ 
q_1\ll B^{3/2}/q_2\\q_1\;\text{square-free}}}
\frac{B^\eps \gcd(q_1, D(\mathbf{m}, \mathbf{n}))
\gcd(q_2, m_3)\gcd(q_2, n_3)}{q_1 B^2 T^2 (1+T/B^{1/2})^A}.
\end{equation*}
We can actually ignore the condition $q_2\mid (m_3^2n_3^4)^2$.
%and $\{q_2,0\}^{1/2}\mid n_1,n_2$.
% The other conditions, such as $m_3n_3\ne 0$, remain essential.
Carrying out the sums over $q_1$, $n_3$ and $m_3$
using \eqref{gcd-1-average},
we get
\begin{equation*}
E_{2,2}
\ll \sum_{T\gg 1}
\sum_{\substack{q_2\ll B^{3/2}\\q_2\;\text{square-full}}}
\sum_{n_1,n_2\ll T}
\frac{(BT)^\eps}{B^2 (1+T/B^{1/2})^A}
\ll \sum_{T\gg 1}
\frac{B^{3/4} T^2 (BT)^\eps}{B^2 (1+T/B^{1/2})^A}.
\end{equation*}
As in \eqref{E21-finale},
this is $O(B^{\eps-1/4})$, which thereby completes the proof of Proposition~\ref{PROP:E2B}.

\section{The contribution from the dual variety}\label{s:dual}

The goal of this section is to prove the following result,
concerning $E_3(B)$ from \eqref{eq:E3B}.
\begin{proposition}
\label{final-E3-estimate}
We have
\begin{equation*}
E_3(B)
= \frac{\sigma_\infty}{4\zeta(3)} \log{B}
+ O(1).
\end{equation*}
\end{proposition}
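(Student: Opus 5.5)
We propose to prove Proposition~\ref{final-E3-estimate} by first parametrizing the integer points of the dual hypersurface $D(\m,\n)=0$, then evaluating $S_q(\m,\n)$ and $I_q(\m,\n)$ asymptotically on that locus, and finally recognizing the resulting double sum as $\log B$ times a constant. The target is that each dyadic range $\max\{|\m|,|\n|\}\asymp T$ with $1\ll T\ll B^{1/2}$ contributes a fixed constant. The $\frac{1}{2}\log B$ coming from $\log(B^{1/2})$ should then combine with the local densities to give $\sigma_\infty/(4\zeta(3))$.

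\textbf{Step 1: decompose the dual locus.} By \eqref{eq:factorize-dual-form}, $D(\m,\n)=0$ with $n_1n_2n_3m_1m_2m_3\neq 0$ forces $m_in_i^2=k w_i^2$ for a common square-free $k$ and integers $w_i$ satisfying $\pm w_1\pm w_2\pm w_3=0$.

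\begin{itemize}
\item \emph{Generic branch.} This is the set just described. It is parametrized by $k$, by $n_i$, and by the squares dividing $m_i$, subject to one linear relation among the $w_i$.
\item \emph{Degenerate pieces.} These are the cases where some $n_i=0$ or some $m_i=0$; for example, $\n=\0$ with $\m\neq\0$ arbitrary.
\end{itemize}

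For the degenerate pieces we would show that they contribute $O(1)$. The tools are the crude bounds Corollary~\ref{cor:basic}, Lemma~\ref{n-vanishing} (which forces divisibility conditions on $\n$) and Lemma~\ref{tricky22}, together with the decay in Lemma~\ref{int-estimate}. This is in the spirit of the treatment of $E_2(B)$.

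\textbf{Step 2: the arithmetic factor on the generic branch.} Here $p\mid D(\m,\n)$ for every $p$. For $p\nmid 2\m\n$ in the relevant residue classes, Lemma~\ref{lem:Sp2} gives $S_p=p^4-4p^3$ or $2p^4-4p^3$, according to the signs in \eqref{eq:m123}. Lemma~\ref{lem:Njp} and Lemma~\ref{lem:primepower} should let us evaluate $S_{p^r}$ for all $r$.

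We would then form $\xi(s;\m,\n)=\sum_q q^{-s}S_q(\m,\n)$ and factor it as
\[
\zeta(s-4)\cdot L(s;\m,\n),
\]
where $L$ is absolutely convergent for $\Re(s)>4.5$. This requires checking that $(\frac{m_im_j}{p})=(\frac{k^2}{p})=1$ automatically on this branch, so that no genuine quadratic twist survives. Perron's formula should then give
\[
\sum_{q\le x}q^{-6}S_q(\m,\n)=c(\m,\n)+O\bigl(x^{-\delta}|\m,\n|^{\eps}\bigr),
\]
with an explicit constant $c(\m,\n)$ that is an Euler product. Unlike $S_q(\0,\0)$, there is no pole producing a logarithm here; the logarithm must come from the sum over $(\m,\n)$.

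\textbf{Step 3: the integral factor and the final assembly.} On the generic branch we would compute $I_q(\m,\n)$ by stationary phase in $\x$, which is linear in $F$, and then in $\y$. Because $(\m,\n)$ lies on the dual variety, the phase has a degenerate critical point. We expect
\[
I_q(\m,\n)\approx \Bigl(\frac{q}{B|\m,\n|}\Bigr)^{2}\,\omega\!\Bigl(\frac{(\m,\n)}{|\m,\n|}\Bigr)
\]
for $q\ll B|\m,\n|$, with a smooth density $\omega$ on the real dual cone.

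Summing $c(\m,\n)$ against this over the generic branch with $|\m,\n|\asymp T$, the number of such points of height $T$ should be of order $T^4$, up to the averaged singular weights. Combined with the factor $T^{-2}$ from $I_q$ after partial summation in $q$, each dyadic range should give a constant, uniformly for $T\ll B^{1/2}$. Lemma~\ref{int-estimate} kills $T\gg B^{1/2}$. Summing over $\asymp\log B^{1/2}$ dyadic ranges yields $C\log B+O(1)$.

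\textbf{Main obstacle: identifying $C=\sigma_\infty/(4\zeta(3))$.} This requires averaging the Euler products $c(\m,\n)$ over the parametrization, via a Möbius/sieve computation in $k$ and the square parts of the $m_i$. It also requires showing that $\int\omega$ over the dual cone reproduces $\frac12\sigma_\infty$ times the right local factor.

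Our first attempt would avoid computing the constant directly. Instead we would reverse Poisson summation: interpret the generic-branch sum as counting points of $X$ on which $F$ vanishes with a prescribed degenerate linear structure, namely the $\x$-fibres over $\y$ lying on a plane conic. We would then compare this with the already computed main term of Proposition~\ref{PROP:main}, using that their sum must match the $\frac{1}{\zeta(3)}$ normalization. Uniformity of the error terms in $(\m,\n)$, especially near the boundary $n_i\to 0$ of the parametrization, will need the same square-full bookkeeping as in Section~5.
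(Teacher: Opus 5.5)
There is a genuine gap. The proposal puts the main term of $E_3(B)$ in the wrong place, and the pointwise machinery you set up cannot capture it.

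\textbf{Where the main term lives.} Your count is wrong. By Lemma~\ref{degen-counts} and the parametrization $\m=h\mathbf{t}^2$, $\n\cdot\mathbf{t}=0$ of Lemma~\ref{non-coordinate-lattices}, each dyadic range $|\mathbf{t}|\sim T$ with $T^2\ll M$ contributes $\asymp M^3$ dual points of height $\ll M$. So there are $\asymp M^3\log M$ of them, not $\asymp M^4$, and in any case $T^4\cdot T^{-2}$ is not constant.

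You also evaluate $\xi(s;\m,\n)$ at the wrong abscissa. On $D(\m,\n)=0$ the only decay left in Lemma~\ref{int-estimate} is $(q/B|(\m,\n)|)^2$, and your own ansatz takes this as the true size, so $I_q$ grows like $q^2$. Unlike the case of $I_q(\0,\0)$, what matters is then the pole of $\xi(s;\m,\n)$ at $s=5$, not $c(\m,\n)=\xi(6;\m,\n)$. After partial summation the constant $c(\m,\n)$ contributes only $c(\m,\n)I_1(\m,\n)\ll (B|(\m,\n)|)^{-2}$. The whole main term sits in the tail $\sum_{q>x}q^{-6}S_q(\m,\n)\sim \mathrm{Res}_{s=5}\xi(s;\m,\n)/x$, which you propose to discard as $O(x^{-\delta})$.

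In the paper the main term comes from moduli within a small power of $B^{3/2}$ (namely $q_0\gg B^{3/2}/P(B,T)$). These are summed over whole lattices $\Lambda^\perp(\mathbf{t})$ out to $|(\m,\n)|\approx q/B\approx B^{1/2}$. That is the transition range $q\asymp B|(\m,\n)|$, where your stationary-phase approximation (stated for $q\ll B|(\m,\n)|$) is unavailable. Each lattice contributes $\sigma_{\infty,\Lambda^\perp(\mathbf{t}),W}\asymp|\mathbf{t}|^{-3}$. The factor $\log B$ counts dyadic ranges of the lattice parameter $|\mathbf{t}|\le B^{1/4}$, not of $|(\m,\n)|\le B^{1/2}$.

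\textbf{Missing ingredients.}
\begin{itemize}
\item One writes $S_q/q^4=\sum_{q_0q_1q_2=q}\frac{\phi(q_0)}{q_0}\Xi_{q_1}(\m,\n)S''_{q_2}(\m,\n)$. This isolates a pole factor independent of $(\m,\n)$.
\item One then proves a power saving in $B^{3/2}/q_0$ for every other range (Lemma~\ref{absolute-tail-bound}). This needs:
  \begin{itemize}
  \item cancellation in the genuinely non-trivial characters $(\frac{hm_3}{q})$ and $(\frac{m_jm_k}{q})$ on the pieces $n_3=0$ and $\n=\0$, over moduli far larger than $|\m|$ (Heath-Brown's large sieve, Lemmas~\ref{cheap-large-sieve} and~\ref{first-Q1-bound});
  \item sparsity of square-full moduli (Lemma~\ref{first-Q2-bound});
  \item the stability Lemma~\ref{q_0-stability}.
  \end{itemize}
\item Your plan to dispose of the degenerate pieces by $E_2$-type pointwise bounds fails. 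For $\n=\0$ one has $S_p(\m,\0)=\mathfrak{Q}(\m,p)(p^4-p^3)$ for generic $p$. Pointwise bounds then give only $O((\log B)^3)$ for that piece, which swamps the main term.
\item One replaces the dual points by the full lattices $\Lambda^\perp(\mathbf{t})$. The sum of $I_q$ over each coset of $q'\Lambda^\perp(\mathbf{t})$ is evaluated exactly by Poisson summation and the Fourier-slice identity \eqref{poisson-fourier-slice}.
\item That identity requires $q_0/B\gg\lambda_3(\Lambda^\perp(\mathbf{t}))\asymp|\mathbf{t}|^2$, and this is precisely what produces the cutoff $|\mathbf{t}|\ll B^{1/4}$.
\item M\"obius inversion over primitive $\mathbf{t}$, together with $\theta(1)=2\log 2\,\sigma_\infty$, then yields $\sigma_\infty/(4\zeta(3))$.
\end{itemize}

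\textbf{The constant.} Fixing it by requiring $M(B)+E_3(B)$ to match the $1/\zeta(3)$ normalization is circular. That total is Theorem~\ref{t:main}, which is deduced from this proposition. Even an independent count of $N(B)$ would determine $E_3(B)$ only up to $O((\log B)^\eps)$, because of $E_1(B)$.
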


Before proceeding, we offer some intuition for why
the contribution from vectors vanishing on the dual form make  up $\frac{1}{4}$ of the main term. 
If $\gcd(y_1,y_2,y_3)=h$, then 
for $1\ll H\ll B$ each dyadic range $h\sim H$
should contribute
$$\asymp \sum_{h\sim H} \frac{B^3 (B/h)^3}{B (B/h)^2}
= \sum_{h\sim H} \frac{B^3}{h} \asymp B^3$$
solutions $(\x,\y)$ to the counting function $N(B)$.
The solutions $(\x,\y)$ in the range $B^{3/4}\ll h\ll B$ are covered by $3$-dimensional lattices of relatively low height.
Specifically, if we write $\y=h\mathbf{t}$, then $(\x,\y)\in \Lambda(\mathbf{t})$, where $\Lambda(\mathbf{t})$ is defined below in \eqref{Lambda-t} and  $\mathbf{t}=\y/h\ll B/h\ll B^{1/4}$.
These lattices will arise naturally through our analysis of $E_3(B)$.
On the other hand, $M(B)$ captures the contribution from the range $h\ll B^{3/4}$, but the reason for this is more mysterious.
For a partial explanation, consider the following interrelated facts:
\begin{enumerate}
\item
prime-square moduli $p^2$ are responsible for the polar factor $\zeta(2s-11)$ of $F(s)$ in the proof of Lemma \ref{lem:Sigma-x}, in the sense that the restriction
$$\prod_p \left(1 + \left(1-\frac{1}{p}\right)\sum_{1\le r\ne 2}
p^{4r+3\lfloor r/2 \rfloor-rs}\right)$$
of $F(s)$ to products of prime powers $p^r$ with $r\ne 2$ would be absolutely convergent at $s=6$;
\item 
the locus $F(\x,\y) \equiv 0\bmod{h^2}$ includes the locus $\y \equiv \0 \bmod{h}$; and
\item
$\y = \0$ is the singular locus of $F(\x,\y)=0$.
\end{enumerate}
The moduli $q$ in the sum $M(B)$ are restricted to the range $q\ll Q$, so terms like $S_{h^2}(\0,\0)$, which carry information about the congruences $F(\x,\y) \equiv 0\bmod{h^2}$, might only be capable of detecting divisors $h\mid \y$ with $h^2\ll Q=B^{3/2}$; i.e.~$h\ll B^{3/4}$.
In the end, we will have
\begin{equation*}
M(B) = \int_{1}^{B^{3/4}} \frac{\sigma_\infty}{\zeta(3)} \frac{dh}{h} + O(1),
\qquad E_3(B) = \int_{B^{3/4}}^{B} \frac{\sigma_\infty}{\zeta(3)} \frac{dh}{h} + O(1).
\end{equation*}

\subsection*{Building blocks}

Define $\ZZp^3$ to be the set of non-zero vector $\mathbf{t}\in \mathbb{Z}^3$
such that $\gcd(t_1,t_2,t_3)=1$.
Given $\mathbf{t}\in \ZZp^3$, 
let $\mathbf{t}^2=(t_1^2,t_2^2,t_3^2)$ and let
\begin{equation}
\label{Lambda-t}
\Lambda=\Lambda(\mathbf{t})
= \{(\x,\y)\in (\mathbb{Z}^3)^2:
\x\cdot \mathbf{t}^2=0,
\; \y\in \mathbf{t}\mathbb{Z}\}.
\end{equation}
% $\mathbf{t}\times \y=0$, where
% \begin{equation*}
% \mathbf{t}\times \y=(t_2y_3-t_3y_2,t_3y_1-t_1y_3,t_1y_2-t_2y_1)
% \end{equation*}
% denotes the cross product.
Then $\Lambda\subset \mathbb{Z}^6$ is a primitive rank $3$ lattice
that vanishes on  the hypersurface $F(\x,\y)=0$.
Its orthogonal complement is
\begin{equation}
\label{Lambda-perp-t}
\Lambda^\perp=\Lambda^\perp(\mathbf{t})
= \{(\m,\n)\in (\mathbb{Z}^3)^2:
\m\in \mathbf{t}^2\mathbb{Z},
\; \n\cdot \mathbf{t}=0\}.
\end{equation}
It is readily checked that $\Lambda^\perp$ vanishes on  the dual hypersurface.

Let $\m\n\defeq (m_1n_1,m_2n_2,m_3n_3)$.
The equation $\m\n=\0$ cuts out a union of coordinate lattices
contained in the dual variety $D(\m,\n)=0$.
The remaining integral points on the dual variety are characterized by the following result, 
in which we identify $\PP^2(\ZZ)$ with $\ZZp^3$.

\begin{lemma}
\label{non-coordinate-lattices}
Let $\m,\n\in \ZZ^3$ such that 
 $\m\n\ne \0$ and  $D(\m,\n)=0$.
\begin{enumerate}
\item If $n_1n_2n_3\ne 0$,
% then $\dim_\QQ \sum_i m_i^{1/2}\QQ = 1$
then there exists a unique point $[\mathbf{t}]\in \PP^2(\ZZ)$
% i.e.~primitive vector $\mathbf{t}\in \ZZ^3$ up to scaling by $\pm 1$,
such that $(\m,\n)\in \Lambda^\perp(\mathbf{t})$.

\item Let $\{i,j,k\} = \{1,2,3\}$.
If $m_in_i=0$, then $m_jn_j^2 = m_kn_k^2 \ne 0$,
and there exists a unique point $[\mathbf{t}]\in \PP^2(\ZZ)$
with $t_i=0$
such that $(m_j,m_k)\in (t_j^2,t_k^2)\ZZ$
% and $n_jt_j+n_kt_k=0$.
and $(n_j,n_k)\in (t_k,-t_j)\ZZ$.

\item Suppose $m_jm_kn_jn_k\ne 0$ for some $j<k$.
Then $m_jm_k$ is a non-zero square.
\end{enumerate}
\end{lemma}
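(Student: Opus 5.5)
We work throughout with the factorization \eqref{eq:factorize-dual-form}: writing $a_i = m_in_i^2\in\ZZ$, we have $D(\m,\n)=\prod_{\pm,\pm}(x\pm y\pm z)$ with $x^2=a_1$, $y^2=a_2$, $z^2=a_3$. Equivalently,
\[
D(\m,\n)=(a_1+a_2-a_3)^2-4a_1a_2 ,
\]
so $D=0$ forces $4a_1a_2$ to be a square. By symmetry $4a_ja_k$ is a square for every pair, since $D$ is symmetric in the $a_i$.

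We prove part (3) first. If $m_jm_kn_jn_k\ne0$, then $a_ja_k=m_jm_k(n_jn_k)^2$ is a non-zero square times $m_jm_k$. Since $4a_ja_k$ is a square, $m_jm_k$ is a non-zero square.

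For part (2), suppose $m_in_i=0$, so that $a_i=0$. Then $D=(a_j-a_k)^2$, which gives $a_j=a_k$. These are non-zero, because otherwise all $a$'s vanish. That would force $m_\ell n_\ell=0$ for every $\ell$, since $a_\ell=0$ means $m_\ell=0$ or $n_\ell=0$, contradicting $\m\n\ne\0$. Hence $m_jm_kn_jn_k\ne0$, and by (3) $m_j,m_k$ share a sign and $m_j=g s_j^2$, $m_k=g s_k^2$ with $\gcd(s_j,s_k)=1$ (take $g$ the signed gcd-squarefree part). Now $s_j^2n_j^2=s_k^2n_k^2$ gives $s_jn_j=\pm s_kn_k$. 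Choose the sign $\pm$ so that $\mathbf t$ with $t_i=0$, $t_j=s_j$, $t_k=\mp s_k$ is primitive. Then $(m_j,m_k)\in(t_j^2,t_k^2)\ZZ$, and $n_jt_j+n_kt_k=0$ gives $(n_j,n_k)\in(t_k,-t_j)\ZZ$ by coprimality.

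For uniqueness in (2): $(t_j^2:t_k^2)$ is determined by $(m_j:m_k)$ up to sign choices. The ratio $t_j/t_k$ is determined by $n_j,n_k$ via $n_jt_j+n_kt_k=0$, since $n_j n_k\neq 0$.

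For part (1), suppose $n_1n_2n_3\ne0$. If some $m_i=0$, we are in case (2) geometrically: define $\mathbf t$ as there, and $\m\in\mathbf t^2\ZZ$ holds since $m_i=0=t_i^2$. So assume all $m_i\ne0$. By (3), all $m_i$ have a common sign and $m_i=g s_i^2$ with $\gcd(s_1,s_2,s_3)=1$ and $g\in\ZZ$; this follows from pairwise products being squares. Then $\sqrt{a_i}=\sqrt g\,|s_in_i|$. The vanishing of $D$ means some signed sum $\sum\eps_i s_in_i=0$. Setting $t_i=\eps_is_i$ gives $\mathbf t$ primitive with $\m=g\mathbf t^2$ and $\n\cdot\mathbf t=0$.

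Uniqueness in (1) is the step needing care. The squares $t_i^2$ are forced up to a common scalar by $\m\in\mathbf t^2\ZZ$ and primitivity, so $\mathbf t$ is determined up to the signs $\eps_i$. We must show only one sign pattern modulo global $\pm$ satisfies $\n\cdot\mathbf t=0$. If two patterns differing in a nonempty proper set of coordinates both worked, subtracting the two equations gives $2\sum_{i\in S}t_in_i=0$ for the set $S$ of coordinates where the signs differ. Since the pattern is taken modulo global $\pm$, we may take $|S|=1$, so $t_in_i=0$ for some $i$. This contradicts $t_i\ne0$ (as $m_i\neq 0$) and $n_i\ne0$. When some $m_i=0$, we have $t_i=0$, and the remaining two signs are forced by the same argument using $n_jn_k\ne0$.
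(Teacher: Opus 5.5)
Your proof is correct, but it runs in the opposite direction from the paper's. The paper works with the $\QQ$-span $\sum_i m_i^{1/2}\QQ$. The relation $\sum_i\eps_i m_i^{1/2}n_i=0$ forces its dimension to be $1$ or $2$. In the $2$-dimensional case, linear independence over $\QQ$ of square roots of distinct square-free integers forces some $n_i=0$. This yields (1) and (2), and (3) is then read off from the parametrisations $\m=\mathbf t^2h$ or $(m_j,m_k)=(t_j^2,t_k^2)h$.

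You instead use the integer identity $D=(a_1+a_2-a_3)^2-4a_1a_2$ with $a_i=m_in_i^2$, together with its permutations. This shows directly that each $4a_ja_k$ is a perfect square, which gives (3) at once. Parts (2) and (1) then follow by writing $m_i=g s_i^2$ with coprime $s_i$ and extracting a signed linear relation from the factorisation \eqref{eq:factorize-dual-form}. Your uniqueness argument in (1) is the same as the paper's: two admissible sign patterns differ on a set $S$, a global sign change reduces to $\card{S}=1$, and this forces $t_in_i=0$.

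Your route buys self-containedness: it avoids the linear independence of square roots entirely and uses only elementary integer arithmetic. The paper's route is shorter once that fact is granted, and it explains conceptually why $n_1n_2n_3\ne 0$ forces the ``rank one'' situation.

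Two small wording points in (2). The sign is dictated by $s_jn_j=\pm s_kn_k$ rather than chosen, and $\mathbf t$ is primitive for either sign because $\gcd(s_j,s_k)=1$. Also, $g$ is simply the signed $\gcd(m_j,m_k)$, not a ``squarefree part''.
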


\begin{proof}
We have $\sum_i \eps_i m_i^{1/2}n_i = 0$ for some choice of signs $\eps_i=\pm 1$.
% The linear relation $\sum_i \eps_i m_i^{1/2}n_i = 0$
% is either indecomposable,
% in which case $\prod_i (m_in_i)\ne 0$ and $m_i^{1/2}\in m_j^{1/2}\QQ^\times$,
% or 
% If $\dim_\QQ \sum_i m_i^{1/2}\QQ = 0$,
% then $\m=\0$ and $(\m,\n)\in \Lambda^\perp(\0) = \{(\0,\n):\n\in \ZZ^3\}$.
Since $\m\ne\0$, we have $\dim_\QQ \sum_i m_i^{1/2}\QQ \ge 1$.
Since $\n\ne\0$, we have $\dim_\QQ \sum_i m_i^{1/2}\QQ \le 2$.

If $\dim_\QQ \sum_i m_i^{1/2}\QQ = 1$,
then there exists a point $[\mathbf{t}]\in \PP^2(\ZZ)$
such that $(\m,\n)\in \Lambda^\perp(\mathbf{t})$.
Moreover,
% $[\mathbf{t}]$ is unique if $\prod_i n_i\ne 0$,
% and $\mathbf{t}^2\in \ZZ^3$ is unique if $\prod_i n_i=0$.
$\mathbf{t}^2\in \ZZ^3$ is uniquely determined by $\m$.

If $\dim_\QQ \sum_i m_i^{1/2}\QQ = 2$,
then the multi-set $\{m_1^{1/2}\QQ,m_2^{1/2}\QQ,m_3^{1/2}\QQ\}$
contains at least two distinct non-zero spaces,
so by the pigeonhole principle
some non-zero space $m_i^{1/2}\QQ$ appears with multiplicity one.
Then $m_i\ne 0$, since $m_i^{1/2}\QQ\ne 0$;
and $n_i=0$, since
the set of all subspaces of $\overline{\QQ}$ of the form $r^{1/2}\QQ$,
where $r$ is a non-zero square-free integer,
is linearly independent over $\QQ$.

(1):
If $n_1n_2n_3\ne 0$, then by the discussion above,
$\dim_\QQ \sum_i m_i^{1/2}\QQ = 1$.
Existence of $\mathbf{t}$ follows,
as does uniqueness up to coordinate-wise scaling of $\mathbf{t}$ by $\{\pm 1\}^3$.
To check uniqueness, let $S = \{i: t_i\ne 0\}$
and note that if $[\mathbf{t}]\ne [\eps_it_i]$ with $\eps_i=\pm1$
and $\sum_i t_in_i=\sum_i \eps_it_in_i=0$,
then (because $\card{S}\le 3$) there exists $i\in S$
with $t_in_i=0$,
% so either $t_i=0$ or $n_i=0$,
which is impossible.

(2):
If $m_in_i=0$, then $D(\m,\n)=0$ implies $m_jn_j^2 = m_kn_k^2$,
and $\m\n\ne \0$ implies that the latter products are non-zero.
Existence and uniqueness of $\mathbf{t}$ are clear.

(3):
If $m_in_i=0$, then $m_jm_k = (t_j^2h)(t_k^2h) = (t_jt_kh)^2$
for some $t_jt_kh\ne 0$ by (2).
If $m_in_i\ne 0$, then $\m=\mathbf{t}^2h$ for some $t_it_jt_kh\ne 0$ by (1),
and the claim follows in the same way.
\end{proof}

% Recall the definition of $\chi_m(r)$ from just before
% Lemma~\ref{cheap-large-sieve}.
% In particular, $\chi_0(r)\defeq \1_{r=1}$ and $\chi_1(r)\defeq 1$ for all $r\ge 1$.

\begin{lemma}
% Let $S$ be a large finite set of polynomials in $\ZZ[\x,\y]$.
% Let $S = \{\Delta\in 2\ZZ[\x,\y]: \Delta\mid 2x_1x_2x_3y_1y_2y_3\}$.
Let $S = 2\ZZ[\x,\y]$.
Given $(\m,\n)\in \ZZ^6$ with $D(\m,\n)=0$,
% Then there exists a $\Delta\in S$ with $\Delta(\m,\n)\ne 0$ such that
let
\begin{equation}
\label{assign-monomial-Delta}
\Delta(\x,\y)
% \defeq 2\gcd(\1_{\#\{i:m_i=0\}\ge 2},
% x_jx_k(y_jy_k)^{\1_{n_jn_k\ne 0}} \1_{m_i=0,\; m_jm_k\ne 0},
% \prod_{i=1}^{3} x_i^{\1_{m_i\ne 0}} y_i^{\1_{n_i\ne 0}})
\defeq \begin{cases}
2 &\text{if }\#\{i:m_i=0\}\ge 2, \\
2\prod_{i:m_i\ne 0} x_i y_i^{\1_{n_i\ne 0}} &\text{else},
\end{cases}
\end{equation}
be a certain element of $S$.
Then $\Delta(\m,\n)\ne 0$, and for all primes $p$, we have 
\begin{equation}
\label{prime-target}
\begin{split}
&\left|\frac{S_p(\m,\n)}{p^4}
- 1
- \sum_{i<j} \left(\frac{m_im_k}{p}\right) \1_{m_jn_in_j\ne 0} \1_{n_k=0}
- \sum_{j<k} \left(\frac{m_jm_k}{p}\right) \1_{n_j=n_k=0}\right|\\
&
\hspace{7cm}
 \ll \frac1p + \1_{p\mid \Delta(\m,\n)}.
\end{split}
\end{equation}
\end{lemma}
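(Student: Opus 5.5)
The plan is to prove the two assertions separately. The non-vanishing $\Delta(\m,\n)\ne 0$ is immediate from \eqref{assign-monomial-Delta}: $\Delta$ is either the constant $2$, or $2$ times a monomial in which the variable $x_i$ occurs only if $m_i\ne 0$ and $y_i$ occurs only if $n_i\ne 0$.

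For \eqref{prime-target}, we first dispose of the primes $p\mid \Delta(\m,\n)$, including $p=2$. Here it suffices to show that the left-hand side is $O(1)$. The three character sums contribute $O(1)$ trivially. For the main term, Lemma~\ref{lem:Sp2} (or the trivial bound $|S_2|\le 2^7$) gives $|S_p(\m,\n)|\ll p^4$ in every case; this is also the case $q=p$ of Corollary~\ref{cor.3.8}, since $\gcd(p,0)=p$. So from now on we assume that $p$ is odd, $p\nmid \Delta(\m,\n)$, and, because $D(\m,\n)=0$, that $p\mid D(\m,\n)$ automatically. We then read off $S_p(\m,\n)$ from Lemma~\ref{lem:Sp2} and compare it with the target
\[
1+\sum(\cdots)+\sum(\cdots).
\]
The error $-4p^3,-2p^3,-p^3$ is always $O(p^4/p)$.

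The casework runs according to the zero pattern of $\m$ and $\n$.

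\emph{At least two $m_i$ vanish.} If $\m=\0$, then $S_p=p^4-p^3$, and every Legendre symbol in the target vanishes, so the target is $1$. If only $m_i\ne 0$, then $D(\m,\n)=m_i^2n_i^4=0$ forces $n_i=0$. Lemma~\ref{lem:Sp2} then gives $p^4-p^3$, whether or not $p\mid m_i$. The target is again $1$, since each surviving symbol involves a vanishing $m$.

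\emph{Exactly one $m_i$ vanishes.} Now $p\nmid m_jm_k$. By Lemma~\ref{non-coordinate-lattices}(2) we have $m_jn_j^2=m_kn_k^2\ne0$, so $n_jn_k\ne 0$, $p\nmid n_jn_k$, and $m_jm_k$ is a square. Lemma~\ref{lem:Sp2} then gives $S_p=p^4-2p^3$. We check that exactly one symbol in the target survives, and that it is $(\frac{m_jm_k}{p})$-type with value $1$ or a product that equals $1$. This requires a short check against the index conventions $\1_{m_jn_in_j\ne0}\1_{n_k=0}$, taking into account whether $n_i=0$.

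\emph{All $m_i$ are non-zero.} Then $p\nmid m_1m_2m_3$, and we use the first half of Lemma~\ref{lem:Sp2}, together with Lemma~\ref{lem:Sp1} when \eqref{eq:m123} fails. The sub-cases are as follows.
\begin{enumerate}
\item If $n_1n_2n_3\ne0$, then $\m\in\mathbf t^2\ZZ$ by Lemma~\ref{non-coordinate-lattices}(1), so \eqref{eq:m123} holds. Since $p\nmid n_1n_2n_3$, we get $S_p=p^4-4p^3$, and the target is $1$.
\item If exactly $n_k=0$, then $m_im_j$ is a square by Lemma~\ref{non-coordinate-lattices}(3), so \eqref{eq:m123} reduces to $(\frac{m_im_k}{p})=1$. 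Either $S_p=2p^4-4p^3$ against the target $1+1$, or $S_p=0$ against the target $1+(-1)$.
\item Exactly two $n$'s cannot vanish, since then $D(\m,\n)=m_i^2n_i^4\ne 0$.
\item If $\n=\0$, then $S_p=4p^4-4p^3$ or $0$. The target is $1+a+b+ab$ with $a,b\in\{\pm1\}$ the relevant symbols, which equals $4$ if $a=b=1$ and $0$ otherwise, matching \eqref{eq:m123}.
\end{enumerate}

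The main obstacle is bookkeeping rather than analysis. One must verify, in each zero pattern, that exactly the right Legendre symbols survive the indicator conditions in \eqref{prime-target}, with the index conventions ($i<j$ with third index $k$, versus $j<k$). One must also confirm that the quadratic-residue relations supplied by Lemma~\ref{non-coordinate-lattices} make the surviving symbols agree with the four-case evaluation of Lemma~\ref{lem:Sp2}. I would tabulate the patterns up to permutation first, and only then match each one against the lemma.
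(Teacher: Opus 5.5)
Your overall route is the paper's. You check $\Delta(\m,\n)\neq 0$ from the monomial shape, absorb the primes $p\mid\Delta(\m,\n)$ (including $p=2$) via $|S_p|\ll p^4$, and otherwise read $S_p$ off Lemma~\ref{lem:Sp2} one zero pattern at a time. Your cases with at least two vanishing $m_i$, and with $m_1m_2m_3\neq 0$, are correct. However, the case of exactly one vanishing $m_i$ has a genuine hole.

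You invoke Lemma~\ref{non-coordinate-lattices}(2) to conclude $m_jn_j^2=m_kn_k^2\neq 0$, hence $n_jn_k\neq 0$ and $m_jm_k$ a square. That lemma assumes $\m\n\neq\0$. Consider the configuration $m_i=0$, $m_jm_k\neq 0$, $n_j=n_k=0$, with $n_i$ arbitrary. It has $\m\n=\0$ and $D(\m,\n)=(m_jn_j^2-m_kn_k^2)^2=0$, so it is in scope, and $m_jm_k$ can be any non-zero integer. This is exactly the case in which a non-principal symbol survives with only two non-zero $m$'s. It is what feeds the $r_{jk}$ factors of \eqref{degen-char} in that situation, and it is used in Lemma~\ref{first-Q1-bound}. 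So it cannot be dropped.

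The repair is short. Use $D=(m_jn_j^2-m_kn_k^2)^2$ directly to get $n_j=0\Leftrightarrow n_k=0$, and split.

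\emph{If $n_j=n_k=0$:} then $\Delta=2m_jm_k$. For $p\nmid 2m_jm_k$, Lemma~\ref{lem:Sp2} gives $S_p=0$ when $(\frac{m_jm_k}{p})=-1$, and $S_p=2p^4-2p^3$ when $(\frac{m_jm_k}{p})=1$ (the row $p\mid(m_i,n_j,n_k)$). That is, $S_p=(1+(\frac{m_jm_k}{p}))(p^4-p^3)$. The target is $1+(\frac{m_jm_k}{p})$, coming from the pair $\{j,k\}$ in the second sum. The pairs containing $i$ carry $(\frac{0}{p})=0$, and the first sum needs two non-zero $n$'s, so it vanishes.

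\emph{If $n_jn_k\neq 0$:} your appeal to Lemma~\ref{non-coordinate-lattices} is legitimate here, but your bookkeeping claim is wrong, because \emph{no} symbol survives. The only term of the first sum whose indicator can equal $1$ is the one whose third index is the vanishing coordinate, and its symbol therefore contains $m_i=0$. The second sum needs two vanishing $n$'s, so it is empty. Hence the target is $1$, which is exactly what matches $S_p/p^4=1-2/p$. A surviving symbol equal to $1$, as you assert, would give target $2$, and the comparison would fail by a constant.
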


\begin{proof}
Before proceeding, observe that
% non-zero Dirichlet characters only appear in \eqref{prime-target} if $\#\{i: m_i\ne 0\}\ge 2$,
the sum $\sum_{i<j} (\frac{m_im_k}{p}) \1_{m_jn_in_j\ne 0} \1_{n_k=0}$
is non-zero only if $m_1m_2m_3\ne 0$ and $\#\{i: n_i=0\} = 1$,
and that the sum $\sum_{j<k} (\frac{m_jm_k}{p}) \1_{n_j=n_k=0}$
is non-zero only if $\#\{i: m_i\ne 0\}\ge 2$ and $\#\{i: n_i=0\}\ge 2$.
Moreover, if $n_k=0$ then the first sum reduces to $(\frac{m_im_k}{p}) \1_{m_jn_in_j\ne 0}$,
whereas if $m_i=0$ then the second sum simplifies to $(\frac{m_jm_k}{p}) \1_{n_j=n_k=0}$.

Suppose first that $m_1m_2m_3\ne 0$.
If $n_1n_2n_3\ne 0$, then $m_im_j$ is a non-zero square for all $i<j$
by Lemma~\ref{non-coordinate-lattices},
so \eqref{prime-target} holds with $\Delta = 2m_1m_2m_3n_1n_2n_3$;
generically $$S_p(\m,\n) = p^4-4p^3$$
by Lemmas~\ref{lem:Sp1} and \ref{lem:Sp2}.
If $n_in_j\ne 0$ and $n_k=0$, then $m_im_j$ is a non-zero square,
so $(\frac{m_im_k}{p})=(\frac{m_jm_k}{p})$ for all $p\nmid m_im_jm_k$,
and \eqref{prime-target} holds with $\Delta = 2m_1m_2m_3n_in_j$;
generically 
$$S_p(\m,\n) = \left(1+\left(\frac{m_im_k}{p}\right)\right)(p^4-2p^3).
$$
If $n_i=n_j=0$, then $m_kn_k=0$, so $n_k=0$, 
so \eqref{prime-target} holds with $\Delta = 2m_1m_2m_3$;
generically 
$$S_p(\m,\n) = \left(1+\left(\frac{m_1m_2}{p}\right)+\left(\frac{m_2m_3}{p}\right)+\left(\frac{m_3m_1}{p}\right)\right)(p^4-p^3).
$$
% sum generically factors as (1+\chi_{m_1m_2}(p))(1+\chi_{m_2m_3}(p)) since \chi_{m_3m_1}(p) = \chi_{m_1m_2}(p) \chi_{m_2m_3}(p) generically.

Suppose now that $m_1m_2m_3=0$.
If $\m=\0$, then \eqref{prime-target} holds with $\Delta=2$;
generically $$S_p(\m,\n) = p^4-p^3.$$
If $m_i\ne 0$ and $m_j=m_k=0$, then $n_i=0$, so \eqref{prime-target} holds with $\Delta=2$;
generically $$S_p(\m,\n) = p^4-p^3.$$
Finally, suppose $m_i=0$ and $m_jm_k\ne 0$.
Then $m_jn_j^2 = m_kn_k^2$, and $n_j=0 \Leftrightarrow n_k=0$.
If $n_j=n_k=0$, then \eqref{prime-target} holds with $\Delta=2m_jm_k$;
generically 
$$
S_p(\m,\n) = \left(1+\left(\frac{m_jm_k}{p}\right)\right)(p^4-p^3).
$$
If $n_jn_k\ne 0$, then $m_jm_k$ is a non-zero square
by Lemma \ref{non-coordinate-lattices},
so \eqref{prime-target} holds with $\Delta=2m_jm_kn_jn_k$;
generically $$S_p(\m,\n) = p^4-2p^3.$$

In each case, $\Delta$ matches the description given in \eqref{assign-monomial-Delta}.
\end{proof}

For $(\m,\n)\in \ZZ^6$ with $D(\m,\n)=0$,
let $\Delta$ be as in \eqref{assign-monomial-Delta}
and write
\begin{equation}
\label{dual-q0q1q2}
\frac{S_q(\m,\n)}{q^4}
= \sum_{q_0q'=q} \frac{\phi(q_0)}{q_0} S'_{q'}(\m,\n)
= \sum_{q_0q_1q_2=q}
\frac{\phi(q_0)}{q_0} \Xi_{q_1}(\m,\n) S''_{q_2}(\m,\n)
\end{equation}
where
\begin{equation}
\label{degen-char}
\Xi_q(\m,\n)
\defeq \1_{\gcd(q,\Delta(\m,\n))=1} \sum_{q_{ij}r_{12}r_{13}r_{23}=q}
\left(\frac{m_im_k}{q_{ij}}\right)
\prod_{j<k} \left(\frac{m_jm_k}{r_{jk}}\right),
\end{equation}
where the conventions for $q_{ij}$ and $r_{jk}$ are as follows:
\begin{enumerate}
\item If there exist indices $1\le i<j\le 3$ with $m_im_jm_kn_in_j\ne 0$ and $n_k=0$
(where we note that such a pair of indices $i<j$ is necessarily unique),
then $q_{ij}$ ranges over all positive integers.
Otherwise, $q_{ij}\defeq 1$, so that it may be ignored.

\item For each pair $1\le j<k\le 3$ with $m_jm_k\ne 0$ and $n_j=n_k=0$,
we let $r_{jk}$ range over all positive integers.
Otherwise, $r_{jk}\defeq 1$, so that it may be ignored.
\end{enumerate}
Observe that if $q_{ij}\ne 1$, then $\Delta=2m_1m_2m_3n_in_j$ and $r_{12}r_{13}r_{23}=1$.
Similarly, if $r_{12}r_{13}r_{23}\ne 1$, then $\Delta=2\prod_{k:m_k\ne 0}m_k\in \{2m_1m_2m_3,2m_1m_2,2m_1m_3,2m_2m_3\}$ and $q_{ij}=1$.

\begin{lemma}
\label{q-invariance}
The quantity $S'_q(\m,\n)$ depends only on $q$ and $(\m,\n)\bmod{q}$.
The same holds for the quantities
$\Xi_q(\m,\n)$
and $S''_q(\m,\n)$
provided that $\Delta\in S$ is fixed,
which holds for example if we fix which coordinates of $(\m,\n)\in \ZZ^6$
are zero and which are non-zero.
\end{lemma}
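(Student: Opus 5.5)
The idea is to exploit the fact that all three quantities are built multiplicatively from local data, and to verify the invariance one prime power at a time.

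\textbf{Step 1: $S_q(\m,\n)$.} From its definition \eqref{eq:char-sum-in}, $S_q(\m,\n)$ depends only on $q$ and on $(\m,\n)\bmod q$. The phase $e_q(aF(\x,\y)+\m.\x+\n.\y)$ is unchanged when $(\m,\n)$ is shifted by $q\ZZ^6$, because $\x,\y$ are integers.

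\textbf{Step 2: $S'_q(\m,\n)$.} The first identity in \eqref{dual-q0q1q2} defines $S'$ as a Dirichlet convolution inverse. Möbius inversion, with $\phi(q)/q$ inverted as an arithmetic function, gives
\[
S'_q(\m,\n)=\sum_{q_0q'=q} g(q_0)\,\frac{S_{q'}(\m,\n)}{q'^4},
\]
where $g$ is the Dirichlet inverse of $q\mapsto\phi(q)/q$. Each $q'$ here divides $q$, so $S_{q'}(\m,\n)$ depends only on $(\m,\n)\bmod q'$, and hence only on $(\m,\n)\bmod q$. This settles the claim for $S'$.

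\textbf{Step 3: $\Xi_q(\m,\n)$ for fixed $\Delta$.} Fix $\Delta\in S$, equivalently fix the zero pattern of $(\m,\n)$. Then the following are all determined:
\begin{enumerate}
\item which of the variables $q_{ij},r_{jk}$ range freely in \eqref{degen-char};
\item which Jacobi symbols $\left(\frac{m_am_b}{\cdot}\right)$ occur.
\end{enumerate}
The factor $\1_{\gcd(q,\Delta(\m,\n))=1}$ depends only on $\Delta(\m,\n)\bmod q$, since $\Delta$ is a fixed polynomial with integer coefficients. Hence it depends only on $(\m,\n)\bmod q$.

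For each divisor $d\mid q$, the Jacobi symbol $\left(\frac{m_am_b}{d}\right)$ depends only on $m_am_b\bmod d$. There is a parity issue, since Jacobi symbols are only defined for odd denominators. But every $\Delta\in S=2\ZZ[\x,\y]$ has $2\mid\Delta(\m,\n)$, so the indicator forces $q$ to be odd. Consequently every $q_{ij},r_{jk}$ is an odd divisor of $q$, and each symbol is periodic in the top entry modulo that divisor. Therefore $\Xi_q(\m,\n)$ depends only on $(\m,\n)\bmod q$.

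\textbf{Step 4: $S''_q(\m,\n)$.} The second identity in \eqref{dual-q0q1q2} defines $S''$ by $S'=\Xi*S''$. To invert $\Xi$ as an arithmetic function of $q$, first note $\Xi_1=1$: the indicator equals 1, the only factorisation has every variable equal to 1, and each symbol with denominator 1 equals 1. So $\Xi$ has a Dirichlet inverse $\Xi^{-1}$.

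Set $f(d)=\Xi_d(\m,\n)$. The recursion
\[
\Xi^{-1}(q)=-\sum_{d\mid q,\ d>1} f(d)\,\Xi^{-1}(q/d)
\]
expresses $\Xi^{-1}(q)$ using only the values $f(d)$ for $d\mid q$. Each such value depends only on $(\m,\n)\bmod d$, hence only on $(\m,\n)\bmod q$. By induction, $\Xi^{-1}(q)$ depends only on $(\m,\n)\bmod q$. Then
\[
S''_q(\m,\n)=\sum_{q_1q_2=q}\Xi^{-1}(q_1)\,S'_{q_2}(\m,\n)
\]
inherits the same property, by Step 2.

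\textbf{Main obstacle.} There is little real difficulty; the one point needing care is the well-definedness issue in Step 3. The invariance in $(\m,\n)\bmod q$ genuinely requires $\Delta$ to be held fixed, and not merely $(\m,\n)\bmod q$. Without fixing $\Delta$, the zero pattern of $(\m,\n)$, and hence the shape of \eqref{degen-char}, could change under shifts by $q\ZZ^6$. That is exactly why the lemma includes this hypothesis.
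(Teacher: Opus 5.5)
Your argument is correct and is essentially the paper's proof: periodicity of $S_q$ passes to $S'$ by inverting the convolution with $\phi(q)/q$, then to $\Xi_q$ once $\Delta$ (and hence the shape of \eqref{degen-char}) is fixed, and then to $S''$ by inverting the convolution with $\Xi$; you simply make explicit what the paper leaves implicit, namely the Dirichlet inverses (using $\Xi_1=1$) and the fact that $2\mid\Delta(\m,\n)$ forces $q$ odd, so the Jacobi symbols are well defined and periodic. One small quibble: fixing $\Delta$ is not literally equivalent to fixing the zero pattern (for instance $\Delta=2$ or $\Delta=2x_jx_k$ leave some coordinates unconstrained), but on $D(\m,\n)=0$ the polynomial $\Delta$ does determine which of the $q_{ij},r_{jk}$ range freely, so your argument is unaffected.
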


\begin{proof}
The quantity $S_q(\m,\n)$ depends only on $q$ and $(\m,\n)\bmod{q}$.
Since $\frac{\phi(q)}{q}$ depends only on $q$,
the claim for $S'_q(\m,\n)$ follows via \eqref{dual-q0q1q2}.
Now fix $\Delta\in S$.
Then the quantity $\1_{\gcd(q,\Delta(\m,\n))=1}$ depends only on $q$ and $(\m,\n)\bmod{q}$.
Moreover, if the residue class $(\m,\n)\bmod{q}$ is fixed,
% so that $\gcd(q,\Delta(\m,\n))=1$,
then the values of the symbols
$(\frac{m_im_k}{q_{ij}})$ and $(\frac{m_jm_k}{r_{jk}})$
in the definition \eqref{degen-char} of $\Xi_q(\m,\n)$
are uniquely determined by the moduli $q_{ij}$ and $r_{jk}$, respectively.
% https://en.wikipedia.org/wiki/Jacobi_symbol#Properties
The claim for $\Xi_q(\m,\n)$,
and then for $S''_q(\m,\n)$ via \eqref{dual-q0q1q2},
follows.
\end{proof}

\begin{lemma}
\label{S''-bounds}
If $p$ is prime then $S''_p(\m,\n)\ll p^{-1} + \1_{p\mid \Delta(\m,\n)}$.
Moreover, in general
\begin{equation*}
S''_q(\m,\n)
\ll q^\eps \sum_{\substack{\textnormal{square-full }q'\mid q
\\ \kappa(q')\mid \nabla{G}(\m,\n)}} \frac{|S_{q'}(\m,\n)|}{(q')^4}.
\end{equation*}
\end{lemma}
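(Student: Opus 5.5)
The natural route is to treat both claims through the local factors of the Dirichlet convolution \eqref{dual-q0q1q2}. At a fixed prime $p$, the factor $\phi(q_0)/q_0$ has generating series $1+(1-p^{-1})\sum_{r\ge1}p^{-rs}$, and $\Xi$ is multiplicative in $q$ (for fixed $\Delta$) with local series equal to $\1_{p\nmid\Delta}$ times a product of factors $(1-\chi(p)p^{-s})^{-1}$, one for each active character. Dividing the local series of $S_{p^r}/p^{4r}$ by these two series defines $S''$ multiplicatively, and comparing coefficients of $p^{-s}$ gives
\[
S''_p(\m,\n)=\frac{S_p(\m,\n)}{p^4}-\Big(1-\frac1p\Big)-\Xi_p(\m,\n),
\]
where $\Xi_p$ is exactly the character sum appearing in \eqref{prime-target} when $p\nmid\Delta(\m,\n)$, and is $0$ otherwise.

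For the first claim, take $p\nmid\Delta(\m,\n)$. Then \eqref{prime-target} says that $S_p/p^4-1-\Xi_p\ll 1/p$, so $S''_p\ll p^{-1}$. If instead $p\mid\Delta(\m,\n)$, then $\Xi_p=0$, and Corollary \ref{cor:basic} with $q=p$ gives $|S_p|\ll p^4$ (all the brackets $\{p,m_i\}$ equal $1$). Hence $S''_p\ll 1$, which is the claim.

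For the general bound, invert the convolution. On the Dirichlet series side we have $S''=S\cdot(\text{inverse of }\phi(q)/q)\cdot\Xi^{-1}$, so
\[
S''_q=\sum_{q'd e=q}\frac{S_{q'}}{(q')^4}\,\lambda(d)\,\nu(e),
\]
where $\lambda$ is the inverse of $\phi(q)/q$ and $\nu$ is the inverse of $\Xi$. Locally, the inverse of $\phi(q)/q$ has local series $(1-p^{-s})/(1-p^{-1-s})$, so $|\lambda(p^r)|\le 2$. The inverse of $\Xi$ is a product of at most three factors $(1-\chi(p)p^{-s})$, so $|\nu(p^r)|\le 8$ and $\nu(p^r)=0$ for $r>3$. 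Consequently $\sum_{de=m}|\lambda(d)\nu(e)|\ll m^\eps$, and
\[
|S''_q|\ll q^\eps\sum_{q'\mid q}\frac{|S_{q'}|}{(q')^4}.
\]

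This is still too crude, because it sums over all divisors $q'$ rather than square-full ones with $\kappa(q')\mid\nabla G$. The fix is to factor each $q'\mid q$ as $q'=ab$, with $a$ square-free, $b$ square-full, and $\gcd(a,b)=1$, and use multiplicativity of $S$. Rather than doing this globally, the cleaner argument is local: I would bound $S''_{p^r}$ prime by prime, aiming for
\[
|S''_{p^r}|\ll C^r\Big(\sum_{2\le j\le r,\ p\mid\nabla G}\frac{|S_{p^j}|}{p^{4j}}+\1_{r\le 3}\Big)
\]
and then multiply over $p\mid q$; the $O(1)^{\omega(q)}$ losses are absorbed into $q^\eps$.

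The local bound needs three facts about $S_{p^j}$ for $j\ge2$.
\begin{enumerate}
\item The terms with $j\ge2$ and $p\nmid\nabla G$ must vanish. Since $D(\m,\n)=0$, we have $p\mid G(\m,\n)$. Lemma \ref{grad1} then forces $p\nmid n_1n_2n_3m_1m_2m_3$ whenever $S_{p^j}\neq0$. In that regime I would try to show $N_0(p^j)=p^{-1}N_1(p^j)$ directly, or else bound it by Lemma \ref{lem:beach}.
\item For $p\mid\nabla G$, the contribution is $|S_{p^j}|/p^{4j}$ itself, which is exactly the form allowed in the statement.
\item The $j\in\{0,1\}$ terms must be bounded by $O(1)$. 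This follows from $|S_p|\ll p^4$.
\end{enumerate}

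The \textbf{main obstacle} is the first of these, the case $p\nmid\nabla G$ with $j\ge2$. There Lemma \ref{lem:beach} only gives $|S_{p^j}|/p^{4j}\le1$, not $0$, so the sum over $j$ would not be controlled by the permitted expression. To handle it, I would use that $G=0$ and $p\nmid\nabla G$ place $(\m,\n)$ at a smooth point of the dual variety mod $p$. In that situation the counts $N_0(p^j)$ and $N_1(p^j)$ Hensel-lift from $j=1$, which gives $S_{p^j}=p^{4j}(1-1/p)$ for $j\ge1$. These exact values are precisely absorbed by the $\phi(q_0)/q_0$ factor, so that $S''_{p^r}=0$ for $r\ge2$, and also for $r=1$ after matching with \eqref{prime-target}.

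The remaining bookkeeping at primes $p\mid\Delta(\m,\n)$ with $p\nmid\nabla G$, where $\Xi$ vanishes, should follow the same way, using the casework of Lemma \ref{lem:Sp2}.
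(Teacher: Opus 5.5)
\textbf{There is a genuine gap in your proposal.} Your treatment of the prime case is correct. So is the first half of the general bound: inverting \eqref{dual-q0q1q2} locally gives $|S''_q|\ll q^\eps\sum_{q'\mid q}|S_{q'}|/(q')^4$, which is exactly the paper's intermediate step. (Minor slip: at $p\mid\Delta(\m,\n)$ the local series of $\Xi$ is $1$, not $0$.)

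The gap is in the final step. You decide that Lemma~\ref{lem:beach} is insufficient at primes $p\nmid\nabla G(\m,\n)$. You replace it by the claim that $S_{p^j}=p^{4j}(1-1/p)$ for all $j\ge1$, hence $S''_{p^r}=0$. Your target local bound has an empty $j$-sum at such $p$ and carries $\1_{r\le 3}$, so it forces $S''_{p^r}=0$ for $r\ge 4$. This is false:
\begin{itemize}
\item Take $\m=h\mathbf{t}^2$ and $\n\cdot\mathbf{t}=0$ as in Lemma~\ref{non-coordinate-lattices}, and a prime $p\nmid 6ht_1t_2t_3n_1n_2n_3$.
\item Then $L_i(\m,\n)=2ht_jt_kn_jn_k$, so $p\nmid\nabla G(\m,\n)$, and $\Xi_{p^r}=\1_{r=0}$.
\item Lemma~\ref{lem:Sp2} gives $S_p=p^4-4p^3$. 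At $r=1$ the linear condition in $N_1(p)$ is vacuous, so all sign patterns $y_i=\pm c\,t_i$ survive.
\item Only for $r\ge 2$ does the count in Lemma~\ref{lem:primepower} give $N_0(p^r)=N_1(p^r)=\phi(p^r)$, hence $S_{p^r}=p^{4r}(1-1/p)$.
\item Solving \eqref{dual-q0q1q2} at $p$ then yields $S''_p=-3/p$ and $S''_{p^r}=3p^{1-r}(1-1/p)\neq0$ for every $r\ge2$.
\end{itemize}
The other option you float, $N_0(p^j)=p^{-1}N_1(p^j)$, fails as well. Separately, a local bound carrying a factor $C^r$ multiplies out to $C^{\Omega(q)}$, which is not $O(q^\eps)$ (take $q=2^r$).

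\textbf{No vanishing is needed.} At a prime $p\nmid\nabla G$, the allowed local factor in the statement is just the $j=0$ term, i.e.\ $1$, and an $O(v_p(q)+1)$ loss there is harmless. Group the divisors $q'\mid q$ by their part $q''$ supported on primes $p\mid\nabla G(\m,\n)$ with $v_p(q')\ge2$. Every other prime power $p^j\parallel q'$ contributes:
\begin{itemize}
\item $|S_{p^j}|/p^{4j}\le C$ if $j=1$, by Corollary~\ref{cor:basic};
\item $|S_{p^j}|/p^{4j}\le1$ if $j\ge 2$ and $p\nmid\nabla G$, by Lemma~\ref{lem:beach}.
\end{itemize}
Hence
\[
\sum_{q'\mid q}\frac{|S_{q'}|}{(q')^4}\le\prod_{p\mid q}\bigl(C+1+v_p(q)\bigr)\sum_{\substack{\textnormal{square-full }q''\mid q\\ \kappa(q'')\mid\nabla G(\m,\n)}}\frac{|S_{q''}|}{(q'')^4},
\]
and the product is $\ll q^\eps$ by the divisor bound. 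This is the paper's argument.

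In your local language, the correct target is
\[
|S''_{p^r}|\ll(r+1)\bigl(1+\1_{p\mid\nabla G}\textstyle\sum_{2\le j\le r}|S_{p^j}|/p^{4j}\bigr),
\]
with an absolute implied constant. This follows at once from the bounded inverse coefficients, $|S_p|\ll p^4$, and Lemma~\ref{lem:beach}. The Hensel-lifting detour, and the unfinished casework at $p\mid\Delta$, $p\nmid\nabla G$, can be dropped.
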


\begin{proof}
The first sentence holds by \eqref{prime-target}.
By \eqref{dual-q0q1q2} and \eqref{degen-char},
writing $\Delta$ for $\Delta(\m,\n)$,
\begin{equation*}
\frac{\sum_{l\ge 0} S''_{p^l}(\m,\n) p^{-ls}}{\sum_{l\ge 0} p^{-4l}S_{p^l}(\m,\n) p^{-ls}}
= \frac{1-p^{-s}}{1-p^{-s-1}}
\left(1-\frac{(\frac{m_im_k}{p})}{p^{s}} \1_{p\nmid \Delta}\right)
\prod_{j<k} \left(1-\frac{(\frac{m_jm_k}{p})}{p^{s}} \1_{p\nmid \Delta}\right),
\end{equation*}
whose $p^{-ls}$ coefficient is
$\ll 1+p^{-1}+p^{-2}+\dots = \frac{1}{1-p^{-1}} \le 2$.
% (In fact the coefficient decays as $l\to\infty$, but for now we will not use this.)
It follows that
\begin{equation*}
|S''_q(\m,\n)|
\le \sum_{q'\mid q} C^{\omega(q/q')} \frac{|S_{q'}(\m,\n)|}{(q')^4}
\le C^{\omega(q)} \sum_{q'\mid q} \frac{|S_{q'}(\m,\n)|}{(q')^4},
\end{equation*}
for a suitable constant $C>0$.
Now factor out the square-free part of $q'$,
and the coprime-to-$\nabla{G}(\m,\n)$ part of $q'$,
leaving a square-full modulus $q''\mid q'\mid q$ with $\kappa(q'')\mid \nabla{G}(\m,\n)$.
Since we always have 
 $|S_p(\m,\n)|\le Cp^4$, and 
 Lemma \ref{lem:beach} implies that 
 $|S_{p^l}(\m,\n)|\le p^{4l}$ if $p\nmid \nabla{G}(\m,\n)$, 
it follows that 
$$
\sum_{q'\mid q} \frac{|S_{q'}(\m,\n)|}{(q')^4}\leq 
(2C)^{\omega(q/q'')}
\sum_{\substack{\textnormal{square-full }q''\mid q
\\ \kappa(q'')\mid \nabla{G}(\m,\n)}} 
\frac{|S_{q''}(\m,\n)|}{(q'')^4},
$$
since 
 the number of choices for $q'$, given $q''$, is at most $2^{\omega(q/q'')}$.
The divisor bound finishes the proof.
\end{proof}

\begin{remark}
\label{uniform-pointwise-bound}
By Corollary~\ref{cor:basic}, $q^{-4}S_q(\m,\n)\ll q^{3/2+\eps}$.
By Lemma~\ref{S''-bounds}, $S''_q(\m,\n)\ll q^{3/2+\eps}$.
Since $\Xi_q(\m,\n)\ll q^\eps$ by \eqref{degen-char},
it follows from \eqref{dual-q0q1q2} that $S'_q(\m,\n)\ll q^{3/2+\eps}$ as well.
\end{remark}

\subsection*{Dyadic decomposition}

Returning to \eqref{eq:E3B}, it follows from  \eqref{dual-q0q1q2} that
\begin{equation}
\label{global-E3-decomp}
E_3(B)
= \sum_{\substack{(\m,\n)\ne \0 \\ D(\m,\n)=0}}
\sum_{q_0q_1q_2=q\ge 1}
\frac{I_q(\m,\n)}{q^2}
\frac{\phi(q_0)}{q_0} \Xi_{q_1}(\m,\n) S''_{q_2}(\m,\n).
\end{equation}
The strategy is now similar to \cite{wang2023ratios}, 
except that the ``main'' quantity $\frac{\phi(q_0)}{q_0}$ is independent of $(\m,\n)$,
as in the setting of \cite{wang2024dual}.
% After partial summation over $q_{ij}$ and $r_{jk}$,
% we take absolute values
We seek upper bounds in most dyadic ranges of moduli.
% this may require either GLH,
% or H\"{o}lder's inequality (raising the character sums to a large but fixed power)
% plus Heath-Brown's large sieve.
On the other hand,
we asymptotically evaluate the sum
% (without absolute values)
when $B^{3/2}/q_0$ is tiny
in terms of $B$ and $(\m,\n)$.
This last part is similar to \cite{wang2024dual},
except that infinitely many lattices
(whose successive minima are at most some power of $B$)
are involved rather than finitely many.

For each $(\m,\n)\in \ZZ^6$ with $D(\m,\n)=0$,
define $[\mathbf{t}]\in \PP^2(\ZZ)$
via Lemma~\ref{non-coordinate-lattices} if $\m\n\ne\0$,
and let $[\mathbf{t}]\defeq [1:1:1]\in \PP^2(\ZZ)$ if $\m\n=\0$,
for notational convenience.
If $E_3(B;M,T,\mathbf{Q})$
denotes the contribution to $E_3(B)$ from
$|(\m,\n)|\sim M$,
$|\mathbf{t}|\sim T$,
% $\lambda_{\rank\Gamma}(\Gamma(\m,\n))\sim \Theta$,
$q_0\sim Q_0$, $q_1\sim Q_1$, $q_2\sim Q_2$,
% $q_2\defeq q_{ij}r_{12}r_{13}r_{23}\sim Q_2$.
% $q_{ij}\sim Q_{ij}$, $r_{jk}\sim R_{jk}$.
% Upper bound on such a piece:
% Do a min of partial summations over $q_{ij}$, $r_{jk}$.
% Upper bound on such a piece:
then by Lemma \ref{int-estimate} and 
partial summation over $q_1$, 
there exists an interval $I\belongs \{q_1\sim Q_1\}$ such that
\begin{equation*}
E_3(B;M,T,\mathbf{Q})
\ll 
\hspace{-0.2cm}
\sum_{\substack{D(\m,\n)=0 \\
|(\m,\n)|\sim M \\ |\mathbf{t}|\sim T}}
\sum_{q_0\sim Q_0} \sum_{q_2\sim Q_2}
\frac{(Q_0Q_1Q_2+BM)^{-2}}{(1+M/B^{1/2})^A}
% \frac{\phi(q_0)}{q_0}
\left|S''_{q_2}(\m,\n)
\sum_{q_1\in I} \Xi_{q_1}(\m,\n)\right|,
\end{equation*}
for any $A>0$.

\begin{lemma}
\label{degen-counts}
For all $T,M\ge 1$ we have
\begin{equation*}
\#\{D(\m,\n)=0: |\m|,|\n|\ll M,\; |\mathbf{t}|\sim T\}
\ll M^3 \1_{M\gg T^2}.
\end{equation*}
Moreover, the point count restricted to $\m\n=\0$ is $\ll M^3 \1_{T\asymp 1}$,
and the point count restricted to $m_3n_3=0$, $\m\n\ne\0$
is $\ll \frac{M^3}{T} \1_{M\gg T^2}$.
\end{lemma}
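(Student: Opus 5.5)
We parametrize the integral points of the dual variety using Lemma~\ref{non-coordinate-lattices} and split into three cases: $\m\n=\0$; $\m\n\neq\0$ with $n_1n_2n_3\neq0$; and $\m\n\neq\0$ with some $m_in_i=0$.

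\emph{Case $\m\n=\0$.} By convention $[\mathbf t]=[1:1:1]$, so $T\asymp 1$. For each $i$ either $m_i=0$ or $n_i=0$, so we choose a subset $I\subseteq\{1,2,3\}$ of coordinates where $m_i$ may be non-zero and the complementary $n_i$'s are free. This gives $\ll 2^3 M^3$ points, which proves the second claim.

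\emph{Case $n_1n_2n_3\neq0$.} Lemma~\ref{non-coordinate-lattices}(1) gives $(\m,\n)\in\Lambda^\perp(\mathbf t)$, so $\m=h\mathbf t^2$ with $h\in\ZZ\setminus\{0\}$ and $\n\cdot\mathbf t=0$. From $|\m|\ll M$ we get $|h|\,|\mathbf t|^2\ll M$, which forces $M\gg T^2$ and gives $\ll M/T^2$ choices of $h$. The set $\{\n\in\ZZ^3:\n\cdot\mathbf t=0\}$ is a rank~$2$ lattice of covolume $|\mathbf t|\asymp T$ (as $\mathbf t$ is primitive). Since $n_1n_2n_3\neq0$ it contains no vector with a zero coordinate, which rules out degenerate directions. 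Its number of points with $|\n|\ll M$ is therefore $\ll M^2/T+M/\lambda_1+1$. Here $\lambda_1\ge1$, and because $\n\neq\0$ with $|\n|\ll M$ we can bound this by $\ll M^2/T+M$. Multiplying gives $(M/T^2)(M^2/T+M)\ll M^3/T^3+M^2/T^2\ll M^3$. Finally we sum over $\mathbf t$ with $|\mathbf t|\sim T$, which has $\ll T^3$ choices. The total is $\ll M^3+M^2T$, and since $T^2\ll M$ gives $M^2T\ll M^3$, the total is $\ll M^3\1_{M\gg T^2}$.

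\emph{Case $m_in_i=0$ for some $i$ (and $\m\n\neq\0$).} Lemma~\ref{non-coordinate-lattices}(2) gives $t_i=0$, $(m_j,m_k)=h(t_j^2,t_k^2)$ and $(n_j,n_k)=g(t_k,-t_j)$, with $m_in_i=0$ leaving one free coordinate among $(m_i,n_i)$ of size $\ll M$. Here $\mathbf t$ is effectively a primitive pair $(t_j,t_k)$ of size $\sim T$, giving $\ll T^2$ choices. As before $h\ll M/T^2$, which forces $M\gg T^2$, and $g\ll M/T$. The count is therefore $\ll T^2\cdot (M/T^2)\cdot(M/T)\cdot M=M^3/T$, which is the third claim (taking $i=3$), and it is also $\ll M^3$.

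The main obstacle is the lattice point count in the second case, where the bound must be uniform in $\mathbf t$. A cruder estimate, such as treating $n_3$ as determined by $n_1,n_2$ (which needs $t_3\neq0$) and summing naively, would give $M^2$ choices of $\n$ per $\mathbf t$ without the saving $1/T$. Combined with $T^3$ choices of $\mathbf t$ this yields $M^3 T$, which is too lossy. The covolume bound for the rank~$2$ lattice $\mathbf t^\perp$, together with the constraint $T^2\ll M$, is what brings the total down to $M^3$.
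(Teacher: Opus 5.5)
Your argument is correct and is essentially the paper's proof. You use the same case split via Lemma~\ref{non-coordinate-lattices} and the same product count over $\mathbf t$, $h$ and $\n$, with one extra free coordinate in the $m_in_i=0$ case.

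The only difference is how you count $\n$. You use the covolume $|\mathbf t|$ of the rank-$2$ lattice $\mathbf t^\perp\cap\ZZ^3$, obtaining $M^2/T+M$, and absorb the extra $M^2T$ using $M\gg T^2$. The paper instead reads $\n\cdot\mathbf t=0$ as a congruence modulo $\max_i|t_i|\asymp T$, implicitly using $M\gg T$. One aside in your write-up is false but unused: the lattice does not in general avoid vectors with a zero coordinate, since $t_3=0$ can occur in your second case (e.g.\ when $m_3=0$).
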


\begin{proof}
% m_1m_2m_3n_1n_2n_3\ne 0 has \ll T^3 (M/T^2) (M^2/T) = M^3 points.
% m_3n_3=0, m_1n_1^2=m_2n_2^2\ne 0 has \ll M T^2 (M/T^2) (M/T) = M^3/T points.
% \m\n=\0 has \ll M^3 points.
If $\m\n=\0$, then $|\mathbf{t}|\asymp 1$,
so the number of available pairs $(\m,\n)$ is $\ll M^3 \1_{T\asymp 1}$.
If $m_3n_3=0$, $\m\n\ne\0$, then by Lemma~\ref{non-coordinate-lattices}
the number of available pairs $(\m,\n)$ is
\begin{equation*}
\ll \sum_{m_3n_3=0} \sum_{t_1,t_2\ll T} \frac{M}{T^2} \1_{M/T^2\gg 1} \frac{M}{T}
\ll M T^2 \frac{M^2}{T^3} \1_{M\gg T^2}
= \frac{M^3}{T} \1_{M\gg T^2}.
\end{equation*}
If $m_1m_2m_3n_1n_2n_3\ne 0$, then by Lemma~\ref{non-coordinate-lattices}
the number of available pairs $(\m,\n)$ is
\begin{equation*}
\ll \sum_{t_1,t_2,t_3\ll T} \frac{M}{T^2} \1_{M/T^2\gg 1} \frac{M^2}{T}
\ll M^3 \1_{M\gg T^2},
\end{equation*}
where $O(\frac{M^2}{T})$
bounds the number of solutions $\n\ll M$ to the equation $\n\cdot \mathbf{t}=0$,
viewed as a congruence modulo $\max(|t_1|,|t_2|,|t_3|) \asymp T$.
\end{proof}

\subsection*{Character sums}

For $q_1$, Lemma~\ref{cheap-large-sieve} will be useful.

\begin{lemma}
\label{first-Q1-bound}
For all $Q_1,T,M,A\ge 1$ and $I\belongs \{q\sim Q_1\}$ we have
\begin{equation*}
\Sigma(I,T,M)
\defeq \sum_{\substack{D(\m,\n)=0 \\
1\le |(\m,\n)|\le M \\ |\mathbf{t}|\sim T}}
\left|\sum_{q\in I} \Xi_q(\m,\n)\right|^A
\ll_{A} (MQ_1)^\eps M^3 Q_1^A
\left(\frac{1}{Q_1^{1/6}} + \frac{1}{M}\right).
\end{equation*}
\end{lemma}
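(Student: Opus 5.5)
We bound $\Sigma(I,T,M)$ by splitting the pairs $(\m,\n)$ according to the shape of their zero pattern. There are $O(1)$ patterns, and within a fixed pattern $\Delta$ is fixed. By \eqref{degen-char}, $\Xi_q(\m,\n)$ is then a Dirichlet convolution of at most three characters of the form $q\mapsto(\frac{m_jm_k}{q})\1_{\gcd(q,\Delta(\m,\n))=1}$. If the pattern has no $q_{ij}$ and no $r_{jk}$, then $\Xi_q=\1_{q=1}$, and the sum over $q\in I$ is at most $1$. Summing trivially over the $\ll M^3$ points from Lemma~\ref{degen-counts} (dyadically in $M$) gives $M^{3}\ll M^3Q_1^A/Q_1^{1/6}$ when $Q_1\ge 1$, which is acceptable.

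In the remaining patterns, at least one genuine character $(\frac{m_jm_k}{\cdot})$ appears. By Lemma~\ref{non-coordinate-lattices}(3), any product $m_jm_k$ with $m_jm_kn_jn_k\ne0$ is a non-zero square, so those characters are principal away from $\Delta$. Only the characters attached to $q_{ij}$ or $r_{jk}$ (where $n_k=0$, or $n_j=n_k=0$) can be non-principal.

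We proceed as in the proof of Lemma~\ref{Q1-aspect}. Write $\Xi_q$ as a sum over factorizations $q=\prod_\ell u_\ell$ with $u_\ell\sim U_\ell$. Take the largest factor $U_t\gg Q_1^{1/4}$ and freeze the others via the triangle inequality, losing $\tau(q)^{O(1)}$. Using the trivial bound $|\sum_{q\in I}\Xi_q|\ll Q_1^{1+\eps}$ on $A-1$ of the factors, we are reduced to bounding
\[
Q_1^{(A-1)(1+\eps)}\sum_{v\asymp Q_1/U_t}\sum_{(\m,\n)}\Bigl|\sum_{u\sim U_t,\,uv\in I}\Bigl(\frac{m_jm_k}{u}\Bigr)\1_{\gcd(u,v\Delta(\m,\n))=1}\Bigr|.
\]

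If the surviving character is principal (for instance because $m_jm_k$ is a square), we instead exploit the fact that such points are sparse, or use the trivial bound together with the restriction that forces this situation. The difficulty is that principal characters give no cancellation. However, in those cases the relevant $\Xi$ factor collapses to coprimality indicators. The $\Xi$-sum is then genuinely of size $Q_1$, and the claimed bound would fail unless these cases have density $\ll M^3 Q_1^{A-1/6}$ or are absorbed elsewhere. So I would first check that, in every pattern where $q_{ij}$ or $r_{jk}$ ranges freely, the pair $m_im_k$ (respectively $m_jm_k$) is generically a non-square. The number of $(\m,\n)$ in the pattern with $m_im_k$ a square is $\ll M^{2+\eps}$, and these contribute $\ll M^{2+\eps}Q_1^{A}\ll M^3Q_1^A/M$.

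For the non-square case, the coprimality condition with $\Delta(\m,\n)$ couples the modulus to the point. We remove it by Möbius inversion over $e\mid\gcd(u,\Delta)$: this writes $u=eu'$ with $\kappa(e)\mid\Delta$, and \eqref{eq:kernel-divisor-bound} controls the number of $e$. After this the modulus no longer depends on $(\m,\n)$ except through $m_jm_k$. Setting $h=m_j$ and $m=m_k$, the variables $(h,m)$ range over $\ll M^2$ values, each with multiplicity $\ll M$ from the other coordinates (the pattern forces $n_j=n_k=0$, or $n_k=0$ with $n_i$ determined up to $O(M)$ choices). Lemma~\ref{cheap-large-sieve} then gives
\[
\ll M\cdot(MQ_1)^\eps\bigl(M^2U_t^{1/2}+MU_t\bigr)\cdot\frac{Q_1}{U_t}
\ll (MQ_1)^\eps M^3Q_1\bigl(Q_1^{-1/8}+M^{-1}\bigr).
\]
Multiplying by $Q_1^{A-1}$ gives the claim, since $Q_1^{-1/8}\le Q_1^{-1/6}$ fails. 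So I would arrange $U_t\gg Q_1^{1/3}$ by noting that at most three convolution factors are present, which yields exactly the exponent $1/6$. The main obstacle is the bookkeeping for this decoupling of $\Delta$, together with ensuring that the point count per $(h,m)$ is $\ll M$ uniformly across patterns.
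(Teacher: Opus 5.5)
Your strategy is the paper's. You split by the zero pattern of $(\m,\n)$ and dispose trivially of the patterns with $\Xi_q=\1_{q=1}$ or with only $\ll M^2$ points. You reduce the $A$th moment to a first moment via $|\sum_{q\in I}\Xi_q|\ll Q_1^{1+\eps}$, freeze all but the longest of the at most three convolution factors (length $\gg Q_1^{1/3}$), and apply Lemma~\ref{cheap-large-sieve}. Two small remarks:
\begin{itemize}
\item Your separate removal of points where $m_jm_k$ is a square is unnecessary. The term $(HM)^{1/2}Q_1$ in Lemma~\ref{cheap-large-sieve} already pays for them.
\item \eqref{degen-char} imposes no coprimality between convolution factors, so the $\gcd(u,v)=1$ in your display should go.
\end{itemize}

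The step you leave open is a genuine gap as written. After M\"obius inversion over $e\mid\gcd(u,\Delta(\m,\n))$, the range of $u'$ ($u'\sim U_t/e$, $eu'v\in I$) still depends on the point through $e\mid\Delta(\m,\n)$. So the modulus does not depend on $(\m,\n)$ only through $m_jm_k$, and Lemma~\ref{cheap-large-sieve}, whose coefficients must be independent of $(h,m)$, does not apply. Dropping the constraint $e\mid\Delta$ and summing $e$ up to $U_t$ loses about $U_t^{1/2}$ in the main term. Moreover, with inner variables the two entries of the character, e.g.\ $(m_1,m_3)$ when $n_3=0$ and $m_1m_2m_3n_1n_2\ne0$, the remaining coordinates $(m_2,n_1,n_2)$ are tied to $m_1$ by $m_1n_1^2=m_2n_2^2$. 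They also enter the coprimality condition, so they cannot simply be fixed outside the sieve.

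The missing observation is that $(\frac{hm}{u})$ already vanishes unless $\gcd(u,hm)=1$. Hence $\1_{\gcd(u,\Delta)=1}$ collapses to $\1_{\gcd(u,t)=1}$, with $t$ the part of $\Delta$ not involving the inner variables. One then chooses inner variables so that $t$ depends only on outer ones:
\begin{itemize}
\item For $n_3=0$, $m_1m_2m_3n_1n_2\ne0$, write $(m_1,m_2)=(t_1^2h,t_2^2h)$ and $(n_1,n_2)=(t_2b,-t_1b)$ via Lemma~\ref{non-coordinate-lattices}. Then $\Delta=-2t_1^3t_2^3h^2b^2m_3$ and $\Xi_q=(\frac{hm_3}{q})\1_{\gcd(q,2t_1t_2b)=1}$. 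Sieve over $(h,m_3)$ with $H=M/T^2$ and $(t_1,t_2,b)$ outer.
\item For $m_1=n_2=n_3=0$, $m_2m_3n_1\ne0$, one has $\Delta=2m_2m_3$ and $\Xi_q=(\frac{m_2m_3}{q})\1_{\gcd(q,2)=1}$.
\item For $\n=\0$, $m_1m_2m_3\ne0$, the factor $r_{12}$ carries $(\frac{m_1m_2}{r_{12}})\1_{\gcd(r_{12},2m_3)=1}$, with $m_3$ outer.
\end{itemize}
No M\"obius inversion is then needed. The three cases give savings $Q_1^{-1/2}/T$, $Q_1^{-1/2}$ and $Q_1^{-1/6}$ respectively, plus $1/M$.

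A maximal-over-intervals form of the large sieve could alternatively absorb the $e$-dependence, but that is an extra input you did not supply.
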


\begin{proof}
We let $\Sigma = \Sigma(I,T,M)$ and write
\begin{equation}
\label{n-strat}
\Sigma
\ll \Sigma_{n_1n_2n_3\ne 0}
+ \Sigma_{n_1n_2\ne 0,\; n_3=0}
+ \Sigma_{n_1\ne 0,\; n_2=n_3=0}
+ \Sigma_{\n=\0}.
\end{equation}
Within \eqref{n-strat} we further write
\begin{equation*}
\begin{split}
\Sigma_{n_1n_2\ne 0,\; n_3=0}
&\ll \Sigma_{m_1m_2m_3n_1n_2\ne 0,\; n_3=0}
+ \Sigma_{n_1n_2\ne 0,\; m_1m_2m_3=n_3=0}, \\
\Sigma_{n_1\ne 0,\; n_2=n_3=0}
&\ll \Sigma_{m_2m_3n_1\ne 0,\; n_2=n_3=0}
+ \Sigma_{n_1\ne 0,\; m_2m_3=n_2=n_3=0}, \\
\Sigma_{\n=\0}
&\ll \Sigma_{m_1m_2m_3\ne 0,\; \n=\0}
+ \Sigma_{m_1m_2m_3=0,\; \n=\0}.
\end{split}
\end{equation*}

On the pieces $\Sigma_{n_1n_2n_3\ne 0}$,
$\Sigma_{n_1n_2\ne 0,\; m_1m_2m_3=n_3=0}$
and $\Sigma_{n_1\ne 0,\; m_2m_3=n_2=n_3=0}$,
we have $\Xi_q=\1_{q=1}$ by \eqref{degen-char},
so that by Lemma~\ref{degen-counts} their total is
\begin{equation*}
\ll \#\{D(\m,\n)=0: |\m|,|\n|\ll M,\; |\mathbf{t}|\sim T\}
\, \1_{Q_1\asymp 1}
\ll M^3\, \1_{Q_1\asymp 1}
\ll M^3 Q_1^{A-1}.
\end{equation*}

On the piece $\Sigma_{m_1m_2m_3=0,\; \n=\0}$
we plug in the trivial bound $\Xi_q\ll q^\eps$ to get
\begin{equation*}
\Sigma_{m_1m_2m_3=0,\; \n=\0}
\ll_{A} M^2 Q_1^{A+\eps}.
\end{equation*}

On $\Sigma_{m_1m_2m_3n_1n_2\ne 0,\; n_3=0}$,
Lemma~\ref{non-coordinate-lattices} gives
$(m_1,m_2) = (t_1^2h,t_2^2h)$,
$(n_1,n_2) = (t_2b,-t_1b)$,
$\Delta(\m,\n) = 2m_1m_2m_3n_1n_2 = -2t_1^3t_2^3h^2b^2m_3$,
and $m_1m_3 = t_1^2hm_3$. But then  \eqref{degen-char} gives
\begin{equation*}
\Xi_q(\m,\n)
= \left(\frac{hm_3}{q}\right)
\1_{\gcd(q,2t_1t_2hbm_3)=1}
= \left(\frac{hm_3}{q}\right)
\1_{\gcd(q,2t_1t_2b)=1}.
\end{equation*}
By Lemma~\ref{cheap-large-sieve}
with $H = M/T^2$
and $t = 2t_1t_2b$,
after using the trivial bound $\Xi_q\ll 1$ to reduce from an $A$th moment to a first moment,
we get
\begin{equation*}
\begin{split}
\Sigma_{m_1m_2m_3n_1n_2\ne 0,\; n_3=0}
&\ll_{A} (MQ_1)^\eps Q_1^{A-1}
\sum_{t_1,t_2\ll T} \sum_{b\ll M/T} (HMQ_1^{1/2} + (HM)^{1/2}Q_1) \\
&\ll (MQ_1)^\eps Q_1^{A-1}
T^2 (M/T) ((M/T)^2Q_1^{1/2} + (M/T)Q_1) \\
&\ll (MQ_1)^\eps M^3 Q_1^A
\left(\frac{1}{T Q_1^{1/2}} + \frac{1}{M}\right).
\end{split}
\end{equation*}

On $\Sigma_{m_2m_3n_1\ne 0,\; n_2=n_3=0}$, we have $m_1=0$ and $\Delta(\m,\n)=2m_2m_3$, so that
\begin{equation*}
\Xi_q(\m,\n)
= (\frac{m_2m_3}{q})
\1_{\gcd(q,2m_2m_3)=1}
= (\frac{m_2m_3}{q})
\1_{\gcd(q,2)=1}.
\end{equation*}
By Lemma~\ref{cheap-large-sieve}
with $H = M$
and $t = 2$,
we get
\begin{align*}
\Sigma_{m_2m_3n_1\ne 0,\; n_2=n_3=0}
&\ll (MQ_1)^\eps Q_1^{A-1}
\sum_{n_1\ll M} (M^2Q_1^{1/2} + MQ_1)\\
&\ll (MQ_1)^\eps M^3 Q_1^A
\left(\frac{1}{Q_1^{1/2}} + \frac{1}{M}\right).
\end{align*}

For $\Sigma_{m_1m_2m_3\ne 0,\; \n=\0}$, we have $\Delta(\m,\n)=2m_1m_2m_3$, so
\begin{align*}
\Xi_q(\m,\n)
&= \1_{\gcd(q,2m_1m_2m_3)=1} \sum_{r_{12}r_{13}r_{23}=q}
\prod_{j<k} \left(\frac{m_jm_k}{r_{jk}}\right)\\
&= \sum_{r_{12}r_{13}r_{23}=q}
\prod_{j<k} \left(\frac{m_jm_k}{r_{jk}}\right) \1_{\gcd(r_{jk},2m_i)=1},
\end{align*}
where $i=6-j-k$.
By the triangle inequality, there exists a choice of
dyadic ranges $r_{jk}\sim R_{jk}$, where $R_{12}R_{13}R_{23}\asymp Q_1$, for which
\begin{equation*}
\Sigma_{m_1m_2m_3\ne 0,\; \n=\0}
\ll_{A} Q_1^{A-1+\eps} \sum_{m_1m_2m_3\ne 0}
\left|\sum_{\substack{r_{12}r_{13}r_{23}\in I \\ r_{jk}\sim R_{jk}}}
\prod_{j<k} \left(\frac{m_jm_k}{r_{jk}}\right) \1_{\gcd(r_{jk},2m_i)=1}\right|.
\end{equation*}
Suppose $R_{12}\ge R_{13}\ge R_{23}$.
Then $R_{12}\ge (R_{12}R_{13}R_{23})^{1/3} \asymp Q_1^{1/3}$.
We find by Lemma~\ref{cheap-large-sieve},
with $H = M$,
$q = r_{12}$
and $t = 2m_3$,
that
\begin{align*}
\Sigma_{m_1m_2m_3\ne 0,\; \n=\0}
&\ll_{A} (MQ_1)^\eps Q_1^{A-1} \sum_{m_3\ll M}
\sum_{r_{13}\sim R_{13}} \sum_{r_{23}\sim R_{23}}
(M^2R_{12}^{1/2} + MR_{12}) \\
&\ll (MQ_1)^\eps Q_1^{A-1} M Q_1
(M^2R_{12}^{-1/2} + M)\\
&\ll (MQ_1)^\eps M^3 Q_1^A
\left(\frac{1}{Q_1^{1/6}} + \frac{1}{M}\right).
\end{align*}
In the light of  \eqref{n-strat}, we are done.
\end{proof}

\subsection*{Gains from sparsity}

For $q_2$, the following variant of Lemma~\ref{degen-counts}
will be useful.
\begin{lemma}
\label{HB-lattice-bound}
Let $\mathbf{t},\dd\in \ZZ^3$
with $|\mathbf{t}|\ge 1$
and with $d_i\ge 1$.
% Then
% \begin{equation*}
% \#\{\n\ll M: d_i\mid n_i,\; \sum n_it_i = 0\}
% \ll 1 + M^{1+\eps} + \frac{\gcd(d_1t_1,d_2t_2,d_3t_3)}{d_1d_2d_3} \frac{M^2}{T}.
% % \ll \frac{(M+T)^2}{T}
% \end{equation*}
% % For example, if $M\asymp T^2$ then we get a bound of $\ll T^3$.
Then
\begin{equation*}
\#\{|\n|\ll M: d_i\mid n_i,\; \n\cdot \mathbf{t} = 0,\; n_1n_2n_3\ne 0\}
\ll \frac{M}{|\dd|}
+ \frac{\gcd(d_1t_1,d_2t_2,d_3t_3)}{d_1d_2d_3} \frac{M^2}{|\mathbf{t}|}.
\end{equation*}
If $t_3=0$, then
\begin{equation*}
\#\{|\n|\ll M: d_i\mid n_i,\; \n\cdot \mathbf{t} = 0,\; n_1n_2\ne 0\}
\ll \frac{\gcd(d_1t_1,d_2t_2)}{d_1d_2d_3} \frac{Md_3+M^2}{|\mathbf{t}|}.
\end{equation*}
\end{lemma}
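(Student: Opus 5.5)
We substitute $n_i=d_in_i'$ and count integer vectors $\n'$ with $|n_i'|\ll M/d_i$ lying in the rank-two lattice
\[
L=\{\n'\in\ZZ^3:\ \textstyle\sum_i d_it_in_i'=0\}.
\]
Set $\mathbf{v}=(d_1t_1,d_2t_2,d_3t_3)$ and $g=\gcd(\mathbf{v})$. Then $L$ is the orthogonal complement of the primitive vector $\mathbf{v}/g$. Hence $\det L=|\mathbf{v}|/g$.

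\textbf{Point count in the box.} For the count in the box $\prod_i[-M/d_i,M/d_i]$ we use the standard lattice point bound (Davenport / Heath-Brown style). For a rank-two lattice $L$ with first successive minimum $\lambda_1$, the number of points in a convex region $R$ in the plane of $L$ is
\[
\ll 1+\frac{\text{(diameter)}}{\lambda_1}+\frac{\text{area}}{\det L}.
\]

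\textbf{The area term.} The slice of the box by the plane $\mathbf{v}^\perp$ has area $\ll M^2/(d_1d_2d_3)\cdot|\mathbf{v}|/|\mathbf{v}|_\infty$-type quantities. The cleanest route is to project the slice to the coordinate plane $(n_j',n_k')$, where $i$ is the index maximizing $|v_i|$. That projection has area $\le M^2/(d_jd_k)$, and the projection factor is $|v_i|/|\mathbf{v}|\asymp 1$. We then use $|v_i|\ge d_i|t_i|$ together with $|\mathbf{v}|\gg \max_i d_i|t_i|$. Suitably bounding $|\mathbf{v}|\ge d_i\,|\mathbf{t}|$ (after choosing $i$ to be the index of the largest $|t_i|$), the area over the determinant becomes
\[
\frac{g}{|\mathbf{v}|}\cdot\frac{M^2\,|\mathbf{v}|}{d_jd_k|v_i|}\ \le\ \frac{g\,M^2}{d_1d_2d_3\,|\mathbf{t}|}.
\]
The hypothesis $n_1n_2n_3\neq 0$ is used to discard the degenerate configurations.

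\textbf{The perimeter term.} This term is $\ll M/(d\lambda_1)$, and since $\lambda_1\ge 1$ it is $\ll M/\min_i d_i$. To obtain the stated $M/|\dd|$, we instead note that any two points of $L$ are either dependent (a single line, which gives $\ll M/|\dd|$ points because the largest $d_i$ controls the step along that coordinate whenever the line has nonzero $i$th coordinate, and $n_i\ne0$ forces this) or span a sublattice of $L$. In the latter case the area term dominates. Concretely, if all counted points lie on one line $\ZZ\mathbf{w}$, then every $w_i\neq0$, so $|w_i|d_i\ge d_i$ for each $i$, and the count is $\ll M/\max_i d_i$. Otherwise the count is $\ll \text{area}/\det L$, by the Blichfeldt/Minkowski type bound for convex symmetric sets containing two independent lattice points. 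We also have the additive constant $1$, which is absorbed since the nonzero condition makes the empty case trivial.

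\textbf{The case $t_3=0$.} Here the constraint is $d_1t_1n_1'+d_2t_2n_2'=0$, which places $(n_1',n_2')$ on the one-dimensional lattice generated by $(d_2t_2,-d_1t_1)/g'$, where $g'=\gcd(d_1t_1,d_2t_2)$. The number of choices for $(n_1',n_2')$ is therefore $\ll 1+(M/d_1)\,g'/(d_2|t_2|)$, and similarly with the indices $1,2$ swapped. Taking the better of the two gives
\[
\ll \frac{g'M}{d_1d_2|\mathbf{t}|}.
\]
The condition $n_1n_2\neq0$ removes the $+1$ term, because it forces both $t_1,t_2\ne0$ or else there are no solutions. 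Meanwhile $n_3'$ is free with $\ll 1+M/d_3$ choices. The product $\frac{g'M}{d_1d_2|\mathbf{t}|}(1+M/d_3)$ is exactly the claimed bound $\frac{g'}{d_1d_2d_3}\frac{Md_3+M^2}{|\mathbf{t}|}$.

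\textbf{Main obstacle.} The main obstacle is the first bound: getting the geometry-of-numbers estimate in a form that isolates both the determinant factor $g/|\mathbf{v}|$ and the degenerate-line term $M/|\dd|$ uniformly in the skewed box. The cleanest way to handle this is to fix the coordinate $i$ with $|t_i|=|\mathbf{t}|$, solve for $n_i'$, and count the pairs $(n_j',n_k')$ in the congruence $d_jt_jn_j'+d_kt_kn_k'\equiv0\bmod d_it_i$. That congruence defines a lattice of index $d_i|t_i|/\gcd(d_it_i,d_jt_j,d_kt_k)$ in $\ZZ^2$, and we then apply the two-dimensional count directly.
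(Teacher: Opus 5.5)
Your argument is correct and takes a different route from the paper's first bound.

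\textbf{The paper's route.} The paper factors out $h=\gcd(n_1/d_1,n_2/d_2,n_3/d_3)$. It then applies Heath-Brown's bound $1+X_1X_2X_3/\max_i|v_i|X_i$ for \emph{primitive} solutions of $\mathbf{u}\cdot\mathbf{v}=0$ in a box, with $X_i=M/(d_ih)$, $v_i=d_it_i/g$ and $|t_1|$ maximal, and sums over $h\ll M/|\dd|$. This is the only place $n_1n_2n_3\ne 0$ is used. The ``$+1$'' terms summed over $h$ produce $M/|\dd|$, and the $h^{-2}$ terms produce the area term.

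\textbf{Your route.} You work directly in the full lattice $L=\mathbf{v}^\perp\cap\ZZ^3$, of covolume $|\mathbf{v}|/g$, and split into two cases. In the collinear case all counted points lie in $\ZZ\mathbf{w}$; then $n_1n_2n_3\neq0$ forces every $w_i\neq 0$, giving $\ll M/\max_i d_i$ points. In the spanning case $\lambda_2\le 1$, so the count is $\ll 1/(\lambda_1\lambda_2)\ll \mathrm{area}/\det L$ by Minkowski's second theorem. Your collinear case is exactly the paper's accumulated ``$+1$'' terms, so the two proofs organise the same estimate differently. The paper's is shorter given Heath-Brown's lemma as a black box. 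Yours is self-contained, avoids the $h$-decomposition, and shows clearly where $M/|\dd|$ comes from. Your treatment of the case $t_3=0$ coincides with the paper's.

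\textbf{Two points to tidy in the write-up.}
\begin{enumerate}
\item In the area term you must project along the index $i$ with $|t_i|=\max_l|t_l|$, equivalently the index maximising $|v_i|\cdot M/d_i$. For that $i$ the factor $|\mathbf{v}|/|v_i|$ is \emph{not} $\asymp 1$; it cancels against $\det L=|\mathbf{v}|/g$, which is what your displayed inequality actually does. Projecting along the index maximising $|v_i|$, as you first suggest, loses a factor. For example, with $\mathbf{t}=(1,T,T)$, $\dd=(D,1,1)$ and $D>T$, it gives $gM^2/D$ instead of $gM^2/(DT)$. The reason is that bounding the projected area by $4M^2/(d_2d_3)$ ignores the constraint $|n_1'|\le M/D$.
\item Your first perimeter estimate, $M/\min_i d_i$, is too weak and should be dropped. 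The collinear/spanning dichotomy needs neither a perimeter term nor an additive constant.
\end{enumerate}
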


\begin{proof}
Assume $|t_1|\ge |t_2|\ge |t_3|$
and let $g = \gcd(d_1t_1,d_2t_2,d_3t_3)$.
If $\gcd(n_i/d_i)=h\ge 1$,
then $(n_1/d_1,n_2/d_2,n_3/d_3)=h\mathbf{u}$ where $\mathbf{u}\in \ZZp^3$.
It therefore follows from Heath-Brown \cite{HBLattice}*{Lemma~3},
with $X_i = M/(d_ih)>0$ and $v_i = d_it_i/g\in \ZZ$, that
\begin{equation*}
\#\{|\n|\ll M: \gcd(n_i/d_i)=h,\; \n\cdot \mathbf{t} = 0\}
\ll 1 + \frac{M^2}{d_2d_3h^2|v_1|}.
\end{equation*}
If $n_1n_2n_3\ne 0$, then $h\ll M/|\dd|$.
Summing over $h$, the first bound follows,
since $|v_1|\asymp d_1|\mathbf{t}|/g$.

If $t_3=0$, then $(n_1/d_1,n_2/d_2) = (bd_2t_2/g,-bd_1t_1/g)$
for some non-zero integer $b\ll Mg/|d_2d_1t_1|$.
Since $\#\{n_3\ll M: d_3\mid n_3\} \ll 1 + M/d_3$,
the second bound follows.
\end{proof}

For any  $H\geq 1$ and square-full integer $r\geq 1$, we claim that 
\begin{equation}\label{eq:***}
\#\left\{
h\leq H:  \text{$h$ is square-full  and  $r\mid h$}\right\}
\ll r^\ve \left(H/r\right)^{1/2}.
\end{equation}
To see this we note that any square-full integer $h\leq H$ divisible by a given square-full integer $r\ge 1$
is of the form $rh_1h_2'$, where $h_2'$ is square-full, $h_1$ is square-free, 
and $\gcd(h_1,h_2')=1$.
Then $h_1^2\mid rh_1h'_2=h_2$, say, so that $h_1\mid r$.
Thus the number of possible $h_2$, given $r$, is
$\ll \sum_{h_1\mid r} (\frac{H}{rh_1})^{1/2}
\ll (H/r)^{1/2}r^\eps$, as claimed.

\begin{lemma}
\label{first-Q2-bound}
For any $Q_2,T,M\ge 1$ we have
\begin{equation*}
\Sigma(Q_2,T,M)
\defeq \sum_{\substack{D(\m,\n)=0 \\
1\le |(\m,\n)|\le M \\ |\mathbf{t}|\sim T}}
\sum_{q\sim Q_2} |S''_q(\m,\n)|
\ll (MQ_2)^\eps Q_2^{1/2}
(MQ_2 + M^3Q_2^{1/3}).
\end{equation*}
\end{lemma}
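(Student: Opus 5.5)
We begin from the second part of Lemma~\ref{S''-bounds}, which bounds $|S''_q(\m,\n)|$ by $q^\eps$ times a sum over square-full divisors $q'\mid q$ with $\kappa(q')\mid\nabla G(\m,\n)$. Swapping the order of summation, the number of $q\sim Q_2$ divisible by a given $q'$ is $\ll Q_2/q'$. Hence
\begin{equation*}
\Sigma(Q_2,T,M)\ll Q_2^{1+\eps}\sum_{\substack{q'\ll Q_2\\ \textnormal{square-full}}}\frac{1}{(q')^5}\sum_{\substack{D(\m,\n)=0\\ 1\le|(\m,\n)|\le M\\ |\mathbf t|\sim T\\ \kappa(q')\mid \nabla G(\m,\n)}}|S_{q'}(\m,\n)|.
\end{equation*}
For $|S_{q'}|$ we use Corollary~\ref{cor:basic}:
\begin{equation*}
|S_{q'}(\m,\n)|\ll (q')^{4+\eps}\prod_i \{q',m_i\}^{1/2}\1_{\{q',m_i\}^{1/2}\mid n_i}.
\end{equation*}
We write $d_i=\{q',m_i\}$, which is square-full with $d_i\mid\gcd(q',m_i)$ and $d_i^{1/2}\mid n_i$. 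It then remains to count points on the dual variety with $d_i\mid m_i$ and $d_i^{1/2}\mid n_i$, weighted by $(d_1d_2d_3)^{1/2}/q'$, and to sum over $q'$ and the $d_i\mid q'$ using \eqref{eq:***}.

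For the point count we use the parametrisation from Lemma~\ref{non-coordinate-lattices}, splitting into the same cases as in Lemma~\ref{degen-counts}.

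In the generic case $\m\n\neq\0$ we have $\m=h\mathbf t^2$ and $\n\cdot\mathbf t=0$. The condition $d_i\mid ht_i^2$ is handled by summing over $h\ll M/T^2$ in progressions, giving a factor roughly $\gcd$-weighted $M/(T^2\,\mathrm{lcm}$ of the $d_i/\gcd(d_i,t_i^2))$. For $\n$, with $d_i^{1/2}\mid n_i$ and $\n\cdot\mathbf t=0$, we apply Lemma~\ref{HB-lattice-bound} with $d_i^{1/2}$ in place of $d_i$; this handles both the $n_1n_2n_3\ne0$ case and the case $t_3=0$. Summing over $\mathbf t\sim T$, the gcd factors $\gcd(d_i^{1/2}t_i)$ average to $O(M^\eps)$. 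This should give a total of roughly
\begin{equation*}
M^3(d_1d_2d_3)^{-1/2-c}+M^{2}(\cdots)
\end{equation*}
for some $c>0$. Combined with the weight $(d_1d_2d_3)^{1/2}/q'$ and with $q'\le d_1d_2d_3$-type relations, the sum over $q'$ via \eqref{eq:***} then produces $Q_2^{1/2}\cdot M^3Q_2^{1/3}$ at worst. The exponent $1/3$ arises because we can only guarantee that one $d_i$ is at least about $(q')^{1/3}$ when balancing.

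In the degenerate cases ($\m\n=\0$, or $\n=\0$, or $m_in_i=0$) there are fewer free variables. The divisibility conditions are then taken essentially trivially: divisors of $m_i$ or $n_i$ are counted by the divisor bound, and zero coordinates are bounded by $M$. This yields the $M Q_2\cdot Q_2^{1/2}$ term. Here the vanishing coordinates make $\{q',0\}=q'$-type factors appear, and Lemma~\ref{n-vanishing} forces the corresponding $n_i$ to be divisible by $\{q',0\}^{1/2}$, which compensates.

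The main obstacle is the bookkeeping in the generic case: making the gain from $d_i\mid m_i=ht_i^2$ together with $d_i^{1/2}\mid n_i$ beat the loss $(d_1d_2d_3)^{1/2}$ uniformly in $\mathbf t$. My first attempt would be to drop all but the largest $d_i$, using $d_{\max}\ge (q')^{1/3}$ since $q'\mid d_1d_2d_3\cdot(\text{something})$ fails in general. If that fails, I would instead use $\kappa(q')\mid\nabla G$, which by \eqref{eq:derivatives} forces $\kappa(q')\mid 6n_1n_2n_3\gcd(\m)$, together with Lemma~\ref{lem:square-full} ($q'\mid\kappa(q')D$ degenerates since $D=0$) and the trivial bound $|S_{q'}|\le (q')^{4}\cdot(q')^{3/2}$ from Remark~\ref{uniform-pointwise-bound}, trading the resulting loss against the sparsity from the divisibility condition.
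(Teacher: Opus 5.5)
Your proposal has a genuine gap in the first step, and it loses a power of $Q_2$ that nothing downstream can recover. On the square-free part of $q$ the only square-full divisor is $q'=1$, so the second part of Lemma~\ref{S''-bounds} gives merely $|S''_q(\m,\n)|\ll q^\eps$ there. You then count all $q\sim Q_2$ with $q'\mid q$ as $Q_2/q'$. Consequently the $q'=1$ term of your majorant is $Q_2^{1+\eps}\,\#\{(\m,\n)\}$, which for $T\asymp 1$ is $\gg Q_2M^3$: the points with $\m=\0$ alone number $\asymp M^3$.

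The target is $Q_2^{3/2}M+Q_2^{5/6}M^3$. So for $M\ge Q_2^{1/4}$ your bound is too large by a factor $\min(M^2Q_2^{-1/2},Q_2^{1/6})$. Moreover, the saving $Q_2^{-1/6}$ over the trivial $M^3Q_2$ is precisely what the later H\"older argument needs.

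The missing idea is the \emph{first} part of Lemma~\ref{S''-bounds}, namely $S''_p(\m,\n)\ll p^{-1}+\1_{p\mid\Delta(\m,\n)}$. The paper factors $q=q_1q_\Delta q_2$, where $q_1q_\Delta$ is square-free and coprime to the square-full $q_2$, with $\gcd(q_1,\Delta(\m,\n))=1$ and $q_\Delta\mid\Delta(\m,\n)$. Then:
\begin{itemize}
\item $|S''_{q_1}|\ll q_1^{\eps-1}$, so the $q_1$-sum costs only $Q_2^\eps$;
\item the $q_\Delta$-sum costs $\tau(\Delta(\m,\n))\ll M^\eps$, because $\Delta(\m,\n)\ne 0$;
\item only then is the second part of Lemma~\ref{S''-bounds} applied to $S''_{q_2}$, and the number of \emph{square-full} $q_2\ll Q_2$ divisible by a given square-full $q'$ is $\ll (Q_2/q')^{1/2}(q')^{\eps}$ by \eqref{eq:***}.
\end{itemize}
This is where the prefactor $Q_2^{1/2}$ in the lemma comes from. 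It leaves the weight $(q')^{-1/2}\prod_i\{q',m_i\}^{1/2}$ in place of your $Q_2\,(q')^{-1}\prod_i\{q',m_i\}^{1/2}$.

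Even with that repair, your generic case is not yet an argument. The claim that the factors $\gcd(d_i^{1/2}t_i)$ average to $O(M^\eps)$ is unsupported. The pigeonhole ``some $d_i\ge (q')^{1/3}$'' is false, as you suspect: all $\{q',m_i\}$ may equal $1$. So it cannot be the source of the exponent $1/3$. Your fallback via Lemma~\ref{lem:square-full} is vacuous, since $D(\m,\n)=0$.

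In the paper, Lemma~\ref{HB-lattice-bound} is applied with $d_i=\{q,m_i\}^{1/2}$, which produces two terms:
\begin{itemize}
\item The $M/|\mathbf{d}|$ term is controlled using $d_1d_2\mid\{q,m_1m_2\}$ and \eqref{eq:***}. This gives $M^2TQ_2^{1/2}$, which is absorbed at the end using $M\gg T^2$.
\item The term with $\gcd(d_1t_1,d_2t_2,d_3t_3)$ is handled by Lemma~\ref{rankin-stuff}. That is Rankin's trick applied jointly over $(q,\mathbf{t},h)$ with $\m=h\mathbf{t}^2$, reducing to a local computation of $v_p\bigl(\gcd(d_1t_1,d_2t_2,d_3t_3)/q^{1/2}\bigr)$ and a convergent Euler product. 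The factor $Q_2^{1/3}=Q_2^{1/2-\delta}$ arises from the choice $\delta=1/6$ there.
\end{itemize}
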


\begin{proof}
We write  $q = q_1q_\Delta q_2$, where $q_1q_\Delta$ is square-free
and $\gcd(q_1q_\Delta,q_2)=1$, 
where 
$q_1\nmid \Delta(\m,\n)$, $q_\Delta\mid \Delta(\m,\n)$,
and $q_2$ is square-full. It now follows from  Lemma~\ref{S''-bounds} that 
\begin{equation*}
\begin{split}
\Sigma(Q_2,T,M)
&\ll \sum_{q_1q_\Delta q_2\sim Q_2}
\sum_{\substack{D(\m,\n)=0 \\
1\le |(\m,\n)|\le M \\ |\mathbf{t}|\sim T}}
q_1^{\eps-1}
q_\Delta^\eps
|S''_{q_2}(\m,\n)| \\
&\ll \sum_{q_\Delta q_2\ll Q_2}
\sum_{\substack{D(\m,\n)=0 \\
1\le |(\m,\n)|\le M \\ |\mathbf{t}|\sim T}}
\left(\frac{Q_2}{q_\Delta q_2}\right)^\eps
q_\Delta^\eps
|S''_{q_2}(\m,\n)| \\
&\ll \sum_{q_2\ll Q_2}
\sum_{\substack{D(\m,\n)=0 \\
1\le |(\m,\n)|\le M \\ |\mathbf{t}|\sim T}}
\left(\frac{Q_2}{q_2}\right)^\eps
M^\eps
|S''_{q_2}(\m,\n)|,
\end{split}
\end{equation*}
by the divisor bound applied to the integer $\Delta(\m,\n)\ne 0$.

We now bound $S''_{q_2}(\m,\n)$ using Lemma~\ref{S''-bounds}.
It follows from \eqref{eq:***} that the number of possible $q_2$, given $q$, is
$O((Q_2/q)^{1/2}q^\eps)$.
It follows that
\begin{equation}
\begin{split}
\label{yunnan-1}
\Sigma(Q_2,T,M)
&\ll (MQ_2)^\eps
\sum_{\kappa(q)^2\mid q\ll Q_2}
\sum_{\substack{D(\m,\n)=0 \\
1\le |(\m,\n)|\le M \\ |\mathbf{t}|\sim T}}
\left(\frac{Q_2}{q}\right)^{1/2}
\frac{|S_q(\m,\n)|}{q^4} \\
&\ll (MQ_2)^{2\eps} Q_2^{1/2} \Sigma',
\end{split}
\end{equation}
by Corollary~\ref{cor:basic},
where we let
\begin{equation*}
\Sigma'\defeq
\sum_{\kappa(q)^2\mid q\ll Q_2}
\sum_{\substack{D(\m,\n)=0 \\
1\le |(\m,\n)|\le M \\ |\mathbf{t}|\sim T}}
\frac{1}{q^{1/2}}
\prod_{1\le i\le 3} \{q,m_i\}^{1/2} \1_{d_i\mid n_i},
\end{equation*}
where $d_i = \{q,m_i\}^{1/2}$.
Although we could include the condition $\kappa(q)\mid \nabla{G}(\m,\n)$ here,
it turns out to be more convenient not to.
% Note: Thus we potentially lose a factor of $\kappa(q)$ when summing over $h$ and $\n$.

Let $d_0$ be the largest integer such that $d_0^2\mid q$.
Fix $i\in \{1,2,3\}$.
For each square-full $q$,
\begin{equation}
\label{0-swap-trick}
\sum_{\substack{n_i\ll M \\ m_i=0}} \{q,m_i\}^{1/2} \1_{d_i\mid n_i}
= \sum_{n_i\ll M} d_0 \1_{d_0\mid n_i}
\asymp d_0 + M.
\end{equation}
Moreover, the contribution to the left-hand side of \eqref{0-swap-trick}
from $n_i=0$ is exactly $= d_0$,
and the contribution from $n_i\ne 0$ is $\ll M$.
On the other hand,
\begin{equation*}
\sum_{\substack{0\ne m_i\ll M \\ n_i=0}} \{q,m_i\}^{1/2} \1_{d_i\mid n_i}
= \sum_{0\ne m_i\ll M} \{q,m_i\}^{1/2}
\ll \sum_{\textnormal{square-full }d\mid q} d^{1/2} \frac{M}{d}
\ll q^\eps M.
\end{equation*}
If $\Sigma'_S$ denotes the contribution to $\Sigma'$
from all $(\m,\n)$ satisfying a given condition $S$,
then it follows,
by bounding any sum over $(m_i,n_i)\in \ZZ\times 0$
in terms of the corresponding sum over $(m_i,n_i)\in 0\times \ZZ$,
that
\begin{equation*}
\Sigma'_{\m\n=\0}
\ll Q_2^\eps \Sigma'_{\m=\0},
\end{equation*}
and that for any fixed $i\in \{1,2,3\}$,
\begin{equation*}
\Sigma'_{m_in_i=0,\; m_jn_j^2=m_kn_k^2\ne 0}
\ll Q_2^\eps \Sigma'_{m_i=0,\; m_jn_j^2=m_kn_k^2\ne 0}.
% \le Q_2^\eps \Sigma'_{t_i=0,\; \m\ne\0,\; (\m,\n)\in \Lambda^\perp(\mathbf{t})}.
\end{equation*}
% Conceptually: If $m_i=0$, then $n_i=0 \Leftrightarrow \nabla{D}(\m,\n)=0$; the plane containing $m_i=0\ne n_i$ can be extended to $m_i=0=n_i$?
Therefore, we deduce that
\begin{equation}
\begin{split}
\label{yunnan-2}
\Sigma'
&\le \Sigma'_{m_1m_2m_3n_1n_2n_3\ne 0}
+ \Sigma'_{m_3n_3=0,\; m_1n_1^2=m_2n_2^2\ne 0}
+ \Sigma'_{\m\n=\0} \\
&\ll \Sigma'_{m_1m_2m_3n_1n_2n_3\ne 0}
+ Q_2^\eps \Sigma'_{m_3=0,\; m_1n_1^2=m_2n_2^2\ne 0}
+ Q_2^\eps \Sigma'_{\m=\0}.
\end{split}
\end{equation}

By our calculations for $m_i=0$ above in \eqref{0-swap-trick},
we have, since $(\m,\n)\ne \0$,
\begin{equation*}
\begin{split}
\Sigma'_{\m=\0}
&\ll \sum_{\kappa(q)^2\mid q\ll Q_2} \frac{1}{q^{1/2}}
(d_0^2 M + d_0 M^2 + M^3) \\
&\ll M Q_2
+ M^2 Q_2^{1/2}
+ M^3 Q_2^\eps,
\end{split}
\end{equation*}
since $d_0\le q^{1/2}$.

Turning to $\Sigma'_{m_1m_2m_3n_1n_2n_3\ne 0}$,
we write $(\m,\n)\in \Lambda^\perp(\mathbf{t})$ where $t_1t_2t_3\ne 0$,
by Lemma~\ref{non-coordinate-lattices}.
Evaluating the sum over $n_1n_2n_3\ne 0$
using Lemma~\ref{HB-lattice-bound}, we get
\begin{equation*}
\Sigma'_{m_1m_2m_3n_1n_2n_3\ne 0}\ll
\hspace{-0.3cm}
\sum_{\kappa(q)^2\mid q\ll Q_2}
\sum_{\substack{|\mathbf{t}|\sim T \\ t_1t_2t_3\ne 0}}
\sum_{\substack{\m=\mathbf{t}^2h \\
|\m|\le M \\ h\ne 0}}
\left(\frac{M}{|\dd|} + \frac{\gcd(d_1t_1,d_2t_2,d_3t_3)}{d_1d_2d_3} \frac{M^2}{T}\right)
\frac{d_1d_2d_3}{q^{1/2}},
\end{equation*}
where  $d_i=\{q,m_i\}^{1/2}.$ We have 
$d_3\leq q^{1/2}$ and  $(d_1d_2)^2=\{q,m_2\}\{q,m_2\}\mid \{q,m_1m_2\}^{2}$, whence $d_1d_2\mid \{q,m_1m_2\}$. Taking $|\dd|\geq (d_1d_2)^{1/2}$, 
the total contribution from the $\frac{M}{|\dd|}$ term is found to be 
\begin{align*}
\sum_{\kappa(q)^2\mid q\ll Q_2}
\sum_{\substack{|\mathbf{t}|\sim T \\ t_1t_2t_3\ne 0}}
\sum_{\substack{\m=\mathbf{t}^2h \\
|\m|\le M \\ h\ne 0}}
\frac{d_1d_2d_3 M}{|\dd| q^{1/2}}
&\ll 
M\sum_{\kappa(q)^2\mid q\ll Q_2}
\sum_{\substack{|\mathbf{t}|\sim T \\ t_1t_2t_3\ne 0}}
\sum_{\substack{\m=\mathbf{t}^2h \\
|\m|\le M \\ h\ne 0}}
\{q,m_1m_2\}^{1/2}\\
&\ll 
M
\sum_{\substack{|\mathbf{t}|\sim T \\ t_1t_2t_3\ne 0}}
\sum_{\substack{\m=\mathbf{t}^2h \\
|\m|\le M \\ h\ne 0}} 
\sum_{\substack{r\mid m_1m_2\\
\text{$r$ square-full}}} r^{1/2+\ve} \left(\frac{Q_2}{r}\right)^{1/2}\\
&\ll (Q_2M)^\ve M^2 Q_2^{1/2}T,
\end{align*}
by \eqref{eq:***}.
The remaining contribution is
\begin{equation*}
\ll \frac{M^2}{T} (TM)^{1+\eps} Q_2^{1/2-\delta}
= (TM)^\eps M^3 Q_2^{1/2-\delta},
\end{equation*}
by \eqref{generic-rankin}.

For $\Sigma'_{m_3=0,\; m_1n_1^2=m_2n_2^2\ne 0}$,
we write $(\m,\n)\in \Lambda^\perp(\mathbf{t})$ where $t_1t_2\ne 0=t_3$,
by Lemma~\ref{non-coordinate-lattices} and \eqref{Lambda-perp-t},
and evaluate the sum over $\n$ using Lemma~\ref{HB-lattice-bound},
getting
\begin{equation*}
\Sigma'_{m_3=0,\; m_1n_1^2=m_2n_2^2\ne 0}\ll
\sum_{\kappa(q)^2\mid q\ll Q_2}
\sum_{\substack{|\mathbf{t}|\sim T \\ t_1t_2\ne 0=t_3}}
\sum_{\substack{\m=\mathbf{t}^2h \\
|\m|\le M \\ h\ne 0}}
\frac{\gcd(d_1t_1,d_2t_2)}{d_1d_2d_3} \frac{Md_3+M^2}{T}
\frac{d_1d_2d_3}{q^{1/2}}.
\end{equation*}
Taking 
 $d_3\le q^{1/2}$, 
the total contribution from the $Md_3$ term is
\begin{equation*}
\ll \frac{M}{T} (TM)^{1+\eps} Q_2^{1/2+\eps},
\end{equation*}
by \eqref{nasty-rankin} in
Lemma \ref{rankin-stuff}.
Using \eqref{generic-rankin}, 
the remaining contribution is
\begin{equation*}
\ll \frac{M^2}{T} (TM)^{1+\eps} Q_2^{1/2-\delta}
= (TM)^\eps M^3 Q_2^{1/2-\delta}.
\end{equation*}
Since $\delta = 1/6$ is admissible in \eqref{generic-rankin},
it follows from \eqref{yunnan-1} and \eqref{yunnan-2} that
\begin{equation*}
\Sigma(Q_2,T,M)
\ll (MQ_2)^\eps Q_2^{1/2}
(MQ_2 + M^3Q_2^{1/3}
+ M^2T Q_2^{1/2}).
\end{equation*}
We may assume $M\gg T^2$, or else $\Sigma=0$.
But then $M^2T Q_2^{1/2}
\ll (MQ_2)^{1/4} (M^3Q_2^{1/3})^{3/4}$ and the statement of the lemma follows.
\end{proof}

\begin{lemma}
\label{rankin-stuff}
Let $d_i=\{q,m_i\}^{1/2}$ for $1\leq i\leq 3$.  
Let $T,M,Q_2\ge 1$.
If $\eps>0$ and $0<\delta<1/2$ then 
\begin{align}
\Psi_1 &\defeq \sum_{\kappa(q)^2\mid q\ll Q_2}
\sum_{\substack{|\mathbf{t}|\sim T
\\ \#\{i:t_i=0\}\le 1}}
\sum_{\substack{h\ge 1 \\
M\gg m_i=t_i^2h}}
\frac{\gcd(d_1t_1,d_2t_2,d_3t_3)}{q^{1/2}}
\ll (TM)^{1+\eps} Q_2^{1/2-\delta},
\label{generic-rankin} \\
\Psi_2 &\defeq \sum_{\kappa(q)^2\mid q\ll Q_2}
\sum_{\substack{|\mathbf{t}|\sim T
\\ t_1t_2\ne 0=t_3}}
\sum_{\substack{h\ge 1 \\
M\gg m_i=t_i^2h}}
\gcd(d_1t_1,d_2t_2)
\ll (TM)^{1+\eps} Q_2^{1/2+\eps}.
\label{nasty-rankin}
\end{align}
\end{lemma}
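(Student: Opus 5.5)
The plan is to bound both sums by working prime by prime, after fixing $\mathbf{t}$ and $h$. The point is that $\gcd(d_1t_1,d_2t_2,d_3t_3)$ can only be large if the $d_i=\{q,t_i^2h\}^{1/2}$ are large. Since $d_i^2\mid q$, this forces $q$ to share a large square-full factor with the $m_i$. Write $g=\gcd(d_1t_1,d_2t_2,d_3t_3)$. At a prime $p$, put $a_i=v_p(t_i)$, $b=v_p(h)$ and $e=v_p(q)\ge 2$. Then $v_p(d_i)=\lfloor \min(e,2a_i+b)/2\rfloor$, so $v_p(d_it_i)\le \min(e/2+a_i,\,2a_i+b/2)$.

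For \eqref{generic-rankin}, at least two coordinates of $\mathbf{t}$ are non-zero, say $t_1t_2\ne0$, so $g\le \gcd(d_1t_1,d_2t_2)$. Since $\gcd(t_1,t_2,t_3)=1$, at each $p$ one of the relevant $a_i$ is $0$ when $t_3=0$; in general $\min_i a_i=0$. This gives $v_p(g)\le \min(e/2,\,b/2)$ when the minimum is attained at a non-zero coordinate. When it is attained at $t_3=0$, we instead get $v_p(g)\le \min(e/2+a_1,2a_1+b/2)$, but the factor $p^{a_1}$ is then counted against the sum over $\mathbf{t}$. In the main case this yields $g\le \gcd(q,h^{\infty})$-type quantities, and more precisely $g\le \{q,h\}^{1/2}\cdot(\text{small})$.

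I then apply Rankin's trick in the $q$-sum. Using $g\le q^{1/2}$ together with $g\mid \{q,h\}^{1/2}$, I bound
\[
\sum_{\kappa(q)^2\mid q\ll Q_2} \frac{\{q,h\}^{1/2}}{q^{1/2}}\ll Q_2^{\delta'}\sum_{\kappa(q)^2\mid q}\frac{\{q,h\}^{1/2}}{q^{1/2+\delta'}}\cdots.
\]
This is awkward because of the upper bound on $q$, so a cleaner route is to write $q=rs$ with $r\mid h^\infty$ square-full and $s$ coprime to $h$. Then $g\le r^{1/2}$, and by \eqref{eq:***} the count of square-full $s\ll Q_2/r$ is $\ll (Q_2/r)^{1/2}$. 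The $q$-sum is therefore $\ll \sum_{r\mid h^\infty,\, r\ll Q_2} r^{1/2}(Q_2/r)^{1/2}/(rs)^{1/2}$. Summing dyadically in $s$, the ratio $1/s^{1/2}$ against a count of $s^{1/2}$ gives a factor of $\log$, and $\sum_{r\mid h^\infty} r^{-\delta}$ is $\ll h^\eps$. This yields $Q_2^{\eps}$, which is far better than $Q_2^{1/2-\delta}$. The sums over $\mathbf{t}$ and $h$ then give $\#\{(\mathbf{t},h): t_i^2h\ll M\}\ll T^3M/T^2=TM$ (up to $M^\eps$), matching the claimed bound $(TM)^{1+\eps}$.

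For \eqref{nasty-rankin} there is no $q^{-1/2}$ factor. Here I use the crude bound $g\le \gcd(t_1,t_2)\cdot r^{1/2}$, where $\gcd(t_1,t_2)=1$ because $t_3=0$ and $\mathbf{t}$ is primitive. The count of square-full $q\ll Q_2$ with $r$-part $r$ is $\ll (Q_2/r)^{1/2}$, giving $\sum_{r\mid h^\infty} r^{1/2}(Q_2/r)^{1/2}\ll Q_2^{1/2}h^\eps$. Summing over the $\ll TM$ pairs $(\mathbf{t},h)$ gives $(TM)^{1+\eps}Q_2^{1/2+\eps}$.

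The main obstacle will be justifying rigorously that $g$ is controlled by the $h$-part of $q$: that is, showing $v_p(d_it_i)$ stays bounded by $v_p(\{q,h\})/2$ up to factors absorbed by primitivity of $\mathbf{t}$. I would do this by direct case analysis on which $a_i$ vanishes at $p$. If that analysis leaks a power of $p^{a_i}$, the fallback is to sum over $\mathbf{t}$ with $\sum_{t}\gcd(t,q)\ll T q^\eps$ via \eqref{gcd-1-average}.
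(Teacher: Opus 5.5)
Your argument is correct, at least for primitive $\mathbf{t}$, but it follows a genuinely different route from the paper's.

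The paper applies Rankin's trick to $q$, $\mathbf{t}$ and $h$ simultaneously, with weights $(Q_2/q)^{\alpha}(T^3/t_1t_2t_3)^{\beta}(H/h)^{\gamma}$. This turns $\Psi_1$ into $Q_2^{\alpha}T^{3\beta}H^{\gamma}$ times an Euler product. A short linear-programming estimate then shows that the local exponent $\ell-v$ exceeds $1+\eps$ for every non-trivial local datum. Finally, \eqref{nasty-rankin} is read off from the case $t_3=0$ of \eqref{generic-rankin} with $\delta=1/2-\eps$.

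You instead prove the pointwise divisibility $\gcd(d_1t_1,d_2t_2,d_3t_3)\mid\{q,h\}^{1/2}$. You then count square-full moduli directly, splitting $q$ into its $h$-part and a square-full cofactor coprime to $h$. This is more transparent and loses nothing. Your $Q_2^{\eps}$ is not actually stronger than \eqref{generic-rankin}, since $\delta$ there may be taken arbitrarily close to $1/2$.

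Two points should be cleaned up.

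First, the divisibility needs no case analysis. Some $i$ with $t_i\ne 0$ has $v_p(t_i)=0$, so $v_p(g)\le\lfloor\min(e,b)/2\rfloor$ immediately. Your subcase where the minimum is \emph{attained at $t_3=0$} is vacuous. It only arises if you discard $d_3t_3$ when $t_3\ne 0$, which you should not do.

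Second, and more substantively, you use primitivity of $\mathbf{t}$. The lemma as stated does not impose this, and the paper's Euler product does not need it, since it sums over all $t_i\ge 1$. Without primitivity your divisibility is false: for example $\mathbf{t}=(p,p,p)$, $h=1$, $q=p^2$ gives $g=p^2$ while $\{q,h\}=1$. The repair is easy. With $c=\gcd(t_1,t_2,t_3)$, the same valuation computation gives $g\mid c\,\{q,c^2h\}^{1/2}$. Since $\sum_{|\mathbf{t}|\ll T}\gcd(t_1,t_2,t_3)\ll T^3$ and $\sum_{t_1,t_2\ll T}\gcd(t_1,t_2)\ll T^2\log T$, your final bounds survive unchanged. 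In the only application, Lemma~\ref{first-Q2-bound}, $\mathbf{t}$ is primitive anyway.
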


% \begin{proof}
% Since $d_3\le q^{1/2}$ and $d_1d_2\le d_1^2+d_2^2$, we have
% \begin{equation*}
% \Psi\ll \sum_{\kappa(q)^2\mid q\ll Q_2}
% \sum_{\substack{|\mathbf{t}|\sim T
% \\ t_1t_2t_3\ne 0}}
% \sum_{\substack{h\ge 1 \\
% M\gg m_i=t_i^2h}}
% \gcd(q,m_1)
% \end{equation*}
% we find by \eqref{gcd-1-average} that [doesn't work; this is over sq-full]
% \end{proof}

\begin{proof}
Taking $\delta = 1/2 - \eps$ and $t_3=0$ in \eqref{generic-rankin},
and multiplying by $Q_2^{1/2}$,
we obtain \eqref{nasty-rankin}.
Therefore, we need only prove \eqref{generic-rankin}.

\medskip

\noindent
{\em Proof for $t_1t_2t_3\ne 0$.}
% \VW{Given $\mathbf{t}$, try to sum over $q$ and $h$ using Rankin's trick?
% Or maybe sum over all variables at once using multiplicativity,
% assuming say $t_1t_2t_3\ne 0$?
% Reduce to Euler product.}
We have $h\ll M/T^2$.
Let $H = M/T^2$ and
$$(\alpha,\beta,\gamma)\defeq \left(\frac12-\delta,1+\eps,1+\eps\right).$$
By Rankin's trick we have
\begin{equation*}
\Psi_{1,t_1t_2t_3\ne 0}\ll
\sum_{\kappa(q)^2\mid q} \left(\frac{Q_2}{q}\right)^\alpha
\sum_{t_1,t_2,t_3\ge 1} \left(\frac{T^3}{t_1t_2t_3}\right)^\beta
\sum_{h\ge 1} \left(\frac{H}{h}\right)^\gamma
\frac{\gcd(d_1t_1,d_2t_2,d_3t_3)}{q^{1/2}},
\end{equation*}
which is $Q_2^\alpha T^{3\beta} H^\gamma$ times
an Euler product whose definition is independent of $Q_2,T,H$.
It remains to show that this Euler product is convergent.

If $v_p(q) = e\in \{0,2,3,4,\dots\}$, $v_p(t_i)=f_i\ge 0$, and $v_p(h)=g\ge 0$,
then $$r_i\defeq v_p(\{q,t_i^2h\}) = 2\floor{\min(e,2f_i+g)/2}.$$
Since $d_i^2 = \{q,t_i^2h\}$, we get
\begin{equation*}
v\defeq v_p\left(\frac{\gcd(d_1t_1,d_2t_2,d_3t_3)}{q^{1/2}}\right)
= \min_{1\leq i\leq 3}\left(\frac12r_i+f_i\right) - \frac12e
\end{equation*}
and
\begin{equation*}
\ell\defeq v_p(q^\alpha (t_1t_2t_3)^\beta h^\gamma)
= \alpha e + \gamma g + \sum_i \beta f_i.
\end{equation*}

% https://sagecell.sagemath.org
% a = 0.334
% b = 1.001
% c = 1.001
% box = 5
% report = 2
% for e in range(box):
%     for f1 in range(box):
%         for f2 in range(box):
%             for f3 in range(box):
%                 for g in range(box):
%                     r1 = min(e,2*f1+g)
%                     r2 = min(e,2*f2+g)
%                     r3 = min(e,2*f3+g)
%                     if r1==1:
%                         r1=0
%                     if r2==1:
%                         r2=0
%                     if r3==1:
%                         r3=0
%                     s1 = floor(r1/2)
%                     s2 = floor(r2/2)
%                     s3 = floor(r3/2)
%                     v = min(s1+f1,s2+f2,s3+f3) - (e/2) + (r1+r2+r3)/2 - (s1+s2+s3)
%                     l = a*e+c*g+b*(f1+f2+f3)
%                     if e+f1+f2+f3+g > 0 and l-v < report:
%                         print([e,f1,f2,f3,g],l-v)
% [0, 0, 0, 0, 1] 1.00100000000000
% [0, 0, 0, 1, 0] 1.00100000000000
% [0, 0, 1, 0, 0] 1.00100000000000
% [0, 1, 0, 0, 0] 1.00100000000000
% [1, 0, 0, 0, 0] 0.834000000000000
% [1, 0, 0, 0, 1] 1.83500000000000
% [1, 0, 0, 1, 0] 1.83500000000000
% [1, 0, 1, 0, 0] 1.83500000000000
% [1, 1, 0, 0, 0] 1.83500000000000
% [2, 0, 0, 0, 0] 1.66800000000000

We begin with a clean general estimate.
Clearly
\begin{equation*}
v\le \frac13\sum_i(\frac12r_i+f_i) - \frac12e
\le \frac13\sum_i(2f_i+\frac12g) - \frac12e
= \sum_i \frac23f_i + \frac12g - \frac12e,
\end{equation*}
since $r_i\le 2f_i+g$.
Since $\beta,\gamma\ge 1$, it follows that
\begin{equation*}
\ell-v\ge (\alpha+\frac12)e + \frac12g + \sum_i \frac13f_i.
\end{equation*}
If $e\ge 2$ then $\ell-v\ge 1+\eps$,
since $\alpha>0$.
In general, it is also clear that if $L\ge 1$ then
$$\#\{(e,f,g): \ell-v\le L\} \ll L^5.$$
It remains to show that if $e=0$ and $(f,g)\ne \0$, then $\ell-v\ge 1+\eps$.
But if $e=0$,
then $r_i=0$,
so $v = \min_i f_i$.
Since $\beta\ge 1$, it follows that
$$\ell-v\ge \gamma g + \beta \max_i(f_i).$$
Thus $\ell-v\ge 1+\eps$, because $(f,g)\ne \0$.

\medskip

\noindent
{\em Proof for $t_1t_2t_3\ne 0$.}
In this case,
\begin{equation*}
\Psi_{1,t_3=0} \ll \sum_{\kappa(q)^2\mid q\ll Q_2}
\sum_{1\le t_1,t_2\ll T}
\sum_{h\ll M/T^2}
\frac{\gcd(d_1t_1,d_2t_2)}{q^{1/2}}.
\end{equation*}
Let $H = M/T^2$.
We will take
$$(\alpha,\beta,\gamma)\defeq \left(\frac12-\delta,\frac32+\eps,1+\eps\right).$$
(In fact the argument also goes through with  $\beta=1+\eps$, but there is no advantage in doing so.)
By Rankin's trick we have
\begin{equation*}
\Psi_{1, t_3=0}\ll
\sum_{\kappa(q)^2\mid q} \left(\frac{Q_2}{q}\right)^\alpha
\sum_{t_1,t_2\ge 1} \left(\frac{T^2}{t_1t_2}\right)^\beta
\sum_{h\ge 1} \left(\frac{H}{h}\right)^\gamma
\frac{\gcd(d_1t_1,d_2t_2)}{q^{1/2}}.
\end{equation*}

If $v_p(q) = e\in \{0,2,3,4,\dots\}$, $v_p(t_i)=f_i\ge 0$, and $v_p(h)=g\ge 0$,
then $$r_i\defeq v_p(\{q,t_i^2h\}) = 2\floor{\min(e,2f_i+g)/2}.$$
as before.
% Here $f_3=\infty$, and $r_3 = 2\floor{e/2}$.
Thus 
\begin{equation*}
v\defeq v_p\left(\frac{\gcd(d_1t_1,d_2t_2)}{q^{1/2}}\right)
= \min_{1\le i\le 2}\left(\frac12r_i+f_i\right) - \frac12e
\end{equation*}
and
\begin{equation*}
\ell\defeq v_p(q^\alpha (t_1t_2)^\beta h^\gamma)
= \alpha e + \gamma g + \sum_{1\le i\le 2} \beta f_i.
\end{equation*}

% a = 0.251
% b = 1.501
% c = 1.001
% box = 5
% report = 2
% for e in range(box):
%     for f1 in range(box):
%         for f2 in range(box):
%             for g in range(box):
%                 r1 = min(e,2*f1+g)
%                 r2 = min(e,2*f2+g)
%                 if r1==1:
%                     r1=0
%                 if r2==1:
%                     r2=0
%                 s1 = floor(r1/2)
%                 s2 = floor(r2/2)
%                 v = min(s1+f1,s2+f2) - floor(e/2) + (r1+r2)/2 - (s1+s2)
%                 l = a*e+c*g+b*(f1+f2)
%                 if e+f1+f2+g > 0 and l-v < report:
%                     print([e,f1,f2,g],l-v)
% [0, 0, 0, 1] 1.00100000000000
% [0, 0, 1, 0] 1.50100000000000
% [0, 1, 0, 0] 1.50100000000000
% [1, 0, 0, 0] 0.251000000000000
% [1, 0, 0, 1] 1.25200000000000
% [1, 0, 1, 0] 1.75200000000000
% [1, 1, 0, 0] 1.75200000000000
% [2, 0, 0, 0] 1.50200000000000
% [3, 0, 0, 0] 1.75300000000000

Observe that
\begin{equation*}
v \le \frac12 \sum_{1\le i\le 2}\left(\frac12r_i+f_i\right) - \frac12e
\le \frac12 \sum_{1\le i\le 2}\left(2f_i+\frac12g\right) - \frac12e
= \sum_{1\le i\le 2} f_i + \frac12 g - \frac12 e,
\end{equation*}
so
\begin{equation*}
\ell-v\ge \left(\alpha+\frac12\right)e
+ \left(\gamma-\frac12\right) g + \sum_{1\le i\le 2} (\beta-1)f_i.
\end{equation*}
If $e\ge 2$, then $\ell-v\ge 1+\eps$,
since $\alpha>0$.
Alternatively, if $e=0$, then $r_i=0$, so $v=\min_i{f_i}$ and
$\ell-v\ge \gamma g + \beta \max_i(f_i)$, since $\beta\ge 1$.
As in the case $t_1t_2t_3\ne 0$, this suffices.
\end{proof}

\subsection*{Uniform upper bounds}

We begin by recalling a classical result from the geometry of numbers.
\begin{lemma}
% [Mahler/Remak, 1938; Weyl, 1942; van der Waerden, 1956]
\label{shortest-basis}
Let $v_1,\dots,v_N$ be a \emph{shortest} 
basis of a lattice in a Euclidean space;
i.e.~a basis for which $\max(|v_1|,\dots,|v_N|)$ is minimal.
Then
$$\max(|v_1|,\dots,|v_N|) \ll_N \lambda_N,
$$
where $\lambda_N$ is the largest successive minimum of the lattice.
\end{lemma}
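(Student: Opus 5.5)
The plan is to argue via Minkowski's second theorem together with a standard basis-construction argument. Let $\Lambda$ be the lattice, of rank $N$, with successive minima $\lambda_1\le\dots\le\lambda_N$. Choose linearly independent lattice vectors $w_1,\dots,w_N\in\Lambda$ with $|w_i|=\lambda_i$. The aim is to construct a genuine basis $u_1,\dots,u_N$ of $\Lambda$ satisfying $|u_i|\ll_N \lambda_i\le\lambda_N$. Since a shortest basis minimises $\max_i|v_i|$ over all bases, this gives
\[
\max(|v_1|,\dots,|v_N|)\le \max_i |u_i|\ll_N\lambda_N.
\]

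The basis $u_i$ will be built inductively. Let $L_i=\mathrm{span}_\RR(w_1,\dots,w_i)$ and $\Lambda_i=\Lambda\cap L_i$, which is a primitive sublattice of rank $i$. Then $\Lambda_i/\Lambda_{i-1}$ is infinite cyclic, so we may pick $u_i\in\Lambda_i$ generating it; by construction $u_1,\dots,u_N$ is a basis of $\Lambda$. We then modify $u_i$ by an element of $\Lambda_{i-1}$ so as to make it short. Write $w_i=c\,u_i+x$ with $c\in\ZZ\setminus\{0\}$ and $x\in\Lambda_{i-1}$. Next, write $u_i=a w_i+\sum_{j<i}b_j w_j$ with real coefficients, where $|a|=1/|c|\le 1$. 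Subtract from $u_i$ the integer combination $\sum_{j<i}\lfloor b_j\rceil w_j$. Since $\{w_j\}_{j<i}$ lies in $\Lambda_{i-1}$, this remains a valid generator of $\Lambda_i/\Lambda_{i-1}$. The resulting vector has coefficients at most $1$ in absolute value on each $w_j$, so its length is at most $\sum_{j\le i}|w_j|\le i\lambda_i$.

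One subtlety must be handled: $\{w_j\}_{j<i}$ spans only a finite-index subgroup of $\Lambda_{i-1}$. This does not affect the argument, because the reduction only subtracts elements of that subgroup, which is contained in $\Lambda_{i-1}$. Hence the replaced $u_i$ still lies in $u_i+\Lambda_{i-1}$.

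With this, all $u_i$ satisfy $|u_i|\le N\lambda_N$, giving the lemma with implied constant $N$. The main point to verify carefully is exactly the step above: that the reduced vector is still a generator modulo $\Lambda_{i-1}$ and still lies in $\Lambda$. Both hold because we only add lattice vectors from $\Lambda_{i-1}$. No use of Minkowski's second theorem is actually needed beyond the existence of the minima vectors $w_i$.
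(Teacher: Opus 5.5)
Your argument is correct, and it is the standard proof behind the paper's citation (Cassels, p.~135, Lemma~8): build a basis adapted to the flag $\Lambda_1\subset\dots\subset\Lambda_N$ cut out by independent minima vectors, then reduce each generator $u_i$ modulo the integer span of $w_1,\dots,w_{i-1}$. Cassels reduces the coefficients to size at most $\tfrac12$ and treats the case $|c|=1$ separately, which gives the sharper constant $\max(1,i/2)$ in place of your $i$; as you note yourself, Minkowski's second theorem plays no role, so the opening reference to it can be dropped.
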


\begin{proof}
% For example, a greedy (Minkowski) basis suffices.
% https://arxiv.org/pdf/2106.03183
See Cassels \cite{cassels}*{p.~135, Lemma~8}.
\end{proof}

The following lemma is proved using the geometry of numbers and 
will help us remove certain akward factors of $M^\eps$.
\begin{lemma}
\label{stable-ranges}
Let $A>0$ be a large absolute constant.
Let $\a,\b\in (\ZZ/q_2\ZZ)^3$ and $c\in \ZZ/q_2\ZZ$.
Let $N(T,H)=N(T,H;\a,\b,c,q_2)$ be the number of tuples
$$(\mathbf{t},h,\n)\in \ZZ^3\times \ZZ\times \ZZ^3,
\; (\mathbf{t},h,\n)\equiv (\a,c,\b)\bmod{q_2}$$
with $\mathbf{t}$ primitive, $h\ne 0$, and $\n.\mathbf{t}=0$
such that $|\mathbf{t}|\sim T$, $h\ll H$, and $n_i\ll T^2H$.
% In practice, we have in mind $H = M/T^2$.
\begin{enumerate}
\item If $T>0$ and $H\ge Aq_2$,
then $$\frac{N(T,H)}{H (T^2H)^2} \asymp \frac{N(T,Aq_2)}{Aq_2 (T^2Aq_2)^2}.$$

\item If $0<H\le Aq_2$ and $T\ge Aq_2^2$,
then $$\frac{N(T,H)}{T^2 (T^2H)^2} \asymp \frac{N(Aq_2^2,H)}{(Aq_2^2)^2 ((Aq_2^2)^2H)^2}.$$
\end{enumerate}
\end{lemma}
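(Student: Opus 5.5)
The plan is to reduce both parts to the statement that, once the relevant box is large compared with the modulus, $N$ is comparable to its expected volume times a density depending only on the residue data. Concretely, I would prove
\begin{equation*}
N(T,H) \asymp \rho\cdot T^3 H (T^2H)^3/(T\cdot T^2H),
\end{equation*}
where $\rho=\rho(\a,\b,c,q_2)$ is a local density. The structure of the count is as follows. The $h$-variable is independent of $(\mathbf{t},\n)$: it is a single congruence class $h\equiv c\bmod{q_2}$ with $h\ne 0$ and $h\ll H$. Hence $N(T,H)=N_h(H)\,N_{\mathbf{t},\n}(T,T^2H)$ factorises.

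\emph{Part (1).} When $H\ge Aq_2$, the count $N_h(H)=\#\{0\ne h\ll H: h\equiv c\bmod{q_2}\}$ satisfies $N_h(H)\asymp H/q_2$. This needs only $A$ large, so that the interval contains at least about $H/q_2$ terms of the progression, excluding $h=0$. For the pair $(\mathbf{t},\n)$, fix $\mathbf{t}$ and count $\n\ll T^2H$ with $\n\equiv\b\bmod{q_2}$ and $\n\cdot\mathbf{t}=0$. This is a lattice-coset count in the rank-$2$ lattice $\{\n:\n\cdot\mathbf{t}=0\}$, whose covolume is $|\mathbf{t}|\asymp T$. Its successive minima are $\le |\mathbf{t}|\ll T$: indeed $\mathbf{t}\times e_i$ gives vectors of length $\ll T$, which span a full-rank sublattice. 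The intersection with the congruence condition modulo $q_2$ is either empty or a coset of a sublattice of index dividing $q_2^2$, whose largest successive minimum is $\ll q_2T$.

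Now compare the box side $T^2H$ with $q_2T$. We have $T^2H\ge AT^2q_2\ge Aq_2T$, so by Lemma~\ref{shortest-basis} the coset count equals (area)$/$(covolume)$\,\cdot(1+O(1/A))$. For $A$ large this is $\asymp (T^2H)^2\rho_{\mathbf{t}}/T$, with $\rho_{\mathbf{t}}$ depending only on $\mathbf{t}\bmod q_2$ and $\b$. Summing over $\mathbf{t}$ gives $N(T,H)\asymp (H/q_2)(T^2H)^2\,\Phi(T)$, where $\Phi(T)$ does not depend on $H$. Dividing by $H(T^2H)^2$ removes the $H$-dependence, and the same formula at $H=Aq_2$ yields (1).

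\emph{Part (2).} Now $h$ is frozen and the scaling is in $T$. For fixed $h$, count primitive $\mathbf{t}\equiv\a\bmod{q_2}$ with $|\mathbf{t}|\sim T$, weighted by the $\n$-count $\asymp (T^2H)^2\rho_{\mathbf{t}}/|\mathbf{t}|$. This uses the same lattice argument, now valid because $T^2H\ge T\cdot T\ge T\cdot Aq_2^2\gg q_2T$. The quantity $\rho_{\mathbf{t}}$ depends on $\mathbf{t}$ only through $\mathbf{t}\bmod q_2$, which is fixed to be $\a$. So $N\asymp N_h(H)\,(T^2H)^2T^{-1}\rho_\a\cdot\#\{\mathbf{t}\text{ primitive},\,|\mathbf{t}|\sim T,\,\mathbf{t}\equiv\a\bmod q_2\}$.

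The remaining task is the primitive count, which should be $\asymp T^3\sigma(\a,q_2)$ uniformly for $T\ge Aq_2^2$ (or be identically zero). I would obtain this by Möbius inversion over $d=\gcd(\mathbf{t})$. Terms with $d\mid q_2$-compatible classes give $T^3/(q_2^3d^3)$ plus an error $O(T^2 q_2^{-2}d^{-2})$. Large $d$ is handled trivially, and the threshold $T\ge Aq_2^2$ ensures the error is dominated by the main term; this is where the square $q_2^2$ is used. The result is $N\asymp T^2\cdot (T^2H)^2\cdot c(\a,\b,c,q_2,H)$, matching the normalisation $T^2(T^2H)^2$ in (2), since $T^3/T=T^2$. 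Comparing $T$ with $Aq_2^2$ then gives (2).

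The main obstacle is twofold. First, the upper and lower bounds must be uniform. The lower bound needs non-emptiness to persist across all scales, and this holds because the coset density is the same at every scale. Second, the primitive-vector count needs $\asymp$ rather than only $\ll$. Here one must handle the case where $\a$ forces $\gcd(\mathbf{t})$ to share factors with $q_2$, and check that a positive density at scale $Aq_2^2$ implies a positive density at scale $T$ and conversely. I would first try to handle this by pulling back along $\mathbf{t}\mapsto\mathbf{t}\bmod q_2$ and applying a Möbius sieve restricted to $d$ coprime to $q_2$.
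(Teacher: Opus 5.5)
Your proposal is correct and follows essentially the same route as the paper: you factor out the $\asymp H/q_2$ choices of $h$, and count $\n$ in a coset of the rank-$2$ lattice $\{\n\in q_2\ZZ^3:\n\cdot\mathbf{t}=0\}$ via a shortest basis, which works because its largest successive minimum is $\ll q_2T\le T^2H/A$. In part (2) you use that the $\n$-density depends only on $\a$, and count primitive $\mathbf{t}\equiv\a\bmod{q_2}$ by M\"obius inversion; that count is identically zero if $\gcd(\a,q_2)\ne 1$, and otherwise only $d$ coprime to $q_2$ contribute, giving a main term $\asymp (T/q_2)^3$ with error $O((T/q_2)^2+T)$. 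The $O(T)$ part, which comes from the $O(1)$ error per $d$, is what requires $T\gg q_2^{3/2}$, and this is comfortably implied by $T\ge Aq_2^2$.
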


\begin{proof}
(1):
We proceed to estimate $N(T,H)$. 
For fixed $\mathbf{t}$, 
the  successive minima of
the lattice $\Lambda=\{\n\in q_2\ZZ^3: \n. \mathbf{t}=0\}$
are $\ll q_2T\le T^2H/A$. Moreover, if the set $\{\n\in \ZZ^3: \n\equiv \b\bmod{q_2}: 
\n.\mathbf{t}=0\}$ is nonempty
then it contains a point inside a fundamental domain of $(\Lambda\otimes\RR)/\Lambda$
generated by a shortest basis of $\Lambda$,
which can be bounded using Lemma~\ref{shortest-basis}.
It follows that 
the number of available choices for $\n$ is $\asymp C(q_2,\mathbf{t},\b) (T^2H)^2$
by the geometry of numbers.
Assume that 
$H\ge Aq_2$. Then, on summing over all available choices for $\mathbf{t}$,
and 
$\asymp H/q_2$
available choices for $h$, 
it follows that 
\begin{equation*}
N(T,H) \asymp \sum_{\mathbf{t}} (H/q_2) C(q_2,\mathbf{t},\b) (T^2H)^2.
\end{equation*}
Comparing this quantity to its value at $H=Aq_2$ gives (1).

(2):
Given $\mathbf{t}$,
the number of available choices for $\n$ is again $\asymp C(q_2,\mathbf{t},\b) (T^2H)^2$
by the geometry of numbers, since $q_2T\le T^2/A\le T^2H/A$.
Moreover, $C(q_2,\mathbf{t},\b)\asymp C'(q_2,\a,\b)/T$,
since the set 
$\{\n\in \ZZ^3: \n\equiv \b\bmod{q_2}: 
\n.\mathbf{t}=0\}$
is nonempty
if and only if there exists $\n'\in \ZZ^3$ with $\sum_i (q_2n'_i+b_i)t_i = 0$,
which is equivalent to asking that $q_2\gcd(t_1,t_2,t_3)\mid \mathbf{b}.\mathbf{t}$.
But $\mathbf{t}$ is primitive and $\mathbf{b}.\mathbf{t}\equiv \mathbf{b}.\mathbf{a} \bmod{q_2}$, whence
\begin{equation*}
N(T,H) \asymp \sum_{h,\mathbf{t}} \frac{C'(q_2,\a,\b)}{T} (T^2H)^2
= \frac{C'(q_2,\a,\b)}{T} (T^2H)^2 (\#h)(\#\mathbf{t}).
\end{equation*}
The number of primitive vectors $\mathbf{t}\equiv \a\bmod{q_2}$
of height $\sim T$
is $\asymp C''(q_2,\a) T^3$,
since $T\ge Aq_2^2$.
If $\gcd(\a,q_2)\ne 1$ then no such $\mathbf{t}$
exist, whereas if $\gcd(\a,q_2)=1$ then M\"{o}bius inversion
yields a uniform estimate
$$
 M(T,q_2) + O\left(\left(\frac{T}{q_2}\right)^2 + T\right)
$$
for the number of $\mathbf{t}$, 
with main term
$$M(T,q_2) \asymp \left(\frac{T}{q_2}\right)^3 \prod_{p\nmid q_2} (1-p^{-3})
\asymp \left(\frac{T}{q_2}\right)^3.$$
Comparing $N(T,H)$ to its value at $T=Aq_2^2$ gives (2).
\end{proof}

The next stage of our argument is devoted to  a study of  the quantity
\begin{equation}
\label{clean-Q1Q2-hybrid}
\Sigma(I,Q_2,T,M)
\defeq \sum_{\substack{D(\m,\n)=0 \\
1\le |(\m,\n)|\le M \\ |\mathbf{t}|\sim T}}
\left|\sum_{q_1\in I} \Xi_{q_1}(\m,\n)\right|^2
\sum_{q_2\sim Q_2} |S''_{q_2}(\m,\n)|,
\end{equation}
for  $Q_2,T,M\ge 1$ and an interval $I\belongs \{q_1\sim Q_1\}$, where $Q_1\ge 1$.

\begin{lemma}
\label{q_0-stability}
There exists an absolute constant $A>16$ such that 
\begin{equation*}
\frac{\Sigma(I,Q_2,T,M)}{M^3}
\ll \frac{\Sigma(I,Q_2,T',M' )}
{(M'/A^2)^3}
+ 1,
\end{equation*}
where if $R=(AQ_1Q_2)^A$ then
\begin{equation}\label{eq:TM'}
T' = \min(T,R) , \quad 
M' = A^2\min(T,R)^2\min(M/T^2,R).
\end{equation}
\end{lemma}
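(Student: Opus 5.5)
The plan is to exploit periodicity. By Lemma~\ref{q-invariance}, once the zero-pattern of $(\m,\n)$ is fixed (and hence $\Delta\in S$ is fixed), the summand of \eqref{clean-Q1Q2-hybrid} depends only on $(\m,\n)$ modulo a bounded modulus. To see this, expand
$$
\Bigl|\sum_{q_1\in I}\Xi_{q_1}\Bigr|^2=\sum_{q_1,q_1'\in I}\Xi_{q_1}\ol{\Xi_{q_1'}}
$$
and interchange the order of summation with the sums over $q_1,q_1',q_2$. For each fixed triple $(q_1,q_1',q_2)$ the weight $\Xi_{q_1}\ol{\Xi_{q_1'}}|S''_{q_2}|$ is a function of $(\m,\n)$ modulo $P=\lcm(q_1,q_1',q_2)\le 4Q_1^2Q_2\le R^{1/2}$. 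Each term is therefore a sum, over residue classes $(\a,\b,c)\bmod P$, of a periodic weight times a lattice-point count $N(T,H;\a,\b,c,P)$ of the type in Lemma~\ref{stable-ranges}, with $P$ playing the role of $q_2$. The subsequent comparisons are carried out term by term and then reassembled. Since the weights need not be non-negative, I will apply the upper and lower comparisons of Lemma~\ref{stable-ranges} to the positive and negative parts separately; this costs only absolute constants.

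First I reduce to the generic pieces. By Lemma~\ref{non-coordinate-lattices}, these are parametrised by $\m=\mathbf{t}^2h$ and $\n\cdot\mathbf{t}=0$, with $\mathbf{t}$ primitive and $|\mathbf{t}|\sim T$. This covers both the case $n_1n_2n_3\ne0$ and the case $m_in_i=0$, $t_i=0$; in the latter the analogous count lives in one fewer $\mathbf{t}$-dimension and one uses the $t_3=0$ variant. The condition $|(\m,\n)|\le M$ is sandwiched between the box conditions $h\ll H$, $n_i\ll T^2H$ with $H\asymp M/T^2$, at the cost of factors $A^2$. This is the source of the $A^2$ in $M'$ in \eqref{eq:TM'}. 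Because the weights are non-negative after the positive/negative split, replacing the ball by slightly larger or smaller boxes only changes the sum by absolute constants.

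Next I pass from $(T,M)$ to $(T',M')$ in \eqref{eq:TM'} in two steps. If $H\ge R\ge AP$, part~(1) of Lemma~\ref{stable-ranges} shows that each residue-class count scales like $H^3$. Hence the sum divided by $M^3\asymp T^6H^3$ equals, up to constants, the same expression at $H=R$. If moreover $T\ge R\ge AP^2$, part~(2) lets me replace $T$ by $R$ with the normalisation $T^2(T^2H)^2$. These two normalisations combine to $M^3$ versus $(M'/A^2)^3$. Summing back over residue classes and over $q_1,q_1',q_2$ reassembles $\Sigma(I,Q_2,T',M')$.

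The degenerate pieces are those with $\m\n=\0$, where $T\asymp 1$ so $T'=T$, together with the coordinate-subspace pieces. Here the same periodicity argument works on simpler lattices, namely boxes in fewer variables. Any leftover pieces for which no clean scaling holds should be bounded directly by Lemmas~\ref{first-Q1-bound} and~\ref{first-Q2-bound} together with Lemma~\ref{degen-counts}, which I expect to give $O(M^3)$; this is what the ``$+1$'' absorbs. The main obstacle is making the comparison $\asymp$ genuinely two-sided and uniform over residue classes. Lemma~\ref{stable-ranges} only gives $\asymp$ for each class with constants independent of the class, so I must check carefully that the boundary effects from ball-versus-box, primitivity of $\mathbf{t}$, and the identification $\pm\mathbf{t}$ all contribute only absolute constants. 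This uniformity is what makes the choice of $A$ large enough, for instance $A>16$, sufficient.
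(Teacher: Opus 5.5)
There is a genuine gap at the step where you expand $\bigl|\sum_{q_1\in I}\Xi_{q_1}\bigr|^2$ and then apply the two-sided comparisons of Lemma~\ref{stable-ranges} separately to the positive and negative parts of the signed weights $\Xi_{q_1}\ol{\Xi_{q_1'}}\,|S''_{q_2}|$.

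Those comparisons hold only up to unspecified constants $c_1<c_2$. Writing $\Sigma=\Sigma^+-\Sigma^-$, the best you get is $\Sigma(T,M)/M^3\le c_2\Sigma^+(T',M')/(M')^3-c_1\Sigma^-(T',M')/(M')^3$. This leaves a surplus $(c_2-c_1)\Sigma^-(T',M')/(M')^3$. That surplus is controlled only by the sum in which $\bigl|\sum_{q_1}\Xi_{q_1}\bigr|^2$ is replaced by $\bigl(\sum_{q_1}|\Xi_{q_1}|\bigr)^2$, which is roughly $Q_1^2$. It is not $\ll \Sigma(I,Q_2,T',M')/(M')^3+1$.

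So the split throws away exactly the cancellation in $q_1$ that the lemma must carry from $(T,M)$ to $(T',M')$. This is the saving $Q_1^{-\delta/6}$ from Lemma~\ref{first-Q1-bound}, which is later needed in Lemma~\ref{absolute-tail-bound}. Your ball-versus-box sandwiching has the same defect, since it relies on monotonicity and hence on positivity.

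Your parametrisation is also wrong for a key piece. For $n_3=0$, $m_1m_2m_3n_1n_2\ne0$, the points are $(t_1^2h,t_2^2h,m_3;\,t_2b,-t_1b,0)$ with $m_3\ne0$ free, not $\m=\mathbf{t}^2h$. So this piece is not counted by Lemma~\ref{stable-ranges} at all. Moreover, when $T\asymp1$ it has $\asymp M^3$ points, so it cannot be discarded.

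The missing idea is to treat unsigned and signed pieces differently.

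\begin{itemize}
\item On $\m\ne\0$, $n_1n_2n_3\ne0$, one has $\Xi_q=\1_{q=1}$ by \eqref{degen-char}. The weight is then just $|S'_{q_2}|\ge0$, and Lemma~\ref{stable-ranges} (through Lemma~\ref{reduce-T,H}, with modulus $q_2$ only) gives a clean comparison with $\Sigma'$. The extra lattice points with $n_3=0$, $(m_1,m_2)\ne\0$ picked up when passing back from $\Sigma'$ are bounded by $(M')^{2+\eps}$.
\item On the pieces where $\Xi$ genuinely oscillates ($n_3=0$, $m_1m_2m_3n_1n_2\ne0$, and the coordinate pieces with $\m\n=\0$), count the free coordinates, e.g.\ $(m_3,h,b)$, in a box in each residue class modulo $q_1q_1'q_2$. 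This gives a main term \emph{exactly} proportional to $M^3$, plus $O(M^2/T)$.
\end{itemize}

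After disposing of the trivial case $\max(T,M/T^2)\le R$, one has $M\gg R$. Then:
\begin{itemize}
\item If $T\le R^{1/2}$, then $T'=T$, and the signed main terms cancel exactly in $\Sigma(T,M)/M^3-\Sigma(T',M')/(M')^3$. What remains is $O((Q_1Q_2)^{8+\eps}/R^{1/2})$.
\item If $T\ge R^{1/2}$, the point count $M^3/T$ together with the pointwise bound $Q_1^{2+\eps}Q_2^{5/2+\eps}$ suffices.
\item Pieces with four or more vanishing coordinates have only $O(M^2)$ points and are handled the same way.
\end{itemize}

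Since $R=(AQ_1Q_2)^A$ with $A>16$, all these errors are $O(1)$, and that is where the ``$+1$'' comes from. Lemmas~\ref{first-Q1-bound} and~\ref{first-Q2-bound} cannot supply it, because their bounds grow polynomially in $Q_1$ and $Q_2$.
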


Before establishing this result it will be convenient to establish a simpler variant in which
$I$ is taken to be $\{1\}$,
$S''_q(\m,\n)$ is replaced by $S'_q(\m,\n)$,
the vectors $(\m,\n)$ are constrained,
and the old definition of $\mathbf{t}$ in terms of $(\m,\n)$ is relinquished.
Let
\begin{equation*}
\Sigma'(Q_2,T,M)
\defeq \sum_{\substack{[\mathbf{t}]\in \PP^2(\ZZ) \\ |\mathbf{t}|\sim T}}
\sum_{\substack{(\m,\n)\in \Lambda^\perp(\mathbf{t}) \\
\m\ne\0 \\ 1\le |(\m,\n)|\le M}}
\sum_{q_2\sim Q_2} |S'_{q_2}(\m,\n)|.
\end{equation*}
The triples $(\mathbf{t},\m,\n)$ occurring in $\Sigma'$
can thus be parameterised in terms of the triples $(\mathbf{t},h,\n)$
appearing in Lemma~\ref{stable-ranges}.

\begin{lemma}
\label{reduce-T,H}
Let $A\geq 1$ be a large absolute constant and 
let $Q_2,T,M\geq 1$.
Then
\begin{equation*}
\frac{\Sigma'(Q_2,T,A^\tau M)}{M^3}
\bowtie_\tau \frac{\Sigma'(Q_2,\min(T,AQ_2^2),\min(T,AQ_2^2)^2\min(M/T^2,AQ_2))}
{(\min(T,AQ_2^2)^2\min(M/T^2,AQ_2))^3}
\end{equation*}
for all $\tau=\pm 1$,
where $\bowtie_1$ denotes $\gg$
and where $\bowtie_{-1}$ denotes $\ll$.
\end{lemma}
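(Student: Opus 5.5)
The plan is to reduce $\Sigma'$ to a sum over residue classes modulo $q_2$ and then apply Lemma~\ref{stable-ranges}. By Lemma~\ref{non-coordinate-lattices} and \eqref{Lambda-perp-t}, every $(\m,\n)\in\Lambda^\perp(\mathbf{t})$ with $\m\ne\0$ has the form $\m=\mathbf{t}^2h$ with $h\ne0$ and $\n\cdot\mathbf{t}=0$. The constraint $1\le|(\m,\n)|\le A^\tau M$ becomes $h\ll A^\tau M/T^2$ and $n_i\ll A^\tau M$, up to harmless constant adjustments in the ranges.

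By Lemma~\ref{q-invariance}, $S'_{q_2}(\m,\n)$ depends only on $(\m,\n)\bmod q_2$, and hence only on $(\mathbf{t},h,\n)\bmod q_2$. Therefore
\begin{equation*}
\Sigma'(Q_2,T,M)=\sum_{q_2\sim Q_2}\;\sum_{\a,\b,c\bmod q_2}|S'_{q_2}(\a^2c,\b)|\, N(T,H;\a,\b,c,q_2)/2,
\end{equation*}
with $H\asymp M/T^2$. The factor $1/2$ accounts for the choice of sign of $\mathbf{t}$ representing $[\mathbf{t}]$. The boundary issues from the shape of the box $|(\m,\n)|\le M$ versus the product box in Lemma~\ref{stable-ranges} are absorbed by the $A^{\pm1}$ dilation in the statement, and this is the role of $\tau$. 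Concretely, the box for $M$ is sandwiched between product boxes at scales $M/A$ and $AM$, up to constants, and positivity of every term gives the one-sided inequalities $\bowtie_\tau$.

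Everything now reduces to comparing $N(T,H)$ with its value at the truncated parameters, uniformly in $(\a,\b,c,q_2)$, with $q_2\sim Q_2$ so that $q_2\asymp Q_2$. I would do this in two moves.

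First, if $H\ge AQ_2$, part~(1) of Lemma~\ref{stable-ranges} gives $N(T,H)/(H(T^2H)^2)\asymp N(T,AQ_2)/(AQ_2(T^2AQ_2)^2)$. This replaces $H$ by $\min(H,AQ_2)$. Note that $H(T^2H)^2=T^4H^3$ and $M^3=T^6H^3$.

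Second, with $H\le AQ_2$ now in force, if $T\ge AQ_2^2$, part~(2) replaces $T$ by $AQ_2^2$, with normalising factor $T^2(T^2H)^2=T^6H^2$.

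Summing over residue classes and $q_2$, both normalisations are proportional to $M^3=(T^2H)^3$ up to factors depending only on $T,H$. Part~(1) normalises by $T^4H^3$ and part~(2) by $T^6H^2$. I must check that, after combining with the $T$-count, the ratios line up with the claimed $(\min(T,AQ_2^2)^2\min(M/T^2,AQ_2))^3$. In case~(1) the extra factor of $T^2$ relative to $M^3$ is constant across the comparison, since $T$ is fixed. In case~(2) the factor of $H$ is fixed and $T^6$ matches. So the two normalisations agree with $M^3$, and the statement follows.

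The main obstacle is the uniformity of Lemma~\ref{stable-ranges} in $q_2$ ranging over a dyadic interval, and making sure the implicit constants do not depend on the residue classes. One subtlety is that the thresholds in Lemma~\ref{stable-ranges} use $q_2$, whereas the statement uses $Q_2$. I would handle this by enlarging $A$, using $Aq_2\le 2AQ_2$, and applying the lemma with $2A$ in place of $A$, together with a final monotonicity adjustment.
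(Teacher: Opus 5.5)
Your proposal is correct and follows the paper's proof. As you suggest, one fixes the residue class of $(\mathbf{t},h,\n)\bmod q_2$ so that $S'_{q_2}(\m,\n)$ is constant by Lemma~\ref{q-invariance}, truncates $H=M/T^2$ to $\min(H,AQ_2)$ via Lemma~\ref{stable-ranges}(1), and then truncates $T$ to $\min(T,AQ_2^2)$ via Lemma~\ref{stable-ranges}(2); the normalisations $H(T^2H)^2$ and $T^2(T^2H)^2$ are exactly the ones you checked against $M^3$. The paper handles the mismatch between the round box and the product box the same way you do, using $|h|\le A^{-1/4}M/T^2\Rightarrow|\m|\le M\Rightarrow|h|\le A^{1/4}M/T^2$ and spending a factor $A^{\tau/2}$ on each of the two steps, and it treats the $q_2$ versus $Q_2$ threshold issue you raise as routine constant bookkeeping.
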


\begin{proof}
Since $|\mathbf{t}|\sim T$ and $\m=\mathbf{t}^2h$,
there exist implications of the form
\begin{equation*}
|h|\le A^{-1/4} M/T^2
\Rightarrow |\m|\le M
\Rightarrow |h|\le A^{1/4} M/T^2.
\end{equation*}
Let $H=M/T^2$.
We proceed by applying Lemma~\ref{stable-ranges}
in each residue class $(\mathbf{t},h,\n)\bmod{q_2}$,
which fixes the value of $S'_{q_2}(\m,\n)$ by Lemma~\ref{q-invariance},
and then summing over all residue classes.
By Lemma~\ref{stable-ranges}(1) if $H\ge AQ_2$, and trivially otherwise, we get
\begin{equation*}
\frac{\Sigma'(Q_2,T,A^\tau M)}{M^3/T^2}
= \frac{\Sigma'(Q_2,T,A^\tau T^2H)}{H(T^2H)^2}
\bowtie_\tau \frac{\Sigma'(Q_2,T,A^{\tau/2} T^2\min(H,AQ_2))}{\min(H,AQ_2)(T^2\min(H,AQ_2))^2}.
\end{equation*}
By Lemma~\ref{stable-ranges}(2) with $H=\min(H,AQ_2)$
if $T\ge AQ_2^2$, and trivially otherwise, we have
\begin{equation*}
\frac{\Sigma'(Q_2,T,A^{\tau/2} T^2\min(H,AQ_2))}
{T^2(T^2\min(H,AQ_2))^2}
\bowtie_\tau \frac{\Sigma'(Q_2,\min(T,AQ_2^2),\min(T,AQ_2^2)^2\min(H,AQ_2))}
{\min(T,AQ_2^2)^2(\min(T,AQ_2^2)^2\min(H,AQ_2))^2}.
\end{equation*}
Plugging the last display into the one before it, we get the desired result.
\end{proof}

\begin{proof}
[Proof of Lemma~\ref{q_0-stability}]
We may assume $M\gg T^2$,
or else 
Lemma~\ref{degen-counts} implies that
$\Sigma(I,Q_2,T,M)=0$.
For $T',M'$ defined in \eqref{eq:TM'}, 
with  $R=(AQ_1Q_2)^A$,
we have 
 $T'  \le T$
and $M' \le A^2M$.
If $\max(T,M/T^2)\le R$,
then $T'=T$ and $M'=A^2M$,
so the result is trivial.
Thus we may suppose that $\max(T,M/T^2)\ge R$.
In particular, since $M/T^2\gg 1$ we have
$$M = T^2(M/T^2)\gg R,
\qquad M' \gg A^2 (T')^2,
\qquad M' \gg A^2 R.$$
We begin by writing
\begin{equation}
\label{hybrid-strat}
\Sigma
\asymp \Sigma_{\m\ne\0,\; n_1n_2n_3\ne 0}
+ \Sigma_{n_3=0,\; m_1m_2m_3n_1n_2\ne 0}
+ \Sigma_{m_3=n_3=0,\; m_1m_2n_1n_2\ne 0}
+ \Sigma_{\m\n=\0}.
\end{equation}

By Lemma~\ref{reduce-T,H}, since $A^{-1}M'>0$ we have
\begin{equation*}
\frac{\Sigma'(Q_2,T',M')}{(M')^3}
\gg \frac{\Sigma'(Q_2,\min(T',AQ_2^2),\min(T',AQ_2^2)^2\min(A^{-1}M'/(T')^2,AQ_2))}
{(\min(T',AQ_2^2)^2\min(A^{-1}M'/(T')^2,AQ_2))^3}.
\end{equation*}
Similarly,  since $AM>0$ we also have
\begin{equation*}
\frac{\Sigma'(Q_2,T,M)}{M^3}
\ll \frac{\Sigma'(Q_2,\min(T,AQ_2^2),\min(T,AQ_2^2)^2\min(AM/T^2,AQ_2))}
{(\min(T,AQ_2^2)^2\min(AM/T^2,AQ_2))^3}.
\end{equation*}
We have $\min(T',AQ_2^2) = \min(T,R,AQ_2^2) = \min(T,AQ_2^2)$
and 
\begin{align*}
\min(A^{-1}M'/(T')^2,AQ_2) = \min(A\min(M/T^2,R),AQ_2)
&= A\min(M/T^2,R,Q_2)\\
&= \min(AM/T^2,AQ_2).
\end{align*}
Thus
\begin{equation}
\label{generic-stability}
\frac{\Sigma'(Q_2,T',M')}{(M')^3}
\gg \frac{\Sigma'(Q_2,T,M)}{M^3}.
\end{equation}
% We have
% \begin{equation*}
% \begin{split}
% \Sigma_{\m\ne\0,\; n_1n_2n_3\ne 0}(I,Q_2,T,M)
% &= \Sigma_{\m\ne\0,\; n_1n_2n_3\ne 0}(\{1\},Q_2,T,M) \1_{1\in I} \\
% &= \Sigma_{(\m,\n)\in \Lambda^\perp(\mathbf{t}),\; \m\ne\0}
% - \Sigma_{(\m,\n)\in \Lambda^\perp(\mathbf{t}),\; n_1n_2n_3=0,\; \m\ne\0}
% \end{split}
% \end{equation*}
But if $n_1n_2n_3\ne 0$,
then $\Xi_q(\m,\n)=\1_{q=1}$ by \eqref{degen-char},
whence $S''_q(\m,\n) = S'_q(\m,\n)$.
Thus
\begin{equation*}
\begin{split}
\Sigma_{\m\ne\0,\; n_1n_2n_3\ne 0}(I,Q_2,T,M)
&= \Sigma_{\m\ne\0,\; n_1n_2n_3\ne 0}(\{1\},Q_2,T,M) \1_{1\in I} \\
&\le \Sigma'(Q_2,T,M) \1_{1\in I}
\end{split}
\end{equation*}
by Lemma~\ref{non-coordinate-lattices}.
Similarly,
\begin{equation*}
\Sigma'(Q_2,T',M') \1_{1\in I}
= \Sigma_{\m\ne\0,\; n_1n_2n_3\ne 0}(I,Q_2,T',M')
+ \Sigma'_{n_1n_2n_3=0}(Q_2,T',M') \1_{1\in I}.
\end{equation*}
Let us first estimate the second term on the right.
By symmetry,
$$\Sigma'_{n_1n_2n_3=0}
\ll \Sigma'_{n_3=0}
\le \Sigma'_{n_3=m_1=m_2=0}
+ \Sigma'_{n_3=0,\; (m_1,m_2)\ne\0}.$$
If $n_3=m_1=m_2=0$,
then \eqref{degen-char} implies $\Xi_q(\m,\n)=\1_{q=1}$
and $S'_q(\m,\n) = S''_q(\m,\n)$.
Any pair $(\m,\n)$ appears in $\Sigma'$ for at most four choices of $[\mathbf{t}]$,
since $[\mathbf{t}^2] = [\m]$.
Therefore,
\begin{align*}
\Sigma'_{n_3=m_1=m_2=0}(Q_2,T',M') \1_{1\in I}
&\ll \Sigma_{n_3=m_1=m_2=0}(\{1\},Q_2,T',M') \1_{1\in I} \\
&= \Sigma_{n_3=m_1=m_2=0}(I,Q_2,T',M')\\
&\le \Sigma_{\m\n=\0}(I,Q_2,T',M').
\end{align*}
We deduce that
\begin{align*}
\Sigma'(Q_2,T',M') \1_{1\in I}
\ll~& (\Sigma_{\m\ne\0,\; n_1n_2n_3\ne 0}+\Sigma_{\m\n=\0})(I,Q_2,T',M')\\
&\qquad + \Sigma'_{n_3=0,\; (m_1,m_2)\ne\0}(Q_2,T',M') \\
\ll~& \Sigma(I,Q_2,T',M')
+ \Sigma'_{n_3=0,\; (m_1,m_2)\ne\0}(Q_2,T',M'),
\end{align*}
where in the last step we apply \eqref{hybrid-strat}.
However, 
on applying the second part of Lemma~\ref{HB-lattice-bound} with $d_1=d_2=d_3=1$, 
the number of triples $(\mathbf{t},\m,\n)$ counted in the second sum on the right is
% say $\m=\mathbf{t}^2h$, $n_1t_1+n_2t_2=0$, $n_3=0$
\begin{equation}
\label{gcd-sum}
\ll \sum_{\substack{t_1,t_2,t_3\ll T' \\ (t_1,t_2)\ne\0}}
\frac{M'}{(T')^2} \frac{M'\gcd(t_1,t_2)}{|(t_1,t_2)|}
\ll \sum_{1\ll U\ll T'} U^2 \frac{T'}{U} T'\frac{M'}{(T')^2} \frac{M'}{U}
\ll (M')^2 (T')^\eps,
% \ll (M')^{2+\eps},
\end{equation}
where $U$ is a dyadic parameter,
since
there are $O(U^2)$ choices of  primitive $(u_1,u_2)\in \ZZ^2$ 
such that $|(u_1,u_2)|\sim U$, there are $O(T')$ choices for $t_3$, and the number of choices for $(t_1,t_2)$ is $U^2 \cdot T'/U$ (because there are $O(T'/U)$ choices for 
$\gcd(t_1,t_2)$ associated to a primitive representative $(u_1,u_2)$ of order  $U$).
Thus, in view of \eqref{generic-stability}
and the bound $S'_{q_2}(\m,\n)\ll q_2^{3/2+\eps}$ from Remark~\ref{uniform-pointwise-bound},
we deduce that
\begin{equation*}
\frac{\Sigma_{\m\ne\0,\; n_1n_2n_3\ne 0}(I,Q_2,T,M)}{M^3}
\ll \frac{\Sigma(I,Q_2,T',M')}{(M')^3}
+ \frac{Q_2^{5/2+\eps}}{(A^2R)^{1-\eps}}.
\end{equation*}

By \eqref{degen-char}, the sum $\Sigma_{m_3=n_3=0,\; m_1m_2n_1n_2\ne 0}(I,Q_2,T,M)$,
is equal to
\begin{equation*}
\Sigma_{m_3=n_3=0,\; m_1m_2n_1n_2\ne 0}(\{1\},Q_2,T,M) \1_{1\in I}
\ll M^2 Q_2^{5/2+\eps},
\end{equation*}
since the number of pairs $(\m,\n)$ counted on the left hand side is $\ll M^2/T \ll M^2$ by the proof of the $m_3n_3 = 0$ case of Lemma~\ref{degen-counts}.

Similarly, by Lemma~\ref{degen-counts} and the trivial bound $\Xi_q\ll q^\eps$, we have
\begin{align*}
\Sigma_{n_3=0,\; m_1m_2m_3n_1n_2\ne 0}(I,Q_2,T,M) \1_{T\ge R^{1/2}}
&\ll \frac{M^3}{T} Q_1^{2+\eps} Q_2^{5/2+\eps} \1_{T\ge R^{1/2}}\\
&\ll \frac{M^3 (Q_1Q_2)^{5/2+\eps}}{R^{1/2}}.
\end{align*}
On the other hand, on 
writing $(m_1,m_2)=(t_1^2,t_2^2)h$
and $(n_1,n_2)=(t_2,-t_1)b$ and
using Lemma~\ref{non-coordinate-lattices},
we have by expanding the square in \eqref{clean-Q1Q2-hybrid}
\begin{equation*}
\Sigma_{n_3=0,\; m_1m_2m_3n_1n_2\ne 0}(I,Q_2,T,M)
= \sum_{\substack{[\mathbf{t}]\in \PP^2(\ZZ) \\ |\mathbf{t}|\sim T \\ t_1t_2\ne 0=t_3}}
\sum_{\substack{1\le |m_3|\le M
\\ 1\le |h|\le M/|\mathbf{t}^2| \\ 1\le |b|\le M/|\mathbf{t}|}}
\sum_{q_1,q'_1\in I} \Xi_{q_1}\Xi_{q'_1}
\sum_{q_2\sim Q_2} |S''_{q_2}|.
\end{equation*}
Given $\mathbf{t}$, the number of vectors $(m_3,h,b)\in \ZZ^3$
lying in a given residue class modulo $q_1q'_1q_2$ is
\begin{equation*}
\left(\frac{M}{q_1q'_1q_2} + O(1)\right)
\left(\frac{M/|\mathbf{t}^2|}{q_1q'_1q_2} + O(1)\right)
\left(\frac{M/|\mathbf{t}|}{q_1q'_1q_2} + O(1)\right)
= \frac{1}{(q_1q'_1q_2)^3} \frac{M^3}{|\mathbf{t}^2||\mathbf{t}|}
+ O\left(\frac{M^2}{T}\right),
\end{equation*}
since $M/T\gg 1$.
Summing over residue classes,
using Lemma~\ref{q-invariance},
we conclude that
\begin{align*}
&\Sigma_{n_3=0,\; m_1m_2m_3n_1n_2\ne 0}(I,Q_2,T,M)\\
&\qquad= \sum_{\substack{[\mathbf{t}]\in \PP^2(\ZZ) \\ |\mathbf{t}|\sim T \\ t_1t_2\ne 0=t_3}}
\sum_{\substack{q_1,q'_1\in I \\ q_2\sim Q_2
\\ (m_3,h,b)\bmod{q_1q'_1q_2}}}
\left(\frac{\Xi_{q_1}\Xi_{q'_1} |S''_{q_2}|}{(q_1q'_1q_2)^3}
\frac{M^3}{|\mathbf{t}^2||\mathbf{t}|}
+ O\left(q_1^\eps q_2^{3/2+\eps} \frac{M^2}{T}\right)\right).
\end{align*}
If $T\le R^{1/2}$, then $T'=T$ and we may  cancel out main terms for $M$ and $M'$ to get
% Note that M\gg T^2 and M'\gg (T')^2 = T^2.
\begin{align*}
\frac{\Sigma_{n_3=0,\; m_1m_2m_3n_1n_2\ne 0}(I,Q_2,T,M)}{M^3}
&- \frac{\Sigma_{n_3=0,\; m_1m_2m_3n_1n_2\ne 0}(I,Q_2,T',M')}{(M')^3}\\
&\ll \frac{T (Q_1^2Q_2)^{4+\eps} Q_2^{3/2}}{\min(M,M')}\\
&\ll \frac{ (Q_1Q_2)^{8+\eps} }{R^{1/2}},
\end{align*}
since $\min(M,M')/T\gg R/T\geq R^{1/2}$.
Together with our analysis for $T\ge R^{1/2}$, this implies that 
\begin{align*}
\frac{\Sigma_{n_3=0,\; m_1m_2m_3n_1n_2\ne 0}(I,Q_2,T,M)}{M^3}
\ll~& \frac{\Sigma_{n_3=0,\; m_1m_2m_3n_1n_2\ne 0}(I,Q_2,T',M')}{(M')^3}\\
&\quad + \frac{(Q_1Q_2)^{8+\eps}}{R^{1/2}},
\end{align*}
for all $T$.

Recall our convention that $[\mathbf{t}]=[1:1:1]$ if $\m\n = \0$, which implies in particular that 
$T \asymp 1$ in this case. Bearing this  in mind, 
we have
$$\Sigma_{\m\n=\0}(I,Q_2,T,M)
= \sum_{J_1\cup J_2 = \{1,2,3\}}
\Sigma_{i\in J_1\Leftrightarrow m_i=0,\; j\in J_2\Leftrightarrow n_j=0}(I,Q_2,T,M),$$
where $J_1,J_2\subseteq \{1,2,3\}$ are sets.
If $\#J_1+\#J_2\ge 4$, then
\begin{equation*}
\Sigma_{i\in J_1\Leftrightarrow m_i=0,\; j\in J_2\Leftrightarrow n_j=0}(I,Q_2,T,M)
\ll M^2 Q_1^{2+\eps} Q_2^{5/2+\eps}.
\end{equation*}
If $\#J_1+\#J_2=3$ and $\Sigma_{i\in J_1\Leftrightarrow m_i=0,\; j\in J_2\Leftrightarrow n_j=0}(I,Q_2,T,M)\ne 0$,
then $T\asymp 1$, so arguing as we did for
$\Sigma_{n_3=0,\; m_1m_2m_3n_1n_2\ne 0}(I,Q_2,T,M)$ when $T\le R^{1/2}$,
we obtain the estimate
\begin{align*}
\frac{\Sigma_{i\in J_1\Leftrightarrow m_i=0,\; j\in J_2\Leftrightarrow n_j=0}(I,Q_2,T,M)}{M^3}
&- \frac{\Sigma_{i\in J_1\Leftrightarrow m_i=0,\; j\in J_2\Leftrightarrow n_j=0}(I,Q_2,T',M')}{(M')^3}\\
&
\ll \frac{(Q_1^2Q_2)^{4+\eps} Q_2^{3/2}}{\min(M,M')}.
\end{align*}
In any case, it follows that
\begin{equation*}
\frac{\Sigma_{\m\n=\0}(I,Q_2,T,M)}{M^3}
\ll \frac{\Sigma_{\m\n=\0}(I,Q_2,T',M')}{(M')^3}
+ \frac{(Q_1Q_2)^{8+\eps}}{R}.
\end{equation*}
Plugging our work into \eqref{hybrid-strat}
yields Lemma~\ref{q_0-stability},
provided $A>16$.
\end{proof}

Let $0<\delta<1$ be fixed.
it follows from  Remark~\ref{uniform-pointwise-bound}
that 
$$
|S''_{q_2}(\m,\n)| = |S''_{q_2}(\m,\n)|^\delta |S''_{q_2}(\m,\n)|^{1-\delta} \ll (q_2^{3/2 + \ve})^\delta |S''_{q_2}(\m,\n)|^{1-\delta}.
$$ 
Hence, 
in the notation of
\eqref{clean-Q1Q2-hybrid},
it follows from 
H\"{o}lder's inequality between Lemma~\ref{first-Q1-bound} (with $A=2/\delta$) and Lemma~\ref{first-Q2-bound} that
\begin{equation*}
\begin{split}
\frac{\Sigma(I,Q_2,T,M)}{(Q_2^{3/2+\eps})^\delta}
&\ll \sum_{\substack{D(\m,\n)=0 \\
1\le |(\m,\n)|\le M \\ |\mathbf{t}|\sim T}}
\left|\sum_{q_1\in I} \Xi_{q_1}(\m,\n)\right|^2
\sum_{q_2\sim Q_2} |S''_{q_2}(\m,\n)|^{1-\delta} \\
&\ll \left((MQ_1)^\eps M^3 Q_1^{2/\delta}
\left(\frac{1}{Q_1^{1/6}} + \frac{1}{M}\right)\right)^\delta\\
&\quad \times
\left((MQ_2)^\eps Q_2^{1/2}
(MQ_2 + M^3Q_2^{1/3})\right)^{1-\delta}.
\end{split}
\end{equation*}
Thus
\begin{equation*}
\frac{\Sigma(I,Q_2,T,M)}{M^3 Q_1^2 Q_2}
\ll (MQ_1Q_2)^\eps \left(\frac{1}{Q_1^{\delta/6}} + \frac{1}{M^\delta}\right)
\left(\frac{Q_2^{1/2}}{M^{2(1-\delta)}} + \frac{1}{Q_2^{(1-4\delta)/6}}\right).
\end{equation*}
% (3/2) delta + (3/2)(1-delta) = 3/2.
% 3\delta/2 + (5/6)(1-\delta) = (4\delta+5)/6.
Replacing $(T,M)$ with $(T',M')$ and applying Lemma~\ref{q_0-stability}, we get
\begin{equation*}
\frac{\Sigma(I,Q_2,T,M)}{M^3 Q_1^2 Q_2}
\ll (Q_1Q_2)^\eps \left(\frac{1}{Q_1^{\delta/6}} + \frac{1}{(M')^\delta}\right)
\left(\frac{Q_2^{1/2}}{(M')^{2(1-\delta)}} + \frac{1}{Q_2^{(1-4\delta)/6}}\right)
+ \frac{1}{Q_1^2Q_2},
\end{equation*}
where $M' \asymp \min(T,R)^2\min(M/T^2,R) = \min(M,T^2R,R^2M/T^2,R^3)$.
We observe that  $R\ge (Q_1Q_2)^{16}$. Moreover, 
we may assume $M\gg T^2$,  by Lemma~\ref{degen-counts}, whence either $M' \asymp M$ or $M'\gg R$.
Thus
\begin{equation*}
\frac{\Sigma(I,Q_2,T,M)}{M^3 Q_1^2 Q_2}
\ll (Q_1Q_2)^\eps \left(\frac{1}{Q_1^{\delta/6}} + \frac{1}{M^\delta}\right)
\left(\frac{Q_2^{1/2}}{M^{2(1-\delta)}} + \frac{1}{Q_2^{(1-4\delta)/6}}\right),
\end{equation*}
since $R^\delta \gg Q_1^{\delta/6}$,
 $R^{2(1-\delta)} \gg Q_2^{2(1-\delta)/3} = Q_2^{1/2 + (1-4\delta)/6}$,
and $Q_1^2 Q_2 \gg Q_1^{\delta/6} Q_2^{(1-4\delta)/6}$.

Recall our dyadic decomposition of $E_3(B)$
after \eqref{global-E3-decomp}.
It now follows from   the Cauchy--Schwarz inequality that 
$E_3(B;M,T,\mathbf{Q})$ is 
\begin{align*}
&\ll \sum_{q_0\sim Q_0}
\frac{(Q_0Q_1Q_2+BM)^{-2}}{(1+M/B^{1/2})^A}
\Sigma(I,Q_2,T,M)^{1/2} \Sigma(\{1\},Q_2,T,M)^{1/2} \\
&\ll \frac{Q_0 M^3 (Q_1Q_2)^{1+\eps}}{(Q_0Q_1Q_2+BM)^2 (1+M/B^{1/2})^A}
\left(\frac{1}{Q_1^{\delta/12}} + \frac{1}{M^{\delta/2}}\right)
\left(\frac{Q_2^{1/2}}{M^{2(1-\delta)}} + \frac{1}{Q_2^{(1-4\delta)/6}}\right).
\end{align*}
Since we have  $(Q_0Q_1Q_2+BM)^2\ge (Q_0Q_1Q_2)^{(2-3\delta)/2} (BM)^{(2+3\delta)/2}$
and $(1+M/B^{1/2})^{A}\gg (1+T^2/B^{1/2})^{A}$, it follows that
\begin{align*}
\sum_{M\gg T^2}
|E_3(B;M,T,\mathbf{Q})|
&\ll \frac{Q_0^{3\delta/2} (Q_1Q_2)^{3\delta/2+\eps}}
{B^{(2+3\delta)/2} (1+T^2/B^{1/2})^{A}}\\
&~\times\left(\frac{(B^{1/2})^{\delta/2}}{Q_1^{\delta/12}} Q_2^{1/2}
+ Q_2^{1/2}
+ \frac{(B^{1/2})^{(4-3\delta)/2}}{Q_1^{\delta/12} Q_2^{(1-4\delta)/6}}
+ \frac{(B^{1/2})^{2(1-\delta)}}{Q_2^{(1-4\delta)/6}}\right) \\
&= \frac{Q_0^{3\delta/2} (B^{1/2})^{(4-3\delta)/2} (Q_1Q_2)^{3\delta/2+\eps}}
{B^{(2+3\delta)/2} (1+T^2/B^{1/2})^{A}}\\
&~\times \left(\frac{1}{Q_1^{\delta/12}} + \frac{1}{(B^{1/2})^{\delta/2}}\right)
\left(\frac{Q_2^{1/2}}{(B^{1/2})^{2(1-\delta)}} + \frac{1}{Q_2^{(1-4\delta)/6}}\right),
\end{align*}
since $3 - (2+3\delta)/2 = (4-3\delta)/2 = \delta/2 + 2(1-\delta)$.
We note that  $(2+3\delta)/2 - (4-3\delta)/4 = 9\delta/4$. Moreover, 
 $Q_1^{\delta/12} \ll B^{\delta/8}\ll B^{\delta/4}$
and $Q_2^{1/2 + (1-4\delta)/6} = Q_2^{2(1-\delta)/3} \ll B^{1-\delta}$,
since $Q_1,Q_2\ll B^{3/2}$.
% so the $B$-free terms $\frac{1}{Q_1^{\delta/12}}$ and $\frac{1}{Q_2^{(1-4\delta)/6}}$
% dominate their respective parenthetical expressions.
We conclude that 
\begin{align*}
\sum_{M\gg T^2}
|E_3(B;M,T,\mathbf{Q})|
&\ll \frac{Q_0^{3\delta/2} (Q_1Q_2)^{3\delta/2+\eps}}
{B^{9\delta/4} (1+T^2/B^{1/2})^{A}}
\frac{1}{Q_1^{\delta/12}}
\frac{1}{Q_2^{(1-4\delta)/6}}\\
&\ll \frac{(Q_0/B^{3/2})^{\min(\delta/12,(1-4\delta)/6)-\eps}}
{(1+T^2/B^{1/2})^{A}},
\end{align*}
since $Q_1^{\delta/12} Q_2^{(1-4\delta)/6} \gg (Q_1Q_2)^{\min(\delta/12,(1-4\delta)/6)}$
%and 
%$3\delta/2 > \delta/12 \ge \min(\delta/12,(1-4\delta)/6)$,
and $Q_0Q_1Q_2\ll B^{3/2}$.
The number of dyadic choices of $Q_1,Q_2\ll B^{3/2}/Q_0$  is $O((B^{3/2}/Q_0)^\eps)$. 
Thus we  may finally sum over $Q_1,Q_2\ll B^{3/2}/Q_0$
and take $\delta=2/9$ to get the following result.

\begin{lemma}
\label{absolute-tail-bound}
If $B,T,Q_0\ge 1$, then
\begin{equation*}
\sum_{M,Q_1,Q_2\ge 1} |E_3(B;M,T,\mathbf{Q})|
\ll \frac{(B^{3/2}/Q_0)^{ - 1/54}}{(1+T^2/B^{1/2})^{A}}.
\end{equation*}
\end{lemma}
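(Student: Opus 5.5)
The plan is simply to assemble the bounds derived in the text immediately preceding the statement. Fix $B,T,Q_0\ge 1$ and the dyadic parameters $M,Q_1,Q_2$.

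\textbf{Bound for fixed $(Q_1,Q_2)$.} For the piece $E_3(B;M,T,\mathbf{Q})$ we proceed as follows.
\begin{itemize}
\item Apply Cauchy--Schwarz to the partial-summation bound, pairing $\Sigma(I,Q_2,T,M)$ with $\Sigma(\{1\},Q_2,T,M)$.
\item Bound each of these using the hybrid estimate. That estimate came from H\"older's inequality between Lemma~\ref{first-Q1-bound} (with exponent $2/\delta$) and Lemma~\ref{first-Q2-bound}, followed by Lemma~\ref{q_0-stability}, which removes the stray $M^\eps$ factors.
\item Restrict to $M\gg T^2$, as permitted by Lemma~\ref{degen-counts}, so that $M'\asymp M$ or $M'\gg R\ge (Q_1Q_2)^{16}$.
\item Use the interpolation $(Q_0Q_1Q_2+BM)^2\ge (Q_0Q_1Q_2)^{(2-3\delta)/2}(BM)^{(2+3\delta)/2}$ and the decay $(1+M/B^{1/2})^{-A}\le(1+T^2/B^{1/2})^{-A}$.
\item Sum over dyadic $M\gg T^2$, so that the $M$-sum is dominated by $M\asymp B^{1/2}$.
\end{itemize}
Then invoke $Q_1,Q_2\ll B^{3/2}$ to see that the $B$-free factors $Q_1^{-\delta/12}$ and $Q_2^{-(1-4\delta)/6}$ dominate their parenthetical expressions. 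This yields
\[
\sum_{M}|E_3(B;M,T,\mathbf{Q})|\ll \frac{(Q_0/B^{3/2})^{\min(\delta/12,(1-4\delta)/6)-\eps}}{(1+T^2/B^{1/2})^{A}},
\]
using $Q_0Q_1Q_2\ll B^{3/2}$. This step relies on the support condition $q\ll Q$ of $I_q$, which forces $Q_0Q_1Q_2\ll B^{3/2}$; otherwise the piece vanishes.

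\textbf{Summing over $Q_1,Q_2$ and choosing $\delta$.} Sum over dyadic $Q_1,Q_2\ll B^{3/2}/Q_0$. There are $O((B^{3/2}/Q_0)^\eps)$ such choices, which costs an $\eps$ in the exponent. Choose $\delta=2/9$: then $\delta/12=1/54$ and $(1-4\delta)/6=1/54$, so the minimum equals $1/54$. The $\eps$-losses must be absorbed. To do this, run the argument with $\delta$ slightly perturbed, or note that the minimum is attained with slack in the other exponent. Since the statement loses nothing beyond $1/54$, I would check that the $\eps$'s can be taken arbitrarily small and then shrink the exponent slightly if needed.

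\textbf{Main obstacle.} The delicate step is the bookkeeping of exponents in the final optimisation. Specifically, one must verify:
\begin{itemize}
\item $(2+3\delta)/2-(4-3\delta)/4=9\delta/4$ exactly compensates $Q_0^{3\delta/2}(Q_1Q_2)^{3\delta/2}\le B^{9\delta/4}$;
\item the terms carrying $B^{1/2}$ powers are indeed dominated, via $Q_1^{\delta/12}\ll B^{\delta/4}$ and $Q_2^{2(1-\delta)/3}\ll B^{1-\delta}$.
\end{itemize}
The substantive analytic input has already been supplied by Lemmas~\ref{first-Q1-bound}, \ref{first-Q2-bound} and~\ref{q_0-stability}. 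Beyond these checks, the argument is a careful combination of them.
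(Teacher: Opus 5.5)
Your proposal is the paper's own argument, step for step: Cauchy--Schwarz against $\Sigma(\{1\},Q_2,T,M)$, H\"older between Lemmas~\ref{first-Q1-bound} and~\ref{first-Q2-bound}, removal of $M^\eps$ via Lemma~\ref{q_0-stability}, interpolation of $(Q_0Q_1Q_2+BM)^2$, the dyadic sums, and the choice $\delta=2/9$.

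Two points of bookkeeping need care.

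\textbf{Keep the ratio factor.} Do not use $Q_0^{3\delta/2}(Q_1Q_2)^{3\delta/2}\le B^{9\delta/4}$ merely to cancel the $B^{9\delta/4}$. Write $X=B^{3/2}/Q_0$, $Y=Q_1Q_2\ll X$ and $c=\min(\delta/12,(1-4\delta)/6)$. The decay in $Q_0$ comes from
$$(Y/X)^{3\delta/2}Y^{-c}=(Y/X)^{3\delta/2-c}X^{-c}\le X^{-c},$$
which uses $c<3\delta/2$. If you bound $(Y/X)^{3\delta/2}$ by $1$, only $Y^{-c}$ remains, and that gives no decay in $Q_0$ when $Q_1Q_2\asymp 1$.

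\textbf{The $\eps$-losses cannot be absorbed.} Your proposed remedies do not work. At $\delta=2/9$ both $\delta/12$ and $(1-4\delta)/6$ equal $1/54$, so neither exponent has slack, and moving $\delta$ only lowers the minimum. Meanwhile the factor $(Q_1Q_2)^\eps$ and the logarithmic number of dyadic pairs $(Q_1,Q_2)$ with $Q_1Q_2\asymp B^{3/2}/Q_0$ are genuinely present. So this argument, yours and the paper's alike, gives $(B^{3/2}/Q_0)^{-1/54+\eps}$, not the exponent $-1/54$ exactly. This is harmless: any fixed negative power suffices where the lemma is used in deriving \eqref{apt}.
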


\subsection*{Secondary main terms}

Returning to \eqref{global-E3-decomp},
we are now prepared to extract a new main term.
Given $B,T,Q_0\ge 1$, consider the piece
\begin{equation*}
\sum_{M,Q_1,Q_2\ge 1} E_3(B;M,T,\mathbf{Q})
= \sum_{\substack{(\m,\n)\ne \0 \\ D(\m,\n)=0 \\ |\mathbf{t}|\sim T}}
\sum_{\substack{q_0q_1q_2=q\ge 1 \\ q_0\sim Q_0}}
\frac{I_q(\m,\n)}{q^2}
\frac{\phi(q_0)}{q_0} \Xi_{q_1}(\m,\n) S''_{q_2}(\m,\n).
\end{equation*}
of $E_3(B)$.
Let us call this piece $\Sigma_1=\Sigma_1(B,T,Q_0)$. Then, 
on summing over $q_1q_2 = q/q_0$ using \eqref{dual-q0q1q2}, we get
\begin{equation*}
\Sigma_1
= \sum_{\substack{(\m,\n)\ne \0 \\ D(\m,\n)=0 \\ |\mathbf{t}|\sim T}}
\sum_{\substack{q_0q'=q\ge 1 \\ q_0\sim Q_0}}
\frac{I_q(\m,\n)}{q^2}
\frac{\phi(q_0)}{q_0} S'_{q'}(\m,\n).
\end{equation*}
Here $I_q(\m,\n)=0$ unless $q'\ll B^{3/2}/Q_0$,
by the properties of the $h$-function recorded at the start of Section \ref{sec:initial}.
We would like to approximate $\Sigma_1$ by the sum
\begin{equation*}
\Sigma_2=\Sigma_2(B,T,Q_0)\defeq \sum_{\substack{[\mathbf{t}]\in \PP^2(\ZZ) \\ |\mathbf{t}|\sim T}}
\sum_{(\m,\n)\in \Lambda^\perp(\mathbf{t})}
\sum_{\substack{q_0q'=q\ge 1 \\ q_0\sim Q_0}}
\frac{I_q(\m,\n)}{q^2}
\frac{\phi(q_0)}{q_0} S'_{q'}(\m,\n).
\end{equation*}

\begin{lemma}
\label{lattice-overcounts}
Let $M\ge 1$.
Then
\begin{equation*}
\sum_{\substack{[\mathbf{t}]\in \PP^2(\ZZ) \\ |\mathbf{t}|\sim T}}
\sum_{\substack{(\m,\n)\in \Lambda^\perp(\mathbf{t}) \\ |\m|,|\n|\ll M}}
(\1_{\m\n=\0} + \1_{n_1n_2n_3=0})
\ll T^2 (M+T)^2
+ M^3 \1_{T\asymp 1}
+ M^{2+\eps} \1_{M\gg T^2}.
\end{equation*}
\end{lemma}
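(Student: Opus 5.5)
We parametrise $\Lambda^\perp(\mathbf{t})$ explicitly as $\m=\mathbf{t}^2h$ with $h\in\ZZ$, and $\n$ ranging over the rank-two lattice $\{\n\in\ZZ^3:\n\cdot\mathbf{t}=0\}$. We then split the left-hand side according to which coordinates vanish. The two indicator functions overlap, but it suffices to bound the contributions from $\m=\0$, from $\n=\0$, and from the cases with exactly one or two vanishing coordinates among $m_1n_1,m_2n_2,m_3n_3$ or among $n_1,n_2,n_3$. The key structural input is that $m_i=t_i^2h$ vanishes only if $h=0$ or $t_i=0$.

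\emph{Case $\m=\0$ (i.e.\ $h=0$).} Here we count $\n$ with $\n\cdot\mathbf{t}=0$ and $|\n|\ll M$. A rank-two lattice of covolume $|\mathbf{t}|\asymp T$ whose successive minima are $\ll T$ contains $\ll (M+T)^2/T$ such points; more crudely, $\ll (M+T)^2$ suffices. Summing over the $\ll T^3$ classes $[\mathbf{t}]$ gives $T^3(M+T)^2/T=T^2(M+T)^2$, which is the first term of the bound.

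\emph{Case $h\ne0$.} Now $|\m|\asymp T^2|h|\ll M$ forces $M\gg T^2$, and there are $\ll M/T^2$ choices of $h$. If $\n=\0$, the count is $T^3\cdot M/T^2=MT\ll M^{2}$ (as $M\gg T^2$), which is absorbed in $M^{2+\eps}\1_{M\gg T^2}$. If $n_k=0$ for some $k$ but $\n\ne\0$, then $\n\cdot\mathbf{t}=0$ reduces to $n_it_i+n_jt_j=0$.
\begin{itemize}
\item When $(t_i,t_j)\ne\0$, the solutions are $(n_i,n_j)=b(t_j,-t_i)/\gcd(t_i,t_j)$, so there are $\ll M\gcd(t_i,t_j)/|(t_i,t_j)|+1$ of them. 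Summing over $\mathbf{t}$ exactly as in \eqref{gcd-sum} gives $\sum_{\mathbf{t}}(M/T^2)\,M\gcd(t_i,t_j)/|(t_i,t_j)|\ll M^2T^\eps\ll M^{2+\eps}$, using $T\ll M^{1/2}$.
\item When $t_i=t_j=0$, we have $[\mathbf{t}]=[e_k]$, so $T\asymp1$; then $n_i,n_j$ are free and $n_k=0$, giving $M\cdot M^2=M^3\1_{T\asymp1}$.
\end{itemize}

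\emph{Case $\m\n=\0$ with $h\ne0$ and $\n$ having no zero coordinate.} Then some $m_i=0$, so $t_i=0$. This is again covered by the computation above, since $t_i=0$ with $n_i$ free and $n_jt_j+n_kt_k=0$ gives the same gcd sum, bounded by $M^{2+\eps}$, or the count $M^3\1_{T\asymp1}$ when two coordinates of $\mathbf{t}$ vanish.

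The only delicate step is the lattice-point count for $\n$ with $n_k=0$, where the $\gcd(t_i,t_j)$ factor must be averaged over $\mathbf{t}$. The dyadic argument of \eqref{gcd-sum} handles this.
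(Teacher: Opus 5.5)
Your overall route matches the paper's. You bound the $\m=\0$ part by $\sum_{\mathbf t}(M+T)^2/T\ll T^2(M+T)^2$. You note that $h\ne0$ forces $M\gg T^2$ and leaves $\ll M/T^2$ values of $h$, and you reduce to $n_k=0$. You then split according to whether $(t_i,t_j)=\0$, which gives $M^3\1_{T\asymp1}$, or not, which gives $M^{2+\eps}$ via \eqref{gcd-sum}. Those steps are fine, but two points need correcting.

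\textbf{Your final case is mishandled.} It becomes harmless once corrected. If $h\ne0$ and $\m\n=\0$, then $t_i^2hn_i=m_in_i=0$ gives $t_in_i=0$ for every $i$. So $n_i=0$ whenever $t_i\ne0$, and since $\mathbf t\ne\0$ this forces $n_1n_2n_3=0$. Hence the case ``$\m\n=\0$, $h\ne0$, no $n_i$ equal to zero'' is empty. Away from $\m=\0$, the indicator $\1_{\m\n=\0}$ is therefore dominated by $\1_{n_1n_2n_3=0}$; this is exactly how the paper reduces to $n_3=0$.

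The justification you wrote instead does not work. Counting tuples with $t_i=0$, $n_i$ free and $n_jt_j+n_kt_k=0$ is not ``the same gcd sum''. Since $(t_j,t_k)$ is then primitive, the count is $\asymp T^2\cdot\frac{M}{T^2}\cdot M\cdot\frac{M}{T}=\frac{M^3}{T}$. For, say, $M\ge T^4$ and $T$ large, this exceeds every term on the right-hand side of the lemma. Replace that paragraph by the vacuity observation above.

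\textbf{A smaller slip.} The aside that ``more crudely, $\ll(M+T)^2$ suffices'' in the case $\m=\0$ is false. Summing that bound over the $\asymp T^3$ classes $[\mathbf t]$ would give $T^3(M+T)^2$, so the saving of $1/T$ from the covolume, which you then use, is genuinely needed.
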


\begin{proof}
We will make use of Lemma \ref{HB-lattice-bound}.
The contribution from $\m=\0$ is
$$
\ll \sum_{\mathbf{t}}\#\left\{|\n|\ll M+T:  \mathbf{n}.\mathbf{t}=0\right\}
\ll \sum_{\mathbf{t}} \frac{(M+T)^2}{T}
\ll T^2 (M+T)^2,
$$
on enlarging the range $|\n|\ll M$ into $|\n|\ll M+T$.
On the other hand, if $\m\ne\0$ and $\m\n=\0$, then $n_1n_2n_3=0$,
so by symmetry it remains to consider the contribution $N$, say,  from $\m\ne\0$, $n_3=0$.
If $t_1=t_2=0$, then $t_3=\pm 1$ and $N\ll M^3 \1_{T\asymp 1}$.
If $(t_1,t_2)\ne\0$, then
$$
N\ll \sum_{t_1,t_2,t_3\ll T} \frac{M}{T^2} \1_{M\gg T^2}
\frac{M \gcd(t_1,t_2)}{|(t_1,t_2)|},
$$
where the condition $M\gg T^2$ comes from $\m\ne\0$.
Arguing as in \eqref{gcd-sum}, we get $N\ll M^{2+\eps} \1_{M\gg T^2}$.
\end{proof}

By Lemma~\ref{non-coordinate-lattices},
% The $\m\n\ne\0$, $n_1n_2n_3\ne 0$ parts of $\Sigma_1$ and $\Sigma_2$ coincide exactly.
% the bound $S'_{q'}\ll (q')^{3/2+\eps}$ from
Remark~\ref{uniform-pointwise-bound},
and the bound $I_q\ll (1+|(\m,\n)|/B^{1/2})^{-A}$ from
Lemma~\ref{int-estimate}, we see that 
$\Sigma_1-\Sigma_2$ is 
\begin{equation*}
\ll 
\hspace{-0.3cm}
\sum_{\substack{q_0\sim Q_0 \\ q'\ll B^{3/2}/Q_0}}
\hspace{-0.3cm}
\frac{(q')^{3/2+\eps}}{Q_0^2 (q')^2}
\Bigg(\sum_{\substack{\m\n=\0 \\ |\mathbf{t}|\sim T}}
+ \sum_{\substack{\m\n\ne \0 \\ n_1n_2n_3=0 \\ D(\m,\n)=0 \\ |\mathbf{t}|\sim T}}
+ \sum_{\substack{[\mathbf{t}]\in \PP^2(\ZZ) \\ |\mathbf{t}|\sim T}}
\sum_{\substack{(\m,\n)\in \Lambda^\perp(\mathbf{t})
\\ \m\n=\0\textnormal{ or } n_1n_2n_3=0}}\Bigg)
\left(1+\frac{|(\m,\n)|}{B^{1/2}}\right)^{-A}
\hspace{-0.2cm}
.
\end{equation*}
By Lemmas~\ref{degen-counts} and~\ref{lattice-overcounts},
and the inequality $M^{2+\eps} \1_{M\gg T^2} \ll \frac{M^3}{T} \1_{M\gg T^2}$,
the inner sum is
\begin{equation*}
\ll \sum_{M\ge 1} \frac{M^3 \1_{T\asymp 1}
+ \frac{M^3}{T} \1_{M\gg T^2}
+ T^2 (M+T)^2}
{(1+M/B^{1/2})^A}
\ll B^{3/2} \1_{T\asymp 1}
+ \frac{B^{3/2}}{T}
% + \frac{B^{3/2}/T}{(1+T^2/B^{1/2})^{A/2}}
+ T^2 (B+T^2).
\end{equation*}
Therefore,
\begin{equation*}
(\Sigma_1-\Sigma_2)(B,T,Q_0)
\ll \frac{(B^{3/2}/Q_0)^{1/2+\eps} \1_{Q_0\ll B^{3/2}}}{Q_0}
\left(\frac{B^{3/2}}{T}
+ T^2 (B+T^2)\right).
\end{equation*}

Let $P=P(B,T) \defeq \min(T,B^{1/2}/T^2)^\delta$,
where $\delta\in (0,1)$ is fixed.
If $B^{3/2}/Q_0\ll P$, then
\begin{align*}
(\Sigma_1-\Sigma_2)(B,T,Q_0)
\ll \frac{P^{3/2+\eps} \1_{P\gg 1}}{T}
+ \frac{P^{1/2+\eps} \1_{P\gg 1}}{B^{3/2}/P} T^2 B
&\ll \frac{P^{3/2+\eps} \1_{P\gg 1}}{\min(T,B^{1/2}/T^2)}\\
&\ll \frac{\1_{P\gg 1}}{P^\delta},
\end{align*}
since $P^{3/2}/P^{1/\delta}\leq 1/P^{\delta}$ if $\delta$ is small enough. 
Since $\#\{Q_0: B^{3/2}/P\ll Q_0\ll B^{3/2}\} \ll P^\eps$
and $\#\{T: P(B,T) \sim P\}\ll_\delta 1$, we get
\begin{equation}
\label{large-q-P-sum}
\sum_{\substack{T\ge 1 \\ Q_0\gg B^{3/2}/P(B,T)}}
|(\Sigma_1-\Sigma_2)(B,T,Q_0)|
\ll \sum_{P\gg 1} \frac{P^\eps}{P^\delta}
\ll 1.
\end{equation}
On the other hand, by Lemma~\ref{absolute-tail-bound}, we have 
\begin{align*}
\sum_{\substack{T\ge 1 \\ Q_0\ll B^{3/2}/P(B,T)}}
|\Sigma_1(B,T,Q_0)|
&\ll \sum_{T\ge 1} \frac{P^{ - 1/54}}{(1+T^2/B^{1/2})^{A}}\\
&\ll \sum_{T\ge 1} \frac{T^{-\delta/54} + (T^2/B^{1/2})^{\delta/54}}{(1+T^2/B^{1/2})^{A}}\\
&\ll 1.
\end{align*}
Hence
\begin{equation}
\label{apt}
E_3(B)
= \sum_{\substack{T\ge 1 \\ Q_0\gg B^{3/2}/P(B,T)}}
\Sigma_2(B,T,Q_0)
+O(1).
\end{equation}

To evaluate $\Sigma_2$,
fix $S'_{q'}(\m,\n)$ using Lemma~\ref{q-invariance} to get
\begin{equation*}
\Sigma_2 = \sum_{\substack{[\mathbf{t}]\in \PP^2(\ZZ) \\ |\mathbf{t}|\sim T}}
\sum_{\substack{q_0q'=q\ge 1 \\ q_0\sim Q_0}}
\frac{\phi(q_0)}{q_0}
\sum_{(\a,\b)\in \Lambda^\perp(\mathbf{t})/q'\Lambda^\perp(\mathbf{t})}
S'_{q'}(\a,\b)
\sum_{(\m,\n)\in (\a,\b)+q'\Lambda^\perp(\mathbf{t})}
\frac{I_q(\m,\n)}{q^2}.
\end{equation*}
% (The idea is to make a multiplicative Dirichlet series approximation of
% $\sum_{q\ge 1} S_q(\m,\n) q^{-s}$
% in terms of a Riemann-zeta-like series independent of $(\m,\n)$,
% as in \cite{wang2024dual}*{(8.3)}
% or \cite{BGW2024forthcoming}*{(10.7)}.
% We then factor $q$ into a zeta modulus $q_0$ and an ``error modulus'' $q'$,
% and fix $\m,\n\bmod{q'}$.)
Let $\lambda_1(\Lambda^\perp)\leq \lambda_2(\Lambda^\perp)\leq \lambda_3(\Lambda^\perp)$
be the successive minima of $\Lambda^\perp=\Lambda^\perp(\mathbf{t})$.
Choose a matrix $\mathbf{\Lambda^\perp}\in \mathrm{Mat}_{3\times 6}$
whose three row vectors form a shortest basis of $\Lambda^\perp$.
Consider the real density
\begin{equation}
\label{EQN:real-density-linear-spaces}
\sigma_{\infty,\Lambda^\perp,W}
\defeq \lim_{\epsilon\to 0}{(2\epsilon)^{-3}
\int_{\mathbf{\Lambda^\perp}(\x,\y)\in [-\epsilon,\epsilon]^3}
W(\x,\y)}\,\mathrm{d}\x\,\mathrm{d}\y,
\end{equation}
where we view $(\x,\y)$ as a column vector in $\mathrm{Mat}_{6\times 1}$.
If
\begin{equation}
\label{stable-domain}
\frac{q_0}{B} \ge A(W) \lambda_3(\Lambda^\perp),
\end{equation}
where $A(W)>0$ is a large constant defined in terms of $W$,
then by \cite{wang2024dual}*{proof of Lemma~4.6},
% or \cite{BGW2024forthcoming}*{proof of Lemma~10.11},
which amounts to Poisson summation
% https://en.wikipedia.org/wiki/Projection-slice_theorem
% https://en.wikipedia.org/wiki/Projection-slice_theorem#The_generalized_Fourier-slice_theorem
and a Fourier-slice identity,
we have
\begin{equation}
\label{poisson-fourier-slice}
q_0^{-3}
\sum_{(\m,\n)\in (\a,\b)+q'\Lambda^\perp}
I_q(\m,\n) B^6
= \sigma_{\infty,\Lambda^\perp,W} B^3
\, h(q/Q,0),
\end{equation}
where $(q_0,q')$ corresponds to
what \cite{wang2024dual} calls $(n_1,n_0)$.
Strictly speaking, the proof requires $q_0/B$
to exceed a certain constant times the length of the shortest basis
$\mathbf{\Lambda^\perp}$ of $\Lambda^\perp$.
However, this is equivalent to \eqref{stable-domain},
by Lemma~\ref{shortest-basis}.

We can also interpret \eqref{EQN:real-density-linear-spaces}
in terms of the orthogonal lattice $\Lambda$ from \eqref{Lambda-t}.
% By Poisson summation of $w(\frac{\x,\y}{H})$
% over $(\x,\y)\in \Lambda$ (cf.~(1.7) of [W24]),
% compared with Schmidt's lattice-counting asymptotic formula, we have
% % Geometry of numbers reference: https://arxiv.org/abs/2006.02356 (and Schmidt within)
% \begin{equation}
% \sigma_{\infty,\Lambda^\perp,w}
% \ll_w \frac{1}{\mathrm{vol}(\Lambda\otimes \mathbb{R}/\Lambda)}
% \asymp \frac{1}{|\mathbf{t}|^2\cdot |\mathbf{t}|},
% \end{equation}
% since the $\y$-component lattice $\y\in \mathbf{t}\mathbb{Z}$
% has co-volume $|\mathbf{t}|$ in $\mathbf{t}\mathbb{R}$
% and the $\x$-component lattice $\x\cdot \mathbf{t}^2=0$
% has co-volume equal to that of its orthogonal complement $\mathbf{t}^2\mathbb{Z}$.
By Poisson summation of $W(\frac{\x,\y}{H})$
over $(\x,\y)\in \Lambda$,
along the lines of \cite{wang2024dual}*{Eq.~(1.7)},
we have
\begin{equation}
\label{lattice-limit}
\sigma_{\infty,\Lambda^\perp,W}
= \lim_{H\to \infty} \frac{\sum_{(\x,\y)\in \Lambda} W(\frac{\x,\y}{H})}{H^3}.
% \ll_W \frac{1}{|\mathbf{t}^2|\cdot |\mathbf{t}|},
\end{equation}
% since $\x\cdot \mathbf{t}^2=0$
% may be viewed as a congruence modulo the largest coordinate of $\mathbf{t}^2$.
The actual interpretation we will use is \eqref{general-formula},
a hybrid between \eqref{EQN:real-density-linear-spaces} and \eqref{lattice-limit}.
%It should also be possible to interpret
%$\sigma_{\infty,\Lambda^\perp(\mathbf{t}),W}$
%as a Fourier integral,
%as in \cite{BHBDuke}.

\begin{lemma}
\label{separable-formula}
If $|t_3|\asymp |\mathbf{t}|$, say, then
\begin{equation}
\label{general-formula}
\sigma_{\infty,\Lambda^\perp(\mathbf{t}),W}
= \int_{\RR}
\int_{\RR^2} W\left(x_1,x_2,-\frac{x_1t_1^2+x_2t_2^2}{t_3^2},s\mathbf{t}\right)
\,\frac{\mathrm{d}x_1\,\mathrm{d}x_2}{t_3^2}\, \mathrm{d}s.
\end{equation}
Thus $\sigma_{\infty,\Lambda^\perp(\alpha\mathbf{t}),W}
= |\alpha|^{-3} \sigma_{\infty,\Lambda^\perp(\mathbf{t}),W}$
for all $\alpha\in \RR^\times$.
Moreover,
$$\sigma_{\infty,\Lambda^\perp(\mathbf{t}),W} \ll |\mathbf{t}|^{-3},
\qquad
\nabla_{\mathbf{t}}(\sigma_{\infty,\Lambda^\perp(\mathbf{t}),W}) \ll |\mathbf{t}|^{-4}.$$
\end{lemma}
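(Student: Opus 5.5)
The plan is to evaluate the limit in \eqref{EQN:real-density-linear-spaces} directly by a linear change of variables adapted to $\Lambda^\perp(\mathbf{t})$. The key observation is that the defining conditions $\mathbf{\Lambda^\perp}(\x,\y)\in[-\epsilon,\epsilon]^3$ only depend on the linear forms $\x\cdot\mathbf{t}^2$ and the two coordinates of $\y$ transverse to the line $\RR\mathbf{t}$. Rather than working with the shortest basis, I would first note that the limit \eqref{EQN:real-density-linear-spaces} is unchanged if the rows of $\mathbf{\Lambda^\perp}$ are replaced by any $\ZZ$-basis of $\Lambda^\perp$, since two bases differ by a matrix in $\mathrm{GL}_3(\ZZ)$ of determinant $\pm1$ and the density is the pushforward density of $W\,\mathrm d\x\,\mathrm d\y$ under the linear map at $0$. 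I would also check this against the lattice-count formula \eqref{lattice-limit}, which is manifestly basis-independent. Concretely, $\Lambda^\perp=\mathbf{t}^2\ZZ\times\{\n:\n\cdot\mathbf{t}=0\}$, so a basis consists of $(\mathbf{t}^2,\0)$ together with $(\0,\mathbf{u}),(\0,\mathbf{v})$, where $\mathbf{u},\mathbf{v}$ is a basis of $\mathbf{t}^\perp\cap\ZZ^3$.

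The density then factors as a product of two pieces. For the $\x$-part, the density of $\x\cdot\mathbf{t}^2$ at $0$ is computed by solving for $x_3$, using $t_3\neq0$; this gives $\int W(x_1,x_2,-(x_1t_1^2+x_2t_2^2)/t_3^2,\cdot)\,\mathrm dx_1\mathrm dx_2/t_3^2$. For the $\y$-part, the map $\y\mapsto(\mathbf{u}\cdot\y,\mathbf{v}\cdot\y)$ has fibre $\RR\mathbf{t}$ over $0$. The density is the integral over $\y=s\mathbf{t}$ of $|\mathbf{t}|\,\mathrm ds$, divided by the $2$-dimensional Jacobian $|\mathbf{u}\times\mathbf{v}|$. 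Since $\mathbf{t}$ is primitive, $\mathbf{u}\times\mathbf{v}=\pm\mathbf{t}$, so $|\mathbf{u}\times\mathbf{v}|=|\mathbf{t}|$ and the two factors cancel, giving exactly $\mathrm ds$. Together these yield \eqref{general-formula}. The only real care needed is the identity $|\mathbf{u}\times\mathbf{v}|=\mathrm{covol}(\mathbf{t}^\perp\cap\ZZ^3)=|\mathbf{t}|$ for primitive $\mathbf{t}$, which is standard.

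With \eqref{general-formula} established, the remaining claims follow quickly. Homogeneity comes from the substitution $s\mapsto s/\alpha$: the quotient $t_1^2/t_3^2$ is scale-invariant and the factor $1/t_3^2$ picks up $\alpha^{-2}$, for a total of $|\alpha|^{-3}$. For the bound $\ll|\mathbf{t}|^{-3}$, $W$ is compactly supported, so $x_1,x_2$ range over a bounded set and $s$ over a set of length $\ll1/|\mathbf{t}|$, and the prefactor $1/t_3^2\asymp|\mathbf{t}|^{-2}$ supplies the rest. For the gradient bound, I would differentiate under the integral sign. Derivatives of $t_i^2/t_3^2$ are $\ll1/|\mathbf{t}|$, those of $t_3^{-2}$ are $\ll|\mathbf{t}|^{-3}$, and $\partial_{t_i}$ of $W(\dots,s\mathbf{t})$ brings out a factor $s\ll1/|\mathbf{t}|$. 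Each term therefore gains $1/|\mathbf{t}|$, giving $\ll|\mathbf{t}|^{-4}$.

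The main obstacle is making the first step rigorous, namely justifying that the shortest-basis definition agrees with the basis-free product computation. The cleanest route is to invoke \eqref{lattice-limit}, count $\Lambda=\{\x\cdot\mathbf{t}^2=0\}\times\mathbf{t}\ZZ$ by Riemann sums, and note that the $\x$-lattice has covolume $|\mathbf{t}^2|$ in its plane. This reproduces \eqref{general-formula} directly.
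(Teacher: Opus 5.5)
Your proposal is correct, but it argues differently from the paper.

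\begin{itemize}
\item \textbf{The paper's route.} It notes that $\sigma_{\infty,\Lambda^\perp(\mathbf{t}),W}$ depends continuously on $W$ in the sup norm. Using density of $C^\infty_c(\RR^3)\otimes C^\infty_c(\RR^3)$, it reduces to product weights $W_1(\x)W_2(\y)$. It then factors the density into two pieces. The $\x$-piece is evaluated with the Leray form $\mathrm{d}x_1\,\mathrm{d}x_2/t_3^2$ on $\x\cdot\mathbf{t}^2=0$. The $\y$-piece is evaluated through the lattice interpretation \eqref{lattice-limit}, as the one-dimensional Riemann sum $\frac1H\sum_{h\in\ZZ}W_2(h\mathbf{t}/H)\to\int_\RR W_2(s\mathbf{t})\,\mathrm{d}s$.

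\item \textbf{Your route.} You work with a general $W$ throughout. You make explicit that the limit in \eqref{EQN:real-density-linear-spaces} does not depend on the chosen $\ZZ$-basis. The paper uses this tacitly when it ``factors'' the density, since a shortest basis need not be block-diagonal. You then pass to the block-diagonal basis $(\mathbf{t}^2,\0),(\0,\mathbf{u}),(\0,\mathbf{v})$ and compute the pushforward density by the linear coarea formula. The covolume identity $|\mathbf{u}\times\mathbf{v}|=|\mathbf{t}|$ does the work that the paper delegates to \eqref{lattice-limit}.

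\item \textbf{Trade-off.} Your route avoids the approximation argument and any reliance on the externally cited identity \eqref{lattice-limit}. The paper's route avoids the cross-product and covolume computation.

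\item \textbf{Your last paragraph.} Your $\mathrm{GL}_3(\ZZ)$ argument is already rigorous on its own: the change of basis is unimodular, and the pushforward density is continuous at $0$. So the detour through \eqref{lattice-limit} is optional, not the ``main obstacle''.

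\item \textbf{The consequences.} Your derivations of the homogeneity and of the bounds $|\mathbf{t}|^{-3}$ and $|\mathbf{t}|^{-4}$ from \eqref{general-formula} are fine. They supply details the paper leaves to the reader. The gradient bound uses $|t_3|\asymp|\mathbf{t}|$ and $|s|\ll|\mathbf{t}|^{-1}$ on the support of $W$.
\end{itemize}
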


\begin{proof}
We may express $\sigma_{\infty,\Lambda^\perp(\mathbf{t}),W}$
as a continuous function of $W\in C_c(\RR^6)$,
by \eqref{EQN:real-density-linear-spaces},
after choosing a Leray form on
the smooth complete intersection $\mathbf{\Lambda^\perp}(\x,\y) = \0$ in $\RR^6$.
By continuity,
since $C^\infty_c(\RR^3) \otimes C^\infty_c(\RR^3)$
is dense in $C_c(\RR^6)$ for the $\sup$ norm,
% https://en.wikipedia.org/wiki/Uniform_norm
it thus suffices to prove \eqref{general-formula}
assuming that $W(\x,\y) = W_1(\x)W_2(\y)$,
in which case the following three observations imply \eqref{general-formula}.
First, \eqref{EQN:real-density-linear-spaces}
factors as a product of two densities,
one associated to the two-dimensional lattice $\x\cdot \mathbf{t}^2=0$,
and the other associated the one-dimensional lattice $\y\in \mathbf{t}\ZZ$.
Second,
$\frac{\mathrm{d}x_1\,\mathrm{d} x_2}{t_3^2}$ is a Leray form on $\x\cdot \mathbf{t}^2=0$.
Third,
$\frac1H \sum_{\y\in \mathbf{t}\ZZ} W_2(\frac{\y}{H})
\to \int_{\RR} W_2(s\mathbf{t})\, \mathrm{d}s$
as $H\to \infty$,
because if $f(s)\defeq W_2(s\mathbf{t})$
then $\frac1H \sum_{h\in \ZZ} f(\frac{h}{H}) \to \int_\RR f(s)\, \mathrm{d}s$.
% \VW{This is assuming $w(\x,\y)=w_1(\x)w_2(\y)$.
% % We can try to do more general weights by Fourier inversion and/or other methods for separation of variables, if desired.
% The general case probably follows by continuity.
% Alternatively, methods from the main term analysis of \cite{BHBDuke} might be useful.}
The rest of Lemma~\ref{separable-formula} follows from \eqref{general-formula}.
\end{proof}

Among the four vectors
$(\mathbf{t}^2,\0)$,
$(\0,(0,t_3,-t_2))$,
$(\0,(t_3,0,-t_1))$,
$(\0,(t_2,-t_1,0))$
in $\Lambda^\perp$,
some three of them are linearly independent (e.g.~the first three, if $t_3\ne 0$). Thus
\begin{equation}
\label{bound-lambda3}
\lambda_3(\Lambda^\perp)\ll |\mathbf{t}|^2,
\end{equation}
which is optimal in view of the condition $\m\in \mathbf{t}^2\mathbb{Z}$ in \eqref{Lambda-perp-t}.

If $C_1B^{3/2}/P(B,T)\le Q_0\le C_2B^{3/2}$,
then $B^{1/2}/T^2 \ge P^{1/\delta} \ge (C_1/C_2)^{1/\delta}$, so
\begin{equation*}
\frac{Q_0}{B} \ge \frac{C_1B^{1/2}}{P}
\ge C_1P^{(1-\delta)/\delta}T^2
\ge C_1(C_1/C_2)^{(1-\delta)/\delta}T^2.
\end{equation*}
Hence,  if $C_1$ is large enough in terms of $W$,
then by \eqref{bound-lambda3} the condition \eqref{stable-domain} is satisfied
for all $q_0\sim Q_0$.
Thus by \eqref{poisson-fourier-slice}
\begin{equation*}
\Sigma_2 = \sum_{\substack{[\mathbf{t}]\in \PP^2(\ZZ) \\ |\mathbf{t}|\sim T}}
\sum_{\substack{q_0q'=q\ge 1 \\ q_0\sim Q_0}}
\frac{\phi(q_0)}{q_0}
\frac{q_0^3 \sigma_{\infty,\Lambda^\perp,W} h(q/Q,0)}{q^2 B^3}
\sum_{(\a,\b)\in \Lambda^\perp(\mathbf{t})/q'\Lambda^\perp(\mathbf{t})}
S'_{q'}(\a,\b).
\end{equation*}
Arguing as in
\cite{wang2024dual}*{proofs of Propositions~4.7 and~8.4},
we find that the average value of $q^{-4} S_q(\m,\n)$
over $(\m,\n)\in \Lambda^\perp(\mathbf{t})/q\Lambda^\perp(\mathbf{t})$ is
\begin{equation*}
q^{-4} \sum_{a\in (\ZZ/q\ZZ)^\times}
\sum_{(\x,\y)\in \Lambda(\mathbf{t})/q\Lambda(\mathbf{t})}
e_q(aF(\x,\y))
= q^{-4} \sum_{a\in (\ZZ/q\ZZ)^\times}
\sum_{(\x,\y)\in \Lambda(\mathbf{t})/q\Lambda(\mathbf{t})}
1
= q^{-1} \phi(q),
\end{equation*}
and thus that the average value of $S'_q(\m,\n)$
over $(\m,\n)\in \Lambda^\perp(\mathbf{t})/q\Lambda^\perp(\mathbf{t})$ is
$\1_{q=1}$, by \eqref{dual-q0q1q2}.
Therefore,
\begin{equation*}
\Sigma_2 = \sum_{\substack{[\mathbf{t}]\in \PP^2(\ZZ) \\ |\mathbf{t}|\sim T}}
\sum_{q\sim Q_0}
\phi(q) 
\frac{\sigma_{\infty,\Lambda^\perp(\mathbf{t}),W} h(q/Q,0)}{B^3}.
\end{equation*}
We may sum over $q$ using the $h$-function identity
$$
\sum_{q\geq Q/L} \phi(q) h(q/Q,0)
= Q^2(1 + O(L^{-1})) \1_{L\ge 1},
% = Q^2(1 + O(L^{-1})),
$$
which follows from  \cite{wang2024dual}*{Proposition~8.11}
if $L\ge 1$,
and from  the identity $h(q/Q,0) \1_{q\ge Q} = 0$
in \cite{HB}*{Lemma~4}
if $L\le 1$.
Plugging our work into \eqref{apt},
and bounding the total error term using the
inequality $\sigma_{\infty,\Lambda^\perp(\mathbf{t}),W} \ll |\mathbf{t}|^{-3}$,
we get
\begin{equation*}
\begin{split}
E_3(B)
- \sum_{\substack{T\ge 1 \\ P(B,T)\gg 1}}
\sum_{\substack{[\mathbf{t}]\in \PP^2(\ZZ) \\ |\mathbf{t}|\sim T}}
\sigma_{\infty,\Lambda^\perp,W}
&\ll 1
+ \sum_{\substack{T\ge 1 \\ P(B,T)\gg 1}}
\sum_{\substack{[\mathbf{t}]\in \PP^2(\ZZ) \\ |\mathbf{t}|\sim T}}
\frac{|\sigma_{\infty,\Lambda^\perp,W}|}{P(B,T)} \\
&\ll 1
+ \sum_{\substack{T\ge 1 \\ P(B,T)\gg 1}} \frac{1}{P(B,T)}
\ll 1,
\end{split}
\end{equation*}
similarly to \eqref{large-q-P-sum}.

% Since $f(\mathbf{t})\defeq |\mathbf{t}|^3 \sigma_{\infty,\Lambda^\perp(\mathbf{t}),w}$
% is a continuous function of $\mathbf{t}\in \RR^3 \setminus \{\0\}$
% that is invariant under scaling by $\RR^\times$
% (i.e.~$f(\alpha\mathbf{t})=f(\mathbf{t})$ for all $\alpha\in \RR^\times$),
% sieving in annuli gives

For any integer $1\le d\ll T$ we have, by Lemma~\ref{separable-formula},
\begin{equation*}
\sum_{\substack{\mathbf{t}\in d\ZZ^3 \\ |\mathbf{t}|\sim T}}
\sigma_{\infty,\Lambda^\perp(\mathbf{t}),W}
= \sum_{\substack{\mathbf{u}\in \ZZ^3 \\ |\mathbf{u}|\sim T/d}}
\frac{\sigma_{\infty,\Lambda^\perp(\mathbf{u}),W}}{d^3}
= \frac{1}{d^3}
\left(\theta(T/d)
+ \int_{\substack{\mathbf{u}\in \RR^3 \\ |\mathbf{u}|\asymp T/d}}
O(|\mathbf{u}|^{-4})\, \mathrm{d}\mathbf{u}\right),
\end{equation*}
where for any $U>0$ we let
\begin{equation*}
\theta(U)\defeq \int_{\substack{\mathbf{u}\in \RR^3 \\ |\mathbf{u}|\sim U}}
\sigma_{\infty,\Lambda^\perp(\mathbf{u}),W}\, \mathrm{d}\mathbf{u}.
\end{equation*}
Moreover,
% if we call the first integral $\theta(T/d)$, then
$\theta(U) = \theta(1)$
by scale-invariance of the volume form
$\sigma_{\infty,\Lambda^\perp(\mathbf{u}),W}\, d\mathbf{u}$ on $\RR^3$.
Thus, by M\"{o}bius inversion,
\begin{equation*}
2 \sum_{\substack{[\mathbf{t}]\in \PP^2(\ZZ) \\ |\mathbf{t}|\sim T}}
\sigma_{\infty,\Lambda^\perp,W}
= \sum_{d\ll T} \frac{\mu(d)}{d^3} (\theta(1) + O(\frac{d}{T}))
= \frac{\theta(1)}{\zeta(3)} + O(T^{-1}).
\end{equation*}

After writing the condition $P(B,T)\gg 1$ in the form $1\ll T\ll B^{1/4}$, we get
\begin{equation*}
E_3(B)
= \sum_{1\ll T\ll B^{1/4}} \frac{\theta(1)}{2\zeta(3)}
+ O(1)
= \frac{\theta(1)}{8\zeta(3)} \frac{\log{B}}{\log{2}}
+ O(1).
\end{equation*}
Plugging Lemma~\ref{separable-formula} into the definition of $\theta(1)$,
% \begin{equation*}
% \theta(1)
% = \int_{\substack{\mathbf{t}\in \RR^3 \\ |\mathbf{t}|\sim 1}}
% \int_{\RR^2}
% \int_{\RR}
% W(x_1,x_2,-\frac{x_1t_1^2+x_2t_2^2}{t_3^2},s\mathbf{t})
% \, \mathrm{d}s
% \,\frac{\mathrm{d}x_1\,\mathrm{d}x_2}{t_3^2}
% \, \mathrm{d}\mathbf{t}.
% \end{equation*}
and changing variables from $\mathbf{t}$ to $\y\defeq s\mathbf{t}$,
then integrating over $|s|\sim |\y|$,
where $s$ may lie in either component of $\RR^\times$,
we get
\begin{equation*}
\begin{split}
\theta(1)
&= \int_{\RR^3}
\int_{\RR^2}
\int_{|s|\sim |\y|}
W(x_1,x_2,-\frac{x_1y_1^2+x_2y_2^2}{y_3^2},\y)
\, \mathrm{d}s
\,\frac{\mathrm{d}x_1\,\mathrm{d}x_2}{(y_3/s)^2}
\, \frac{\mathrm{d}\y}{|s|^3} \\
&= (2\log{2}) \int_{\RR^3}
\int_{\RR^2} W(x_1,x_2,-\frac{x_1y_1^2+x_2y_2^2}{y_3^2},\y)
\,\frac{\mathrm{d}x_1\,\mathrm{d}x_2}{y_3^2}
\, \mathrm{d}\y\\
&= (2\log{2}) \sigma_\infty,
\end{split}
\end{equation*}
since $\frac{\mathrm{d}x_1\,\mathrm{d}x_2\,\mathrm{d}\y}{y_3^2}$ is a Leray form on the hypersurface $F(\x,\y)=0$.
This completes the proof of Proposition~\ref{final-E3-estimate}.

\appendix

\section{Another example of \texorpdfstring{$(\diamond)$}{(diamond)}}
\label{elementary}

Let $F(\x,\y) = \sum_{i=1}^{3} x_iy_i^2 + x_1x_2x_3$,
 let  $p\ge 3$ be a prime and let $\psi:\FF_p\to \CC^*$ be a non-trivial additive character.
Then by fixing $\x\in \FF_p^3$, we get
\begin{equation*}
\begin{split}
S_p(\m,\n)
&\defeq \sum_{a\ne 0} \sum_{\x,\y} \psi(aF(\x,\y) + \m\cdot\x+\n\cdot\y) \\
&= \sum_{a\ne 0} \sum_{\x} \psi(ax_1x_2x_3 + \m\cdot\x)
\prod_{i=1}^{3} G(ax_i,n_i),
\end{split}
\end{equation*}
where
$$
G(a,b)
\defeq \sum_{y\in \FF_p} \psi(ay^2+by)
= \psi\left(-\frac{b^2}{4a}\right) \left(\frac{a}{p}\right) G(1,0) \1_{a\ne 0} + p \1_{a=b=0}.
$$
Evaluating the Gauss sums over $\y\in \FF_p^3$,
we therefore find that if $n_1n_2n_3\ne 0$ in $\FF_p$, then
\begin{equation*}
\begin{split}
S_p(\m,\n)
&= \sum_{ax_1x_2x_3\ne 0} \psi(ax_1x_2x_3 + \m\cdot\x)
\prod_{i=1}^{3} \psi\left(-\frac{n_i^2}{4ax_i}\right) \left(\frac{ax_i}{p}\right) G(1,0) \\
&= G(1,0)^3 \sum_{ax_1x_2x_3\ne 0} \left(\frac{ax_1x_2x_3}{p}\right)
\psi\left(ax_1x_2x_3 + \m\cdot\x - \sum_{i=1}^{3} \frac{n_i^2}{4ax_i}\right).
\end{split}
\end{equation*}

By the Sali\'{e} sum formula
in  \cite[Lemma~12.4]{IK}, we obtain
% https://blogs.ethz.ch/kowalski/2010/03/07/the-computation-of-salie-sums/
% https://arxiv.org/abs/1107.5282
\begin{align*}
S(m,n;p) &\defeq \sum_{a\ne 0} \left(\frac{a}{p}\right) \psi(ma + na^{-1})\\
&= \left(\frac{m}{p}\right) \sum_{v^2\equiv mn\bmod{p}} \psi(2v) G(1,0) \1_{m\ne 0}
+ \left(\frac{n}{p}\right) G(1,0) \1_{m=0}.
\end{align*}
Applying this with $m = x_1x_2x_3 \ne 0$ and $n = -\sum_{i=1}^{3} \frac{n_i^2}{4x_i}$,
we get
\begin{equation*}
\begin{split}
S_p(\m,\n)
&= G(1,0)^4 \sum_{x_1x_2x_3\ne 0} 
\left(\frac{x_1x_2x_3}{p}\right) \psi(\m\cdot\x)
\left(\frac{x_1x_2x_3}{p}\right) 
\hspace{-0.2cm}
\sum_{v^2\equiv -\frac14 \sum_{i=1}^{3} n_i^2x_jx_k \bmod{p}} 
\hspace{-0.3cm}
\psi(2v) \\
&= p^2 \sum_{x_1x_2x_3\ne 0}
\sum_{u^2 = -\sum_{i=1}^{3} n_i^2x_jx_k} \psi(\m\cdot\x + u),
\end{split}
\end{equation*}
where $u \defeq 2v$.
Now, replacing $(\x,u)$ with $(t\x,tu)$
and averaging over $t\ne 0$, we get
\begin{equation*}
\begin{split}
S_p(\m,\n)
&= p^2 \sum_{x_1x_2x_3\ne 0}
\sum_{u^2 = -\sum_{i=1}^{3} n_i^2x_jx_k} \frac{p \1_{\m\cdot\x + u=0} - 1}{p-1} \\
&= p^2 \frac{p (N_1-N_2) - (N_3-N_4)}{p-1},
\end{split}
\end{equation*}
where 
\begin{align*}
N_1 &= \sum_{\x} \1_{(\m\cdot\x)^2 = -\sum_{i=1}^{3} n_i^2x_jx_k},
\qquad N_2 = \sum_{x_1x_2x_3=0} \1_{(\m\cdot\x)^2 = -\sum_{i=1}^{3} n_i^2x_jx_k},\\
N_3 &= \sum_{\x,u} \1_{u^2 = -\sum_{i=1}^{3} n_i^2x_jx_k},
\hspace{1.35cm} N_4 = \sum_{x_1x_2x_3=0} \sum_{u} \1_{u^2 = -\sum_{i=1}^{3} n_i^2x_jx_k}.
\end{align*}
If the projective conic in $N_1$ is smooth (a condition defined by the non-vanishing of a $3\times 3$ determinant, which is a sextic form in $(\m,\n)$),
then 
$$
\frac{N_1-1}{p-1} = \#\PP^1(\FF_p) = p+1,
$$
so $N_1 = p^2$.
Typically, the projective locus in $N_2$ will be zero-dimensional, so that $N_2\ll p$.
The projective quadric surface in $N_3$ has determinant $n_1^2n_2^2n_3^2/4$,
so if $n_1n_2n_3\ne 0$ then $$\frac{N_3-1}{p-1} = \#\PP^2(\FF_p) + p,
$$
so $N_3 = p^3 + (p^2-p) = p^3 + O(p^2)$.
The projective locus in $N_4$ is one-dimensional, so $N_4\ll p^2$.
Thus, typically,
\begin{equation*}
S_p(\m,\n)
\ll p^2 \frac{p^2}{p-1}
\ll p^3.	
\end{equation*}
This justifies the claim for $(c_1,c_2) = (-1,0)$ 
in Example~\ref{prelim-examples}(4).

\end{document}